\documentclass[11pt]{article}
\usepackage[T1]{fontenc}
\usepackage[utf8]{inputenc}
\usepackage[letterpaper,margin=1in]{geometry}
\usepackage{amsmath}

\usepackage{amssymb,amsthm}
\usepackage{bm}
\usepackage{graphicx}
\usepackage{xcolor}
\usepackage{booktabs}
\usepackage{threeparttable}
\usepackage{array}
\usepackage{tabularx}
\usepackage{enumitem}
\usepackage{placeins}
\usepackage{tikz}
\usetikzlibrary{positioning, arrows.meta, decorations.pathreplacing, calc, fit, backgrounds, shapes.geometric}
\graphicspath{{icons/}{figures/}}
\usepackage[authoryear,round]{natbib}
\usepackage{url}
\usepackage{hyperref}
\hypersetup{
  colorlinks=true,
  linkcolor=blue!70!black,
  citecolor=blue!70!black,
  urlcolor=blue!70!black
}
\usepackage[nameinlink,noabbrev]{cleveref}
\newtheorem{theorem}{Theorem}
\newtheorem{lemma}[theorem]{Lemma}
\newtheorem{proposition}[theorem]{Proposition}
\newtheorem{corollary}[theorem]{Corollary}
\theoremstyle{definition}
\newtheorem{definition}{Definition}

\newtheorem{example}{Example}
\theoremstyle{remark}
\newtheorem{remark}{Remark}
\newtheorem{appalemma}{Lemma}

\newtheorem{appaproposition}[appalemma]{Proposition}
\newtheorem{appacorollary}[appalemma]{Corollary}
\newtheorem{appatheorem}{Theorem}

\renewenvironment{proof}[1][Proof]{\par\noindent{\itshape #1.}\ }{\hfill\(\square\)\par}
\newif\ifshowcomments
\showcommentsfalse
\ifshowcomments
\newcommand{\jd}[1]{{\color{orange}[JD: #1]}}
\newcommand{\arc}[1]{{\color{blue}[ARC: #1]}}
\else
\newcommand{\jd}[1]{}
\newcommand{\arc}[1]{}
\fi
\newcommand{\inputterm}{input}
\newcommand{\Inputterm}{Input}
\newcommand{\inputctx}{input context}

\newcommand{\inputlen}{input length}
\newcommand{\inputlens}{input lengths}
\newcommand{\Inputlen}{Input length}

\newcommand{\inputlenratios}{input-length ratios}
\newcommand{\inputdominated}{input-dominated}
\newcommand{\commoninput}{common-input}
\newcommand{\finiteinput}{finite-input}
\newcommand{\heteroinput}{heterogeneous-input}
\title{\Large Service-Induced Congestion in Memory-Constrained LLM Serving}
\author{%
Ruicheng Ao\thanks{Institute for Data, Systems, and Society, Massachusetts Institute of Technology, Cambridge, MA 02139. Email: \texttt{aorc@mit.edu}.}
\and Jing Dong\thanks{Columbia Business School, Columbia University, New York, NY 10027. Email: \texttt{jing.dong@gsb.columbia.edu}.}
\and Gan Luo\thanks{School of Mathematical Sciences, Peking University, Beijing, China. Email: \texttt{luogan@stu.pku.edu.cn}.}
\and David Simchi-Levi\thanks{Institute for Data, Systems, and Society, Massachusetts Institute of Technology, Cambridge, MA 02139. Email: \texttt{dslevi@mit.edu}.}
}
\date{\today}
\begin{document}
\maketitle
\begin{abstract}
\noindent
In large language model (LLM) serving, each request accumulates persistent graphics processing unit (GPU) memory during service as its key--value cache grows with every generated token. Under high concurrency, aggregate memory usage therefore increases endogenously over time: the service process itself creates future capacity pressure. When memory capacity is exceeded, systems must evict active requests, i.e., discarding their cached state and restarting them later, which wastes computation and reduces throughput. This progressive consumption of resources gives rise to a new form of service-induced congestion. In this work, we develop a discrete-time dynamical model of memory-constrained LLM inference that captures the interaction among admission, memory growth, and eviction under continuous batching. In the saturated-input regime, the system admits both eviction-free fixed points and limit cycles with evictions. For homogeneous workloads, we show that the eviction-free equilibrium is unstable under standard batching dynamics and that, except for a Lebesgue-measure-zero exact-capture set, the system converges to a unique worst-case limit cycle that is asymptotically stable outside this exceptional set, where throughput losses can be as large as 50\%. For heterogeneous workloads, we prove a sharp stability criterion in the two-class \commoninput{} setting and explain how the survival-polynomial mechanism generalizes to multiple classes and \heteroinput{} lengths. Under an \inputdominated{} scaling regime, coprime decoding lengths stabilize the eviction-free equilibrium, while non-coprime lengths create synchronized modes that drive instability. This result characterizes when workload heterogeneity desynchronizes completion events and helps stabilize the system around the eviction-free equilibrium. More broadly, we identify service-induced congestion as a structural instability mechanism in memory-constrained systems and offer scheduling design principles for sustaining high throughput.
\end{abstract}
\noindent\textbf{Keywords:} large language model serving; memory constraints; continuous batching; dynamical systems; admission control
\section{Introduction}
\label{sec:introduction}

Large language models (LLMs) now serve as shared online services that must handle high request volumes under strict latency and reliability requirements~\citep{orca2022,vllm2023}. These models generate text using autoregressive decoding. Given an \inputctx{}, the model produces text one token at a time; each newly generated token is appended to the context and used to condition subsequent token generation. Internally, this process requires storing intermediate attention activations from previous tokens, known as the key–value (KV) cache. In particular, at each transformer layer, every token produces a key and a value vector, and the collection of these vectors for all prior tokens is cached and reused during decoding. This avoids recomputing attention over the full prefix at each step, reducing the per-token computational complexity from quadratic to linear in the sequence length. However, the KV cache grows linearly with the number of generated tokens, resulting in a progressively increasing graphics processing unit (GPU) memory footprint during inference.

As such, inference for a single request is not a one-shot computation but a sequence of decoding steps that create, update, and retain request-specific state. This behavior contrasts with traditional stateless inference workloads such as image classification or fixed-length embedding generation, where each request is processed in a single forward pass. For example, in an image classification service, an input image is mapped to a label through one feedforward evaluation of the neural network, after which all intermediate activations can be discarded. In LLM serving, by contrast, a request accumulates internal state across many decoding steps and continuously occupies GPU memory throughout its lifetime. Requests therefore consume not only compute capacity but also persistent memory capacity, making memory a first-class scheduling constraint.

Another important characteristic of LLM inference is that modern serving systems do not process requests one at a time. A single decoding step uses only a small fraction of a GPU’s available compute. To achieve high throughput, systems advance many requests concurrently, sharing compute across them. This high concurrency keeps the GPU well utilized, but it also means that the memory states of many requests coexist in the GPU memory.

In many practical serving regimes, GPU memory becomes the binding constraint, especially under high concurrency and long-context workloads. In modern agentic and retrieval-augmented deployments, the \inputctx{} may include conversation history, retrieved documents, uploaded files, tool outputs, and other application state, so the initial context itself can be large and heterogeneous across requests~\citep{lewis2020rag,yao2023react,schick2023toolformer,sglang2024}. Production systems therefore treat memory pressure and memory-induced preemption as explicit scheduling concerns: vLLM implements swapping and recomputation for evicted KV caches~\citep{vllm2023}, SGLang employs least-recently-used (LRU) eviction for its radix-tree cache~\citep{sglang2024}, and FastServe~\citep{wu2023fastserve} and TensorRT-LLM~\citep{tensorrt2023} schedule preemptively to manage memory overflow. Unlike compute resources, which can be time-shared across requests, GPU memory cannot be safely overcommitted. When aggregate memory usage exceeds capacity, the system must evict in-progress requests, discarding previously accumulated computation, which reduces effective throughput.

This memory-driven behavior differs qualitatively from congestion in classical queueing models. In most classical formulations, a job occupies a fixed amount of capacity throughout its lifetime, whether that capacity represents a server, bandwidth, or memory. 
LLM requests, in contrast, consume increasing capacity: even when a request's total decoding length is known at admission, its memory footprint grows monotonically during service. A request that fits in memory at admission may become infeasible solely because it continues to run. In this context, eviction is not merely an implementation artifact or rare failure mode. Under sustained load, it becomes unavoidable because memory state grows during service. Service itself therefore creates future capacity pressure, giving rise to a new form of congestion.

This structural difference fundamentally changes the role of admission decisions, creating an inherent tension between protecting the system from future memory overflow and maintaining high utilization.
A naive myopic admission policy checks only current KV-cache usage: it admits a new request whenever the instantaneous footprint fits within available memory, without reserving capacity for future growth. Such locally feasible decisions can create synchronized memory peaks later, triggering eviction cascades that discard accumulated computation and collapse throughput.
At the other extreme, overly conservative policies that reserve memory for worst-case future growth can also perform poorly. Excessive reservation suppresses concurrency and leaves substantial GPU capacity idle, a phenomenon widely observed in practice 
\citep{vllm2023,distserve2024}. 
The central challenge is thus how to balance memory protection against utilization when capacity consumption evolves during service.

To analyze these dynamics formally, we introduce a discrete-time queueing system in which jobs have known total sizes but resource footprints grow during service. 
The resulting model departs from classical queues with fixed per-job capacity consumption and reveals phenomena absent from traditional settings, including multiple equilibria and eviction-driven throughput loss.
We first analyze the homogeneous workload case, and then study the heterogeneous setting. Near homogeneous workloads arise naturally in schema-constrained LLM applications, such as binary decisions, tool-routing outputs, or fixed-format responses, where requests from the same application have nearly identical decoding budgets~\citep{sglang2024,schick2023toolformer,openai2024structuredoutputs}. For example, an evaluation or moderation pipeline may ask the model to return a fixed schema with a small enumerated field, such as \texttt{\{answer: yes\}} or \texttt{\{answer: no\}}, creating many requests with nearly identical generation lengths. We show that homogeneous workloads are prone to synchronized memory growth, which can lead to memory peaks and throughput collapse, whereas some form of heterogeneity in decoding lengths can help sustain throughput by desynchronizing memory growth across requests. Our analysis also provides an analytical basis for studying congestion and efficiency in stateful artificial intelligence (AI) services where resource consumption evolves during service.

\subsection{Contributions}

We formalize LLM serving with KV-cache memory as a discrete-time dynamical system that captures the interaction among admission, memory growth, and eviction (Section~\ref{sec:model}). The model isolates a congestion mechanism absent from classical queueing dynamics: resource consumption increases progressively during service. 
We characterize the long-run behavior of the system in a saturated-input regime, where admission is constrained only by memory feasibility. This regime isolates the intrinsic congestion effects induced by progressive memory growth, independent of stochastic arrival fluctuations, and reveals the fundamental stability limit imposed by the serving architecture itself.

For homogeneous job sizes, we provide a general stability characterization of the eviction cascade (Theorem~\ref{thm:fcfs}, Section~\ref{sec:single_class}). For jobs with decode length $l_1$, the dynamics admit an eviction-free fixed point and eviction-level limit cycles, including a maximal-eviction cycle. We show that under the baseline continuous batching policy, the eviction-free fixed point is unstable and intermediate eviction-level cycles are also unstable. More generally, after a support pattern is fixed, the only obstruction to further collapse is the balanced case in which the normalized masses of all live positions are equal; any imbalance forces another support loss. The maximal-eviction limit cycle is the unique cycle that is asymptotically stable outside the Lebesgue-measure-zero exact-capture set on the memory boundary, and its basin contains all memory-boundary initial states except for that set: within each fixed set of occupied stages, only initial states that land exactly on a nonmaximal balanced orbit avoid convergence to this worst-case limit cycle. The resulting throughput losses approach 50\% when decoding lengths are large relative to \inputlens{}, compared with the eviction-free equilibrium. The analysis also develops techniques that may be of independent interest for studying discrete-time service systems with growing resource requirements. These techniques combine imbalance-amplification arguments, support-loss analysis, and cycle-stability analysis to characterize the stability or instability of different equilibrium regimes.

For heterogeneous job sizes, we establish a sharp stability criterion governed by number-theoretic structure under a large-input scaling (Theorem~\ref{thm:gcd}, Section~\ref{sec:multi_class}). The formal theorem is stated and proved for the two-class \commoninput{} setting: the eviction-free equilibrium is asymptotically stable if and only if the two decoding lengths are coprime. Appendix~\ref{app:theorem2} then explains how the same linear-recurrence and root-structure argument generalizes to multiple classes and \heteroinput{} lengths. The underlying mechanism is synchronization: when decoding lengths share a common divisor, completion events align periodically, producing persistent oscillatory modes that drive the system into eviction. When they are coprime, completion phases drift relative to one another, desynchronizing memory release and damping perturbations. Thus, heterogeneity stabilizes the system only when it breaks arithmetic synchronization; not all diversity is beneficial. Technically, the proof combines linear recurrence theory and spectral analysis. By linearizing the eviction-free dynamics, we obtain a finite-order recurrence for admission perturbations whose characteristic polynomial encodes the decoding lengths. Under the large-input scaling, the roots of this polynomial can be characterized and reveal the stability structure.

Service-induced congestion can lead to eviction, which in turn causes throughput degradation and increased latency. 
This observation motivates treating eviction-free operation as a performance target. Guided by our theoretical analysis, we propose two policies to achieve it: rate-limited admission when the eviction-free equilibrium is unstable, and request mixing for heterogeneous systems (Section \ref{sec:policies}). The former regulates concurrency through admission control, while the latter uses routing diversification to mitigate synchronization. Together, these policies
provide practical guidance for memory-aware LLM serving systems.

\jd{Resolved: Discuss numerical experiments here as well.}
Simulation experiments are consistent with the theoretical mechanisms and illustrate the practical impact of the proposed policies (Section~\ref{sec:numerics}). A model-based simulator suggests that, under stochastic arrivals, the open system exhibits an empirical transition near the worst-cycle throughput identified in the saturated-input analysis: in our runs, baseline admission accumulates backlog once the arrival rate exceeds the worst-cycle throughput, even if it remains below the nominal eviction-free equilibrium throughput. We then use Vidur, a high-fidelity LLM inference serving simulator, together with real-GPU experiments, to show that practical batching, scheduling, and memory-accounting dynamics are consistent with the same synchronization mechanism. Request mixing reduces synchronized completions and improves performance even when mixing only partially reduces the common periodicity. Rate-limited admission (using the analytically derived eviction-free admission rate as a per-iteration cap) mitigates eviction cascades while keeping the system close to full utilization. This approach can also yield substantial latency improvements.

\subsection{Related Work}
\label{sec:related_work}
\textbf{LLM serving systems.}
Efficient LLM inference has motivated a growing literature on batching, memory management, and scheduling. \citet{orca2022} introduce continuous batching, or iteration-level scheduling, which allows new requests to enter an active batch between decoding iterations and substantially improves GPU utilization. This batching paradigm is now central to modern serving systems, but it also creates the high-concurrency regime in which many requests simultaneously maintain growing KV caches. Memory management has therefore become a first-order concern. \citet{vllm2023} propose PagedAttention to reduce KV-cache fragmentation and enable flexible memory sharing across requests. NVIDIA TensorRT-LLM~\citep{tensorrt2023} similarly supports in-flight batching and paged KV-cache management. \citet{sglang2024} introduce RadixAttention, which reuses KV-cache states through a radix-tree structure that exploits shared prefixes across requests. FastServe~\citep{wu2023fastserve} focuses on scheduling, using token-level preemption and a skip-join multi-level feedback queue to improve responsiveness under load. Other systems improve efficiency by restructuring the prefill-decode workflow: \citet{sarathi2024} use chunked prefill, and \citet{distserve2024} and \citet{splitwise2024} separate or specialize resources across prefill and decode phases. These works provide mechanisms for efficient LLM serving under memory constraints. Our work is complementary: rather than proposing another memory-management primitive, we analyze the system-level dynamics induced by growing KV caches under continuous batching. We show that locally feasible admission can create future memory pressure, leading to eviction cascades and throughput loss when aggregate demand approaches capacity.

A recent stream of work applies stochastic modeling and online optimization to LLM inference scheduling. \citet{li2025throughput} establish throughput optimality for a broad class of work-conserving scheduling policies in both standard LLM inference and AI-agent workloads, highlighting work conservation as a central design principle for high-throughput serving. \citet{mitzenmacher2025queueing} survey emerging queueing-theoretic questions in LLM inference, including dynamic KV-cache memory footprints, output-length uncertainty, and preemption. Most closely related to our work, \citet{ao2025optimizing} formulate LLM inference as a multi-stage online scheduling problem with memory constraints and derive fluid-guided threshold policies. Complementary online-optimization perspectives are developed by \citet{jaillet2025online}, who study online scheduling under KV-cache constraints, and \citet{chen2025robust}, who address output-length prediction uncertainty and provide logarithmic competitive guarantees. Recent work also studies routing and load balancing for LLM serving systems; see, for example, \citet{bari2025optimal}, \citet{chen2026universal}, \citet{lin2026large}, and \citet{zhang2024distributed}. Our work takes a different perspective from this literature. We characterize the qualitative behavior of the baseline continuous-batching dynamics near the memory boundary. We show that the feedback between admission, memory growth, and eviction creates a structural failure mode that is not visible from standard throughput or feasibility arguments.

\noindent\textbf{Queueing theory.}
Conceptually, our work is related to queueing models in which congestion affects service capacity, resource availability, or admission decisions. Queueing models with state-dependent service rates capture settings where congestion changes the effective processing capacity of the system, such as service slowdowns under high load~\citep{dong2015slowdown,delasay2019load,wu2022service}. Bandwidth-sharing and processor-sharing models study how a fixed service capacity is divided among concurrent jobs flows~\citep{massoulie2000bandwidth,zwart2000sojourn,gupta2022approximations}. A related stream on admission control regulates congestion by restricting entry into capacity-constrained systems through tolls, finite waiting room, or dynamic policies~\citep{naor1969,stidham1985,yoon2004optimal,ayhan2022optimal,cohen2024learning,peng2024admission}. These works capture important forms of state-dependent congestion and resource sharing, but they typically treat an admitted job's resource requirement as fixed during service. In LLM serving, by contrast, each admitted request continues to accumulate KV-cache memory during service, so an admission that is feasible at the current time can create future memory overflow.

Our modeling approach also connects to fluid and stability analyses of queueing networks, which use deterministic approximations to characterize congestion and performance when the system operates near or above capacity~\citep{dai1995positive,whitt2006fluid, bassamboo2010accuracy, chan2014use, bramson2021stability}. Our model shares this goal, but the mechanism is different: the memory state evolves through discrete decoding iterations, and the interaction between stage advancement, admission, and eviction creates synchronization effects. As a result, the system may admit multiple equilibria, including periodic limit-cycle behavior.

\section{Model}
\label{sec:model}

\subsection{Background: How LLM Inference Uses GPU Memory}

Before the formal model, we summarize the inference mechanics that create memory pressure.
An LLM generates text one \textit{token} at a time, where a token is the atomic unit of text (roughly a word or word fragment). Each request arrives with an \textit{\inputctx{}} whose length varies across requests. The model first reads the entire \inputctx{} in a single pass (the \textit{prefill} phase) and then enters the \textit{decoding} phase, producing one output token per iteration. Because each new token depends on all preceding tokens (autoregressive generation), the model must store a pair of key and value vectors per layer for every token processed so far, collectively called the \textit{KV cache}. This cache lets the model attend efficiently: without it, the model would reprocess the entire generated sequence at every step.

The KV cache must reside in GPU high-bandwidth memory (HBM) for fast attention computation. HBM is orders of magnitude smaller than system random-access memory (RAM) and, unlike disk-backed virtual memory, cannot be overcommitted: if the aggregate KV cache of all active requests exceeds capacity, the system must evict some requests. In practice, eviction removes a request from the active batch and discards its KV cache, forcing the system to recompute the cache later~\citep{vllm2023,tensorrt2023}. Crucially, each request's KV cache grows with every generated token. A request that fits in memory at admission may no longer fit several tokens later.

Modern LLM servers use \textit{continuous batching} to exploit GPU parallelism~\citep{orca2022,vllm2023,sarathi2024}. Rather than serving one request to completion before starting the next, the server processes many requests simultaneously. In each iteration, all active requests advance by one decoding step, while new requests may be admitted and completed ones retired. Batching is essential for high throughput because a single decoding step typically occupies only a small fraction of the GPU's parallel compute capacity. Advancing many requests in parallel amortizes per-step overheads and keeps GPU cores busy. However, batching also means that every active request maintains its own KV cache in GPU memory. Total memory usage scales with the number of concurrent requests, making memory exhaustion inherent to high-throughput operation.

\subsection{System Model}
\label{sec:system_model}
\label{sec:protocol}

Consider a single GPU with a memory capacity of $M$ tokens serving a stream of heterogeneous requests. Each request arrives with an \inputctx{} of length $l_0$ and requires $l_1$ decoding tokens; both $l_0$ and $l_1$ can vary across requests. 
We abstract away the prefill phase and focus on the decoding stage, where the KV cache grows over time and drives memory pressure. This is justified by disaggregated or prefill-specialized serving architectures studied in recent systems~\citep{sarathi2024,distserve2024,splitwise2024}: prefill runs on separate hardware, and the resulting KV cache is transferred to the decoding GPU via high-bandwidth interconnects (e.g., NVLink). The transfer is fast relative to decoding iteration time, so the decoding GPU sees each new request as arriving with its \inputlen{} KV cache already in place.
At decoding stage $j \in \{0, \ldots, l_1{-}1\}$, the request occupies $l_0 + 1 + j$ tokens of GPU memory, consisting of the \inputctx{}, the j tokens generated so far, and one additional token reserved for the next decoding step.

For analytical tractability, we group requests into $K \geq 1$ classes by their \inputterm{} and decoding lengths: class-$k$ requests have \inputlen{} $l_{0,k}$ and decoding length $l_{1,k}$. Throughout, we assume deterministic lengths within each class to isolate the effect of memory growth on stability. We also assume the GPU memory capacity satisfies $M>\max_{k}\{l_{0,k}+l_{1,k}\}$, so that even the largest request can be processed in isolation. In practice, GPU memory is typically much larger than the footprint of a single request, allowing many requests to be served concurrently. In simulations, we assume class-$k$ requests arrive as independent Poisson processes with rate $\lambda_k$.

Let $Q^n=(Q^n_{1},\dots, Q^n_{K})$ denote the queue of waiting requests at time~$n$, and let
\[
X^n = \bigl(x^{n}_{k,j} : k\in[K],\ j\in\{0,\ldots,l_{1,k}-1\}\bigr)
\]
denote the vector of active request mass by class and decoding stage.
A class-$k$ request at stage $j$ occupies
\(
w_{k,j} = l_{0,k} + 1 + j
\)
tokens of GPU memory. We adopt the convention $w_{k,l_{1,k}} = 0$: a request that completes its final decoding step releases all memory. The total memory usage at time $n$, corresponding to the post-decoding (one-step-ahead) state, is
\[
M^n = \sum_{k=1}^{K}\sum_{j=0}^{l_{1,k}-1} w_{k,j}\,x^{n}_{k,j} .
\]

At each iteration $n$, the system evolves as follows.

\paragraph{1. Execute.}
Every active request advances one decoding stage and generates one token.
Class-$k$ requests at their final stage ($j = l_{1,k}-1$) complete and leave the system, releasing their memory.
Define the post-execution state and the corresponding memory usage as
\[
\tilde x^{n}_{k,j} =
\begin{cases}
x^{n}_{k,j-1}, & j=1,\dots, l_{1,k}-1,\\[4pt]
0, & j=0,
\end{cases}
\quad \mbox{ and } \quad
\tilde M^n
= \sum_{k=1}^{K}\sum_{j=0}^{l_{1,k}-1} w_{k,j}\,\tilde x^{n}_{k,j} \quad \mbox{ respectively.}
\]

\paragraph{2. Arrive.}
New requests arrive and join the waiting queue:
\(
Q^n \leftarrow Q^n + A^n
\),
where $A^n = (A^n_{1},\ldots,A^n_{K})$.
(We do not specify the arrival process at this stage; this will be addressed later.)
\paragraph{3. Evict.}
Each surviving request grew by one token during execution.
If the post-execution memory exceeds capacity,
\(
\tilde M^n > M,
\)
the system evicts active requests until feasibility is restored ($\tilde M^n \le M$).
Evicted requests forfeit their KV cache and re-enter the queue.
Evictions follow a \textit{least-progressed-first} (LPF) rule: requests at smaller (earlier) stages are evicted before those at larger (later) stages, regardless of class. If LPF partially evicts a stage containing multiple classes, it removes each class in proportion to its occupancy within that stage. Let $\hat X^n$ denote the resulting active state after eviction.

\paragraph{4. Admit.}
Requests from $Q^n$ are admitted into stage $0$ subject to the memory constraint:
\[
\sum_{k,j} w_{k,j}\,\hat x^{n}_{k,j}
+ \sum_{k} w_{k,0}\,a^n_{k} \le M,
\]
where $a^n_{k}$ is the number of class-$k$ requests admitted at time $n$, and
each admission consumes $w_{k,0} = l_{0,k} + 1$ tokens of memory. (We do not specify how requests are selected from the queue at this stage; this will be addressed later.)
Admitted requests enter at stage $0$, and the remaining requests stay waiting in the queue. The resulting state after admission defines $X^{n+1}$ and $Q^{n+1}$.

The sequence above defines one \emph{decoding iteration}.
Completions occur when class-$k$ requests at stage $j=l_{1,k}-1$ advance and
leave the system during the execution step. Let $D^n$ denote the number of
completions at time $n$. The throughput is
\[
\bar{T}=\lim_{N\rightarrow\infty}\frac{1}{N}\sum_{n=0}^{N}D^n
\]
whenever the limit exists. This per-iteration completion rate reflects the
degree of concurrency sustained by the active batch; in the saturated-input
regime formalized below, it isolates the memory-limited throughput. When
iteration duration is approximately constant under sustained continuous
batching, $\bar{T}$ also approximates wall-clock throughput, but our analysis
is stated entirely in completions per iteration.

This LPF rule captures a common design principle in LLM serving systems under
memory pressure: when preemption or recomputation is needed, schedulers often
preserve requests that have already made more decoding progress and remove
younger or less-progressed work first~\citep{vllm2023,tensorrt2023}. In the
homogeneous continuous-batching model, arrival order and decode progress are
aligned, so evicting the latest admitted requests is equivalent to evicting the
least-progressed requests. We therefore use LPF as a decode-progress-aware
abstraction of such preemption behavior. The rule prioritizes retaining
later-stage requests, while abstracting away system-specific recovery costs such
as prefill recomputation, swapping, or prefix-cache reuse.

The baseline system employs no admission control beyond a one-step memory feasibility check:
requests are admitted whenever their anticipated memory footprint (including reserved space for the next decoding step) fits within the available GPU memory.
In particular, the admission decision does not reserve capacity for the future growth of KV caches. This greedy policy reflects common practice in existing serving systems ~\citep{vllm2023,tensorrt2023} and isolates the mechanisms that drive throughput loss. Section~\ref{sec:policies} builds on our analysis to develop improved admission policies.

\begin{example}[Protocol trace]
\label{ex:protocol_trace}

Consider a single class of requests ($K=1$) with $l_0 = 2$, $l_1 = 3$, and memory capacity $M = 24$.
Stage weights are $w_j = l_0 + 1 + j$, $j=0,1,2$,
so stage-$0$, stage-$1$, and stage-$2$ requests occupy $3$, $4$, and $5$ tokens of memory, respectively.

We trace two consecutive iterations starting from
$X^n=(x^n_0,x^n_1,x^n_2)=(1,1,2)$ and $Q^n=8$, and assume $A^n=5$ and $A^{n+1}=0$.

\noindent{\bf Iteration $n$.}
\begin{enumerate}[nosep]
\item \textit{Execute.}
The two stage-$2$ requests complete, while the remaining requests advance one stage.
The post-execution state becomes
$\tilde X^n=(0,1,1)$,
with memory usage
$\tilde M^n = 4 + 5 = 9$.

\item \textit{Arrive.}
Five new requests arrive, increasing the queue length to
$Q^n \leftarrow 8 + 5 = 13$.

\item \textit{Evict.}
Since $\tilde M^n = 9 \le 24$, no eviction occurs.

\item \textit{Admit.}
Each stage-$0$ request requires $w_0=3$ tokens of memory.
The feasibility condition
$9 + 3a \le 24$
implies $a \le 5$, so the system admits $a=5$ requests.
Then,
$X^{n+1}=(5,1,1)$,
with memory usage
$M^{n+1}=5\cdot3 + 4 + 5 = 24$,
and queue length
$Q^{n+1}=13-5=8$.
\end{enumerate}

\noindent{\bf Iteration $n+1$.}
\begin{enumerate}[nosep]
\item \textit{Execute.}
The single stage-$2$ request completes, while the remaining requests advance one stage, yielding
$\tilde X^{n+1}=(0,5,1)$.
The resulting memory usage is
$\tilde M^{n+1}=0\cdot3 + 5\cdot4 + 1\cdot5 = 25 > 24$.

\item \textit{Arrive.}
No new requests arrive, so the queue length remains
$Q^{n+1} \leftarrow Q^{n+1} + A^{n+1} = 8 + 0 = 8$.

\item \textit{Evict.}
Since $\tilde M^{n+1} > 24$, the system evicts requests until feasibility is restored.
Under LPF, requests at smaller stages are evicted first.
Evicting one stage-$1$ request frees $w_1=4$ tokens, reducing memory to
$25 - 4 = 21 \le 24$.
The post-eviction state is therefore
$\hat X^{n+1}=(0,4,1)$,
and the evicted request rejoins the queue, giving
$Q^{n+1} \leftarrow 8 + 1 = 9$.

\item \textit{Admit.}
After eviction, $24-21=3$ tokens of memory remain available.
Since each stage-$0$ admission requires $3$ tokens, the system admits $a=1$ request.
The resulting state is
$X^{n+2}=(1,4,1)$,
with memory usage
$M^{n+2}=1\cdot3 + 4\cdot4 + 1\cdot5 = 24$,
and $Q^{n+2}=9-1=8$.
\end{enumerate}

This trace illustrates how admission decisions that are feasible at the time they are made can nevertheless lead to memory overflow in the subsequent iteration.
\end{example}

\subsection{Equilibria and Stability}

We consider a \emph{saturated-input regime} in which enough waiting work is always available, so admission is never demand-constrained. Formally, we replace the exogenous arrival process by an effectively infinite backlog,
so that the admission decision at time $n$ is constrained only by GPU memory feasibility.
This decouples memory-driven throughput limits from the stochastic arrival process, isolating the maximum sustainable throughput imposed by memory growth and eviction. This is a theoretical abstraction, not a literal description of day-to-day operation. Real systems may move in and out of this regime. Nevertheless, operators strive to keep GPU memory utilized, since idle capacity represents wasted capital~\citep{vllm2023,tensorrt2023}.
The memory-constrained regime therefore reveals the fundamental limits that memory growth imposes on performance. Numerical experiments in Section~\ref{sec:numerics} confirm that these insights extend beyond this idealized regime.

In the saturated-input regime, the queue never empties, so admissions depend only on the current active state~$X^n$, the memory constraint~$M$, and the specified class-selection rule. In the single-class model this rule is automatic; in the multi-class analysis it is fixed by the proportional-admission convention in Section~\ref{sec:multi_class}. We therefore take $X^n$ as the system state. Under these conventions, the dynamics reduce to a deterministic map
\begin{equation}\label{eq:dynamic}
X^{n+1} = F(X^n).
\end{equation}
Our objective is to characterize the macroscopic structure of this map, e.g., its equilibrium behaviors and their dependence on job sizes. Throughout the theoretical analysis in Sections~\ref{sec:single_class} and \ref{sec:multi_class}, we use a continuous deterministic formulation in which the state variables \(x^n_{k,j}\) represent nonnegative request masses. Admission and LPF eviction are interpreted in the same continuous state space: the admission rule can fill available memory exactly, and when LPF reaches a partially occupied stage, it may remove a proportional amount of request mass from that stage. We call
\[
\mathcal B
:=
\left\{X\ge0:\ \sum_{k=1}^K\sum_{j=0}^{l_{1,k}-1} w_{k,j}x_{k,j}=M\right\}
\]
the \emph{memory boundary}. We use the calligraphic symbol \(\mathcal B\) for this boundary; ordinary symbols such as \(B\) are reserved for matrices and other local objects. In the saturated-input continuous model, post-admission states lie on \(\mathcal B\), and the stability statements below are interpreted relative to the induced memory-boundary dynamics. This is the dynamical system to which the equilibrium and stability results apply. The numerical experiments in Section~\ref{sec:numerics} examine finite-request stochastic behavior separately.

For the discrete-time dynamical system \eqref{eq:dynamic}, two classes of long-run behaviors can arise.

\begin{definition}[Fixed point]
\label{def:fix_point}
A \textbf{fixed point} is a state $X^*$ satisfying $F(X^*) = X^*$.
\end{definition}

In our model, a fixed point is a steady regime: the number of requests at each stage, total memory usage, and throughput all remain constant, with no eviction.

\begin{definition}[Limit cycle]
\label{def:limit_cycle}
A \textbf{limit cycle} of period $p \ge 2$ is a sequence $X_1^*, \dots, X_p^*$ where
\[
F(X_i^*) = X_{i+1}^* \quad \text{for } i=1,\ldots,p-1,
\qquad
F(X_p^*) = X_1^*,
\]
and $p$ is the smallest positive integer for which this property holds.
\end{definition}

We refer to both fixed points and limit cycles as equilibria. For a limit cycle of period $p$, the throughput reduces to
\[
\bar{T}=\frac{1}{p}\sum_{i=1}^{p}\sum_{k=1}^{K}(X_i^*)_{k,l_{1,k}-1},
\]
where $(X_i^*)_{k,l_{1,k}-1}$ denotes the mass of class-$k$ requests at its final stage in the $i$th state of the cycle. For a fixed point, $\bar{T}=\sum_{k=1}^{K}X^*_{k,l_{1,k}-1}$.

We further distinguish equilibria by their stability. 
Let $\|\cdot\|$ denote a norm on the state space. For a limit cycle $X_1^*,\ldots,X_p^*$, define the cycle set
$\mathcal{C} = \{X_1^*,\ldots,X_p^*\}$ and
$\mathrm{dist}(X,\mathcal{C}) := \min_{Y\in\mathcal{C}} \|X-Y\|$.

\begin{definition}[Stability]
\label{def:stability}
A fixed point $X^*$ is said to be:
\begin{itemize}[nosep]
    \item \textbf{Stable} if for every $\epsilon>0$ there exists $\delta>0$ such that for any $X^0$ satisfying $\|X^0 - X^*\| < \delta$, $\|X^n - X^*\| < \epsilon$ for all~$n$.
    \item \textbf{Asymptotically stable} if it is stable and there exists $\delta>0$ such that for any $X^0$ satisfying $\|X^0 - X^*\| < \delta$,
    \(
        \lim_{n\to\infty} \|X^n - X^*\| = 0
    \).
\end{itemize}
A limit cycle $\mathcal{C} = \{X_1^*,\ldots,X_p^*\}$ is said to be:
\begin{itemize}
    \item \textbf{Stable} if for every $\epsilon>0$ there exists $\delta>0$ such that for any $X^0$ satisfying $\mathrm{dist}(X^0,\mathcal{C}) < \delta$,
$\mathrm{dist}(X^n,\mathcal{C}) < \epsilon$
for all $n\ge 0$.
\item \textbf{Asymptotically stable} if it is stable and there exists $\delta>0$ such that for any $X^0$ satisfying $\mathrm{dist}(X^0,\mathcal{C}) < \delta$,
\(
\lim_{n\to\infty} \mathrm{dist}(X^n,\mathcal{C}) = 0
\).
\end{itemize}
\end{definition}

For a limit cycle, stability requires convergence to the orbit $\mathcal{C}$, not to a single point. In particular, asymptotic stability implies that there exists an integer shift $\tau\in\{0,\ldots,p-1\}$ such that
\[
\|X^{n} - X_{(n+\tau)\bmod p}^*\| \to 0 .
\]

In Sections~\ref{sec:single_class} and~\ref{sec:multi_class}, the stability notions are applied to the induced memory-boundary dynamics on \(\mathcal B\). Some of our results involve nongeneric exact-capture exceptions. For these statements, we use the same stability definition relative to an exceptional set. Specifically, given a set \(\mathcal E\) of initial states, a limit cycle \(\mathcal C\) is \textbf{asymptotically stable outside \(\mathcal E\)} if the conditions above hold for every initial state satisfying \(\mathrm{dist}(X^0,\mathcal C)<\delta\) and \(X^0\notin\mathcal E\). In addition, \(\mathcal E\) is a relative Lebesgue-measure-zero subset of the relevant memory-boundary faces.

\section{Structural Instability of the Homogeneous System}
\label{sec:single_class}

We begin with a homogeneous system in which every request has \inputlen{} $l_0$ and decoding length $l_1$. Dropping the class superscript, write $w_j = l_0 + 1 + j$ for the memory footprint at stage $j$.

\subsection{Eviction-Free Dynamics and Fixed Point}
\label{sec:balance}

Consider one iteration starting from state $X^n=(x^{n}_{0},\ldots,x^{n}_{l_1-1})$ with total memory usage $M^n=M$.
During the \textit{Execute} step, every stage-$j$ request advances to stage $j{+}1$ and increases its memory footprint by one token.
Requests at the final stage $j=l_1-1$ complete and depart, releasing their entire footprint, i.e., $w_{l_1-1}=l_0+l_1$.
The net memory freed by the execute step equals $(l_0+l_1)\,x^{n}_{l_1-1} -\sum_{j=0}^{l_1-2} x^{n}_{j}$.
In the saturated-input regime, the system converts all freed memory into new stage 0 requests, each costing $w_0=l_0+1$ tokens. Therefore, the number of newly admitted (stage 0) requests at the start of iteration $n+1$ satisfies the balance relation
\begin{equation}\label{eq:single_balance}
(l_0+1)\,x^{n+1}_{0}
=
(l_0+l_1)\,x^{n}_{l_1-1}
-
\sum_{j=0}^{l_1-2} x^{n}_{j}.
\end{equation}
For $j=0,1,\ldots,l_1-2$, stage advancement gives
$x^{n+1}_{j+1} = x^{n}_{j}$.
Combining these relations yields a linear update
$X^{n+1} = B\,X^n$ where
\begin{equation}\label{eq:B1}
B = \begin{bmatrix}
\alpha & \alpha & \cdots & \alpha & \beta \\
1 & 0 & \cdots & 0 & 0 \\
\vdots & & \ddots &  & \vdots \\
0 & 0 & \cdots & 1 & 0
\end{bmatrix}, \quad
\alpha = \frac{-1}{l_0+1},\;\; \beta = \frac{l_0+l_1}{l_0+1}.
\end{equation}
The first row encodes the balance equation; subsequent rows encode the stage shift. The coefficient $\beta > 1$ captures a fundamental asymmetry: a completing request frees $\beta$ admission slots. Note that when $l_1 \gg l_0$, $\beta$ is large and each completion can trigger a disproportionate admission burst.

Solving $F(X^*)=X^*$ on the memory boundary yields the eviction-free fixed point. Let $\mathbf{1}\in\mathbb{R}^{l_1}$ denote the all-ones vector.
Since $\alpha(l_1-1)+\beta=1$, we have $B\mathbf{1}=\mathbf{1}$, and hence any fixed point must be of the form $X^*=x^*\mathbf{1}$.
Imposing $M=\sum_{j=0}^{l_1-1} (l_0+1+j)x^*$ yields
\begin{equation}\label{eq:xstar}
x^* = \frac{2M}{l_1(2l_0 + l_1 + 1)}.
\end{equation}
At $X^*$, each stage holds $x^*$ requests, memory remains at capacity $M$ with no eviction, and throughput equals $x^*$ completions per iteration.

\subsection{Equilibria with Eviction}
\label{sec:equilibria_landscape}

The fixed point $X^*$ is not the only equilibrium. When memory overflows, the dynamics admit periodic regimes in which some stages remain empty at every iteration.

For an integer $i\in\{0,1,\ldots,l_1-1\}$, we say the system is at \emph{eviction level $i$} at time $n$ if exactly $i$ coordinates of $X^n$ equal zero and the remaining $l_1-i$ coordinates are strictly positive.
In particular, level~$0$ corresponds to eviction-free operation (no empty stages).
An \emph{eviction-level-$i$ limit cycle} is a periodic orbit $\{X_1,\ldots,X_p\}$ such that every state on the orbit has exactly $i$ empty coordinates.

\begin{example}[Example equilibria]
\label{ex:canonical_cycles}

Consider $l_0 = 2$, $l_1 = 3$, memory capacity $M = 24$.
The system admits three equilibria corresponding to eviction levels $i=0,1,2$.

\noindent{\bf Eviction level $0$.}
The fixed point is
$X^*=(2,2,2)$
and achieves throughput
$x^* = 2$.

\noindent{\bf Eviction level $1$.}
At eviction level $1$, the system admits the period-$3$ limit cycle
\[
\Bigl(\tfrac{72}{13},\, \tfrac{24}{13},\, 0\Bigr)
\;\to\;
\Bigl(0,\, \tfrac{48}{13},\, \tfrac{24}{13}\Bigr)
\;\to\;
\Bigl(\tfrac{24}{13},\, 0,\, \tfrac{48}{13}\Bigr)
\;\to\;
\Bigl(\tfrac{72}{13},\, \tfrac{24}{13},\, 0\Bigr),
\]
with throughput $(24/13 + 48/13)/3 \approx 1.85$. Note that eviction occurs on the first transition.

\noindent{\bf Eviction level $2$.}
At eviction level 2, the system admits the period-3 limit cycle
\[
(8,0,0) \;\to\; (0,6,0) \;\to\; \Bigl(0,0,\tfrac{24}{5}\Bigr),
\]
with throughput $8/5$, corresponding to a $20\%$ reduction from the fixed-point throughput $x^*$.
\end{example}

\begin{remark}
For $l_1\ge 4$, limit cycles at a given eviction level $i$, $2\leq i\leq l_1-2$, need not be unique. Stage
$j$ is said to be \emph{occupied} if it contains at least one active request.
Different configurations of occupied stages can produce distinct periodic orbits with different throughputs.
Figure~\ref{fig:appendix_l4_cycles} illustrates this landscape. For a fixed eviction level, some cycles concentrate all occupied stages into a single \emph{contiguous block} of stage indices, while others distribute occupied stages as evenly as possible around the pipeline. Appendix~\ref{app:sec3_proofs} shows that, among fixed-relative-support periodic cycles with the same number of occupied stages, these two gap geometries form the throughput envelope: contiguous-block configurations achieve the lowest throughput, whereas as-evenly-spaced configurations achieve the highest throughput (Propositions~\ref{prop:block_cycles}--\ref{prop:gap_vector_envelope}).
Equivalently, for the cycled equilibria described by a fixed set of relative
live positions, the predecessor-gap vector gives a general parametrization, and
the contiguous and as-evenly-spaced gap vectors give the lower and upper
throughput bounds within that equilibrium class.
\end{remark}

\begin{figure}[ht]
\centering
\includegraphics[width=\textwidth]{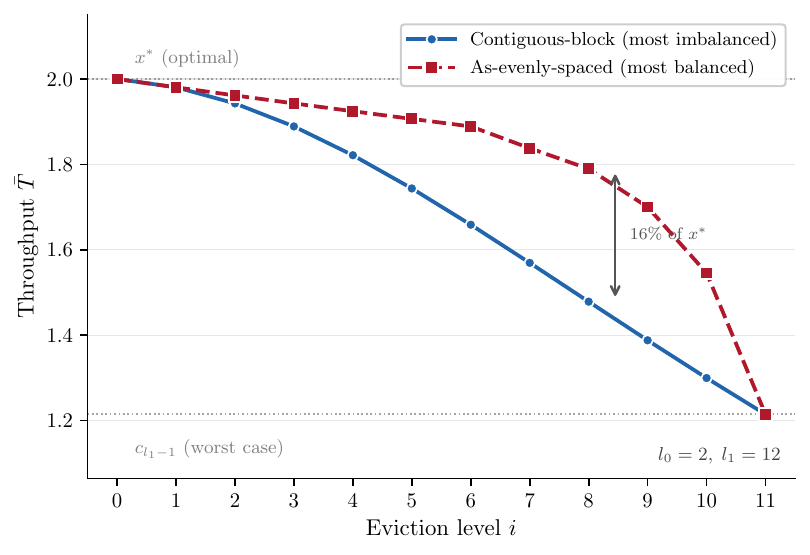}
\caption{Throughput landscape across eviction levels for $l_0 = 2$, $l_1 = 12$. The contiguous-block family (Proposition~\ref{prop:block_cycles}) gives the lower-throughput envelope at each level; the as-evenly-spaced gap family (Proposition~\ref{prop:gap_vector_envelope}) gives the upper-throughput envelope. All fixed-relative-support periodic cycle throughputs at the same eviction level lie between these two curves. Both envelopes coincide at level~0 ($x^*$, the eviction-free optimum) and level~$l_1{-}1$ ($c_{l_1-1}$, the worst case). The gap widens at intermediate levels, reaching 16\% of $x^*$ at level~8.}
\label{fig:appendix_l4_cycles}
\end{figure}

At the maximal eviction level $i=l_1-1$, exactly one stage is active at each iteration, whose size saturates the memory constraint.
The resulting cycle has the form
\[
\Bigl(\frac{M}{w_0},0,\ldots,0\Bigr)
\to
\Bigl(0,\frac{M}{w_1},0,\ldots,0\Bigr)
\to
\cdots
\to
\Bigl(0,\ldots,0,\frac{M}{w_{l_1-1}}\Bigr).
\]
The corresponding throughput equals
\[
\bar{T}_{l_1-1}:=\frac{M}{l_1(l_0+l_1)}.
\]
The maximal-eviction cycle leads to the lowest throughput among all equilibria:

\begin{lemma}[Worst-case throughput]
\label{lem:monotonicity}
Let $\mathcal{C}$ be any limit cycle and let $\bar{T}(\mathcal{C})$ denote its throughput.
\[
\bar{T}(\mathcal{C}) \ge \bar{T}_{l_1-1} = \frac{M}{l_1(l_0+l_1)}.
\]
Moreover, equality holds if and only if $\mathcal{C}$ is the maximal-eviction cycle (i.e., eviction level $l_1-1$).
\end{lemma}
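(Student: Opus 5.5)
The plan is to prove a \emph{window inequality} and then average it over the cycle. Fix a limit cycle $\mathcal C=\{X_1^*,\ldots,X_p^*\}$ and run the dynamics from any $X^n\in\mathcal C$. Every post-admission state lies on $\mathcal B$, so $\sum_j w_j x^n_j=M$. The claim I will aim for is
\[
\sum_{k=0}^{l_1-1} D^{n+k}\;\ge\;\frac{M}{l_0+l_1}\qquad\text{for every } n .
\]
Granting this, sum it over $n=1,\ldots,p$ along the orbit. Each iteration of the cycle is counted exactly $l_1$ times, which gives $l_1\,p\,\bar T(\mathcal C)\ge pM/(l_0+l_1)$, i.e.\ $\bar T(\mathcal C)\ge M/(l_1(l_0+l_1))=\bar T_{l_1-1}$.

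To prove the window inequality, I will track only the requests active at time $n$, organized into cohorts by stage. These cohorts all finish (complete or are evicted) within the next $l_1$ iterations, so any of their mass that survives completes inside the window. Requests admitted after time $n$ sit at strictly smaller stages than every time-$n$ cohort, so they are evicted first under LPF. Hence LPF protects the time-$n$ cohorts in order of decreasing stage: it cuts into an older cohort only after every younger request, including all newer admissions, has been removed. At any later time $n+k$, therefore, the surviving time-$n$ mass is a ``greedy prefix'' of the oldest cohorts. Either it is untouched, or its post-eviction footprint equals the full capacity $M$, because eviction stops exactly at feasibility and nothing younger remains. In both cases the surviving old mass has memory footprint at least $\min\{\text{its own footprint},\,M\}$. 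Its footprint at time $n$ is exactly $M$, and footprints only grow under execution except through eviction. So by induction on $k$, the surviving old mass always has footprint at least $M$ until some of it completes, and completing requests leave from stage $l_1-1$ with weight $w_{l_1-1}=l_0+l_1$.

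To turn this into a count of completions, I will use a potential: the footprint of surviving old mass plus $(l_0+l_1)$ times the cumulative old completions. I will show this potential stays $\ge M$ through the window. Execution can only increase it: growth adds to the footprint, and a completion removes footprint $(l_0+l_1)$ per unit while adding $(l_0+l_1)$ per unit to the completion term. Eviction of old mass happens only when the remaining old footprint is exactly $M$, so the potential is still at least $M$ afterwards. At the end of the window no old mass remains, so $(l_0+l_1)\sum_k D^{n+k}\ge M$.

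For the equality case, equality in the average forces equality in every window. I expect this step, together with making the potential argument rigorous across mixed completion and eviction steps, to be the main obstacle. If any time-$n$ request at stage $j<l_1-1$ survives to completion, it gains footprint strictly before leaving and makes the potential strictly exceed $M$, unless that gain is destroyed by an eviction cutting back to exactly $M$. I will show this can happen for every $n$ only if, at each time, the state has a single occupied stage (otherwise the younger occupied stage either gains footprint that is never cut, or is fully evicted, which changes the level along the orbit). A single occupied stage on $\mathcal B$ with mass $M/w_j$ is precisely the maximal-eviction cycle. The converse is the direct computation $\bar T_{l_1-1}=M/(l_1(l_0+l_1))$ already given above.
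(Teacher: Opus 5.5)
Your proof of the inequality is correct and has the same skeleton as the paper: an $l_1$-window completion bound, then averaging over the orbit. The window bound itself, however, is obtained differently. The paper protects only the $m:=M/(l_0+l_1)$ most-progressed units present at time $t$. Since $m\,w_{l_1-1}=M$, they always fit, so LPF never has to touch them. This needs a within-stage convention at the cutoff stage, where protected and unprotected mass coexist. You instead track all time-$n$ mass with the potential (old footprint) $+\,(l_0+l_1)\times$(old completions). Because old mass and later admissions never share a stage, no such convention is needed, and old mass is evicted only through a reset of its footprint to exactly $M$. That argument is sound.

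The gap is the equality case. You leave it as a plan, and its key dichotomy is incomplete. A surviving younger old cohort need not either keep an uncut footprint gain or be fully evicted. It can be \emph{partially} trimmed back to footprint $M$, and this is exactly what happens at every non-completion step of the intermediate cycles (e.g.\ the adjacent level-$(l_1-2)$ orbit of Proposition~\ref{prop:two_stage_cycles}). The claimed reduction to ``a single occupied stage at every time'' is also not derived.

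Your own potential closes this if you look at the \emph{last} reset in the window. Let $g_k\ge 0$ be the time-$n$ mass in $X^{n+k}$ below stage $l_1-1$, so the execute step of iteration $n+k$ raises the potential by exactly $g_k$. Let $C_k$ be the cumulative old completions through iteration $n+k$. At the end of the window the potential equals $(l_0+l_1)\sum_{k=0}^{l_1-1}D^{n+k}$.
\begin{itemize}
\item If no reset occurs, the final value is $M+\sum_k g_k$. Equality forces $g_0=0$, i.e.\ $X^n=m\,e_{l_1-1}$.
\item If the last reset occurs in iteration $n+\tau$, then $\tau\le l_1-2$, since all old mass completes in iteration $n+l_1-1$. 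The final value is $M+(l_0+l_1)C_\tau+\sum_{k>\tau}g_k$, so equality forces $g_{\tau+1}=0$. The reset leaves only old mass of footprint $M$ and no slack for admission, so $X^{n+\tau+1}=m\,e_{l_1-1}$.
\end{itemize}
Either way $\mathcal C$ contains $m\,e_{l_1-1}$, whose forward orbit is the maximal-eviction cycle, so $\mathcal C$ is that cycle.

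A single tight window already suffices, and on a limit cycle $\bar T(\mathcal C)=M/(l_1(l_0+l_1))$ forces every window to be tight. Completed this way, your route is arguably cleaner than the paper's. The paper needs $N_t>m$ along the whole orbit and an enlarged protected set of mass $m+\min\{y_t,N_t-m\}$.
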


We define the worst-case throughput ratio as
\[
\frac{\bar{T}_{l_1-1}}{x^*}
= \frac{l_1(2l_0+l_1+1)/2}{l_1(l_0+l_1)}
= \frac{2l_0+l_1+1}{2(l_0+l_1)} .
\]
This ratio lies in $(1/2,1)$. It decreases with the decoding length $l_1$ and increases with the \inputlen{} $l_0$. The ratio approaches $1/2$ as $l_1/l_0\rightarrow\infty$.

\subsection{Eviction Cascade} \label{sec:single_cascade}
Although the eviction-free fixed point $X^*$ achieves the maximum throughput, it is unstable. Baseline admission amplifies any small imbalance across stages, eventually triggering eviction. Once eviction begins, the system does not return to eviction-free operation; instead, it cascades through regimes with progressively more empty stages and settles into the worst-case limit cycle at eviction level~$l_1{-}1$. This eviction cascade is formalized in the following theorem.

\begin{theorem}[Baseline Instability]
\label{thm:fcfs}
For a fixed decoding length $l_1\ge 2$, in the saturated-input continuous model, the system admits an eviction-free fixed point, a maximal-eviction limit cycle at eviction level \(l_1-1\), and, when \(l_1\ge3\), intermediate eviction-level limit cycles. The stability and basin structure under baseline admission are as follows.
\begin{enumerate}[nosep]
    \item The eviction-free fixed point is not stable.
    \item Eviction-level-$i$ limit cycles, for $i=1,\dots, l_1-2$, are not stable.
\end{enumerate}
Let \(\mathcal E\) denote the set of memory-boundary initial states whose trajectories are captured exactly by a nonmaximal balanced orbit; equivalently, there exists a finite time after which the trajectory lies on either the eviction-free fixed point or an intermediate balanced limit cycle. Within the relative interior of each fixed support face of \(\mathcal B\), i.e., after fixing which coordinates are zero and which are positive, the set \(\mathcal E\) has relative Lebesgue measure zero. 
\begin{enumerate}
    \item[3.] The maximal-eviction limit cycle, at eviction level \(l_1-1\), is asymptotically stable outside \(\mathcal E\). Its basin of attraction contains every memory-boundary initial state except those whose trajectories are captured exactly by a nonmaximal balanced orbit. The exceptional set \(\mathcal E\) has relative Lebesgue measure zero within each such fixed support face of the memory boundary.
\end{enumerate}
\end{theorem}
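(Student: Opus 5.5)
The argument has three parts: a spectral instability argument at the fixed point, a normalization that turns the boundary dynamics into an averaging rule, and an imbalance-amplification argument that forces support loss. Throughout, $\mathcal B$ carries the induced dynamics.

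\textbf{Part 1: the fixed point.} Near $X^*$ no eviction occurs for one step, so the dynamics are exactly linear, $X^{n+1}=BX^n$ with $B$ as in \eqref{eq:B1}, restricted to the invariant hyperplane tangent to $\mathcal B$. The characteristic polynomial of the companion matrix $B$ is
\[
p(z)=z^{l_1}-\alpha\sum_{m=1}^{l_1-1} z^{l_1-m}-\beta .
\]
We have $p(1)=0$, and the root $z=1$ corresponds to $\mathbf 1$, which is transverse to $\mathcal B$. Factor $p(z)=(z-1)q(z)$. The product of all roots has modulus $\beta>1$, since the constant term is $-\beta$. Hence $q$ has a root $\lambda$ with $|\lambda|\ge\beta^{1/(l_1-1)}>1$.

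The corresponding eigenvector $v$ (or the real part of it) satisfies $\sum_j w_jv_j=0$, because the memory functional is conserved along eviction-free orbits. So perturbations $X^*+\varepsilon v$ stay on $\mathcal B$ and grow geometrically. Eventually some coordinate reaches zero or eviction fires, at distance bounded below independently of $\varepsilon$. This proves item~1.

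\textbf{Part 2: normalized masses and balanced orbits.} Within a fixed support pattern, I would normalize each live coordinate by the memory it will occupy. Set $y_j=w_jx_j$, or the analogous quantity along the orbit of a live position. An unbalanced configuration is then one whose normalized masses differ. The key computation is the one-step update on a fixed support. The new stage-$0$ mass equals freed memory divided by $w_0$, and eviction is triggered exactly when the growth of the earlier stages exceeds the completion release. Together these show that the map acts on normalized masses like an averaging rule with an expansion factor. Balanced configurations, where all normalized live masses are equal, are exactly the nonmaximal balanced orbits. These include $X^*$ at level $0$ and the intermediate cycles such as those in Example~\ref{ex:canonical_cycles}.

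\textbf{Part 3: imbalance amplification, support loss, and the basin.} I would prove a monotone imbalance lemma: if the normalized masses of live positions are not all equal, then within at most $l_1$ iterations either the support shrinks (a live position is fully evicted by LPF) or the spread $\max-\min$ of normalized masses is multiplied by a factor bounded away from $1$. Since the configuration lives in a compact set, spread cannot grow forever. Therefore, unless the orbit hits the balanced set exactly, support is lost in finite time.

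The number of live positions can only decrease, and eviction never recreates a filled gap under LPF because stages only shift. So after finitely many losses we reach a single live position, which is exactly the maximal-eviction cycle.

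\emph{Item 2.} Arbitrarily small unbalanced perturbations of an intermediate balanced cycle lose support. This gives instability.

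\emph{Item 3.} Near the maximal cycle, any perturbation consists of tiny masses at other stages. By LPF, these are evicted first when the large block grows, so they are wiped out within one period. This gives stability, and since the attraction is finite-time, asymptotic stability.

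\emph{Measure zero of $\mathcal E$.} Within a fixed support face the map is piecewise linear. On each piece it is either invertible or a projection that lowers dimension. The balanced set is a lower-dimensional affine subset (a line modulo the memory constraint) in each face. Its preimages under finitely many piecewise-linear branches, taken over countably many times, form a countable union of lower-dimensional sets. Such a union has relative measure zero.

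\textbf{Main obstacle.} The hardest step is the imbalance-amplification lemma in Part 3 for general support patterns. I expect to need a strictly monotone potential, such as the ratio of the largest to the smallest normalized mass, and to handle how the gap geometry interacts with when completions release memory. I would first prove it for contiguous supports, then extend it via the predecessor-gap parametrization.
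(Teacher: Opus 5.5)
Your Part~1 is a valid and genuinely different route to item~1. Since $w^\top B=w^\top$ and $B\mathbf 1=\mathbf 1$ with $w^\top\mathbf 1\neq 0$, the hyperplane $\{v: w^\top v=0\}$ is $B$-invariant, and the restriction of $B$ to it has determinant of modulus $\beta>1$. So there is an expanding direction inside $\mathcal B$. The argument is only local, though. The basin claim in item~3 also needs every non-fixed full-support boundary state to eventually overflow, i.e.\ no bounded non-fixed eviction-free orbits, and the determinant does not give that. The paper gets it from monotonicity of $G^n=\max_j x^n_j-\min_j x^n_j$ plus a uniform jump whenever a near-extremal cohort reaches the final stage. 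You meant Part~3 to cover this, but Part~3 rests on Part~2, and that is where the genuine gap lies: the normalization $y_j=w_jx_j$ is the wrong one. For $l_0=2$, $l_1=3$, $M=24$:
\begin{itemize}
\item the fixed point $(2,2,2)$ gives $(w_jx_j)=(6,8,10)$;
\item the level-1 cycle state $(0,48/13,24/13)$ gives $(192/13,120/13)$;
\item so neither orbit is balanced in your sense, although neither ever loses support;
\item along the trajectory of Example~\ref{ex:spiral}, the spread of $(w_jx_j)$ falls from $43/6$ at $n=2$ to $22/9$ at $n=3$, so it is not a monotone potential.
\end{itemize}
Dividing by $w_j$ fails the same way.

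The missing idea is to sample only at block-ends and to normalize by predecessor gaps. Once dead positions exist, a one-step update is the wrong object. The batch admitted when a live position completes is then trimmed by LPF through $g_{m-1}-1$ pure-evict iterations, and the correct update (Proposition~\ref{prop:A4}) is $(l_0+g_{m-1})y_0'=(l_0+l_1)y_{m-1}-g_{m-1}\sum_{h\le m-2}y_h$. Write $u_h=y_h/g_h$ and use $l_0+l_1=(l_0+g_{m-1})+\sum_{h\le m-2}g_h$; this becomes $u_0'=u_{m-1}+\sum_{h\le m-2}g_h(u_{m-1}-u_h)/(l_0+g_{m-1})$. The new coordinate is pushed away from the $g$-weighted mean of the others. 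Hence $\max_h u_h$ is nondecreasing, $\min_h u_h$ is nonincreasing, and the balanced set $y_h=cg_h$ is exactly $X^*$ (all gaps equal $1$, so $u=x$) together with the intermediate cycles.

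With this normalization your uniform-factor lemma does go through. Within one circuit ($m$ sampled steps, i.e.\ $l_1$ physical iterations), either some newly created coordinate exceeds the initial maximum by $cG$, or the initial maximizer reaches the oldest slot within $cG$ of the running maximum. In the second case it produces a coordinate at least $G(1-cl_1)/(l_0+l_1)\ge cG$ above the initial maximum, so $c=1/(2(l_0+l_1))$ works. This is a quantitative alternative to the convergence argument in Lemma~\ref{lem:A8}.

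The rest of your outline then matches the paper: no revival, finite cascade, local absorption, and measure zero via affine isomorphisms and submersions with finitely many balanced target points per face. Two small corrections remain. An emptied relative position stays empty because its passage through the final stage releases no memory while live masses grow, so LPF leaves no admission slack; the reason is not merely that stages shift. And near the maximal cycle, residual cohorts \emph{ahead} of the principal cohort are not evicted; they complete within $l_1$ iterations.
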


The instability is driven by an amplification mechanism, captured by the \emph{spread}
\begin{equation}\label{def:spread}
G^n:=\max_j x^{n}_{j}-\min_j x^{n}_{j}.
\end{equation}
Before any eviction happens (i.e., before any stage occupancy has reached
zero), the stage shift preserves all non-final coordinates in $X^n$, up to the deterministic relabeling of stages.
For the final and first stage, rewrite the memory balance equation in
\eqref{eq:single_balance} as
\begin{equation} \label{eq:balance_re}
x^{n+1}_{0}
=
x^{n}_{l_1-1}
+
\frac{\sum_{j=0}^{l_1-1}(x^{n}_{l_1-1}-x^{n}_{j})}{l_0+1}
\end{equation}
Let $x_{\mathrm{avg}}^n=l_1^{-1}\sum_{j=0}^{l_1-1}x^n_j$. Then if $x^{n}_{l_1-1}\leq x_{\mathrm{avg}}^n$, $x^{n+1}_{0}\leq x^{n}_{l_1-1}$. If $x^{n}_{l_1-1}>x_{\mathrm{avg}}^n$, $x^{n+1}_{0}>x^{n}_{l_1-1}$. Thus, $G^n$ is non-decreasing.
Moreover, whenever an extremal cohort reaches the final stage, the spread grows
strictly. If the current minimum occupies the final stage, then every term
\(x^n_{l_1-1}-x^n_j\) in~\eqref{eq:balance_re} is nonpositive and at least one
term is at most \(-G^n\). Hence
\[
x^{n+1}_{0}
\leq
x^{n}_{l_1-1}-\frac{G^n}{l_0+1}.
\]
The old maximum is preserved by the shift, so the spread increases by at least
\(G^n/(l_0+1)\). Similarly, if the current maximum occupies the final stage,
then
\[
x^{n+1}_{0}
\geq
x^{n}_{l_1-1}+\frac{G^n}{l_0+1},
\]
while the old minimum is preserved, and the spread again increases by at least
\(G^n/(l_0+1)\).
Thus, if an eviction-free trajectory with \(G^0>0\) persisted, following a
current maximum or minimum around the pipeline would force repeated positive
spread increments. If that extremum is replaced before reaching the final
stage, the replacement is already a strict renewal of the same extremum; the
formal proof in Lemma~\ref{lem:A2} handles this bookkeeping through the running
maximum. Since the memory constraint bounds every stage occupancy by
\(M/(l_0+1)\), these repeated increments cannot continue forever. Therefore the
eviction-free regime must break down in finite time: the first memory overflow
triggers LPF eviction, creating the first eviction-induced support loss and
moving the trajectory to the next lower live-support level.

After support loss, the physical-stage spread is no longer the right object, because
eviction changes which physical stages carry mass. We therefore switch to a relative-position representation on a circular pipeline. In this representation, live positions move deterministically around the circle, and once a relative position becomes empty, it never revives. We then sample the dynamics only at \emph{block-end} times. An eviction block is a consecutive sequence of iterations in which evictions occur but no live position completes; the corresponding block-end is the first subsequent iteration in which a live position completes and new requests are admitted. Sampling at block-ends reduces the post-support-loss dynamics to a lower-dimensional iterative map. Lastly, we measure imbalance using a normalized block-end spread, where each live mass is divided by the predecessor gap, which is the number of stages from the previous live position. The normalization makes the mass more comparable because positions with larger gaps accumulate more mass during eviction. Under the normalized spread, the same amplification mechanism holds: unless the normalized live masses are exactly balanced, the block-end dynamics amplify imbalance and eliminate at least one additional live position after finitely many updates. The trajectory then moves to a lower-dimensional support face, where the same argument applies recursively (see Appendix~\ref{app:theorem1} for more details).

Since empty relative positions never revive, support losses form a finite cascade. Every trajectory not lying exactly on, or landing exactly on, a balanced intermediate cycle eventually reaches the single-live-position
regime, namely the maximal-eviction cycle. Such exact capture is nongeneric, requiring the normalized live masses to satisfy a collection of exact linear equalities. At the maximal-eviction level, the memory boundary and LPF eviction
damp perturbations within one pipeline cycle, so the maximal-eviction cycle is
asymptotically stable outside the exact-capture set.

\begin{example}[Instability spiral]
\label{ex:spiral}
Consider the system with $l_0 = 2$, $l_1 = 3$, and memory capacity $M = 24$ (same as Example~\ref{ex:protocol_trace}). The eviction-free equilibrium has $x^* = 2$.
We perturb this equilibrium by increasing stage~0 slightly, by $\tfrac12$.
Specifically, we start from
$X^0 = \left(\tfrac{5}{2},\, 2,\, \tfrac{17}{10}\right)$.
The balance equation~\eqref{eq:single_balance} yields the following eviction-free trajectory:

\begin{center}
\begin{minipage}{0.36\textwidth}
\begin{center}
\begin{tabular}{c|c|c}
$n$ & $X^n=(x^{n}_{0},x^{n}_{1},x^{n}_{2})$ & $G^n$ \\ \hline
0 & $\left(\tfrac{5}{2},\,2,\,\tfrac{17}{10}\right)$ & $\tfrac{4}{5}$ \\
1 & $\left(\tfrac{4}{3},\,\tfrac{5}{2},\,2\right)$ & $\tfrac{7}{6}$ \\
2 & $\left(\tfrac{37}{18},\,\tfrac{4}{3},\,\tfrac{5}{2}\right)$ & $\tfrac{7}{6}$ \\
3 & $\left(\tfrac{82}{27},\,\tfrac{37}{18},\,\tfrac{4}{3}\right)$ & $\tfrac{46}{27}$ \\
4 & $\left(\tfrac{85}{162},\,\tfrac{82}{27},\,\tfrac{37}{18}\right)$ & $\tfrac{407}{162}$ \\
5 & $\left(\tfrac{544}{243},\,\tfrac{85}{162},\,\tfrac{82}{27}\right)$ & $\tfrac{407}{162}$ \\
6 & $\left(\tfrac{6037}{1458},\,\tfrac{544}{243},\,\tfrac{85}{162}\right)$ & $\tfrac{2636}{729}$
\end{tabular}
\end{center}
\end{minipage}
\begin{minipage}{0.5\textwidth}
\begin{center}
\begin{tabular}{c|c|c|l}
$n$ & $X^n$ (approx.) & level & event \\ \hline
$7$ & $(0,\;3.20,\;2.24)$ & $1$ & first eviction \\
$10$ & $(0,\;2.67,\;2.66)$ & $1$ & second eviction \\
$13$ & $(0,\;1.56,\;3.55)$ & $1$ & third eviction \\
$16$ & $(0,\;0,\;4.80)$ & $2$ & second position dies \\
$17$ & $(8,\;0,\;0)$ & $2$ & worst-case cycle begins \\
$18$ & $(0,\;6,\;0)$ & $2$ & $\cdots$ \\
$19$ & $(0,\;0,\;4.8)$ & $2$ & $\cdots$
\end{tabular}
\end{center}
\end{minipage}
\end{center}

At iteration $n=6$, the execute step shifts the state to
$\tilde X^6=(0,\,x^{6}_{0},\,x^{6}_{1})=\Bigl(0,\,\tfrac{6037}{1458},\,\tfrac{544}{243}\Bigr)$,
whose post-execution memory usage is
\[
\tilde M^6=4x^{6}_{0}+5x^{6}_{1}=\frac{20234}{729}
>24.
\]
LPF evicts from the least-progressed occupied stage (stage~1), reducing it by
$e=(\tilde M^6-24)/4=\tfrac{1369}{1458}$
and leaving no slack for admission:
$X^7=\Bigl(0,\,\tfrac{778}{243},\,\tfrac{544}{243}\Bigr)$.
This is the first eviction (see also Figure~\ref{fig:instability}).

Continuing from $X^7=(0,\,\tfrac{778}{243},\,\tfrac{544}{243})$, the system enters eviction level~$1$ (one empty stage per iteration).
Over the next nine iterations, LPF evictions progressively deplete the second live position. At $n=16$, the system enters the maximal-eviction cycle $(8,0,0)\to(0,6,0)\to(0,0,4.8)$, which repeats stably with throughput $\bar{T}_2=8/5$, a 20\% reduction from $x^*=2$.

\end{example}

Figure~\ref{fig:instability} in Appendix~\ref{app:theorem1} visualizes the
growing oscillation and first eviction in this example.

Above all, the instability is structural: baseline admission converts each completion burst into a disproportionate admission pulse ($\beta>1$), and progressive memory growth amplifies these pulses as they propagate through the pipeline. The pulses eventually synchronize all requests at a single stage, triggering maximal eviction. 

\section{Multi-Class Systems and Structural Stability}
\label{sec:multi_class}

Real LLM workloads are heterogeneous: short chat completions, long code-generation sessions, and multi-turn reasoning chains coexist on the same GPU pool, each with different \inputterm{} and decoding lengths. Variation in decoding lengths can desynchronize completions and mitigate the eviction cascades identified in Section ~\ref{sec:single_class}. As we show in this section, whether this desynchronization is strong enough to stabilize the system depends on the arithmetic structure of the decoding lengths.

\subsection{Multi-Class Model and Large-\Inputterm{} Regime}
Consider $K \geq 2$ request classes with decoding lengths $l_{1,1} < l_{1,2} < \cdots < l_{1,K}$ and arrival proportions $p_1, \ldots, p_K$ ($\sum_{k=1}^K p_k = 1$, and $p_k > 0$ for $k=1,\dots, K$). Each class~$k$ has \inputlen{} $l_{0,k}$.
As before, we focus on the saturated-input regime: the waiting queue is never empty, and admission is constrained only by memory. Because the backlog is effectively infinite, the system must specify how request classes are selected for admission. We impose the natural proportion constraint that \(x^n_{k,0}=p_k \sum_{h=1}^K x^n_{h,0}\).
This constraint separates memory dynamics from class-selection effects. 

To isolate the structural effects of heterogeneous decoding lengths, we analyze an \inputdominated{} asymptotic regime. Specifically, we consider a sequence of systems in which the decoding lengths remain fixed, while \inputlens{} and total memory scale proportionally:
\begin{equation}\label{eq:scaling}
l_{0,k} = r_k L, \quad M = c\,L, \quad L \to \infty,
\end{equation}
where $r_1, \ldots, r_K > 0$ and $c > 0$ are fixed constants. This scaling preserves non-degenerate concurrency while making per-stage memory costs nearly constant across stages. Indeed, for a class~$k$ request at stage~$j$, $w_{k,j} = l_{0,k}+1+j = r_k L\bigl(1 + O(1/L)\bigr)$, so the relative variation in memory usage across decoding stages vanishes as $L$ grows. In this limit, the dominant source of dynamics is not the precise stage-dependent memory weight, but the timing of request completions, which is
determined by the interaction among the decoding lengths.

This regime also reflects an \inputdominated{} setting common in modern LLM serving. In retrieval-augmented and agentic workflows, the \inputctx{} can include retrieved documents, uploaded files, interaction histories, tool outputs, and other application state~\citep{lewis2020rag,yao2023react,schick2023toolformer,sglang2024}, while individual decoding outputs may span only tens to a few hundred tokens. 

It is worth noting that large inputs also attenuate the amplification mechanism observed in the homogeneous model.
There, eviction cascades are driven by the memory gradient across stages: a completion releases $w_{l_1-1}=l_0+l_1$ tokens of memory while admissions consume only $w_0=l_0+1$, yielding amplification factor $\beta=w_{l_1-1}/w_0=1+O(l_1/l_0)$. Thus, when inputs dominate memory usage, the stage-to-stage memory gradient becomes small, and eviction cascades cause smaller throughput losses. The eviction-free fixed point nevertheless remains unstable for every finite $l_0$, and eviction cascades can still occur.

In the heterogeneous setting, we use the large-input regime to remove the
confounding effect of stage-dependent memory weights and expose the role of
completion timing.  The limiting dynamics are then governed by the arithmetic
structure of the decoding lengths, leading to the $\gcd(l_{1,1},\ldots,l_{1,K})$-based dichotomy derived below.
The numerical experiments in Section~\ref{sec:numerics} confirm that these structural insights persist at finite \inputlens{}, including regimes where the homogeneous throughput loss is substantial.

We first characterize the eviction-free equilibrium. In the saturated-input regime with proportion constraint, an eviction-free fixed point is one in which:
1) No request is evicted before completing decoding; 2) Memory is exactly saturated;
3) Admissions occur at a constant rate $x^*$ per iteration; and
4) The composition of active jobs reflects the proportions $p_k$.
At such a state, each class-$k$ job occupies stages $0,\ldots,l_{1,k}-1$, and there are $p_k x^*$ jobs in each active stage of class $k$. Memory balance therefore requires
\[
\sum_{k=1}^K
p_k x^*
\sum_{j=0}^{l_{1,k}-1}
(l_{0,k} + 1 + j)
= M .
\]
Define the total lifetime memory footprint of a class-$k$ job as
\[
C_k
=
\sum_{j=0}^{l_{1,k}-1}
(l_{0,k} + 1 + j)
=
l_{1,k}\!\left(l_{0,k} + \frac{l_{1,k}+1}{2}\right).
\]
Then, the eviction-free admission rate is
\begin{equation}
\label{eq:xstar_multi}
x^*
=
\frac{M}{\sum_{k=1}^K p_k C_k}.
\end{equation}
At this fixed point, every iteration admits $x^*$ new jobs, completes $x^*$ jobs, and maintains a constant occupancy profile across decoding stages.

As before, a central question is whether this eviction-free fixed point is dynamically stable, which we study next.

\subsection{Heterogeneity, Synchronization, and Stability}

Unlike the homogeneous case, heterogeneity introduces additional structure that can fundamentally alter system dynamics. Requests of different classes are completed at different speeds due to the heterogeneous decoding lengths, so their completion times may or may not align over repeated iterations. Such alignment (or lack thereof) determines whether memory-release bursts synchronize and trigger eviction cascades.

To understand when heterogeneity mitigates eviction-induced instability, we
study perturbations around the eviction-free fixed point. In the saturated-input regime, the admission and execution rules define a deterministic discrete-time dynamical system. In the eviction-free regime under proportional admissions, aggregate admission perturbations satisfy a recurrence that describes how small imbalances propagate.

Let $\Delta^n := x^{n+1}_{0} - x^n_0$ denote the change in aggregate stage-0 admissions at iteration~$n$. A class-$k$ request admitted $m$~iterations ago occupies $l_{0,k}+1+m$ memory tokens if $l_{1,k}>m$. Thus, conservation of total memory in the eviction-free
regime gives the linearized recurrence
\begin{equation}
\label{eq:multi_recurrence}
\sum_{m=0}^{l_{1,K}-1} \Bigl(\sum_{k: l_{1,k} > m} p_k (l_{0,k}+1+m)\Bigr)\Delta^{n-m} = 0.
\end{equation}
The outer sum ranges over possible lags, while the inner sum includes exactly
the request classes that remain active at that lag.
The equality to zero enforces that total memory remains at capacity when no eviction occurs.

By standard linear recurrence theory~\citep{elaydi2005}, perturbations are
linear combinations of modes $z^n$, where $z$ is a root of the
characteristic polynomial
\begin{equation}
F(z) = \sum_{m=0}^{l_{1,K}-1}\!\Bigl(\sum_{k:\, l_{1,k} > m} p_k\,(l_{0,k}+1+m)\Bigr)\,z^{l_{1,K}-1-m}.
\label{eq:characteristic}
\end{equation}
Each root corresponds to one perturbation mode: modes with $|z|<1$ decay, while modes with $|z|>1$ grow.  Thus the linear stability of the
eviction-free equilibrium is determined by whether all roots lie inside the
unit circle.

Under the scaling~\eqref{eq:scaling}, dividing $F$ by $L$ and
sending $L\to\infty$ gives
the limiting polynomial
\[
A(z) = \sum_{m=0}^{l_{1,K}-1}\!\Bigl(\sum_{k:\, l_{1,k} > m} p_k\,r_k\Bigr)\,z^{l_{1,K}-1-m},
\]
whose coefficients depend only on the decoding lengths, arrival proportions, and \inputlenratios{}. Let $R = \sum_{k=1}^K p_k\,r_k$. Rearranging gives
\begin{equation}\label{eq:closed_form_main}
(1-z)\,A(z) \;=\; -R\,z^{l_{1,K}} \;+\; \sum_{k=1}^{K} p_k\,r_k\,z^{l_{1,K}-l_{1,k}}.
\end{equation}

Interestingly, the arithmetic structure of the decoding lengths determines
whether the limiting polynomial has non-trivial root-of-unity modes on the unit
circle.  The key quantity is the greatest common divisor of the decoding
lengths. The following theorem formalizes this dichotomy.

We state the formal theorem for the two-class \commoninput{} case.
Both classes are perpetually backlogged, have fixed admission proportions
\(p\in(0,1)\) and \(q=1-p\), and share a common \inputlen{} \(l_0\). 
LPF eviction is applied by decoding stage, and when a partially
evicted stage contains both classes, each class is removed in proportion to its
occupancy within that stage. The dynamics are the continuous-mass
saturated-input memory-boundary dynamics of Section~\ref{sec:model}. 

\begin{theorem}[Greatest Common Divisor (GCD) Stability Condition]
\label{thm:gcd}
Consider two request classes with common \inputlen{} \(l_0\), decoding lengths
\(l_{1,1}<l_{1,2}\), and admission proportions \(p\) and \(1-p\). Let
\(g=\gcd(l_{1,1},l_{1,2})\). In the saturated-input continuous model, for all
sufficiently large \(l_0\):
\begin{enumerate}[nosep]
\item If $g>1$, then the eviction-free equilibrium is linearly unstable: the nonzero eigenvalues, equivalently the roots of \(F\), consist of \(g-1\) unstable roots with modulus
\(|z| = 1 + \Theta(1/l_0)\) and \(l_{1,2}-g\) stable roots; the full stage-state map also has one zero eigenvalue. Consequently, any sufficiently small perturbation with nonzero projection onto the unstable eigenspace leaves the local region where LPF eviction is inactive in finite time.
\item If $g=1$, then the eviction-free equilibrium is globally asymptotically stable: from any feasible initial active state in the saturated-input continuous model, the system converges to the eviction-free fixed point.
\end{enumerate}
\end{theorem}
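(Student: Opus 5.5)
The plan is to exploit one structural fact: on the region where LPF eviction is inactive, the saturated-input map is \emph{exactly} linear, not merely linearized. So parts 1 and 2 both reduce largely to locating the roots of the characteristic polynomial \eqref{eq:characteristic}, specialized to \(K=2\) with common input \(l_0\), in the limit \(l_0\to\infty\).

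\emph{Linear algebra and the zero eigenvalue.} The state is \(X=(x_{1,\cdot},x_{2,\cdot})\), with the proportion constraint tying \(x_{k,0}\) to the aggregate admission. In the eviction-free region the map acts as a shift on the two stage vectors plus one balance row, exactly as in \eqref{eq:B1}. I would show that its nonzero spectrum coincides with the roots of \(F\), which has degree \(l_{1,2}-1\). Counting dimensions of the effective state (the history of aggregate admissions over the last \(l_{1,2}\) steps) then produces exactly one extra eigenvalue, which is \(0\); it comes from the memory-boundary constraint.

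\emph{Limiting root location.} By \eqref{eq:closed_form_main} with \(r_1=r_2=r\),
\[
(1-z)A(z)=r\bigl(-z^{l_{1,2}}+p\,z^{l_{1,2}-l_{1,1}}+q\bigr).
\]
The coefficients of \(A\) are positive and nonincreasing in the power of \(z\): they equal \(r\) for \(m<l_{1,1}\) and \(qr\) afterwards. Eneström--Kakeya therefore places all roots of \(A\) in \(|z|\le1\). On \(|z|=1\), the identity \(z^{l_{1,2}}=p z^{l_{1,2}-l_{1,1}}+q\) writes a unit vector as a strict convex combination of unit vectors. This forces \(z^{l_{1,2}-l_{1,1}}=1\) and \(z^{l_{1,2}}=1\), i.e. \(z^{g}=1\). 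Since \(A(1)=r\,(p\,l_{1,1}+q\,l_{1,2})\neq0\), the unit-circle roots of \(A\) are exactly the \(g-1\) nontrivial \(g\)-th roots of unity. Differentiating the right-hand side at such a \(\zeta\) gives a nonzero value, so these roots are simple. All remaining \(l_{1,2}-g\) roots lie strictly inside the disk.

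\emph{Perturbation to finite \(l_0\).} Write \(F(z)/l_0=A(z)+l_0^{-1}E(z)\), where \(E(z)=\sum_m\bigl(\sum_{k:l_{1,k}>m}p_k(1+m)\bigr)z^{l_{1,2}-1-m}\). By continuity of roots (Rouché), the interior roots stay strictly inside for large \(l_0\). Each simple unit-circle root moves to
\[
\zeta+\delta,\qquad \delta=-\frac{E(\zeta)}{l_0A'(\zeta)}+O(l_0^{-2}).
\]
For \(g>1\) the key computation is \(\mathrm{Re}(\bar\zeta\,\delta)>0\), which gives \(|z|=1+\Theta(1/l_0)\). To do this I would compute \((1-z)E(z)\) in closed form (an arithmetico-geometric sum), evaluate it at \(\zeta\) using \(\zeta^{l_{1,1}}=\zeta^{l_{1,2}}=1\), and compare its phase with that of \(A'(\zeta)\). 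The intuition is that the positive stage gradient (\(\beta>1\) in the homogeneous case) should push the roots outward. Because the dynamics are exactly linear in the no-eviction region, a perturbation with nonzero projection on the unstable eigenspace then grows geometrically and must exit that region in finite time. For \(g=1\) this argument gives linear (indeed local) asymptotic stability.

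\emph{Global convergence for \(g=1\) (main obstacle).} Here evictions from far-away states must be controlled. I would take the aggregate admission sequence \(a^n\) as the state. Up to \(O(1/l_0)\) corrections it satisfies the renewal-type recursion \(a^{n}\approx p\,a^{n-l_{1,1}}+q\,a^{n-l_{1,2}}\), whose lag distribution is aperiodic exactly when \(g=1\). The plan is to show that the window maximum \(\max_{n-l_{1,2}<m\le n}a^m\) is nonincreasing and the window minimum nondecreasing, up to errors of order \(1/l_0\) times the window spread. LPF eviction only trims the newest cohorts when the maximum is too large, so it should preserve this monotonicity. Aperiodicity then gives a strict contraction of the spread over \(l_{1,1}l_{1,2}\) steps, because every lag combination is reachable with positive weight. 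For \(l_0\) large enough this contraction dominates the \(O(1/l_0)\) errors. I expect the delicate part to be proving that eviction episodes never increase the normalized spread and terminate in finite time. Once they do, the trajectory enters the local basin from the linear analysis.
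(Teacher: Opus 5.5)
\textbf{The spectral half is correct and matches the paper.} You use the companion structure, with its one zero eigenvalue coming from the memory-boundary form. You locate the roots of $A$ in the closed disk, identify the unit-circle roots as exactly the nontrivial $g$-th roots of unity and show they are simple. Rouch\'e keeps the interior roots inside, and exact linearity on the no-eviction region gives finite exit. Two small remarks on this part:
\begin{itemize}
\item You use Enestr\"om--Kakeya where the paper uses a triangle-inequality lemma, which is fine. As written, though, your monotonicity direction is reversed: the coefficients increase with the power of $z$ (from $qr$ up to $r$), which is exactly the hypothesis that gives $|z|\le 1$.
\item The drift you leave to a closed-form phase comparison is a one-liner. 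With $s_m:=\sum_{k:\,l_{1,k}>m}p_k$, one has $\sum_m s_m(1+m)z^{l_{1,2}-1-m}=l_{1,2}A(z)-zA'(z)$. So $E=l_{1,2}A-zA'$, i.e.\ $F=(l_0+l_{1,2})A-zA'$, which is the paper's decomposition. Hence $E(\zeta)=-\zeta A'(\zeta)$, $\delta=\zeta/l_0+O(l_0^{-2})$, and $\mathrm{Re}(\bar\zeta\delta)=1/l_0+O(l_0^{-2})>0$ with no further work.
\end{itemize}

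\textbf{The global part takes a different route, and the step you single out as delicate would fail as stated.} The paper uses a quadratic Lyapunov function $y^\top Py$ on the normalized cohort deviation, with $P$ bounded uniformly in $l_0$. It treats eviction as an additive perturbation of size $O(1/l_0)$ and shows that no eviction occurs in a $\Theta(1)$ neighborhood of equilibrium. Your sup-norm contraction for the renewal kernel ($p$ at lag $l_{1,1}$, $q$ at lag $l_{1,2}$) can also be made to work, but two things need fixing.

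\emph{The eviction mechanism is misdescribed.} In the normalized cohort coordinates $Y$ on the memory boundary, the would-be admission is $\tilde a=pY_{l_{1,1}-1}+qY_{l_{1,2}-1}+O(x^*/l_0)$, with constants depending on $l_{1,1},l_{1,2},p$. So overflow is triggered by nearly empty \emph{completing} cohorts, not by a large maximum. When it happens, the admission is set to $0$ and LPF trims the newest cohort. This can lower the window minimum, for instance from a small positive value to $0$. So ``eviction never increases the normalized spread'' is false.

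\emph{The state is the wrong object.} After trimming, cohort masses no longer equal admissions, so the window must be the cohort vector $Y^n$, not the admission history.

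The repair is quantitative. Eviction forces $\min_s Y_s=O(x^*/l_0)$, while the $V$-weighted mean of $Y$ on the boundary equals $x^*$, so at any eviction time the spread is at least $x^*(1-O(1/l_0))$. The evicted mass is only $O(x^*/l_0)$. Hence eviction moves the window extremes by $O(\mathrm{spread}/l_0)$, the same order as the zero-sum no-eviction correction. The positive renewal weights you get after roughly $l_{1,1}l_{1,2}$ steps (Frobenius, using $g=1$) give a contraction factor independent of $l_0$, and for large $l_0$ it absorbs both error sources. Evictions then stop once the spread falls below a fixed multiple of $x^*$. You also need the entry into the proportional-cohort manifold within $l_{1,2}$ steps.

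Once repaired, your route is an elementary argument with an explicit rate. It uses aperiodicity directly and does not depend on the spectral analysis. The paper's route needs only uniform Schur stability plus a compactness bound on $P$, and reuses the Rouch\'e step.
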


Appendix~\ref{app:theorem2} proves the theorem and explains how the same
linear-recurrence and root-structure argument generalizes to \(K\)-class and
\heteroinput{} systems. In particular,
Lemma~\ref{lem:noncoprime_complementary_roots} proves the exact unstable/stable
root count in the non-coprime case, and
Lemma~\ref{lem:generic_finite_exit} turns the unstable eigenmodes into the
finite-exit conclusion. We highlight the main
spectral mechanism here. Let $i$ denote the
imaginary unit, i.e., $i^2=-1$. When $g>1$, the limiting polynomial
$A(z)$ has roots at all non-trivial $g$-th roots of unity.
To see this, consider
$z_j=e^{2\pi i j/g}=\cos(2\pi j/g)+i\sin(2\pi j/g)$ for $j=1,\dots, g-1$, which are non-trivial $g$-th roots of unity, i.e., $z_j^g=1$ and $z_j\neq 1$. Since $g$ divides every decoding
length $l_{1,k}$, we have $z_j^{l_{1,K}-l_{1,k}}=1$ for all $k$, which further implies
\[
(1-z_j)A(z_j)=-R+R=0.
\]
Because $z_j\neq 1$, it follows that $A(z_j)=0$. Thus, the
non-trivial $g$-th roots of unity generate unit-circle modes of the limiting
linearized dynamics, independently of the \inputlenratios{} $r_k$.
For the formal two-class \commoninput{} theorem, the finite-\(l_0\)
characteristic polynomial is an \(O(1/l_0)\) perturbation of the limiting
polynomial. Appendix~\ref{app:theorem2} verifies the required nondegeneracy and
shows that each unit-circle root of \(A\) perturbs to a nearby finite-\(l_0\)
root with outward displacement of order \(O(1/l_0)\), so these modes satisfy
\(|z| = 1 + \Theta(1/l_0) > 1\). The \(K\)-class and \heteroinput{} generalization
uses the same root-of-unity mechanism with weighted survival coefficients; the
drift constants depend on the class weights, but the synchronization mechanism
is unchanged.

When $g=1$, there are no non-trivial common roots of unity.  In this case, the
same polynomial representation, together with a triangle-inequality argument,
shows that all roots lie strictly inside the unit circle.  For the formal
two-class \commoninput{} theorem, the Lyapunov argument in
Appendix~\ref{app:theorem2} proves global convergence to the eviction-free
equilibrium.

The dichotomy reflects the synchronization structure of completion events. When $g>1$, completion times share a common period $g$, so memory releases occur in synchronized bursts. These bursts reinforce periodic oscillations and, for generic local perturbations, push the trajectory out of the region where the eviction-free linearization applies. When $g=1$, completion times occupy different phases of the cycle. Memory is released more evenly over time, which damps oscillations and stabilizes the eviction-free equilibrium.

In the heterogeneous non-coprime case, the instability can be ``milder'' than in the homogeneous system. By Theorem~\ref{thm:gcd}, only the \(g-1\) modes associated with \(g\)-periodic oscillations are unstable, while the remaining modes are damped. These synchronized modes cause generic local perturbations to exit the no-eviction neighborhood. Appendix Section~\ref{sec:pulse_cycle_return} complements this local result by constructing an explicit two-class non-coprime instance in which the post-exit dynamics settle into a period-\(g\) limit cycle with recurrent evictions.

\begin{example}[Coprime decode lengths: convergence]
\label{ex:coprime}
Let $K=2$ with $l_{1,1}=2$, $l_{1,2}=3$, common \inputlen{}
$l_{0,1} = l_{0,2} = l_0 = 50$, arrival proportions
$p_1=p_2=1/2$, and $M=518$. This gives the eviction-free equilibrium admission $x^*=4$.

With $r_1 = r_2 = 1$, the closed form~\eqref{eq:closed_form_main} yields
\[
A(z)=z^2+z+\tfrac12,
\]
whose roots are $z=(-1\pm \mathrm{i})/2$, so
$|z|=1/\sqrt2\simeq0.707$.
Since $\gcd(2,3)=1$, all roots lie strictly inside the unit circle, and perturbations decay geometrically. Figure~\ref{fig:gcd_trajectories}(a)
illustrates this behavior: starting from a $25\%$ over-admission ($x^0_0=5$), the system converges to $x^*$ within $2\%$ by iteration~$8$.
Intuitively, no oscillation period can synchronize completions
of both classes. A period-2 fluctuation in admissions propagates
through class-1 completions (delay $2$) but is phase-shifted by
class-2 completions (delay $3$), causing oscillatory modes to
interfere destructively.
\end{example}

\begin{example}[Non-coprime decode lengths: divergence]
\label{ex:noncoprime}
Let $K=2$ with $l_{1,1}=2$, $l_{1,2}=4$, common \inputlen{}
$l_{0,1} = l_{0,2} = l_0 = 50$, and arrival proportions
$p_1=p_2=1/2$. Setting $M=626$ again yields
$x^*=4$.

Because $\gcd(2,4)=2$, the limiting polynomial $A(z)$
places a root at $z=-1$ on the unit circle.
For finite $l_0=50$, perturbation analysis shows that this root moves outside the unit circle to $|z|\simeq 1.019$.
Figure~\ref{fig:gcd_trajectories}(b) shows the resulting dynamics: the same initial perturbation now produces oscillations whose amplitude grows over time.
The instability arises from period-2 synchronization.
Both class-1 (delay $2$) and class-2 (delay $4$) completions echo a period-2 fluctuation in admissions, reinforcing the oscillation rather than damping it.
\end{example}

\begin{figure}[ht]
\centering
\includegraphics[width=\textwidth]{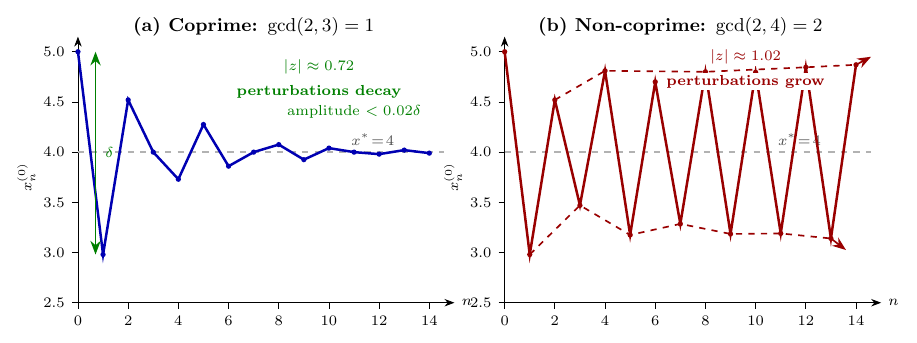}
\caption{Admission trajectories for two multi-class systems ($l_0 = 50$, $p = q = 1/2$, $x^* = 4$, starting at $x_0^{(0)} = 5$). (a)~Coprime decode lengths ($l_A = 2$, $l_B = 3$): the limiting polynomial $A(z) = z^2 + z + \tfrac{1}{2}$ has roots with modulus $|z| \approx 0.72$; the perturbation decays to within 2\% of~$x^*$ by iteration~8. (b)~Non-coprime decode lengths ($l_A = 2$, $l_B = 4$, $\gcd = 2$): $A(z)$ has a root at $z = -1$ (the non-trivial second root of unity), which drifts to $|z| \approx 1.02$ for finite $l_0$ (Theorem~\ref{thm:gcd}); oscillations grow geometrically (dashed envelope), driving the system toward eviction. Both trajectories are computed from the finite-$l_0$ recurrence~\eqref{eq:multi_recurrence}.}
\label{fig:gcd_trajectories}
\end{figure}

\begin{remark}[Invariance to \inputlenratios{}]
\label{rem:hetero_prompt}
The GCD synchronization condition depends only on the decoding lengths
$l_{1,1},\ldots,l_{1,K}$ and is invariant to the \inputlenratios{}
$r_1,\ldots,r_K$ and admission proportions $p_1,\ldots,p_K$. Intuitively, in the
\inputdominated{} regime the \inputlens{} affect the amount of memory carried by
each cohort but not the periodic timing of completions. Consequently, the
existence of synchronized unit-circle modes is determined by the arithmetic
relationship among the decoding lengths. Formally, whether the limiting
polynomial has non-trivial root-of-unity modes on the unit circle only depends on
the greatest common divisor of the decoding lengths. The remaining roots, and
therefore the contraction rates and the leading constants that determine the
size of finite-$L$ corrections and the rate at which the asymptotic regime is
approached, still depend on \(r_k\) and \(p_k\).
\end{remark}

\subsection{Finite \Inputlen{} and Stability Threshold}
\label{sec:sensitivity_analysis}

Theorem~\ref{thm:gcd} characterizes stability in the \inputdominated{} limit.
We now ask how large the \inputlen{} must be for a finite system to inherit
the coprime stability predicted by that limit. This quantifies the
``sufficiently large'' input requirement in Theorem~\ref{thm:gcd}. For
clarity, we focus on two request classes with a common \inputlen{}~$l_0$,
coprime decoding lengths:
$l_{1,1} < l_{1,2}$, $\gcd(l_{1,1},l_{1,2})=1$,
and arrival proportions $p$ and $1-p$.

Let $\rho_\infty := \max\{|\alpha|:A(\alpha)=0\}$
be the spectral radius of the limiting polynomial~\eqref{eq:closed_form_main}. Under the coprime condition,
Theorem~\ref{thm:gcd} implies \(\rho_\infty<1\). For finite \(l_0\), however, the roots of the characteristic polynomial drift outward. Write $\varepsilon:=(l_0+l_{1,2})^{-1}$ and let \(\rho(\varepsilon)\) denote the spectral radius of the corresponding characteristic polynomial. At the threshold scale \(\varepsilon=O(l_{1,2}^{-3})\), the perturbation expansion gives
\begin{equation}
\label{eq:finite_prompt_drift_main1}
\rho(\varepsilon)
=
\rho_\infty(1+\varepsilon)
+
O\!\left(l_{1,2}\varepsilon^2\right).
\end{equation}
This implies that finite \inputlen{} erodes the spectral gap created by
coprime decoding lengths. To first order, stability, i.e., $\rho(\varepsilon)<1$, requires
\[
\rho_\infty(1+\varepsilon)<1,
\qquad\text{or equivalently}\qquad
(l_0+l_{1,2})(1-\rho_\infty)\gtrsim 1 .
\]

Define the minimum stable \inputlen{} by
\[
l_{0,\min}
:=
\min\left\{l_0\in\mathbb{Z}_{\ge0}:
\rho\bigl((l_0+l_{1,2})^{-1}\bigr)<1
\right\}.
\]
Here stability refers to Schur stability of the finite-\(l_0\) linearized recurrence, i.e., all roots lie strictly inside the unit disk.
We next characterizes how this threshold scales with the decoding lengths.

\begin{proposition}[Finite-input stability threshold]
\label{prop:finite_prompt_threshold}
Fix \(p\in(0,1)\). In the two-class \commoninput{} setting above, consider a
coprime sequence with \(l_{1,2}\to\infty\) and
\(l_{1,1}/l_{1,2}\to\theta\in[0,1)\). Then the minimum stable \inputlen{} satisfies
\begin{equation}
\label{eq:l0_min_explicit_main1}
l_{0,\min}
\sim
\frac{((1-p)+p\theta)^3}{2\pi^2p(1-p)}
\,l_{1,2}^3 .
\end{equation}
\end{proposition}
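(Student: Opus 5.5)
The plan is to reduce the threshold to a sharp asymptotic for the limiting spectral gap \(1-\rho_\infty\), and then invert the first-order drift relation~\eqref{eq:finite_prompt_drift_main1}. Write \(a=l_{1,1}\), \(b=l_{1,2}\), \(q=1-p\) and \(N:=pa+qb=b\,(q+p\theta+o(1))\) for the mean decoding length. With common input length, \(r_1=r_2=1\) and \(R=1\), so~\eqref{eq:closed_form_main} becomes
\[
(1-z)A(z)=q+p\,z^{b-a}-z^{b}.
\]
On \(|z|=1\) the equation \(|z^b|=|p z^{b-a}+q|\) forces \(z^a=z^b=1\). Under \(\gcd(a,b)=1\) this gives only \(z=1\), and \(z=1\) is not a root of \(A\) because \(A(1)=N\neq0\). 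Hence \(\rho_\infty<1\), as in Theorem~\ref{thm:gcd}. The point is to quantify how close to 1 it is.

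\textbf{Step 1: localize the near-unit-circle roots.} Near \(|z|=1\) I will parametrize candidate roots as \(z=e^{2\pi i k/b}e^{s}\) with \(|s|\) small. Then \(z^b=e^{bs}\) and \(z^{b-a}=e^{-2\pi i ka/b}e^{(b-a)s}\). The modulus defect of such a root is governed by
\[
|p e^{-i\psi_k}+q|^2=1-2pq(1-\cos\psi_k),\qquad \psi_k:=2\pi\,\frac{(ka\bmod b)}{b}.
\]
It is smallest when \(ka\equiv\pm1 \pmod b\), which is possible exactly because \(\gcd(a,b)=1\). For this \(k\) we have \(\psi_k=\pm2\pi/b\).

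\textbf{Step 2: first-order expansion.} Set \(\sigma=-\log|z|\). I will solve \(e^{bs}=p e^{-i\psi}e^{(b-a)s}+q\) by one Newton step in \(s\), treating \(\psi\) as the small parameter. Matching phases gives the tangential correction, and matching moduli yields
\[
\sigma\,\cdot\,(\text{effective delay})\approx pq(1-\cos\psi)\approx 2\pi^2pq/b^2 .
\]
I expect the effective delay, i.e.\ the derivative of the equation with respect to \(s\), to combine with the phase-shift correction into a power of \(N\). The target is
\[
1-\rho_\infty\sim \frac{2\pi^2pq}{N^3},
\]
which is exactly what~\eqref{eq:l0_min_explicit_main1} requires once \(N\) is written as \((q+p\theta)\,l_{1,2}\). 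The precise exponent distribution between \(b\) and \(N\) is what this expansion must pin down, so I will carry it out carefully, keeping \(\sigma b^{3}\) as the \(O(1)\) scale.

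\textbf{Step 3: show this root is extremal and invert the drift.} Every other index \(k\) has \(|\psi_k|\ge4\pi/b\), so its gap is at least about four times larger. The same expansion, done uniformly in \(k\), shows that those roots and any roots far from the circle do not determine \(\rho_\infty\). Then~\eqref{eq:finite_prompt_drift_main1} is valid at \(\varepsilon=\Theta(b^{-3})\), because its remainder \(O(b\varepsilon^2)=O(b^{-5})\) is \(o(b^{-3})=o(1-\rho_\infty)\). So \(\rho(\varepsilon)<1\) iff \(\varepsilon>(1-\rho_\infty)(1+o(1))\), i.e.
\[
l_0+b\gtrsim \frac{1}{1-\rho_\infty}\sim\frac{N^3}{2\pi^2pq}.
\]
Since \(b=o(b^3)\), the shift by \(b\) is negligible, which gives~\eqref{eq:l0_min_explicit_main1}. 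I also need monotonicity of \(\rho\) in \(\varepsilon\) near the threshold, or at least a single crossing, so that the minimum in the definition of \(l_{0,\min}\) matches the first-order crossing. I would get this from the positive \(\varepsilon\)-derivative of the dominant root supplied by the drift expansion.

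\textbf{Main obstacle.} The main difficulty is the uniform control in Step 3. I must show that no root with larger \(|\psi_k|\), and no root lying outside the small-\(s\) neighbourhoods, has modulus within \(o(b^{-3})\) of the dominant one. I must also track the constant in Step 2 exactly through the phase correction. I would first try a Rouché argument on small circles around each \(e^{2\pi ik/b}\) to count exactly one root per neighbourhood. I would combine this with a crude modulus bound \(|z|^b\le p|z|^{b-a}+q\) to exclude roots elsewhere near the circle.
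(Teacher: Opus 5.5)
Your Steps~1--2 follow the paper's route to $1-\rho_\infty$: a logarithmic chart around the $l_{1,2}$-th roots of unity, the dominant phase $ka\equiv\pm1\pmod b$, and a radial defect quadratic in the phase. Your target $1-\rho_\infty\sim 2\pi^2pq/N^3$, with $N=pa+qb=(q+p\theta_n)\,b$, is correct.

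The genuine gap is in Step~3, where you pass from the local crossing to $l_{0,\min}$. Because $l_{0,\min}$ is a minimum over all $l_0\ge0$, the lower bound $l_{0,\min}\ge(1-o(1))\,C\,l_{1,2}^3$ needs instability for every $l_0$ below the crossing. That means every $\varepsilon\in[(1-\rho_\infty)(1+o(1)),\,1/l_{1,2}]$. The drift relation, and the positive $\varepsilon$-derivative of the dominant root you propose to use, are only available at the threshold scale $\varepsilon=O(l_{1,2}^{-3})$. They say nothing about $l_0=O(l_{1,2})$ or $O(l_{1,2}^2)$. Near $l_0=0$ the leading coefficient $1-\varepsilon(l_{1,2}-1)$ of $A-\varepsilon zA'$ is of order $1/l_{1,2}$, so the roots are not a perturbation of those of $A$ at all. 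Outward drift is also not globally monotone. As $\varepsilon\to\infty$ (unphysical negative $l_0$), the roots of $A-\varepsilon zA'$ tend to those of $zA'$, which lie inside the unit disk by Gauss--Lucas. So a single-crossing property on $(0,1/l_{1,2}]$ must be proved, not extrapolated from the local derivative. A stable window at small $l_0$ would make $l_{0,\min}$ tiny and the asymptotic false.

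The paper closes this with Lemma~\ref{lem:no_stable_islands}, which splits $\eta=l_{1,2}\varepsilon\in(0,1]$ into three ranges:
\begin{itemize}
\item For $\eta\le\eta_0$, the uniform logarithmic perturbation $\nu(\eta)=\mu-\eta+O(\eta^2)$ of the dominant branch (Lemma~\ref{lem:uniform_log_perturbation}), valid far beyond the $l_{1,2}^{-3}$ scale.
\item For intermediate $\eta$, a real root $z_*>1$ of the reduced phase-$0$ equation $(1-\eta)z-p\{1-\eta(1-\theta)\}z^{1-\theta}-q=0$. The implicit function theorem transplants it to a lattice phase with $|\omega-1|$ bounded below.
\item For $\eta\ge(1+p)/2$, the Vieta bound: the product of the root moduli is $q/(1-\eta+\eta/l_{1,2})>1$.
\end{itemize}
Your proof needs an argument of this global kind.

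Two smaller points. First, Step~2 does not yet produce the constant. A first-order Newton step in $\psi$ only gives the tangential shift $\Im s\approx-p\psi/N$; the radial defect is second order. Your relation $\sigma\cdot(\text{delay})\approx pq(1-\cos\psi)$ uses the wrong phase. The tangential shift changes the mismatch from $\psi$ to $\psi-(b-a)\Im s=\psi b/N$. Matching moduli then gives $2N\sigma\approx pq\,(\psi b/N)^2$, i.e.\ $\sigma\approx 2\pi^2pq/N^3$ at $\psi=2\pi/b$; this is the paper's $\mu_{xx}(0,\theta)=pq/D^3$. Using $\psi$ in place of $\psi b/N$ would be off by a factor $(q+p\theta)^2$. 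Second, stability corresponds to $\varepsilon<(1-\rho_\infty)(1+o(1))$, not $>$. Your resulting condition $l_0+b\gtrsim 1/(1-\rho_\infty)$ is nevertheless the correct one.
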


Proposition~\ref{prop:finite_prompt_threshold} shows that coprimality is an
asymptotic stabilizing force, but it may require large inputs to be visible
at finite scale. The reason is that the limiting spectral gap shrinks cubically with the longer decoding length. Thus, when decoding lengths are long, the \inputlen{} needed for the coprime system to become stable scales as \(l_{1,2}^3\). Interestingly, the prefactor also has operational meaning: stability is harder to achieve when the two decoding lengths are close, \(\theta\simeq1\), or when the workload mix is highly imbalanced, \(p\simeq0\) or \(p\simeq1\). In both
cases, the desynchronizing effect of heterogeneity is weak.

The proof in Appendix~\ref{app:theorem2} combines four ingredients: the
\finiteinput{} spectral-drift expansion for the two dominant root branches,
whose simplicity is verified there for every fixed \(p\in(0,1)\), the cubic
limiting spectral-gap estimate, the no-stable-islands lemma that rules out
earlier stable windows, and the resulting stability-threshold corollary.
For any fixed coprime pair, the exact finite threshold can be computed by
finding the roots of the finite-\(l_0\) characteristic polynomial. Figure
\ref{fig:spectral_gap_finite_l0} compares these numerical thresholds with the
asymptotic prediction in \eqref{eq:l0_min_explicit_main1}. The figure is
consistent with the outward spectral drift and the cubic scaling of the \finiteinput{}
stability threshold.

\begin{figure}[t]
\centering
\includegraphics[width=\textwidth]{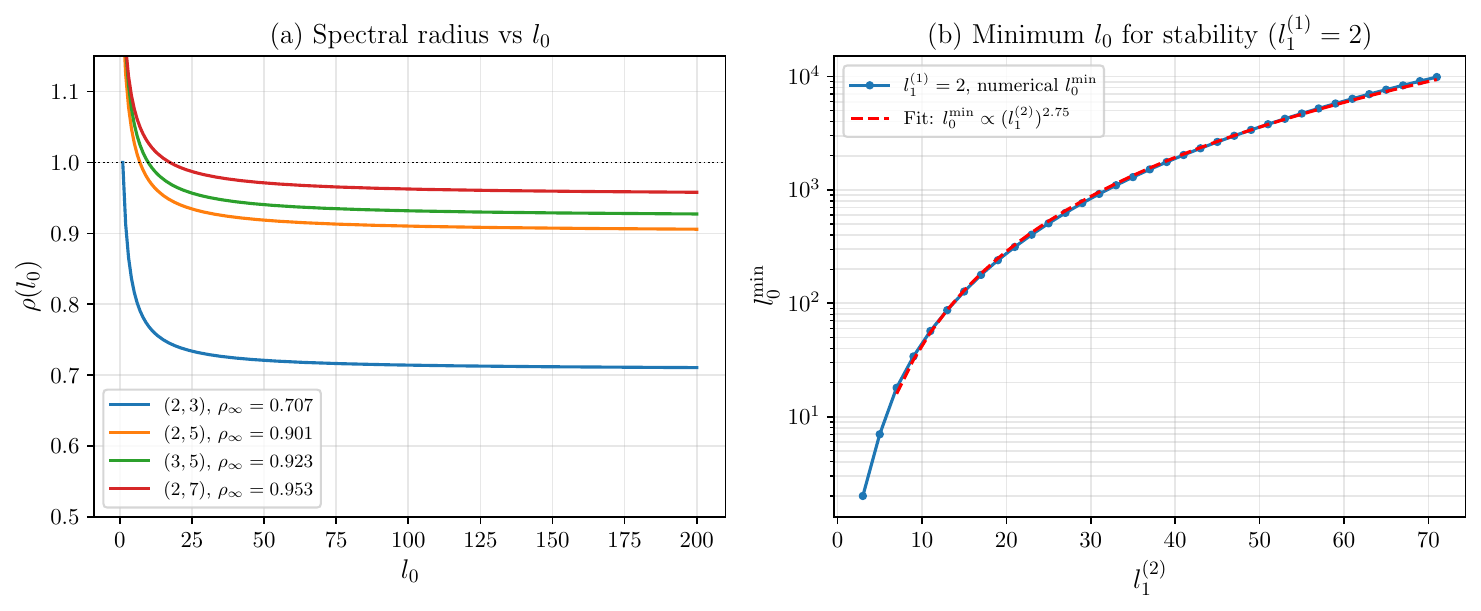}
\caption{Finite-$l_0$ spectral radius for coprime pairs ($p=1/2$). (a)~Spectral radius $\rho(l_0)$ of the characteristic polynomial~$F(z)$ as $l_0$ increases, for four coprime pairs $(l_1^{(1)},l_1^{(2)})$. Each curve converges to the limiting value $\rho_\infty$ (dashed line at $\rho=1$). The pair $(2,7)$ requires $l_0\geq 15$ to enter the stable regime. (b)~Minimum \inputlen{} $l_0^{\min}$ for all eigenvalues to lie inside the unit circle ($l_1^{(1)}=2$). The log-log fit is consistent with the $\Theta((l_1^{(2)})^3)$ scaling proved in Corollary~\ref{cor:l0_threshold}.}
\label{fig:spectral_gap_finite_l0}
\end{figure}

\section{Eviction-Aware Control Policies}
\label{sec:policies}

Theorems~\ref{thm:fcfs} and~\ref{thm:gcd} uncover a single structural source of instability of the eviction-free equilibrium: positive feedback between memory release and admission under progressive memory growth. How this feedback manifests depends on workload composition. In homogeneous systems, it emerges as admission pulses that amplify through the decoding pipeline; in heterogeneous systems, under the large input scaling, arithmetic synchronization determines whether those pulses reinforce or dissipate.
Based on these insights, in this section, we propose eviction-aware control policies.

The policies below are intentionally stylized. Each targets a specific manifestation of the underlying feedback mechanism and illustrates how the structural insights developed earlier translate into concrete design principles. They are not intended as complete production schedulers, but rather as minimal interventions that mitigate eviction at its structural source.

\subsection{Rate-Limited Admission}

The instability results above identify a common mechanism across homogeneous
workloads, non-coprime heterogeneous workloads, and coprime workloads below the
\finiteinput{} stability threshold: transient over-admission. When aggregate
admission \(a^n\) exceeds the eviction-free sustainable rate, the excess mass
enters the decoding pipeline, and its memory footprint grows over subsequent
stages. The resulting amplification can push the system into eviction and
even initiate a support-loss cascade.
A natural remedy is therefore to cap aggregate admission at the eviction-free
equilibrium rate. Under proportional class admission, the policy admits
\[
a^n = \min\!\left(Q^n,\,
\frac{S^n}{\sum_{k}p_k(l_{0,k}+1)},\, x^*\right),
\qquad
a_k^n=p_k a^n,
\]
where \(x^*\) is the eviction-free equilibrium admission rate derived in
Sections~\ref{sec:single_class} and~\ref{sec:multi_class},
\(Q^n:=\sum_k Q_k^n\) is the aggregate waiting demand,
$S^n:=M-\sum_{k,j}w_{k,j}\hat x^n_{k,j}$
is the GPU memory available after execution and eviction at iteration~\(n\).
The denominator \(\sum_k p_k(l_{0,k}+1)\) is the stage-0 memory consumed by one aggregate unit of proportional admissions. Thus the middle term is the largest aggregate
admission that can be placed without violating the memory constraint. This cap removes the admission pulses that drive the instability while preserving the eviction-free throughput whenever sufficient demand and memory are available.

The no-over-admission property is immediate from the definition. During the
first few iterations, evictions may still occur because requests admitted before
the cap was imposed can continue to grow in memory. After at most \(l_1\)
iterations in the single-class system, all such pre-policy requests have either
completed or been evicted; from that point on, every active cohort was admitted
under the cap and therefore cannot exceed the eviction-free profile. The
analogous transient bound is \(\max_k l_{1,k}\) in the multi-class setting.

Our analysis is conducted in the saturated-input regime, where a backlog of requests is always available for admission. In practice, systems rarely operate persistently in this regime. Instead, demand is typically below capacity, although fluctuations and temporary surges may occasionally push the system toward saturation. Nevertheless, the analysis identifies the critical admission boundary that mitigates eviction once the system approaches capacity. The proposed policy adapts naturally to non-saturated conditions. When demand is low ($Q^n < x^*$), the threshold is non-binding and requests are admitted immediately, preserving low latency. In particular, the policy does not require waiting to accumulate a large batch of requests, which would unnecessarily increase latency. When demand surges and the system approaches saturation, the rate cap activates automatically and limits admissions to the sustainable rate $x^*$, mitigating admission pulses and the eviction cascades they would otherwise trigger.

\subsection{Request Mixing for Heterogeneous Systems}

When heterogeneous request classes are available, Theorem~\ref{thm:gcd} suggests a complementary control mechanism that operates at the routing level. Rather than assigning requests to GPUs arbitrarily or grouping identical requests together, the system can route requests so that each GPU processes a diversified mixture of decoding lengths whose greatest common divisor equals one. The motivation follows directly from the instability mechanism identified earlier. In non-coprime systems, decoding completions synchronize at multiples of
$g = \gcd(l_{1,1},\ldots,l_{1,K})$,
producing periodic bursts of memory release that amplify admission fluctuations and trigger eviction. By contrast, when the decoding lengths assigned to a GPU are coprime, completion events occur at desynchronized phases, smoothing memory release over time. 

Serving systems such as vLLM and SGLang show that batching structure and cache locality are central to efficient LLM serving~\citep{vllm2023,sglang2024}. In deployments that route similar request classes to the same device for these reasons, specialization can concentrate workloads with identical or commensurate decoding lengths, creating precisely the synchronization conditions that lead to eviction cycles.
Our analysis does not imply that requests with drastically different decoding
lengths must be mixed.  Stability can often be improved through mild
diversification: assign together request classes with comparable decoding lengths
but avoid nontrivial common periods among their lengths. Such routing preserves
most of the batching and memory-efficiency benefits of specialization while reducing the synchronization of completion events that drives eviction-induced instability.

\subsection{Design Implications}

The structural results above suggest several practical guidelines for the design and monitoring of memory-constrained LLM serving systems.

\paragraph{Arithmetic structure as a stability diagnostic.}
The stability of the system depends critically on the arithmetic structure of decoding lengths. When requests assigned to a GPU share a common divisor, completion events can synchronize and trigger eviction cycles. In contrast, mixtures of coprime decoding lengths naturally desynchronize completions and stabilize the eviction-free equilibrium under the model conditions. Monitoring the effective greatest common divisor of active workloads therefore provides a model-informed structural diagnostic for eviction risk.

\paragraph{Engineering stability through mild diversification.}
Stability can often be improved by adding modest heterogeneity to decoding
lengths. For example, if service-level agreements define discrete output-length
tiers such as \(\{128,256,512\}\), all of which share \(\gcd=128\), small
adjustments (e.g., \(\{127,255,509\}\)) restore coprimality and remove the exact
GCD resonance in the \inputdominated{} model. At finite \inputlens{}, such
adjustments mitigate arithmetic synchronization but do not by themselves
guarantee stability; the \finiteinput{} threshold in
Section~\ref{sec:sensitivity_analysis} still matters.
This provides a practical low-overhead design lever.

Operational practices that route similar request classes to dedicated GPUs can improve batching efficiency and cache locality. At the same time, incorporating a modest level of diversification in the requests assigned to each GPU can enhance system stability by mitigating the synchronization of completion events that leads to eviction cycles. This suggests that efficient and stable operation can often be achieved by balancing specialization with mild heterogeneity in decoding lengths.

\paragraph{Eviction as an early warning signal.}
The transition from zero eviction to persistent eviction is a structural transition (Theorem~\ref{thm:fcfs}), rather than a gradual degradation. Once the sustainable admission rate is exceeded, the system can converge to an eviction-driven limit cycle with sustained throughput loss. Persistent eviction therefore can serve as an indicator that the system is operating beyond the eviction-free operating region predicted by the model.

From a control perspective, this observation also highlights the importance of admission regulation. Monitoring eviction rates provides a practical signal for when admission should be moderated to remain within the eviction-free operating region. Policies such as the rate-limited admission rule described above can then act as safeguards, mitigating transient demand surges that would otherwise push the system into an eviction cycle.

\section{Numerical Experiments}
\label{sec:numerics}

The experiments examine whether the stability mechanisms identified in the deterministic saturated-input model persist in more realistic serving environments. We first use a model-based simulator, implemented directly from the memory and admission dynamics in Section~\ref{sec:model}, to test whether stochastic arrivals in the non-saturated-input regime exhibit the corresponding empirical queue-growth transition near the worst-cycle throughput. We then use Vidur, a high-fidelity LLM inference serving simulator, together with real-GPU experiments, to test whether practical serving-system dynamics are consistent with the synchronization mechanism. Across these settings, request mixing reduces synchronized completions, and rate-limited admission reduces eviction cascades.

Vidur is a validated large-scale LLM-serving simulator whose reported latency and throughput errors are below 5\% relative to real systems~\citep{agrawal2024vidur}. Unless otherwise noted, the Vidur experiments use the Sarathi scheduler~\citep{sarathi2024}, Meta-Llama-3-8B, and a single NVIDIA A100 GPU. Sarathi greedily admits pending requests whenever memory is available. Our rate-limited variant adds a per-iteration admission cap based on the eviction-free admission rate. When the resident KV cache later exceeds capacity, Vidur records an eviction event: the affected request loses its current KV cache and must be recomputed from prefill. We apply the same LPF eviction priority as in the model and interpret these events as the serving-system counterparts of model evictions.
For real-GPU hardware, we run SGLang~\citep{sglang2024}, which uses continuous batching and
therefore aligns with the model's per-iteration dynamics, on an NVIDIA
A800 80\,GB GPU with Qwen2.5-1.5B-Instruct. Appendix~\ref{app:numerical_details} provides additional model-based, Vidur, and real-GPU experiment details.

\subsection{Model-based simulation under open arrivals}
\label{subsec:gcd_effect}

In this section, we evaluate the discrete-time model under different load levels. We build a simulator that follows model dynamics in Section~\ref{sec:model} and replaces the saturated-input assumption with a Poisson arrival process. We vary the arrival rate $\lambda$, interpreted as the average number of new requests arriving per period, and examine the resulting system dynamics.

The saturated-input analysis suggests the following open-arrival numerical
validation under baseline admission. Recall that $\bar{T}_{l_1-1}$ denotes the
throughput associated with the worst-case limit cycle, while $x^*$ denotes the
throughput associated with the eviction-free equilibrium. Since
$\bar{T}_{l_1-1}<x^*$, there exists a nontrivial interval of arrival rates
between these two values. Loads in this interval would appear sustainable if one
only considered the eviction-free equilibrium, since they remain below $x^*$.
However, in the saturated-input model that equilibrium is unstable, and the
dynamics are attracted to the asymptotically stable maximal-eviction cycle, which can
sustain only $\bar{T}_{l_1-1}$ requests per period. We therefore test whether the
same throughput bottleneck appears as queue growth under stochastic arrivals.

The open-arrival simulations are consistent with this mechanism. We treat a run as empirically stable when the waiting queue remains small and shows no sustained upward drift over the simulation horizon. When $\lambda<\bar{T}_{l_1-1}$, evictions are transient and the waiting queue stays small. When $\lambda> \bar{T}_{l_1-1}$, even if $\lambda<x^*$, the simulated system settles around the worst-cycle throughput $\bar{T}_{l_1-1}$, while excess arrivals accumulate and the waiting queue grows.
\Cref{fig:lambda_sweep} shows this transition directly, where $Q^n$ denotes the number of waiting requests at iteration $n$. For loads below $\bar{T}_{l_1-1}$, the throughput is equal to $\lambda$ and $Q^n$ remains close to zero. For loads above $\bar{T}_{l_1-1}$, including those in the interval $(\bar{T}_{l_1-1},x^*)$, throughput is equal to $\bar{T}_{l_1-1}$. The system experiences persistent evictions and a growing $Q^n$. Thus, in this simulated open-arrival setting, baseline admission begins to accumulate backlog before the nominal eviction-free capacity is reached.

\begin{figure}[ht]
    \centering
    \includegraphics[width=0.98\linewidth]{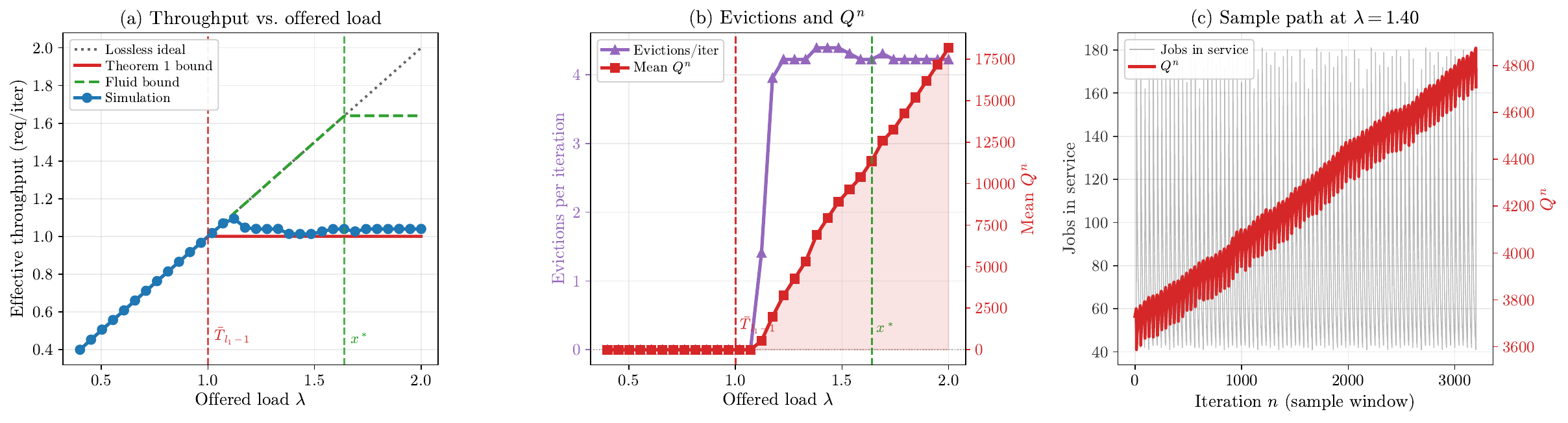}
\caption{Open-system behavior under Poisson arrivals
    in the model-based simulator. With \(M=2000\), \(l_0=10\), and \(l_1=40\),
    the worst-cycle service rate is \(\bar{T}_{l_1-1}=1.00\), while the
    eviction-free fluid optimum is \(x^*=1.64\).
    \textbf{(a)} Throughput departs from the lossless line once
    \(\lambda\) crosses \(\bar{T}_{l_1-1}\), despite \(\lambda<x^*\).
    \textbf{(b)} Evictions per iteration and mean queue length \(Q^n\) are
    negligible below \(\bar{T}_{l_1-1}\), then become positive above it.
    \textbf{(c)} A raw sample path at
    \(\lambda=1.40\in(\bar{T}_{l_1-1},x^*)\) shows persistent evictions and
    growing \(Q^n\). Detailed settings and additional GCD simulations appear
    in Appendix~\ref{app:numerical_details}.}
    \label{fig:lambda_sweep}
\end{figure}

\subsection{Mixing}
\label{subsec:request_mixing}

In this subsection, we analyze the effect of request mixing. Although Theorem~\ref{thm:gcd} is derived in the large-input limit and gives a sharp stabilization criterion for the two-class \commoninput{} setting when the combined decoding lengths are coprime, the underlying mechanism suggests a broader phenomenon. Routing need not eliminate common periodicity completely in order to improve performance. By pooling request classes with different decoding lengths, mixed routing reduces decoding-length homogeneity within each serving pool, which weakens the synchronization of completions. Any reduction in synchronization can weaken the completion pulses that drive large memory-release and memory-growth cycles, thereby reducing the likelihood of eviction cascades. We therefore expect request mixing to remain beneficial in broader serving settings, even when the combined decoding lengths are not coprime or the input length is not large enough.

We first test this mechanism in Vidur. For these fixed-workload Vidur and
real-GPU experiments, requests are generated according to the stated class
proportions and submitted at a sufficiently high rate at the beginning of the
run. The system therefore maintains a persistent waiting queue, and the
scheduler admits from that queue whenever memory is available. This construction
approximates the saturated-input regime analyzed in the theory; stochastic
open-arrival behavior is studied separately in
\Cref{subsec:gcd_effect} and Appendix~\ref{app:numerical_details}.

Table~\ref{tab:sarathi_mixing_both_settings} reports the Vidur results for two main configurations, each consisting of two request classes. In the first configuration, the decoding lengths are 20 and 21, so mixing makes the combined decoding lengths coprime. In the second configuration, the decoding lengths are 100 and 125. Mixing does not eliminate common periodicity in this case: the combined GCD remains 25. However, this common period is substantially smaller than the periodicity induced when each class is served in isolation. For each configuration, we compare segregated serving, where the two request classes are served separately on two devices, with mixed serving, where the same two-device budget serves the pooled workload. The workload size, \inputlen{}, scheduler, and hardware budget are held fixed within each comparison; only the routing of request classes is changed. SegAvg denotes the average across the two segregated devices. We report the number of evictions, request-level mean latency, and throughput measured as completed requests per second. In the coprime configuration, mixing nearly eliminates evictions over the experiment horizon, a 94.2\% reduction, and improves both latency and throughput. In the partial-GCD configuration, the reduction in evictions is smaller, but we still see mixing leads to fewer evictions, lower mean latency, and higher throughput. Appendix~\ref{app:numerical_details} contains the corresponding model-based simulation figures, Vidur memory figures, and additional mixing configuration results.

\begin{table}[htbp]
\centering
\caption{Request mixing with 20k requests and common \inputlen{} $l_0=512$.
Holding the workload, scheduler, and two-device budget fixed, both coprime (GCD$=1$) and non-coprime (GCD$=25$) mixed configurations have fewer evictions, lower latency, and higher throughput.}
\label{tab:sarathi_mixing_both_settings}
\small
\begin{tabular}{@{}lcccccc@{}}
\toprule
\textbf{Configuration} & \textbf{GCD} & \multicolumn{2}{c}{\textbf{Evictions}} & \multicolumn{2}{c}{\textbf{Latency (s)}} & \textbf{Throughput} \\
\cmidrule(lr){3-4} \cmidrule(lr){5-6}
& & Value & Improv. & Value & Improv. & (req/s) \\
\midrule
\quad Node 1 & 20 & 908 & --- & 107.8 & --- & 91.7 \\
\quad Node 2 & 21 & 931 & --- & 114.1 & --- & 87.0 \\
\quad SegAvg & --- & 919 & --- & 111.0 & --- & 89.3 \\
\textbf{Mixed} & \textbf{1} & \textbf{53} & \textbf{--94.2\%} & \textbf{95.1} & \textbf{--14.3\%} & \textbf{105.8} \\
\midrule
\quad Node 1 & 100 & 3296 & --- & 367.9 & --- & 27.3 \\
\quad Node 2 & 125 & 4121 & --- & 473.4 & --- & 21.2 \\
\quad SegAvg & --- & 3708 & --- & 420.6 & --- & 24.2 \\
\textbf{Mixed} & \textbf{25} & \textbf{2449} & \textbf{--34.0\%} & \textbf{390.4} & \textbf{--7.2\%} & \textbf{25.6} \\
\midrule
\bottomrule
\end{tabular}
\vspace{1mm}
\parbox{0.95\linewidth}{\footnotesize Note. SegAvg is the average of the two segregated devices. Mixed uses the same two-device budget with the request classes pooled. Throughput is completed requests per second.}
\end{table}

The same pattern appears in experiments on real GPU hardware.
\Cref{tab:sglang_mixing} reports the number of evictions, mean latency, and throughput in one such study.
Node~1 serves an equal mix of two request classes with decoding lengths $5$ and $10$ (GCD\(=5\));
Node~2 serves an equal mix of two request classes with decoding lengths $6$ and $12$ (GCD\(=6\)); the mixed configuration pools all four request classes, with decoding lengths
\(l_1\in\{5,6,10,12\}\) in equal proportions, giving GCD\(=1\). The comparison again uses the same workload size, \inputlen{}, scheduler, and two-GPU budget: segregated routing assigns one configuration to each GPU, while mixed routing serves the pooled workload on the same two GPUs. Relative to the average of the two segregated configurations (Nodes 1 and 2), mixing reduces evictions by 97.7\%, lowers mean latency by 18.2\%, and increases request throughput by 30.5\%. The time-series diagnostics in Appendix~\ref{app:numerical_details} further demonstrate that the segregated configurations produce periodic eviction bursts, while mixing suppresses them throughout the run.

\begin{table}[htbp]
\centering
\caption{Request mixing on real GPU hardware with $60$k requests and common \inputlen{} $l_0=300$. Mixed routing (GCD\(=1\)) nearly eliminates evictions over the experiment horizon and improves
latency and throughput relative to the segregated average, with the workload, scheduler, and two-GPU budget held fixed.}
\label{tab:sglang_mixing}
\small
\begin{tabular}{@{}lcccccc@{}}
\toprule
\textbf{Configuration} & \textbf{GCD} &
\multicolumn{2}{c}{\textbf{Evictions}} &
\multicolumn{2}{c}{\textbf{Latency (s)}} &
\textbf{Throughput} \\
\cmidrule(lr){3-4} \cmidrule(lr){5-6}
& & Value & Improv. & Value & Improv. & (req/s) \\
\midrule
\quad Node 1  & 5   & 422 & ---          & 5.44 & ---          & 2{,}731 \\
\quad Node 2  & 6   & 372 & ---          & 5.52 & ---          & 3{,}264 \\
\quad SegAvg  & --- & 397 & ---          & 5.48 & ---          & 2{,}998 \\
\textbf{Mixed} & \textbf{1} & \textbf{9} & \textbf{$-$97.7\%} & \textbf{4.48} & \textbf{$-$18.2\%} & \textbf{3{,}911} \\
\midrule
\bottomrule
\end{tabular}
\vspace{1mm}
\parbox{0.95\linewidth}{\footnotesize Note. SegAvg is the average of the two segregated GPUs. Mixed uses the same two-GPU budget with all request classes pooled. Throughput is completed requests per second.}
\end{table}

\subsection{Rate-limited admission}
\label{subsec:rate_limited_admission}

Greedy admission fails because it reacts only to current memory feasibility and ignores the future KV cache growth generated by newly admitted requests. Rate-limited admission addresses this problem by using \(x^*\), the eviction-free equilibrium admission/completion rate, as a per-iteration admission cap. This cap is large enough to keep the system operating near the nominal eviction-free capacity, avoiding an overly conservative policy that leaves GPU capacity idle. At the same time, it prevents the persistent overshoot that would trigger eviction cascades.

The Vidur experiment in \Cref{tab:rate_limit_vidur} uses decoding
lengths \(l_1\in\{100,125,200,250\}\) and an arrival rate of 300 requests per second. Baseline admission accepts every request that fits in currently available memory, which leads to periodic memory overflow and eviction cascades as KV caches grow. Rate-limited admission instead imposes a per-iteration cap of five new requests. Because Vidur implements an integer per-cycle admission cap rather than the continuous admission rate in the fluid model, we select this cap by a grid search over \(\{1,2,\ldots,10\}\), guided by the equilibrium rate \(x^*\). This rate-limited admission eliminates evictions over the experiment horizon, lowers mean latency by 18.9\%,
and increases throughput by 28.3\%.

\begin{table}[htbp]
\centering
\caption{Rate-limited admission. The workload has 20k requests, common input
length \(l_0=512\), decoding lengths \(l_1\in\{100,125,200,250\}\), and arrival
rate 300 req/s. The integer admission cap is selected from \(\{1,\ldots,10\}\).}
\label{tab:rate_limit_vidur}
\small
\begin{tabular}{@{}lccc@{}}
\toprule
\textbf{Scheduler} & \textbf{Evictions} & \textbf{Latency (s)} & \textbf{Throughput (req/s)} \\
\midrule
Sarathi & 2,232 & 234.2 & 42.7 \\
\textbf{Sarathi + rate-limited admission} & \textbf{0} & \textbf{189.9} & \textbf{54.8} \\
\midrule
Improvement & \(-\)100.0\% & +18.9\% & +28.3\% \\
\bottomrule
\end{tabular}
\end{table}

The sample path in \Cref{fig:rate_limit_memory_dynamics_vidur} makes the mechanism visible.
Under Sarathi, memory repeatedly rises above the capacity line, and each overflow is resolved through forced evictions. Under rate-limited admission, memory remains in a narrow band near capacity without crossing it over the experiment horizon.

\begin{figure}[ht]
\centering
\includegraphics[width=0.95\linewidth]{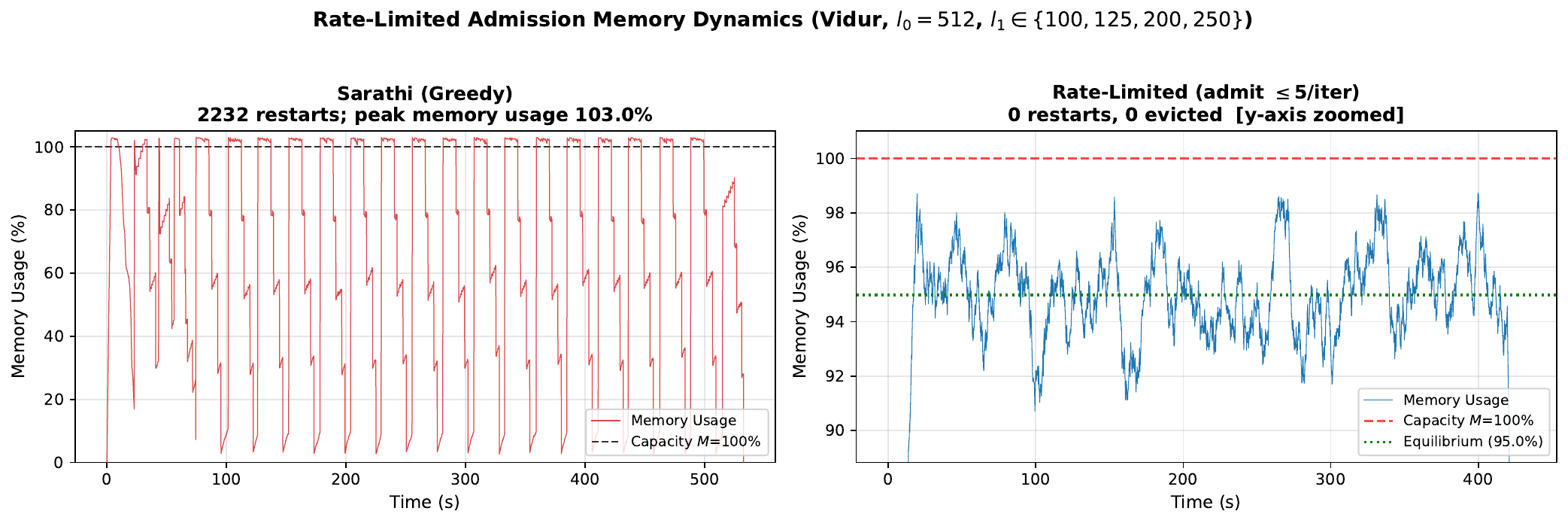}
\caption{Rate-limited admission stabilizes memory dynamics in the Vidur run. Greedy admission
exceeds the \(M=100\%\) capacity line in 25.2\% of scheduling cycles
and incurs 2,232 evictions. Rate-limited admission keeps memory below capacity
and eliminates eviction over this horizon.} 
\label{fig:rate_limit_memory_dynamics_vidur}
\end{figure}

\FloatBarrier

\section{Conclusion}\label{sec:conclusion}

This paper develops a discrete dynamical model for memory-constrained
LLM serving systems in which requests consume increasing memory during service. This progressive KV-cache accumulation, absent from classical fixed-resource
queueing models, creates a feedback loop among admission, memory growth, and
eviction that can destabilize highly utilized systems.

Our analysis identifies two main mechanisms. First, in homogeneous systems, small imbalances in admission propagate through
the decoding pipeline, are amplified by memory growth, and eventually trigger
eviction cascades. Except for exceptional balanced orbits, the system is
attracted to the maximal-eviction limit cycle, which can incur substantial
throughput loss. Second, in heterogeneous systems, stability is governed by synchronization. In the formal two-class \commoninput{} setting, and through the extension argument for multiple classes and \heteroinput{} lengths, coprime decoding lengths desynchronize memory release and stabilize the eviction-free equilibrium under the \inputdominated{} scaling, whereas non-coprime lengths allow synchronized completion peaks that generate persistent oscillatory modes.

These results show that congestion in LLM serving is not only arrival-driven: it can be induced by the service process itself. The proposed policies, rate-limited admission and request mixing, demonstrate that the mechanisms
identified by the theory are actionable. By regulating admission intensity or breaking synchronization across request types, simple interventions can mitigate eviction, improve throughput, and keep the system close to efficient operation.

A natural direction for future research is to incorporate stochastic decoding
lengths. The deterministic-length model studied here isolates the mechanisms
of memory growth and synchronization, but real LLM outputs are variable and
only partially predictable at admission. This uncertainty may affect stability
in two opposing ways: random variation can desynchronize completions and damp
eviction cycles, while long-tail outputs can create unexpected memory pressure
and push the system toward overflow. Extending the analysis to random service
horizons would clarify how output-length uncertainty shifts stability
boundaries and how admission policies should balance utilization against
residual length risk.

\bibliographystyle{plainnat}
\bibliography{references}
\appendix
\section{Proof of Theorem~\ref{thm:fcfs}}
\label{app:theorem1}

We prove Theorem~\ref{thm:fcfs}: under baseline admission at the memory
boundary, the eviction-free fixed point and all intermediate eviction-level
cycles are unstable, while the level-$(l_1-1)$ maximal-eviction cycle is
asymptotically stable on the memory boundary outside the exact nonmaximal
capture set. Its basin contains every memory-boundary initial state except
those captured exactly by a nonmaximal balanced orbit. Within each fixed set of
occupied stages, this exceptional set has Lebesgue measure zero.

\begin{figure}[ht]
\centering
\includegraphics[width=\textwidth]{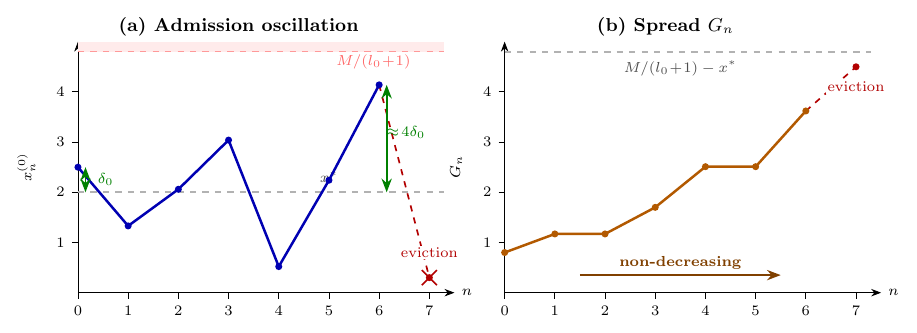}
\caption{Instability trajectory for $l_0 = 2$, $l_1 = 3$, $M = 24$, starting from a small perturbation ($\delta_0 = 0.5$) above the equilibrium $x^* = 2$. (a)~The admission count $x_n^{(0)}$ oscillates with geometrically growing amplitude (spectral radius $|z| = \sqrt{5/3} \approx 1.29$). After 7 iterations, the required admission turns negative---memory is exhausted by surviving requests alone---triggering eviction. (b)~The spread $G_n = \max_j\{x_n^{(j)}\} - \min_j\{x_n^{(j)}\}$ is monotonically non-decreasing. The spread's inevitable growth drives the system to eviction regardless of the perturbation direction. }
\label{fig:instability}
\end{figure}

\subsection{Stability Analysis}

We split the proof into a pre-eviction stage, where the state is described by
physical stages, and a post-eviction stage, where the correct state description
is a block-end sampled chain of relative positions.

\textbf{Definition (Eviction-free spread).} In the eviction-free regime, define
\[
G^n:=\max_j x^{n}_{j}-\min_j x^{n}_{j}.
\]
Write
\[
\bar x^n:=\max_j x^{n}_{j},
\qquad
\underline x^n:=\min_j x^{n}_{j}.
\]

\begin{appalemma}[Monotonicity before eviction]\label{lm:A1}
Under the saturated-input continuous model of Theorem~\ref{thm:fcfs}, while a
memory-boundary trajectory remains eviction-free, \(G^{n+1}\ge G^n\).
\end{appalemma}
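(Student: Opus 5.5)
The plan is to show directly that the one-step map sends the multiset of stage occupancies to a new multiset whose range contains the old one up to a single replaced entry, and that the replacement never shrinks the range. In the eviction-free regime on the memory boundary, one iteration acts as the linear map $X^{n+1}=BX^n$ in \eqref{eq:B1}. Coordinates $x^{n+1}_{j+1}=x^n_j$ for $j=0,\dots,l_1-2$ are copies of the old non-final coordinates. The only change is that the old final coordinate $x^n_{l_1-1}$ is discarded and replaced by the new admission $x^{n+1}_0$. So the new profile is the old profile with $x^n_{l_1-1}$ replaced by $x^{n+1}_0$, and it suffices to compare these two values against the extremes $\bar x^n,\underline x^n$.

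Write $s:=x^n_{l_1-1}$. I use the rewritten balance identity \eqref{eq:balance_re}:
\[
x^{n+1}_0 = s+\frac{\sum_{j=0}^{l_1-1}(s-x^n_j)}{l_0+1} = s+\frac{l_1(s-x^n_{\mathrm{avg}})}{l_0+1}.
\]
This is derived from \eqref{eq:single_balance} using $\beta=1+(l_1-1)/(l_0+1)$. The increment therefore has the sign of $s-x^n_{\mathrm{avg}}$.

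The key observation is a case split on $s$ relative to the average.
\begin{itemize}
\item If $s\le x^n_{\mathrm{avg}}$, then $x^{n+1}_0\le s$. The new minimum satisfies $\underline x^{n+1}\le \min(\underline x',x^{n+1}_0)$, where $\underline x'$ is the minimum over the retained coordinates, and this is at most $\underline x^n$: either the old minimum is retained, or it equals $s\ge x^{n+1}_0$.
\item The maximum also does not drop. If $s<\bar x^n$, the old maximum sits at some non-final stage and is retained. If $s=\bar x^n$, then $s\le x_{\mathrm{avg}}$ forces all coordinates to be equal, so $G^n=0$ and the claim is trivial.
\item The case $s>x^n_{\mathrm{avg}}$ is symmetric: $x^{n+1}_0>s$, so the maximum does not drop. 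Here $s$ cannot be the minimum unless the profile is constant, which is excluded by $s>x_{\mathrm{avg}}$, so the old minimum is retained.
\end{itemize}
In every case $\bar x^{n+1}\ge\bar x^n$ and $\underline x^{n+1}\le\underline x^n$, hence $G^{n+1}\ge G^n$.

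The only subtlety, which I expect to be the main (mild) obstacle, is justifying that the eviction-free step really is the linear map $B$ with all coordinates kept. This requires that the trajectory stays nonnegative and on $\mathcal B$, so that admission exactly fills the freed memory and no LPF truncation occurs. That is precisely the hypothesis "while the trajectory remains eviction-free," together with $M^n=M$ on the memory boundary. I will state this explicitly and note that $x^{n+1}_0\ge0$ is part of that hypothesis. Without it, a negative admission would signal eviction and the step would no longer be given by $B$.
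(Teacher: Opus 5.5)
Your proof is correct and uses the same ingredients as the paper's: the shift copies every non-final coordinate, and the rewritten balance identity $x^{n+1}_0 = x^n_{l_1-1} + l_1(x^n_{l_1-1}-x^n_{\mathrm{avg}})/(l_0+1)$ controls the single new entry. The only difference is organizational. The appendix proof splits on whether the maximum (resp.\ minimum) sits at the final stage, while you split on the sign of $x^n_{l_1-1}-x^n_{\mathrm{avg}}$, exactly as in the main-text sketch after Theorem~\ref{thm:fcfs}.
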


\noindent The proof is deferred to Appendix~\ref{app:aux_lemma_proofs}.

\begin{appalemma}[Finite-time eviction-free breakdown and first support loss]\label{lem:A2}
Assume \(l_1\ge2\). Under the saturated-input continuous model of
Theorem~\ref{thm:fcfs}, for a non-fixed memory-boundary initial state with
\(G^0>0\), the trajectory cannot remain eviction-free forever. The first
memory overflow triggers an LPF eviction; in the relative-position
representation below, this creates the first eviction-induced support loss and
moves the trajectory to the next lower live-support level. In particular, the
eviction-free fixed point is unstable.
\end{appalemma}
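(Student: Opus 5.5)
The plan is to argue by contradiction: assume the trajectory stays eviction-free forever, and show that the spread \(G^n\) must then grow without bound. That is impossible, because the memory boundary forces \(0\le x^n_j\le M/(l_0+1)\) for every stage, so \(G^n\le M/(l_0+1)\). Throughout, eviction-free dynamics are the linear map \(X^{n+1}=BX^n\) from \eqref{eq:B1}. Lemma~\ref{lm:A1} already gives that \(G^n\) is non-decreasing. What remains is a uniform positive increment at least once per pipeline revolution, i.e. every \(l_1\) iterations.

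To get this increment, track the running maximum \(\bar x^n\). Fix a time \(n_0\) and a stage \(j_0\) attaining \(\bar x^{n_0}\). The shift carries this cohort unchanged to the final stage after \(l_1-1-j_0\) steps. Meanwhile the only new value entering the state is the admission \(x^{n+1}_0\) from \eqref{eq:balance_re}. There are two cases.

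\begin{enumerate}[nosep]
\item If some admission in between exceeds the current maximum, then the maximum has been strictly renewed. The argument restarts from the new maximal cohort, and \(\bar x\) has strictly increased.
\item Otherwise, the cohort reaches stage \(l_1-1\) while still a maximum. Then every term \(x^n_{l_1-1}-x^n_j\) in \eqref{eq:balance_re} is nonnegative, and the term at the minimum equals \(G^n\). Hence \(x^{n+1}_0\ge \bar x^n+G^n/(l_0+1)\ge \bar x^n + G^0/(l_0+1)\), using Lemma~\ref{lm:A1}.
\end{enumerate}

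The minimum is bounded in the same way and cannot increase. Within at most \(l_1\) steps, either the maximum rises by at least \(G^0/(l_0+1)\) via case 2, or a strict renewal occurs in case 1.

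The main obstacle is that a renewal in case 1 could in principle be by an arbitrarily small amount. Infinitely many tiny renewals would then never force the uniform jump. I would first handle this by noting that after a renewal at stage 0, the renewing cohort itself needs only \(l_1-1\) further steps to reach the final stage. Chaining renewals therefore cannot postpone case 2 forever: within every window of \(l_1\) consecutive iterations, some currently maximal cohort reaches the final stage. Equivalently, the latest cohort to renew the maximum must complete before any later renewal occurs, or else it is itself overtaken by a renewal of at least its own value. Either way, \(\bar x\) grows by at least \(G^0/(l_0+1)\) per \(l_1\) iterations.

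If that bookkeeping proves delicate, I would fall back on a quantitative version. Suppose a renewal \(x^{n+1}_0>\bar x^n\) occurs. By \eqref{eq:balance_re}, \(x^{n+1}_0-x^n_{l_1-1}\) is at least a positive multiple of \(G^n\) whenever \(x^n_{l_1-1}\) exceeds the average. So small renewals happen only when the final cohort is already near the maximum, and in that situation case 2's estimate applies approximately. Summing the increments gives \(\bar x^n\to\infty\), contradicting the memory bound. Hence a first overflow \(\tilde M^n>M\) occurs in finite time. Equivalently, the admission required by \eqref{eq:single_balance} turns negative.

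At that first overflow, LPF removes mass from the least-progressed occupied stage. Since stage 0 is empty after execution, at least one stage is fully emptied before admission: the overflow state is \(\tilde X^n=(0,x^n_0,\dots,x^n_{l_1-2})\), and feasibility with no slack gives \(X^{n+1}_0=0\) as in Example~\ref{ex:spiral}. In the relative-position picture, this zero cohort is a live position lost, so the live-support level drops by one. Instability of \(X^*\) follows because every neighborhood of \(X^*\) on \(\mathcal B\) contains non-fixed states with \(G^0>0\). Each such trajectory eventually has a zero coordinate, at distance \(x^*\) from \(X^*\). So no \(\delta\) works for \(\epsilon<x^*\).
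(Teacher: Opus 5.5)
The gap is at exactly the step you flagged as the main obstacle, and neither of your two resolutions closes it. The rest of your reduction is sound: $\bar x^n$ is non-decreasing by Lemma~\ref{lm:A1} and bounded by $M/(l_0+1)$, and your case-2 estimate $x^{n+1}_0\ge\bar x^n+G^n/(l_0+1)$ is correct. Your closing paragraph on the first overflow (zero admission, hence a dead relative position) and on instability is also fine.

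\emph{First resolution.} The fact that a renewing cohort needs only $l_1-1$ further steps does not show that ``within every window of $l_1$ consecutive iterations, some currently maximal cohort reaches the final stage.'' Another renewal can occur during those steps and restart the clock. Nothing you wrote rules out an infinite chain of ever smaller renewals in which case~2 never occurs. The claimed gain of $G^0/(l_0+1)$ per window is therefore unjustified.

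\emph{Fallback.} It rests on a false claim: $x^{n+1}_0-x^n_{l_1-1}$ need not be a uniform multiple of $G^n$ when $x^n_{l_1-1}$ exceeds the average. For $l_1=3$ and $x^n=(2,1,\tfrac32+\eta)$ it equals $2\eta/(l_0+1)$, while $G^n=1$. More importantly, a tiny renewal happens exactly when the final cohort sits just inside the renewal threshold below the maximum (see the identity below). There the jump is nearly zero, not ``approximately'' the case-2 amount.

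The missing ingredient is a jump estimate that tolerates the final cohort being slightly below the running maximum. Set $a_n:=\bar x^n-x^n_{l_1-1}$ and $S_n:=\sum_j(\bar x^n-x^n_j)\ge G^n\ge G^0$. Then \eqref{eq:balance_re} gives
\[
x^{n+1}_0-\bar x^n=\frac{S_n-(l_0+l_1+1)a_n}{l_0+1},
\]
so a renewal occurs iff $a_n<S_n/(l_0+l_1+1)$.

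The paper pairs this identity with convergence of the monotone bounded sequence $\bar x^n$. Since $\bar x^{n+l_1-1}-\bar x^n\to0$, at late times any cohort that is currently maximal reaches the final stage with $a=o(1)$. The identity then forces an increase of at least $G^0/[2(l_0+1)]$, contradicting $\bar x^{n+1}-\bar x^n\to0$.

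Alternatively, you can keep your per-window structure and drop the renewal bookkeeping. Suppose the cohort maximal at time $n$ reaches the final stage at time $n'\le n+l_1-1$. Its gap there is $a_{n'}=\Delta:=\bar x^{n'}-\bar x^n$, so
\[
\bar x^{n+l_1}-\bar x^n\ge\Delta+\max\Bigl\{0,\frac{S_{n'}-(l_0+l_1+1)\Delta}{l_0+1}\Bigr\}\ge\frac{G^0}{l_0+l_1+1}.
\]
This is a uniform gain every $l_1$ iterations, which contradicts the memory bound directly.
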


\noindent The proof is deferred to Appendix~\ref{app:aux_lemma_proofs}.

After the first eviction-induced support loss, physical-stage indices lose their tracking power: eviction reshuffles which stages carry mass at each iteration. The right coordinate system tracks \emph{relative positions} in the circular pipeline, samples the state only at \emph{block-ends} (the iterations where live positions complete), and normalizes masses by \emph{predecessor gaps} (the number of stages each live position absorbs).

\noindent\textbf{Relative positions.} Fix labels
\(P_0,\ldots,P_{l_1-1}\) on the circular pipeline. At time \(n\), position
\(P_r\) occupies stage \(r+n \pmod{l_1}\). Let \(z^{n}_{r}\) denote the mass
carried by \(P_r\) at $n$, so
\[
x^{n}_{j}=z^{n}_{(j-n)\bmod l_1}.
\]
When a cohort completes and the released memory funds a new admission at
stage~0, the new stage-0 mass inherits the completing cohort's label. Thus, the
labels track relative cohorts rather than physical stage indices.

\begin{appalemma}[Irreversibility of lost positions]\label{lem:A3}
Under the
saturated-input continuous model of Theorem~\ref{thm:fcfs}, if
\(z^{n_0}_{r}=0\) for some position \(P_r\), then \(z^{n}_{r}=0\) for every
\(n\ge n_0\).
\end{appalemma}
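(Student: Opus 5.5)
Our plan is to argue by induction on \(n\), following the one-step map \(F\) through its four phases (execute, evict, admit) in the relative-position coordinates. Since the statement is about a single label, it suffices to show that if \(z^{n}_{r}=0\), then \(z^{n+1}_{r}=0\). Fix \(n\) with \(z^n_r=0\); the physical stage carrying \(P_r\) at time \(n\) is \(j=(r+n)\bmod l_1\). We split into two cases according to whether \(j<l_1-1\) or \(j=l_1-1\).

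\emph{Case \(j<l_1-1\).} The execute step maps \(x^n_j\) to \(\tilde x^n_{j+1}=x^n_j=0\). The label convention assigns stage \(j+1\) at time \(n+1\) to the same label \(P_r\), because \((r+n+1)\bmod l_1=j+1\). LPF eviction only removes mass, and a proportional partial eviction of an empty stage removes nothing, so \(\hat x^n_{j+1}=0\). Admission adds mass only at stage \(0\neq j+1\). Hence \(z^{n+1}_r=x^{n+1}_{j+1}=0\).

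\emph{Case \(j=l_1-1\).} In this case \(P_r\) is the completing cohort, and its new stage-0 mass \(x^{n+1}_0\) is exactly the admission funded at iteration \(n\). We will show that this admission is zero. Since the trajectory lies on \(\mathcal B\), we have \(M^n=M\). The execute step releases \(w_{l_1-1}x^n_{l_1-1}=0\) from completion, while every other live cohort grows by one token. Hence
\[
\tilde M^n = M - w_{l_1-1}x^n_{l_1-1}+\sum_{i=0}^{l_1-2}x^n_i = M+\sum_{i=0}^{l_1-2}x^n_i \ge M .
\]
If \(\tilde M^n>M\), LPF evicts exactly down to \(M\), which leaves slack \(S^n=0\). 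If \(\tilde M^n=M\), no eviction occurs and the slack is again zero. In either case the admission \(a^n=S^n/w_0=0\), so \(z^{n+1}_r=x^{n+1}_0=0\).

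The one subtlety, and the step we expect to need care, is confirming that the saturated-input admission is exactly \(S^n/w_0\). Formally this means that the continuous-model admission fills available memory exactly and never exceeds it. With that in hand, a completing empty cohort generates zero free memory net of growth, so its label cannot be refunded. We would also check the convention that eviction never moves mass into an empty stage; this holds because LPF only subtracts mass. Induction on \(n\) then gives \(z^n_r=0\) for all \(n\ge n_0\).
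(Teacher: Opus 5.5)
Your proposal is correct and matches the paper's own argument. Zero mass shifts as zero. When the empty position reaches the final stage, it frees no memory while the other cohorts grow, so post-execute memory is at least \(M\); LPF restores exactly \(M\), leaving zero slack and a zero stage-\(0\) admission for the inherited label. Your explicit split into non-final and final stages, and your treatment of \(\tilde M^n = M\) (which cannot actually occur on \(\mathcal B\) with \(M>0\)), just spell out the paper's one-paragraph proof in more detail.
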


\noindent The proof is deferred to Appendix~\ref{app:aux_lemma_proofs}.

Define the \emph{live set} as
\begin{equation}\label{eq:live_set}
L_n:=\{r:z^{n}_{r}>0\}.
\end{equation}
Once a relative position becomes empty, it never revives: when that dead position later cycles back to stage~0, it carries no mass and therefore frees no completion memory, while the surviving positions continue to grow their footprints. No new mass enters the dead position. Hence $L_{n+1}\subseteq L_n$, and the live set can only shrink.

\paragraph{Eviction blocks and block-ends.}
Consider a trajectory with $m=|L_n|\ge 2$ live positions. Within each period of $l_1$ iterations, the dynamics decompose into two phases.
An \emph{eviction block} is a maximal contiguous run of iterations during which eviction removes excess requests but no completions occur (i.e., only dead positions pass stage~$l_1{-}1$). The \emph{block-end} is the first iteration after the eviction block in which a live position completes and new requests are admitted. At block-end, the live/dead pattern returns to the same configuration it had at the previous block-end, making block-ends the natural sampling points for the dynamics (Figure~\ref{fig:block_decomposition}a).

\begin{example}[Block decomposition]
\label{ex:block_decomposition}
Consider the adjacent level-2 orbit for $l_0=2$, $l_1=4$, $M=63$:
\[
(17,3,0,0)\to(0,12,3,0)\to(0,0,9,3)\to(3,0,0,9)\to(17,3,0,0).
\]
The transitions $n=0\to 1\to 2$ form the eviction block: at each step, execute-and-shift produces a memory overshoot corrected by evicting from the most recently admitted stage ($5$~requests at $n{=}0{\to}1$, then $3$ at $n{=}1{\to}2$). No completions occur because only dead positions pass stage~$3$. The block-ends are $n=2$ and $n=3$, where live positions complete and new requests enter.

At $n=2$ (live stages $\{2,3\}$): predecessor gaps $g=(3,1)$, masses $(9,3)$, normalized masses $u=(9/3,\,3/1) = (3,3)$, spread $G^{\mathrm{sc}}=0$. At $n=3$ (live stages $\{0,3\}$): gaps swap to $g=(1,3)$, masses $(3,9)$, normalized $u=(3/1,\,9/3) = (3,3)$, spread $G^{\mathrm{sc}}=0$. The orbit is balanced.
\end{example}

\begin{figure}[t]
\centering
\includegraphics[width=\textwidth]{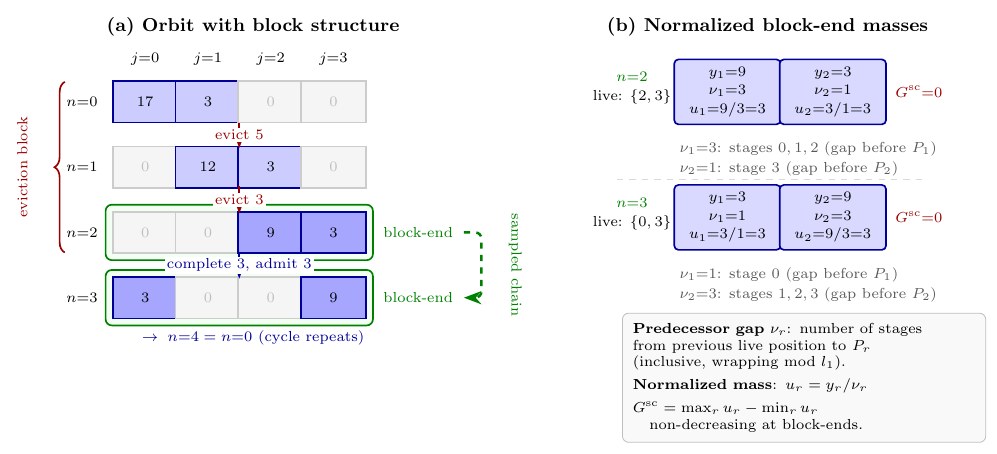}
\caption{Block decomposition for the level-2 adjacent orbit ($l_0 = 2$, $l_1 = 4$, $M = 63$). (a)~The orbit consists of an \emph{eviction block} ($n = 0 \to 2$), during which eviction removes excess requests but no completions occur, followed by a \emph{renewal phase} ($n = 2 \to 0$) in which completions free memory and new requests are admitted. The block-end phases (green borders) are the states immediately after the eviction block settles. (b)~At each block-end, the predecessor gap $\nu_r$ counts the stages between consecutive live positions. The normalized mass $u_r = y_r/\nu_r$ adjusts for these gaps; when all normalized masses agree ($G^{\mathrm{sc}} = 0$), the orbit is balanced.}
\label{fig:block_decomposition}
\end{figure}

\paragraph{Predecessor gaps.}
Fix an interval over which the live-position set is unchanged, with
\(m\ge 2\) live positions listed in circular order as
\(r_0,\dots,r_{m-1}\). The \emph{predecessor gap} of the \(h\)-th live
position is
\[
g_h:=(r_h-r_{h-1})\bmod l_1,
\qquad h=0,\dots,m-1,
\]
with indices modulo \(m\). Thus \(g_h\) is the number of service stages crossed
from the previous live position to the current one, with wrap-around
modulo~\(l_1\). In particular,
\(g_h\ge 1\) and \(\sum_h g_h=l_1\).

Write \(S_h:=\sum_{t=0}^h g_t\) for the cumulative gap.
At a block-end, the \(h\)-th live position occupies physical stage \(S_h-1\),
so the live positions sit at stages
\[
g_0-1,\quad g_0+g_1-1,\quad \dots,\quad l_1-1.
\]
The oldest live position is \(y_{m-1}\); it sits at the final stage
\(l_1{-}1\) and is about to complete.
A \emph{block-end sampled state} is described by the live-mass vector
\(Y=(y_0,\dots,y_{m-1})\).
Here \(y_h\) is the number of requests located at the \(h\)-th live
position in the circular ordering, equivalently, the mass at
physical stage \(S_h-1\).
At stage \(S_h-1\), each request occupies \(l_0+S_h\) tokens.
The memory boundary in these coordinates is therefore
\begin{equation}\label{eq:block_boundary}
M=\sum_{h=0}^{m-1}(l_0+S_h)y_h.
\end{equation}

\begin{appaproposition}[Exact block-end map]\label{prop:A4}
Starting from a block-end
sampled state \(Y=(y_0,\dots,y_{m-1})\), the next state of the embedded
block-end chain, conditional on the same support pattern, is
\(Y'=(y'_0,\dots,y'_{m-1})\) with
\[
y'_{h+1}=y_h,\qquad h=0,\dots,m-2,
\]
and
\begin{equation}\label{eq:block_map_y}
y'_0
=
\frac{(l_0+l_1)y_{m-1}-g_{m-1}\sum_{h=0}^{m-2}y_h}{l_0+g_{m-1}}.
\end{equation}
\end{appaproposition}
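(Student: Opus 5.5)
We follow the block-end state \(Y\) through the \(g_{m-1}\) iterations up to the next block-end, using the protocol of Section~\ref{sec:model} together with the memory-boundary identity \eqref{eq:block_boundary}. At a block-end, the live position with mass \(y_{m-1}\) sits at stage \(l_1-1\), and every other live position \(h\le m-2\) sits at stage \(S_h-1<l_1-1\). In the first iteration the execute step removes \(y_{m-1}\), which frees \((l_0+l_1)y_{m-1}\) tokens, and advances each remaining live cohort by one stage, which costs \(\sum_{h\le m-2}y_h\) tokens. Admission then places new mass at stage \(0\), and this mass carries the label of the completed position. The predecessor gap of this new position is \(g_{m-1}\). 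So after \(g_{m-1}-1\) further iterations the new mass sits at stage \(g_{m-1}-1\), and position \(h\) sits at stage \(S_h-1+g_{m-1}\). Relabeling cyclically, the old \(h\)-th position becomes the \((h+1)\)-th, and the new gap vector is the cyclic rotation of \((g_0,\dots,g_{m-1})\). This rotation is exactly what ``conditional on the same support pattern'' refers to. The shift relations \(y'_{h+1}=y_h\) then reduce to checking that no eviction touches the old live positions during these iterations, and \eqref{eq:block_map_y} reduces to computing the surviving mass of the new position.

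The key step is the intermediate iterations \(t=1,\dots,g_{m-1}-1\). During these, only dead positions pass the final stage, so nothing completes. Each iteration adds one token per live request: \(\sum_{h\le m-2}y_h\) for the old cohorts plus the current mass of the new cohort. The state was on \(\mathcal B\) before each execute step, so the overflow is exactly this increment. LPF evicts from the least-progressed occupied stage, which is the stage of the new cohort, so the new cohort absorbs the whole overflow. No admission follows, because eviction restores the memory exactly to \(M\) and leaves no slack. Hence the old masses are untouched, which gives \(y'_{h+1}=y_h\). I will need to check that the new cohort's mass stays nonnegative throughout; otherwise eviction would reach an older position. This is the condition that the support pattern is preserved, and it is the hypothesis we condition on.

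Tracking the new mass is simplest through a memory-accounting shortcut. At the next block-end, \(\mathcal B\) holds again, and the old cohorts occupy \(\sum_{h\le m-2}(l_0+S_h+g_{m-1})y_h\) tokens. By \eqref{eq:block_boundary}, the remaining memory is \(M-\sum_{h\le m-2}(l_0+S_h)y_h-g_{m-1}\sum_{h\le m-2}y_h=(l_0+l_1)y_{m-1}-g_{m-1}\sum_{h\le m-2}y_h\), using \(S_{m-1}=l_1\). The new cohort sits at stage \(g_{m-1}-1\), where each request costs \(l_0+g_{m-1}\) tokens. Dividing the remaining memory by this cost gives \eqref{eq:block_map_y} directly. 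This avoids iterating the eviction amounts step by step, though I will also cross-check the recursion for \(g_{m-1}=1\), where it reduces to \eqref{eq:single_balance}.

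The main obstacle I expect is the bookkeeping of the block structure rather than any algebra. The new block-end in the recomputed labeling places the new position at cumulative gap \(g_{m-1}\), and this has to match the claimed coordinates. I also have to make the convention precise when \(g_{m-1}=1\), where the eviction block is empty. Finally, I have to justify that the first admission does not cause an eviction that spills onto older cohorts. I would first state precisely that under the conditioning hypothesis every intermediate post-eviction mass of the new cohort is positive, and then derive all three points from the exact-overflow observation above.
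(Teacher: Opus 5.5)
Your proposal is correct and follows essentially the same route as the paper: a fixed-support induction showing that during the $g_{m-1}-1$ pure-evict iterations the newly admitted batch is always the least-progressed occupied stage, so LPF trims only that batch and, by the conditioning hypothesis, never exhausts it, which leaves the old live masses unchanged; then subtracting the old block-end boundary equation \eqref{eq:block_boundary} from the new one to obtain \eqref{eq:block_map_y}. You also treat the first iteration explicitly: an overflow before admission would contradict the fixed-support conditioning. Together with your $g_{m-1}=1$ check against \eqref{eq:single_balance}, this agrees with the paper's argument and is slightly more explicit.
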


\begin{proof}
The next sampling epoch occurs after exactly \(g_{m-1}\) physical iterations.
At the current block-end, the oldest live position \(P_{m-1}\) sits at stage
\(l_1{-}1\). After one execute step it completes, releasing \((l_0+l_1)y_{m-1}\)
tokens. Meanwhile, every other live position advances one stage, increasing
total memory by one token per request, while dead positions passing through
stage \(l_1{-}1\) release no memory. The system then admits a new batch at
stage~\(0\) and traverses \(g_{m-1}-1\) pure-evict iterations, until the next
live position reaches the final stage.

We record the fixed-support induction used in this block. After the initial
completion/admission step, let \(a_t\) be the mass of the newly admitted batch
after it has aged to stage \(t\), for \(t=0,\ldots,g_{m-1}-1\). The other live
masses are \(y_0,\ldots,y_{m-2}\) and occupy stages \(S_h+t\). For
\(t<g_{m-1}-1\), no live position reaches stage \(l_1-1\): the next live
position is \(g_{m-1}\) stages behind the completed one, so only dead positions
pass the final stage during these intermediate iterations. Hence the only
memory change before eviction is the one-token footprint growth of the current
live masses. The newly admitted batch is always the least-progressed occupied
stage, since its stage is \(t+1\) while the old live masses are at stages
\(S_h+t+1>t+1\). If LPF ever reached an old live stage, it would first have
exhausted the newly admitted batch, so the new relative position would be absent
at the next block-end. This contradicts the fixed-support branch. Hence,
conditional on the support pattern remaining unchanged, LPF trims a fractional
amount from the new batch, stops before exhausting it, and does not touch any
old live mass. After each such trim the state returns to the memory boundary, so
there is no admission during the pure-evict block. This proves by induction
that the old non-final live masses are copied unchanged and only the new
coordinate changes.

The old non-final live masses shift one slot in circular order:
\[
y'_{h+1}=y_h,\qquad h=0,\dots,m-2.
\]

In the new sampled state, every live stage has advanced by \(g_{m-1}\) positions.
The live stages are therefore
\[
g_{m-1}-1,\quad g_{m-1}+g_0-1,\quad \dots,\quad l_1-1,
\]
with memory weights \(l_0+g_{m-1}\), \(l_0+g_{m-1}+g_0\), \(\dots\),
\(l_0+l_1\). The new boundary equation is
\[
M=(l_0+g_{m-1})y'_0+\sum_{h=0}^{m-2}(l_0+g_{m-1}+S_h)y_h.
\]
Subtracting the old boundary~\eqref{eq:block_boundary} eliminates the constant
terms and yields
\[
(l_0+g_{m-1})y'_0
=
(l_0+l_1)y_{m-1}-g_{m-1}\sum_{h=0}^{m-2}y_h,
\]
which is \eqref{eq:block_map_y}.
\end{proof}

\noindent{\bf Normalized masses.} Define the \emph{normalized mass} of
position~\(i\) by
\begin{equation}\label{eq:normalized_mass_block}
u_i := y_i / g_i .
\end{equation}
This normalization accounts for the length of the predecessor gap: a live
position preceded by more dead stages absorbs more memory growth during the
eviction block, so \(u_i\) measures mass per unit gap
(Figure~\ref{fig:block_decomposition}b).

\begin{appacorollary}[Normalized block-end map]\label{cor:A5}
In normalized coordinates,
the embedded update is
\[
u'_{h+1}=u_h,\qquad h=0,\dots,m-2,
\]
and
\begin{equation}\label{eq:block_map_u}
u'_0
=
u_{m-1}
+
\frac{\sum_{h=0}^{m-2}g_h(u_{m-1}-u_h)}{l_0+g_{m-1}}.
\end{equation}
\end{appacorollary}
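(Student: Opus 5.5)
The corollary follows by substituting the normalization $y_i=g_iu_i$ into the exact block-end map of Proposition~\ref{prop:A4}. The one point that needs care is how the predecessor gaps are relabeled after one block-end step. I will first show that the gap vector rotates along with the masses, and then verify the two formulas in turn.

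\emph{Gap bookkeeping.} In the new sampled state, the live positions in circular order are the newly admitted batch $y'_0$, followed by the old positions $y_0,\dots,y_{m-2}$ in slots $1,\dots,m-1$. The proof of Proposition~\ref{prop:A4} lists the new live stages as $g_{m-1}-1,\ g_{m-1}+g_0-1,\ \dots,\ l_1-1$.

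From this list the new predecessor gaps are
\[
g'_0=g_{m-1},\qquad g'_{h+1}=g_h\quad (h=0,\dots,m-2).
\]
So the gap of each relative position is attached to that position and rotates with it. In particular, the completing position's gap $g_{m-1}$ is inherited by the new batch, which takes over the completed cohort's label.

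\emph{Shift coordinates.} For $h=0,\dots,m-2$ the rotated gaps give
\[
u'_{h+1}=y'_{h+1}/g'_{h+1}=y_h/g_h=u_h.
\]

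\emph{New coordinate.} Since $g'_0=g_{m-1}$, we have $u'_0=y'_0/g_{m-1}$. Substitute $y_h=g_hu_h$ and $y_{m-1}=g_{m-1}u_{m-1}$ into \eqref{eq:block_map_y}, then divide by $g_{m-1}$:
\[
u'_0=\frac{(l_0+l_1)u_{m-1}-\sum_{h=0}^{m-2}g_hu_h}{l_0+g_{m-1}}.
\]
To reach \eqref{eq:block_map_u}, use $\sum_h g_h=l_1$, which gives
\[
l_0+l_1=(l_0+g_{m-1})+\sum_{h=0}^{m-2}g_h.
\]
Splitting the $u_{m-1}$ term accordingly yields
\[
u'_0=u_{m-1}+\frac{\sum_{h=0}^{m-2}g_h(u_{m-1}-u_h)}{l_0+g_{m-1}},
\]
which is \eqref{eq:block_map_u}.

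The gap-rotation step is the only point that needs care; once it is established, the remaining steps are algebra.
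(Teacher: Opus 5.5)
Your proof is correct and matches the paper's argument: rotate the gap vector to $(g_{m-1},g_0,\dots,g_{m-2})$, get $u'_{h+1}=u_h$ from the shift, and divide \eqref{eq:block_map_y} by $g'_0=g_{m-1}$ after substituting $y_h=g_hu_h$. You also write out the final rearrangement via $\sum_h g_h=l_1$, which the paper leaves implicit.
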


\begin{proof}
After rotation, the gap vector becomes \((g_{m-1},g_0,g_1,\dots,g_{m-2})\),
so \(g'_{h+1}=g_h\) for \(h=0,\dots,m-2\) and \(g'_0=g_{m-1}\).
The shift relation \(y'_{h+1}=y_h\) gives
\(u'_{h+1}=y_h/g'_{h+1}=y_h/g_h=u_h\).
For the new admission, dividing~\eqref{eq:block_map_y} by \(g'_0=g_{m-1}\)
and substituting \(y_h=g_h u_h\) yields~\eqref{eq:block_map_u}.
\end{proof}

Next, define the \emph{block-end spread} as
\begin{equation}\label{eq:block_spread}
G^{\mathrm{sc}}:=\max_i u_i-\min_i u_i.
\end{equation}

\begin{appalemma}[Spread monotonicity on a fixed support pattern]\label{lem:A6}
Let
\[
G^{\mathrm{sc}}(Y):=\max_h u_h-\min_h u_h
\]
be the block-end sampled spread. Then
\[
G^{\mathrm{sc}}(Y')\ge G^{\mathrm{sc}}(Y).
\]
Moreover, if \(G^{\mathrm{sc}}(Y)>0\) and the oldest normalized coordinate
\(u_{m-1}\) is an extremum, then the inequality is strict.
\end{appalemma}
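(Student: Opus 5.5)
The plan is to work directly with the normalized block-end map of Corollary~\ref{cor:A5}. Since $u'_{h+1}=u_h$ for $h=0,\dots,m-2$, the new normalized vector $u'$ consists of the old coordinates $u_0,\dots,u_{m-2}$ together with the new entry $u'_0$; only $u_{m-1}$ is dropped. So the whole argument reduces to comparing $u'_0$ with $u_{m-1}$, relative to the old maximum $\bar u:=\max_h u_h$ and minimum $\underline u:=\min_h u_h$.

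First I would rewrite \eqref{eq:block_map_u} as $u'_0-u_{m-1}=\kappa\,(u_{m-1}-\hat u)$. Here
\[
\kappa:=\frac{\sum_{h=0}^{m-2}g_h}{l_0+g_{m-1}}>0,
\qquad
\hat u:=\frac{\sum_{h=0}^{m-2}g_h u_h}{\sum_{h=0}^{m-2}g_h},
\]
so $\hat u$ is a gap-weighted average of the retained coordinates. The weights $g_h\ge1$ are positive and $m\ge2$, so $\hat u$ lies in $[\min_{h\le m-2}u_h,\max_{h\le m-2}u_h]$. This displays $u'_0$ as lying on the far side of $u_{m-1}$ from $\hat u$, which is the block-end analogue of \eqref{eq:balance_re}.

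For the weak inequality I split on the position of $u_{m-1}$. If $u_{m-1}\ge\hat u$, then $u'_0\ge u_{m-1}$. In that case $\max u'\ge\max(\max_{h\le m-2}u_h,\,u_{m-1})=\bar u$, while $\min u'\le\min_{h\le m-2}u_h\le\hat u\le u_{m-1}$, so $\min u'\le\underline u$. The case $u_{m-1}\le\hat u$ is symmetric. In both cases $G^{\mathrm{sc}}(Y')\ge\bar u-\underline u=G^{\mathrm{sc}}(Y)$.

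For strictness, suppose $G^{\mathrm{sc}}(Y)>0$ and $u_{m-1}=\bar u$ (the case $u_{m-1}=\underline u$ is symmetric). Then $\underline u<\bar u$ is attained at some retained index $h\le m-2$, and that coordinate survives in $u'$. Because some retained coordinate lies strictly below $\bar u$ and carries positive weight, we get $\hat u<\bar u$. Hence $u'_0-\bar u\ge\kappa(\bar u-\hat u)>0$, which gives $\max u'>\bar u$. Since also $\min u'\le\underline u$, the spread increases strictly. To make this quantitative, I would note that $\bar u-\hat u\ge g_{h^*}(\bar u-\underline u)/\sum_{h\le m-2}g_h$, where $h^*$ is the minimizing index. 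This yields an increment of at least $(\bar u-\underline u)/(l_0+g_{m-1})\ge G^{\mathrm{sc}}/(l_0+l_1)$, which the later cascade argument will need.

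The main subtlety is not the algebra but making sure the map of Corollary~\ref{cor:A5} is the right object. The lemma is stated conditional on the support pattern being unchanged, so that the gap vector merely rotates. I would state explicitly that the comparison is for the fixed-support branch of Proposition~\ref{prop:A4}: it is valid as long as $u'_0>0$ and the pure-evict block does not exhaust the new batch. The degenerate case $m=1$ has no retained coordinates and zero spread, so it is excluded or trivial.
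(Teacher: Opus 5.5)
Your proof is correct and follows essentially the paper's argument: the retained coordinates $u_0,\dots,u_{m-2}$ reappear unchanged, and the sign of the numerator in \eqref{eq:block_map_u} places $u'_0$ on the far side of $u_{m-1}$ from the retained coordinates, strictly so when $u_{m-1}$ is an extremum and the spread is positive. Your weighted-average form $u'_0-u_{m-1}=\kappa(u_{m-1}-\hat u)$ just repackages the paper's case split on whether $u_{m-1}$ is the maximum, the minimum, or neither, and the quantitative increment $G^{\mathrm{sc}}/(l_0+l_1)$ you extract is a harmless bonus (the paper derives a comparable renewal estimate separately inside the proof of Lemma~\ref{lem:A8}).
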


\noindent The proof is deferred to Appendix~\ref{app:aux_lemma_proofs}.

\begin{appacorollary}[Strict growth within bounded time]\label{cor:A7}
Fix a support
pattern with \(m\ge 2\). If \(G^{\mathrm{sc}}(Y)>0\) and the support pattern
does not change, then \(G^{\mathrm{sc}}\) strictly increases at least once every
\(m\) sampled steps.
\end{appacorollary}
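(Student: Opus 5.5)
We prove Corollary~\ref{cor:A7} by following the oldest-coordinate extremum through the circular shift, in the same way as the pre-eviction argument around Lemma~\ref{lem:A2}, now in the block-end normalized coordinates. By Corollary~\ref{cor:A5}, one sampled step rotates the normalized vector: \(u'_{h+1}=u_h\) for \(h=0,\dots,m-2\), and only the new coordinate \(u'_0\) is freshly computed. Lemma~\ref{lem:A6} gives \(G^{\mathrm{sc}}\) non-decreasing along the sampled chain. It also gives strict increase whenever \(G^{\mathrm{sc}}>0\) and the oldest coordinate \(u_{m-1}\) attains \(\max_h u_h\) or \(\min_h u_h\). It therefore suffices to show that, within any window of \(m\) consecutive sampled steps on a fixed support pattern, some step starts with an extremal oldest coordinate.

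Fix a sampled time \(s\) with \(G^{\mathrm{sc}}>0\). Let \(h^*\) be an index with \(u_{h^*}=\max_h u_h\) (the minimum works symmetrically).

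\textbf{Case 1: the maximizing value survives.} If no strict increase occurs in steps \(s,\dots,s+m-1\), the shift carries the value \(u_{h^*}\) unchanged from slot \(h^*\) to slot \(h^*+1\), and so on. After \(m-1-h^*\le m-1\) steps it occupies slot \(m-1\). We must check that it is still a maximum when it arrives. Only the new coordinate \(u'_0\) can differ from the shifted old values. If \(u'_0\) ever exceeded the old maximum, the maximum would increase while the minimum is either preserved by the shift or lowered. A lowered minimum is impossible, because it would violate monotonicity, and in any case it would increase \(G^{\mathrm{sc}}\). So a new value exceeding the old maximum is itself a strict increase of the spread.

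\textbf{Case 2: the maximizing value is completed or replaced.} The maximizer may fail to reach slot \(m-1\) as the sole maximizer, since ties may arise. Ties are harmless: equal to the maximum still counts as an extremum in Lemma~\ref{lem:A6}. When the value reaches slot \(m-1\), that step is strict by Lemma~\ref{lem:A6}, because \(G^{\mathrm{sc}}>0\) and \(u_{m-1}\) is extremal.

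Hence within at most \(m\) sampled steps (the index runs \(0\) to \(m-1\), so at most \(m\) steps are needed including the triggering step) a strict increase occurs.

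The main obstacle is the bookkeeping in the window where the extremum is overtaken. One must argue that the spread never decreases, which is Lemma~\ref{lem:A6}, and that overtaking by the new coordinate is itself strict growth. To formalize this, I would use the running maximum \(\bar u^s:=\max_h u^s_h\) and the running minimum. Both are monotone: \(\bar u\) is non-decreasing and \(\underline u\) is non-increasing, because every coordinate except \(u'_0\) is copied, and \(u'_0\) is a convex-type extrapolation from \(u_{m-1}\) that, by \eqref{eq:block_map_u}, lies outside or at \([\underline u,\bar u]\) only in the direction of \(u_{m-1}\)'s relation to the others. A non-strict step then requires \(u'_0\in[\underline u,\bar u]\) and both extremal values to persist under the shift. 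Pushing the argmax forward one slot per step forces it into position \(m-1\) within \(m-1\) steps, after which the next step is strict.
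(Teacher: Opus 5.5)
Your proposal is correct and follows the paper's proof. Both track a coordinate attaining the maximum (or minimum), use the shift $u'_{h+1}=u_h$ to carry it to the oldest slot within at most $m-1$ sampled steps unless strict growth has already occurred, and then invoke Lemma~\ref{lem:A6} for the following step. Your extra bookkeeping makes explicit a point the paper's short argument leaves implicit: the running maximum is non-decreasing and the running minimum non-increasing, so in a non-strict window both extremal values stay fixed and the tracked value is still extremal when it arrives. One aside is wrong but unnecessary: a lowered minimum does not ``violate monotonicity''; it simply increases the spread, which your ``in any case'' clause already covers.
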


\begin{proof}
Take a coordinate attaining either the current maximum or the current minimum.
Under the sampled dynamics, old coordinates shift one slot older at each
sampled step. Hence, unless strict growth has already occurred, that extremal
coordinate reaches the oldest slot \(m-1\) within at most \(m-1\) sampled
steps. The following sampled step is then covered by Lemma~\ref{lem:A6} and forces
strict growth.
\end{proof}

\begin{appalemma}[Finite-time support loss on a fixed pattern]\label{lem:A8}
Fix a support
pattern with \(m\ge 2\) and suppose \(G^{\mathrm{sc}}(Y_0)>0\) at the initial
sampled state. Then the fixed-support regime cannot persist: in the physical
LPF dynamics, some live coordinate reaches zero, and the support strictly
shrinks, within finitely many sampled steps.
\end{appalemma}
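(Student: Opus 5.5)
We argue by contradiction, assuming the support pattern persists forever on the embedded block-end chain, so that Corollary~\ref{cor:A5} governs every sampled step. The argument mirrors the proof of Lemma~\ref{lem:A2}: the spread grows by a definite amount infinitely often, but memory feasibility keeps it bounded.

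\emph{Step 1 (a priori bound).} While the support is fixed, every live mass is positive. The memory boundary \eqref{eq:block_boundary} then gives $y_h \le M/(l_0+S_h) \le M/(l_0+1)$. Since $g_h\ge1$, every normalized mass satisfies $0<u_h\le M/(l_0+1)$. Hence $G^{\mathrm{sc}}\le M/(l_0+1)$ along the fixed-support trajectory.

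\emph{Step 2 (quantitative increments).} We sharpen Lemma~\ref{lem:A6} to a quantitative statement, using \eqref{eq:block_map_u} directly. Suppose the oldest coordinate $u_{m-1}$ is the current minimum. Every term $g_h(u_{m-1}-u_h)$ is then nonpositive, and the term at the index attaining the maximum equals $-g_hG^{\mathrm{sc}}\le -G^{\mathrm{sc}}$. Therefore
\[
u'_0\le \min_h u_h - G^{\mathrm{sc}}/(l_0+l_1).
\]
Here we used $g_{m-1}\le l_1$ in the denominator. The old maximum is preserved by the shift, so the spread increases by at least $G^{\mathrm{sc}}/(l_0+l_1)$. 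The case where $u_{m-1}$ is the maximum is symmetric. One subtlety is that the maximizer might be the oldest coordinate itself when $m=2$. In that case the other coordinate is the minimum, and the same bound holds.

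\emph{Step 3 (recurrence of extremal completions).} We track the running maximum and minimum as in Corollary~\ref{cor:A7}. Within every window of $m$ sampled steps, one of two things happens. Either a current extremum reaches the oldest slot and produces the increment of Step~2, or an extremum is strictly renewed by a new coordinate $u'_0$. In the renewal case, that new coordinate itself becomes the extremum and reaches slot $m-1$ within $m-1$ further steps. Because $G^{\mathrm{sc}}$ is nondecreasing by Lemma~\ref{lem:A6}, $G^{\mathrm{sc}}\ge G^{\mathrm{sc}}(Y_0)=:G_0>0$ throughout. So each window of $2m$ steps adds at least $G_0/(l_0+l_1)$ to the spread. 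After roughly $2m(l_0+l_1)M/((l_0+1)G_0)$ steps, the spread exceeds the bound of Step~1, which is a contradiction.

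\emph{Step 4 (physical interpretation).} The contradiction must be realized as a support loss in the physical LPF dynamics, not as a formal negative value of $u'_0$. The map \eqref{eq:block_map_y} is exact only conditional on the support being unchanged. If the formula yields $y'_0\le0$, the completion-released memory does not cover the growth of the surviving live masses. In that case, either no admission occurs and the new position is born empty, or LPF during the pure-evict block exhausts the new batch and then cuts into older live stages, as noted in the proof of Proposition~\ref{prop:A4}. In both cases some live coordinate reaches zero, and Lemma~\ref{lem:A3} makes the loss permanent.

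The main obstacle is this last point together with the bookkeeping in Step~3. We must check that the only way the fixed-support map can stop being valid is that some live coordinate hits zero. In particular, exceeding $M/(l_0+1)$ is impossible in the physical dynamics, so the contradiction can only manifest as a coordinate becoming nonpositive, that is, as a support loss.
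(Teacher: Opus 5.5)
\textbf{There is a genuine gap in Step~3.}

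Step~2 gives a definite increment only when the oldest coordinate is \emph{exactly} the current maximum or minimum. A renewal $u_0'>\max_h u_h$, by contrast, can raise the spread by an arbitrarily small amount.

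Your sentence ``that new coordinate itself becomes the extremum and reaches slot $m-1$ within $m-1$ further steps'' silently assumes it is still the extremum when it gets there. It may instead be overtaken by a further, again tiny, renewal, and so on. Nothing in Steps~1--2 rules out a chain of max-renewals spaced at most $m-1$ sampled steps apart, with summable increments, during which the Step~2 event never fires. So the claim that ``each window of $2m$ steps adds at least $G_0/(l_0+l_1)$'' does not follow. This is the same bookkeeping issue that the paper resolves in Lemma~\ref{lem:A2} through the running maximum.

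\textbf{The missing ingredient is a near-extremum version of Step~2.} Set $U:=\max_h u_h$, $a:=U-u_{m-1}$ and $S:=\sum_{h=0}^{m-2}g_h(U-u_h)$. Then \eqref{eq:block_map_u} gives
\[
u_0'-U=\frac{S-(l_0+l_1)a}{l_0+g_{m-1}}.
\]
Suppose $a\le G_0/[2(l_0+l_1)]$. Then the oldest coordinate is not the minimum, so $S\ge G^{\mathrm{sc}}\ge G_0$, and hence $u_0'-U\ge G_0/[2(l_0+l_1)]$. In words, an oldest coordinate merely close to the running maximum already forces a jump of fixed size.

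The paper combines this estimate with convergence of the bounded, nondecreasing running maximum. At late times, the coordinate attaining $U$ reaches the oldest slot within $m-1$ steps, while $U$ has risen by less than $G_0/[2(l_0+l_1)]$. The estimate then forces a fixed jump, contradicting convergence.

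You could also keep your explicit count. Let $\epsilon:=G_0/[2(l_0+l_1)]$. Over any $m$ consecutive steps, either $U$ has already risen by $\epsilon$, or the coordinate that held $U$ at the window's start arrives at the oldest slot within $\epsilon$ of the current $U$, and the estimate raises $U$ by $\epsilon$. Since $U\le M/(l_0+1)$, this ends in finitely many steps.

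Your Step~4 translation to a physical support loss matches the paper's branch-boundary argument and is fine.
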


\noindent The proof is deferred to Appendix~\ref{app:aux_lemma_proofs}.

\begin{appatheorem}[Position-tracking cascade]\label{thm:A1_position}
Under the saturated-input
continuous model of Theorem~\ref{thm:fcfs}, every memory-boundary trajectory
either stays exactly on, or is captured exactly by, a periodic orbit of some
eviction level, or eventually reaches the level-$(l_1-1)$ maximal-eviction
cycle. In particular, every trajectory that is not captured by an intermediate
periodic orbit cascades to maximal eviction.
Lemma~\ref{lem:A9} below shows that these exact nonmaximal capture sets are
of Lebesgue measure zero within each fixed set of occupied stages on the memory
boundary.
\end{appatheorem}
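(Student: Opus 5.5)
The proof is an induction on the size of the live set \(L_n\), which by Lemma~\ref{lem:A3} is nonincreasing in \(n\). Since \(|L_n|\in\{1,\dots,l_1\}\), the support can shrink only finitely many times. I would therefore organise the argument around the sequence of times at which the support shrinks and show that, between consecutive support losses, a trajectory either gets captured exactly by a balanced orbit or loses another position in finite time.

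\textbf{Full support.} Start at the eviction-free level, \(|L_0|=l_1\). If \(G^0=0\), the state is a multiple of \(\mathbf 1\) on \(\mathcal B\), so it equals \(X^*\); this is the fixed point, a degenerate case of exact capture. If \(G^0>0\), Lemma~\ref{lem:A2} gives a first overflow in finite time. That overflow produces an LPF eviction that empties a relative position, so \(|L_n|\le l_1-1\) afterwards.

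\textbf{Reduced support.} Now take a time at which the support has just become some \(L\) with \(m=|L|\ge 2\). Wait at most one pipeline pass (\(l_1\) iterations) until the first block-end. From then on the embedded chain is governed by Proposition~\ref{prop:A4} and Corollary~\ref{cor:A5}, as long as the support pattern does not change. There are two cases:
\begin{itemize}
\item If \(G^{\mathrm{sc}}=0\) at that block-end, the normalized masses are equal. The map~\eqref{eq:block_map_u} then reproduces equal normalized masses with rotated gaps, so the trajectory lies exactly on a balanced periodic orbit at level \(l_1-m\). This is exact capture.
\item If \(G^{\mathrm{sc}}>0\), Lemma~\ref{lem:A8} shows that some live coordinate reaches zero in finitely many sampled steps. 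Hence the support strictly shrinks.
\end{itemize}
Iterating this dichotomy at most \(l_1-1\) times, each trajectory either is captured exactly at some level or reaches \(|L_n|=1\).

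\textbf{Single live position.} Once \(|L_n|=1\), there is only one live position, and I would check directly that the trajectory is on the maximal-eviction cycle. Pure-evict LPF steps keep the state on \(\mathcal B\) with the single cohort at mass \(M/w_j\) at stage \(j\). At completion the full \(M\) is released and refilled at stage \(0\) with mass \(M/w_0\). So after at most one pass the trajectory coincides with the cycle \((M/w_0,0,\dots)\to\cdots\to(0,\dots,M/w_{l_1-1})\).

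\textbf{Expected obstacle.} The delicate step is the transition bookkeeping at each support loss. One must check that at the moment of loss the new configuration is again a legitimate starting point for the next level's block-end analysis: the remaining masses are positive, they lie on \(\mathcal B\), and the gap vector is well defined. One must also check that a partial-stage eviction during the transition cannot temporarily revive a position. The latter is ruled out by Lemma~\ref{lem:A3}. For the former, I would re-index the relative positions at the first block-end after the loss and note that Proposition~\ref{prop:A4} only needs the support to be fixed from that block-end on. Since the support can shrink at most \(l_1-1\) times, each phase is finite and the cascade terminates.
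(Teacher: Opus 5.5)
Your proposal is correct and follows the paper's proof essentially step for step. You use Lemma~\ref{lem:A2} to exit the full-support regime, and Lemma~\ref{lem:A3} to make the support nonincreasing so that only finitely many losses can occur. You then apply the block-end dichotomy: if $G^{\mathrm{sc}}=0$ the trajectory lies on a balanced orbit, and if $G^{\mathrm{sc}}>0$ Lemma~\ref{lem:A8} forces another support loss. At one live position, the memory boundary pins the trajectory to the maximal-eviction cycle, and once $|L_n|=1$ the state is already exactly $(M/w_r)e_r$, so no extra pass is needed.
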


\begin{proof}
By Lemma~\ref{lem:A2}, every non-equilibrium eviction-free trajectory leaves the
eviction-free regime in finite time; equivalently, memory overflow forces an
LPF eviction and moves the trajectory to the next lower live-support level.

Now fix an interval on which the live-position set is constant and has size
\(m\ge 2\). By Lemma~\ref{lem:A3}, dead positions do not revive. Therefore the trajectory
on this interval is governed by the exact block-end sampled chain above.

If \(G^{\mathrm{sc}}=0\) at one sampled phase, then all normalized coordinates
are equal. Equation~\eqref{eq:block_map_u} preserves this equality, so the
entire sampled chain remains balanced. More explicitly, if \(u_h=c\) for all
\(h\), then \(y_h=cg_h\); after each block-end update the gap vector and the
mass vector rotate together, and after one full circuit of the \(m\) live
positions the block-end sampled state returns to itself. Proposition~\ref{prop:A4} and
the fixed-support induction in its proof determine the intervening physical
iterations uniquely from the block-end state. Hence the balanced sampled chain
lifts to the corresponding eviction-level-\((l_1-m)\) periodic orbit of the
physical iteration map.

Suppose instead that \(G^{\mathrm{sc}}>0\) on the sampled chain. Since
\(u_h\in[0,\,M/(l_0+1)]\), the sampled coordinates remain bounded. Lemma~\ref{lem:A8}
shows that the fixed-support regime cannot persist: recurrent evictions drive
some live coordinate to zero in finite time, moving the trajectory to the next
lower live-support level.

Each such support loss strictly decreases the number of live positions, and
Lemma~\ref{lem:A3} rules out revival. After finitely many reductions, either the
trajectory lands exactly on a balanced intermediate periodic orbit, or only one
live position remains. When \(m=1\), the memory boundary forces the deterministic
cycle
\[
\Bigl(\frac{M}{w_0},0,\ldots,0\Bigr)
\to
\Bigl(0,\frac{M}{w_1},0,\ldots,0\Bigr)
\to
\cdots
\to
\Bigl(0,\ldots,0,\frac{M}{w_{l_1-1}}\Bigr),
\]
which is the maximal-eviction cycle.
\end{proof}

\begin{appalemma}[Nonmaximal exact-capture sets have Lebesgue measure zero]\label{lem:A9}
For each nonempty \(I\subseteq\{0,\ldots,l_1-1\}\), let
\[
\mathcal B_I:=\Bigl\{x\in\mathbb R_+^{l_1}:
x_j>0\ \text{for } j\in I,\ x_j=0\ \text{for } j\notin I,
\sum_{j=0}^{l_1-1}w_jx_j=M\Bigr\}
\]
be the relative interior of a memory-boundary face, equipped with
\((|I|-1)\)-dimensional Lebesgue measure on its affine hull. The set of initial
states in \(\mathcal B_I\) whose trajectory is captured in finite time by a
nonmaximal balanced orbit has relative Lebesgue measure zero. For \(|I|=1\), this set is
empty. Consequently, within each fixed set of occupied stages on the memory
boundary, the nonmaximal exact-capture set has Lebesgue measure zero. Viewed as
a subset of \(\mathbb R^{l_1}\), it also has Lebesgue measure zero.
\end{appalemma}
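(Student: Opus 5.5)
Our approach is to write the exact-capture set as a countable union of pieces, each the preimage of a lower-dimensional set under a map that is piecewise real-analytic (indeed piecewise affine or piecewise linear-fractional) and locally injective, or at least nonsingular, on the relevant face. Fix $I$ with $|I|\ge2$. A trajectory starting in $\mathcal B_I$ that is captured by a nonmaximal balanced orbit does so after a finite sequence of events: eviction-free steps, eviction blocks and support losses. We index these by a finite \emph{itinerary}. The itinerary records the capture time $N$, the support pattern at each step, which stage LPF partially trims, and the live set $L$ with $|L|=m\ge2$ at capture. There are countably many itineraries. On the subset $D_\sigma\subseteq\mathcal B_I$ realizing a fixed itinerary $\sigma$, the $N$-step map $F^N$ is a composition of explicit formulas. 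These are the shift, the balance equation \eqref{eq:single_balance}, the block-end map of Proposition~\ref{prop:A4}, and LPF trimming, which solves one linear equation for the trimmed stage. So $F^N$ is affine on $D_\sigma$ in the original coordinates, restricted to the affine hull of $\mathcal B_I$.

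The capture condition at the end of $\sigma$ asks that the block-end normalized masses be all equal. With $m\ge2$ live positions, this imposes $m-1\ge1$ independent linear equations $u_0=u_1=\cdots=u_{m-1}$ on the image, where $u_h=y_h/g_h$ by \eqref{eq:normalized_mass_block}. The eviction-free fixed point is the case $m=l_1$, with $G^n=0$. The captured piece of $D_\sigma$ is therefore $D_\sigma\cap (F^N|_{D_\sigma})^{-1}(H)$, where $H$ is a proper affine subspace. This piece is contained in an affine subspace of the hull of $\mathcal B_I$, and it is proper unless the affine functional $u_0-u_1$ composed with $F^N$ vanishes identically on $D_\sigma$.

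The main obstacle is ruling out this degenerate case. We must show the functional is not constant on any $D_\sigma$ of positive measure. Equivalently, a positive-measure open set of initial data cannot be funneled into a single balanced orbit. If it were, $F^N$ would collapse an open set of the face onto the capture set. We plan to exclude this by noting that each constituent map has a controlled rank.

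In eviction-free steps, the matrix $B$ in \eqref{eq:B1} is invertible, since its determinant is $\pm\beta\neq0$. Thus no collapse occurs while the live set is fixed. Along a fixed-support block-end chain, Proposition~\ref{prop:A4} is likewise an invertible linear map on the $(m-1)$-dimensional face, because $y'_0$ has coefficient $(l_0+l_1)/(l_0+g_{m-1})\neq0$ on $y_{m-1}$. Dimension drops only at support-loss events, where one coordinate hits zero. On a positive-measure piece, such an event projects along a single direction, and the remaining coordinates stay affine functions of the initial data with full rank on the new face.

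By induction on the number of support losses, $F^N|_{D_\sigma}$ maps onto an open subset of the terminal face $\mathcal B_L$ with full rank. The preimage of the proper subspace $\{u_0=\dots=u_{m-1}\}$ is then a proper affine subspace intersected with $D_\sigma$, which is null. For $|I|=1$, Lemma~\ref{lem:A3} gives that the trajectory stays in the single-live-position cycle, so the set is empty. A countable union of null sets is null, which gives relative measure zero on each $\mathcal B_I$. Each $\mathcal B_I$ with $|I|\le l_1$ lies in the hyperplane $\sum_j w_jx_j=M$, so the whole set is also Lebesgue-null in $\mathbb R^{l_1}$.
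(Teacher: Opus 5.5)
Your proposal is correct and is essentially the paper's argument. You decompose by finite LPF branch histories on which the dynamics are affine, use that eviction-free steps ($\det B=\pm\beta$) and fixed-support block-end steps (nonzero $y_{m-1}$-coefficient) are affine isomorphisms while support-loss steps are affine submersions, and pull back the single balanced target point on the terminal face before taking a countable union. The differences are cosmetic: you use surjectivity of the composed affine map to get a proper affine subspace where the paper uses Fubini. Two small corrections: a support-loss step can remove several positions at once (not ``a single direction''), and degenerate branch boundaries such as exact exhaustion or zero slack should be shown null explicitly by the same pullback, as the paper does.
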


\noindent The proof is deferred to Appendix~\ref{app:aux_lemma_proofs}.

\begin{appalemma}[Local absorption by the maximal-eviction cycle]\label{lem:max_cycle_local_absorption}
Let
\[
C_r:=\frac{M}{w_r}e_r,\qquad r=0,\ldots,l_1-1,
\]
be the states of the level-\((l_1-1)\) maximal-eviction cycle, where \(e_r\)
is the \(r\)th coordinate vector and \(w_r=l_0+1+r\), and write
\(\mathcal C_{\max}:=\{C_0,\ldots,C_{l_1-1}\}\). Let \(\mathcal E\) be the
exact-capture set from Theorem~\ref{thm:fcfs}: the set of memory-boundary
initial states whose trajectories are captured in finite time by the
eviction-free fixed point or by an intermediate balanced limit cycle. For
every \(\epsilon>0\), there exists \(\delta>0\) such that every
memory-boundary trajectory with
\[
\operatorname{dist}(X^0,\mathcal C_{\max})<\delta,
\qquad
X^0\notin\mathcal E,
\]
satisfies
\[
\operatorname{dist}(X^n,\mathcal C_{\max})<\epsilon\quad\text{for all }n\ge0,
\]
and reaches \(\mathcal C_{\max}\) in finite time. Consequently,
\(\mathcal C_{\max}\) is asymptotically stable outside \(\mathcal E\) in the
sense of Definition~\ref{def:stability}.
\end{appalemma}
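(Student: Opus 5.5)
We work directly with the physical iteration map near the cycle and show that, for $\delta$ small, a trajectory starting $\delta$-close to some $C_r$ is forced onto $\mathcal C_{\max}$ within one pipeline cycle of $l_1$ iterations. The distance to $\mathcal C_{\max}$ never exceeds a constant multiple of $\delta$ along the way. Fix $X^0$ on the memory boundary with $\|X^0-C_r\|<\delta$ and $X^0\notin\mathcal E$. Then $x^0_r\ge M/w_r-\delta$, and every other coordinate is at most $\delta$. Because $X^0$ lies on $\mathcal B$, the total mass off stage $r$ is $O(\delta)$ in memory terms.

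\textbf{Step 1: one-step tracking.} Follow the dominant cohort. If $r<l_1-1$, the execute step moves it to stage $r+1$, and the post-execution memory is $M+\sum_{j<l_1-1}x^0_j-w_{l_1-1}x^0_{l_1-1}$. Since $x^0_r$ contributes roughly $M/w_r$, this exceeds $M$ by about $M/w_r-O(\delta)>0$. Eviction is therefore triggered, and there is no admission. LPF removes mass from the least-progressed stages first. The required removal is about $M/w_r$ tokens, while the non-dominant stages behind the dominant cohort carry only $O(\delta)$ memory. Hence LPF wipes out every stage younger than $r+1$ and then trims the dominant cohort itself to restore the boundary. Stages older than $r+1$ hold $O(\delta)$ mass and are untouched. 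So after one step the state has the dominant cohort at $r+1$ with mass $M/w_{r+1}-O(\delta)$, all stages younger than it are empty, and the remaining $O(\delta)$ mass sits only at older stages. Linear bounds with explicit constants depending on $l_0$, $l_1$, $M$ give $\operatorname{dist}(X^1,\mathcal C_{\max})\le K\delta$. At the completion step ($r=l_1-1$), the dominant cohort frees $w_{l_1-1}x^0_{l_1-1}$ tokens. Admission at stage $0$ then refills memory, and the new stage-$0$ cohort becomes dominant with mass $M/w_0-O(\delta)$, since the residual old masses use only $O(\delta)$ memory.

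\textbf{Step 2: finite-time absorption.} Under this dynamics, the residual older stages advance toward $l_1-1$ and complete one by one. Each completion frees $O(\delta)$ memory, which funds an $O(\delta)$ admission at stage $0$. This admission is younger than the dominant cohort. I must show that such fresh mass is evicted at the next execute step rather than persisting. It is, because the overshoot from the dominant cohort's growth ($\approx M/w\cdot 1$ token per request, which is of order one) dwarfs the $O(\delta)$ memory of every stage younger than it. LPF therefore annihilates all younger stages every step, except at the single step where the dominant cohort itself completes and regenerates at stage $0$. Consequently, every residual cohort older than the dominant one completes within at most $l_1$ steps without being replaced by any surviving mass. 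After at most $l_1$ iterations (at most $2l_1$ for safety), the only live position is the dominant one. The memory boundary then pins its mass to $M/w_j$, so $X^n\in\mathcal C_{\max}$.

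\textbf{Step 3: conclusion and the main obstacle.} Iterating the one-step bound over at most $2l_1$ steps gives $\operatorname{dist}(X^n,\mathcal C_{\max})\le K^{2l_1}\delta$. Choosing $\delta=\epsilon K^{-2l_1}$ yields both stability and finite-time arrival. The exclusion $X^0\notin\mathcal E$ is consistent with this, since the argument shows no nearby trajectory is captured by a nonmaximal orbit. The main obstacle is the uniform bookkeeping in Step 2 at the completion step. There, freed memory $w_{l_1-1}x_{l_1-1}$ is reconverted into admission, and the residual older cohorts must not be trimmed or enlarged in a way that lets a second position grow. I would handle this by writing the completion step explicitly: new stage-$0$ mass equals $\bigl(M-\sum_{j\ne}w_{j+1}x_j\bigr)/w_0$. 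I would also check that the subsequent step's overshoot of order $M/w_0$ exceeds all $O(\delta)$ younger memory whenever $\delta<c\,M/(l_1 w_{l_1-1})$ for an explicit constant $c$. An alternative route is to invoke Lemma~\ref{lem:A3} and the cascade of Theorem~\ref{thm:A1_position}, but the direct estimate gives the needed $\epsilon$–$\delta$ control.
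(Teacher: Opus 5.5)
Your strategy works, and your Step~1 already contains the correct mechanism. It obtains absorption by a different route than the paper. The paper uses its local tube analysis only to keep iterates in the $\epsilon$-tube: overflow at nonfinal phases, LPF clearing lagging residuals before touching the principal cohort, and no new residual cohort being created. It then gets finite-time arrival on $\mathcal C_{\max}$ by invoking the global cascade, Theorem~\ref{thm:A1_position}, for $X^0\notin\mathcal E$.

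You instead close the argument locally. Lagging residuals are removed at the first execute step, and leading residuals complete before the dominant cohort reaches stage $l_1-1$. The boundary equation then pins the state to some $C_j$ within at most $l_1$ iterations. This route is self-contained and gives an explicit absorption time. It also shows that a small enough tube around $\mathcal C_{\max}$ does not meet $\mathcal E$, so the exclusion is vacuous near the cycle. The paper's route avoids redoing this bookkeeping by reusing machinery it needs anyway for the global basin statement.

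You should, however, delete the claim in Step~2 that each residual completion ``funds an $O(\delta)$ admission at stage $0$''. It is false, and your own Step~1 formula $M+\sum_{j<l_1-1}x_j-w_{l_1-1}x_{l_1-1}$ shows why. At a nonfinal phase $s$, the dominant cohort's growth adds about $M/w_s\ge M/w_{l_1-2}$ tokens. Once the total residual memory is a small fraction of $\min_r M/w_r$, this exceeds the memory released by completing residuals. So the post-execute state overflows, LPF restores the boundary exactly, and nothing is admitted.

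The point is not cosmetic. Suppose fresh $O(\delta)$ mass could enter at the step where the dominant cohort moves from stage $l_1-2$ to $l_1-1$. It would not be evicted at the following completion step, because there is no overflow then. It would become a leading residual of the new cohort and regenerate at the same phase every $l_1$ iterations. Taken literally, then, your Step~2 dynamics would allow a persistent satellite, and the trajectory would never land on $\mathcal C_{\max}$.

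For the same reason, the check you list as the main obstacle is misdirected. After the completion step there is no younger memory at all, since the new cohort sits at stage $0$. What must be verified is that at every nonfinal phase the overshoot dominates the memory released by completing residuals. That is exactly the smallness condition the paper imposes.
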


\noindent The proof is deferred to Appendix~\ref{app:aux_lemma_proofs}.

The exceptional set is not empty. For example, with \(l_0=2\), \(l_1=3\),
\(M=24\), and \(w=(3,4,5)\), the state
\[
X^0=\left(\frac{48}{13},\frac{24}{13},\frac{72}{65}\right)
\]
satisfies the memory boundary and maps in one iteration to
\[
\left(0,\frac{48}{13},\frac{24}{13}\right),
\]
which lies on the level-1 balanced cycle
\[
\left(0,\frac{48}{13},\frac{24}{13}\right)
\to
\left(\frac{24}{13},0,\frac{48}{13}\right)
\to
\left(\frac{72}{13},\frac{24}{13},0\right)
\to
\left(0,\frac{48}{13},\frac{24}{13}\right).
\]

Lemma~\ref{lem:A2} already proves part~(1) of Theorem~\ref{thm:fcfs}. Theorem~\ref{thm:A1_position} gives
part~(2): if \(\mathcal C\) is an eviction-level-\(i\) orbit with
\(1\le i\le l_1-2\), then its block-end sampled representative has
\(G^{\mathrm{sc}}=0\), but any sufficiently small perturbation that makes the
normalized sampled masses unequal produces \(G^{\mathrm{sc}}>0\), and
Theorem~\ref{thm:A1_position} then forces another support loss. So intermediate cycles are not
stable.

Figure~\ref{fig:appendix_cascade} illustrates the full cascade for Example~\ref{ex:spiral}
($l_0 = 2$, $l_1 = 3$, $M = 24$). The three phases are visible:
(i) growing oscillation in the eviction-free regime ($n = 0$--$6$), (ii) intermittent
evictions of increasing severity as the system cascades through levels ($n = 7$--$16$),
and (iii) convergence to the stable level-$2$ cycle ($n \geq 17$) with throughput
$c_2 = 8/5$ per iteration.

\begin{figure}[ht]
\centering
\includegraphics[width=\textwidth]{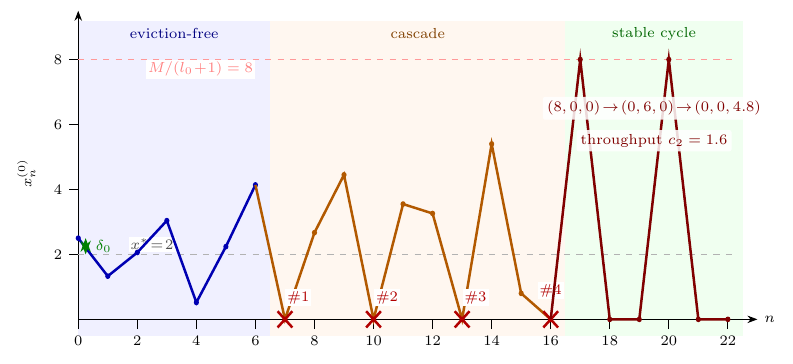}
\caption{Full cascade trajectory for Example~\ref{ex:spiral} ($l_0 = 2$, $l_1 = 3$, $M = 24$). Starting from $\delta_0 = 0.5$ above equilibrium, the system passes through three phases: (i)~growing oscillation in the eviction-free regime ($n = 0$--$6$), (ii)~intermittent evictions of increasing severity (\#1--\#4) as the system cascades through levels ($n = 7$--$16$), and (iii)~convergence to the stable level-$2$ cycle ($n \geq 17$) with throughput $c_2 = 1.6$ per iteration.}
\label{fig:appendix_cascade}
\end{figure}

\paragraph{Higher decoding lengths: $l_1 = 4$.}
With \(l_0=2\), \(l_1=4\), and \(M=48\) (weights \(w=[3,4,5,6]\),
\(x^*=8/3\)), the cascade exhibits \emph{four} phases
(Figure~\ref{fig:appendix_l4_cascade}): (i)~growing oscillation
(\(n=0\)--\(8\)), (ii)~level-1 evictions at \(n=9,13\), (iii)~level-2 and
level-3 evictions at \(n=17,20,21\), and (iv)~convergence to the stable
level-\(3\) cycle (\(n\geq22\)) with throughput \(c_3=2\), a \(25\%\) loss
relative to \(x^*\). The additional decoding stage creates a richer transient:
the system traverses two intermediate eviction levels before reaching the worst
case.

\begin{figure}[ht]
\centering
\includegraphics[width=\textwidth]{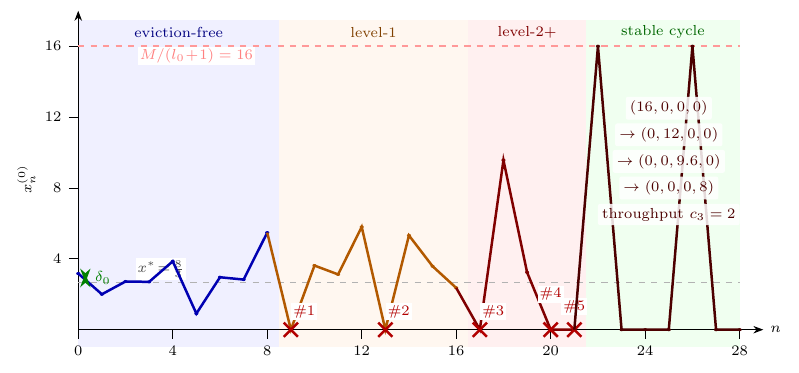}
\caption{Full cascade trajectory for $l_0 = 2$, $l_1 = 4$, $M = 48$ (weights $w = [3,4,5,6]$, $x^* = 8/3$). Starting from $\delta_0 = 0.5$ above equilibrium, the system passes through \emph{four} phases: (i)~growing oscillation in the eviction-free regime ($n = 0$--$8$), (ii)~level-1 evictions at $n = 9, 13$ ($n = 9$--$16$), (iii)~level-2 and level-3 evictions at $n = 17, 20, 21$ ($n = 17$--$21$), and (iv)~convergence to the stable level-$3$ cycle ($n \geq 22$) with throughput $c_3 = 2$ per iteration---a 25\% loss from $x^* = 8/3$.}
\label{fig:appendix_l4_cascade}
\end{figure}

\paragraph{Cycle multiplicity at intermediate levels.}
A qualitatively new phenomenon for $l_1 \geq 4$ is that intermediate eviction levels admit multiple distinct periodic orbits with different throughputs; see Figure~\ref{fig:appendix_l4_cycles} in Section~\ref{sec:single_class} for an illustration. At level-2 (two stages empty per cycle), two orbit families coexist: an \textit{opposite} (maximally separated) orbit (period~2, throughput $= 12/5$, 90\% of~$x^*$) and an \textit{adjacent} orbit (period~4, throughput $= 16/7$, 86\% of~$x^*$). Both are unstable, and the baseline cascade bypasses them to the worst-case level-3 cycle ($c_3 = 2$, 75\% of~$x^*$).

\begin{appacorollary}[Stability of the maximal-eviction cycle]\label{cor:max_cycle_stability}
The level-\((l_1-1)\) maximal-eviction cycle is asymptotically stable outside
\(\mathcal E\). Its basin contains every memory-boundary state outside
\(\mathcal E\), and \(\mathcal E\) has relative Lebesgue measure zero within
each fixed set of occupied stages on the memory boundary. The eviction-free
fixed point and all intermediate balanced limit cycles are unstable in the
sense of Definition~\ref{def:stability}.
\end{appacorollary}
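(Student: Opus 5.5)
The corollary packages results already established, so I would prove it by assembling them rather than by new estimates. There are three pieces: asymptotic stability of $\mathcal C_{\max}$ outside $\mathcal E$, the basin statement, and instability of the nonmaximal equilibria.

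\emph{Asymptotic stability of $\mathcal C_{\max}$.} This is exactly the content of Lemma~\ref{lem:max_cycle_local_absorption}. For each $\epsilon>0$ it supplies a $\delta>0$ such that every memory-boundary trajectory with $\operatorname{dist}(X^0,\mathcal C_{\max})<\delta$ and $X^0\notin\mathcal E$ stays within $\epsilon$ of $\mathcal C_{\max}$. Such a trajectory also reaches $\mathcal C_{\max}$ in finite time, which is stronger than $\operatorname{dist}(X^n,\mathcal C_{\max})\to0$. Both clauses of Definition~\ref{def:stability}, in its relative version outside $\mathcal E$, therefore hold.

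\emph{Basin.} Take any memory-boundary $X^0\notin\mathcal E$. By Theorem~\ref{thm:A1_position}, the trajectory either stays on or is captured exactly by a periodic orbit of some eviction level, or it reaches the level-$(l_1-1)$ cycle. Capture by the eviction-free fixed point or by an intermediate balanced cycle is precisely membership in $\mathcal E$, which we have excluded. The trajectory therefore reaches $\mathcal C_{\max}$ in finite time.

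One subtlety needs checking here. Capture by a \emph{maximal} orbit is by definition not in $\mathcal E$. Level-$(l_1-1)$ periodic orbits are unique, because with one live position the memory boundary forces the cycle displayed in the proof of Theorem~\ref{thm:A1_position}. Hence such capture is the same as reaching $\mathcal C_{\max}$.

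The measure-zero claim is Lemma~\ref{lem:A9} applied to each face $\mathcal B_I$. For $|I|=1$ the exceptional set is empty, consistent with single-stage states lying on or feeding directly into $\mathcal C_{\max}$.

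\emph{Instability of the other equilibria.} For the eviction-free fixed point I would invoke Lemma~\ref{lem:A2}. Arbitrarily close to $X^*$ on $\mathcal B$ there are states with $G^0>0$; for example, perturb two coordinates in opposite directions, weighted to preserve memory. Each of these trajectories suffers an eviction-induced support loss. By Lemma~\ref{lem:A3} it never returns to full support, so it cannot stay within a small $\epsilon$ of $X^*$, whose coordinates are all equal to $x^*>0$. Taking $\epsilon<x^*/2$, say, stability fails.

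For an intermediate balanced cycle with $m$ live positions, $2\le m\le l_1-1$, I would take a block-end state and perturb the live masses within the same support face so that the normalized masses $u_h$ become unequal while remaining on $\mathcal B$. Such perturbations exist because the boundary constraint is a single linear equation and $m\ge2$. Then $G^{\mathrm{sc}}>0$, and Lemma~\ref{lem:A8} forces a further support loss. Since Lemma~\ref{lem:A3} rules out revival, the trajectory ends with fewer than $m$ live positions and cannot remain $\epsilon$-close to the cycle, every state of which has exactly $m$ positive coordinates bounded away from zero.

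The main obstacle is this last step, because the perturbation may be placed at any phase of the cycle rather than at a block-end. I would handle that by noting that the physical map is continuous along the orbit between block-ends; this follows from the fixed-support induction in Proposition~\ref{prop:A4}. A small generic perturbation at any phase therefore propagates to a small perturbation at the next block-end with $G^{\mathrm{sc}}>0$. It suffices to exhibit one such perturbation in every $\delta$-ball.
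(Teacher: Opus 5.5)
Your proposal is correct and follows the paper's own proof, which assembles the same pieces: Lemma~\ref{lem:max_cycle_local_absorption} for the local quantifiers, Theorem~\ref{thm:A1_position} for the basin, Lemma~\ref{lem:A9} for the null set, and Lemma~\ref{lem:A2} together with the off-balance support-loss argument for instability; your use of Lemma~\ref{lem:A3} to turn support loss into leaving an $\epsilon$-ball makes explicit a step the paper leaves implicit. The phase issue you flag as the main obstacle is moot, because block-end states are themselves points of the cycle, so perturbing one of them already gives a bad initial condition in every $\delta$-ball and no continuity argument is needed.
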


\begin{proof}
Theorem~\ref{thm:A1_position} shows that every memory-boundary
trajectory outside \(\mathcal E\) reaches the single-live-position regime in
finite time. Once \(m=1\), the memory boundary fixes the unique live mass at
each phase, so the trajectory follows the maximal-eviction cycle. This proves
the stated basin. Lemma~\ref{lem:A9} proves the relative-measure-zero claim for
\(\mathcal E\), and Lemma~\ref{lem:max_cycle_local_absorption} supplies the
local stability and convergence quantifiers required by
Definition~\ref{def:stability}. Finally, Lemma~\ref{lem:A2} proves instability
of the eviction-free fixed point, and Theorem~\ref{thm:A1_position} proves
instability of intermediate balanced cycles by showing that arbitrarily small
off-balanced perturbations force another support loss.
\end{proof}

\subsection{Summary}

The proof establishes:
\begin{enumerate}
\item The eviction-free spread $G^n$ is non-decreasing (Lemma~\ref{lm:A1}).
\item For non-equilibrium initial conditions ($G^0 > 0$), the maximum $\bar{x}_n$ grows by at least $G^0/(l_0+1)$ each time it reaches stage $l_1-1$, while $G^n$ remains bounded by $M/(l_0+1)$. This contradiction forces the eviction-free regime to break down in finite time; the resulting LPF eviction creates the first support loss and moves the trajectory to the next lower live-support level (Lemma~\ref{lem:A2}).
\item After the first eviction, relative-position tracking becomes the right state description. Lemma~\ref{lem:A3} shows that once a position reaches zero it never revives, and Proposition~\ref{prop:A4} gives the exact block-end sampled map on any fixed support pattern.
\item On the block-end sampled chain, the normalized spread $G^{\mathrm{sc}}$ is non-decreasing (Lemma~\ref{lem:A6}) and strictly increases within bounded time whenever it is positive (Corollary~\ref{cor:A7}). Thus the only obstruction to further cascade is landing exactly on a balanced intermediate periodic orbit; otherwise recurrent evictions drive another live position to zero in finite time and move the trajectory to the next lower live-support level (Theorem~\ref{thm:A1_position}).
\item The level-$(l_1-1)$ cycle is asymptotically stable on the memory boundary
outside the exact nonmaximal capture set.
Its basin contains every memory-boundary initial state except those captured
exactly by a nonmaximal balanced orbit; within each fixed set of occupied
stages, the excluded set has Lebesgue measure zero. All other periodic orbits
are unstable.
\end{enumerate}

This completes the proof of Theorem~\ref{thm:fcfs}. \qed

\subsection{Representative Equilibrium Families}
\label{app:sec3_proofs}

The cascade argument above applies to any fixed support pattern: unless the normalized live masses are balanced, the support shrinks again in finite time. The following propositions are complementary. They characterize the throughput envelope of the cycled equilibria that remain on a fixed relative support pattern. For such equilibria, the predecessor-gap vector gives a general parametrization: contiguous-block gap vectors attain the lower-throughput endpoint, while as-evenly-spaced gap vectors attain the upper-throughput endpoint; the exactly evenly-spaced cycles are the divisor case used in the discussion in Section~\ref{sec:single_class}.

\begin{proposition}[Two-stage cycles (eviction level $l_1-2$)]
	\label{prop:two_stage_cycles}
	Assume $l_1 \ge 2$ and $M \ge w_{l_1-1}=l_0+l_1$. At eviction level $l_1-2$ (exactly two occupied stages at each iteration), the dynamics admit an explicit \emph{adjacent} limit cycle of period~$l_1$ whose throughput is
	\begin{equation}\label{eq:adjacent_level_l1_minus_2_throughput}
	\bar{T}_{\mathrm{adj}}=\frac{M}{w_{l_1-1}^2 - w_0 w_{l_1-2}}.
	\end{equation}
	If, in addition, $l_1$ is even with $l_1=2d$, the dynamics admit an explicit \emph{opposite} (maximally separated) limit cycle of period~$d$ with throughput
	\begin{equation}\label{eq:opposite_level_l1_minus_2_throughput}
	\bar{T}_{\mathrm{opp}}=\frac{M}{d\,(w_{d-1}+w_{l_1-1})}.
	\end{equation}
	\end{proposition}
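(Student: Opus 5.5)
The plan is to build both cycles as balanced orbits of the block-end sampled chain with $m=2$ live positions. Then Proposition~\ref{prop:A4} and the lifting argument in the proof of Theorem~\ref{thm:A1_position} turn each one into a physical periodic orbit. With two live positions the predecessor gaps satisfy $g_0+g_1=l_1$. The \emph{adjacent} family takes gaps $(1,l_1-1)$, which rotate to $(l_1-1,1)$ at the next block-end. The \emph{opposite} family, for $l_1=2d$, takes gaps $(d,d)$. In both cases I would impose balance, $u_0=u_1=c$, that is $y_h=c\,g_h$. By Corollary~\ref{cor:A5}, if $u_0=u_1$ then \eqref{eq:block_map_u} gives $u_0'=u_1$, so the normalized vector stays constant at $(c,c)$. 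Since the gaps rotate together with the masses, the sampled state returns to itself after one circuit.

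The constant $c$ is fixed by the memory boundary \eqref{eq:block_boundary}. For gaps $(g_0,g_1)$ the live stages are $g_0-1$ and $l_1-1$, so
\[
M=c\bigl[g_0(l_0+g_0)+g_1(l_0+l_1)\bigr].
\]
For the adjacent gaps $(1,l_1-1)$ this gives $M=c\,[(l_0+1)+(l_1-1)(l_0+l_1)]=c\,(w_{l_1-1}^2-w_0w_{l_1-2})$, using the expansion $(l_0+l_1)^2-(l_0+1)(l_0+l_1-1)=(l_1-1)(l_0+l_1)+(l_0+1)$. I would also check that the rotated gaps $(l_1-1,1)$ give the same $c$: the two bracket expressions differ by $(l_1-1)-(l_1-1)=0$, so the two block-ends are consistent. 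For the opposite gaps, $M=c\,d\,(w_{d-1}+w_{l_1-1})$, and here the rotated pattern is identical to the original.

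Next I would compute throughput and period. Over one full circuit of $l_1$ iterations each live position completes exactly once, releasing $y_0+y_1=c(g_0+g_1)=c\,l_1$ completed requests in total. Hence $\bar T=c$, which gives exactly the two formulas in the statement. The adjacent orbit returns to the same physical state only after $l_1$ iterations. The opposite orbit already repeats after $d$ iterations, because its two live masses are equal ($cd$ each) and sit at stages that are shifted by $d$. Minimality of the period follows from the single-live-stage structure of the eviction blocks. As a sanity check, $l_0=2$, $l_1=4$, $M=48$ recovers the throughputs $16/7$ and $12/5$ reported in the paper.

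The main obstacle is verifying that the lifted physical orbit really is a valid trajectory at eviction level $l_1-2$. This needs three things. First, the admission computed from \eqref{eq:block_map_y} must be positive; it reduces to $c\,g_1$, which is automatic here. Second, every intermediate pure-evict iteration must overflow, so that LPF trims only the newly admitted batch. This holds because each live request grows by one token per iteration while no live position completes, so the overshoot equals the total live mass $c\,l_1>0$ (up to the already-trimmed part). Third, the new batch must not be exhausted before the block ends, so that exactly two stages stay occupied at every iteration. I would handle this last point by tracking the new batch explicitly: its mass decreases from the admitted amount to its block-end value $c\,g$ through trims of size (live mass)$/w_t$. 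I would then check that it stays positive, and this is where the assumption $M\ge w_{l_1-1}$ would be used. If a direct check is messy, the fallback is to run the recursion backward from the known positive block-end value, which makes positivity along the block immediate.
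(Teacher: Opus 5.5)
Your proposal is correct, but it takes a different route from the paper's proof of this proposition.

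\textbf{The paper's route.} It argues by direct construction. It sets $b$ equal to the claimed throughput and defines $a_r=(M-w_{r+1}b)/w_r$ (resp.\ $a_r=(M-w_{r+d}b)/w_r$). It then writes down every physical state of the orbit and checks the execute--evict--admit map one step at a time. The strict decrease $a_r>a_{r+1}$ gives both the overflow and the fact that LPF only partially trims the youngest stage. The identities $a_{l_1-2}=a_{l_1-1}=(l_1-1)b$ (resp.\ $a_{d-1}=b$) close the wrap phases.

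\textbf{Your route.} You work with balanced block-end representatives $y_h=c\,g_h$ for the gap vectors $(1,l_1-1)$ and $(d,d)$. Corollary~\ref{cor:A5} gives invariance, \eqref{eq:block_boundary} fixes $c$, and completion counting gives $\bar T=c$. This is the argument the paper gives for Proposition~\ref{prop:gap_vector_envelope}, specialized to $m=2$.

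\textbf{Comparison.} Your route explains where the denominators come from: they are $l_0l_1+\tfrac12(l_1^2+g_0^2+g_1^2)$. It also extends verbatim to any gap vector. The price is that Proposition~\ref{prop:A4} holds only conditionally on the support being preserved, so the lifting must be checked separately, as you correctly flag. The paper's route is self-contained and exhibits every physical state, but its formulas appear unmotivated.

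\textbf{Two small corrections.} First, the hypothesis $M\ge w_{l_1-1}$ plays no role in the positivity check. On each pure-evict step, $w_t a_t=w_{t+1}a_{t+1}+Y_{\mathrm{old}}$, where $Y_{\mathrm{old}}>0$ is the old live mass. Hence positivity of the block-end value $c\,g_{m-1}$ propagates backward to every intermediate batch and to the initial admission, for any $M>0$. The trim $(a_t+Y_{\mathrm{old}})/w_{t+1}$ is then positive and strictly smaller than $a_t$. The paper's proof never uses this hypothesis either.

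Second, your ``single-live-stage'' justification for minimality of the period is vague. A cleaner argument is that the supports $\{r,r+1\}$ and $\{0,l_1-1\}$ (resp.\ $\{r,r+d\}$) are pairwise distinct, which settles minimality for $l_1\ge3$ (resp.\ $d\ge2$). At $l_1=2$ both constructions collapse to the eviction-free fixed point $x^*\mathbf{1}$. That is a degeneracy in the statement itself, and the paper's proof does not address it.
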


\begin{proposition}[Contiguous-block cycles (\textbf{most imbalanced})]
\label{prop:block_cycles}
Fix an eviction level $i\in\{0,1,\ldots,l_1-1\}$ and define
\begin{equation}\label{eq:ci_def}
c_i \;:=\; \frac{M}{(i+1)w_i+\sum_{j=i+1}^{l_1-1} w_j}.
\end{equation}
When $i=0$, we have $c_0=x^*$ (the eviction-free fixed point throughput). When $i=l_1-1$, $c_{l_1-1}=M/[l_1w_{l_1-1}]$ (the maximal-eviction throughput). For every $i\ge 1$, the execute-evict-admit map admits an eviction-level-$i$ limit cycle whose time-averaged throughput equals $c_i$ and whose occupied stages form a contiguous block at every iteration.
Moreover, $\{c_i\}_{i=0}^{l_1-1}$ is strictly decreasing:
\begin{equation}\label{eq:ci_strict_decrease}
c_i > c_{i+1},\qquad i=0,1,\ldots,l_1-2.
\end{equation}
\end{proposition}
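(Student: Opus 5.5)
The plan has three parts. First, build the cycle as a \emph{balanced} fixed-support orbit of the block-end chain of Appendix~\ref{app:theorem1}. Second, read off memory feasibility and throughput. Third, prove \eqref{eq:ci_strict_decrease} by differencing the denominators.

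\textbf{Construction.} Fix \(i\ge1\) and set \(m:=l_1-i\ge1\) live relative positions forming a contiguous arc of the circular pipeline. In circular order their predecessor gaps are one gap equal to \(i+1\) (the live position following the block of \(i\) dead positions) and \(m-1\) gaps equal to \(1\); indeed \(\sum_h g_h=(i+1)+(m-1)=l_1\). I take the balanced profile \(u_h\equiv c\), i.e.\ \(y_h=c\,g_h\).

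At a block-end where the gap vector is ordered \((i+1,1,\dots,1)\), the cumulative gaps are \(S_0=i+1,S_1=i+2,\dots,S_{m-1}=l_1\). So the live stages are \(i,i+1,\dots,l_1-1\), with masses \((i+1)c,c,\dots,c\). The boundary equation \eqref{eq:block_boundary} then reads
\[
c\Bigl[(i+1)w_i+\textstyle\sum_{j=i+1}^{l_1-1}w_j\Bigr]=M ,
\]
which forces \(c=c_i\).

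By Corollary~\ref{cor:A5}, when all \(u_h\) are equal, \eqref{eq:block_map_u} gives \(u'_0=u_{m-1}\), so balance is preserved. The gap vector and the mass vector rotate together, so after \(m\) sampled steps the sampled state returns to itself. The same gap multiset and the boundary identity hold at every block-end, just with a rotated ordering. For \(m=1\) this is exactly the maximal-eviction cycle, with \(c_{l_1-1}=M/(l_1w_{l_1-1})\).

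\textbf{Lifting to a physical orbit.} This is the step I expect to need the most care. I must check that the balanced sampled chain lifts, via Proposition~\ref{prop:A4}, to a genuine orbit of the execute-evict-admit map with exactly \(i\) empty stages at every iteration. The fixed-support induction in the proof of Proposition~\ref{prop:A4} requires three facts.

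1. The admission at a block-end is positive. This holds because the admitted batch must end at \(y'_0=g_{m-1}c>0\) after the subsequent trims, and trims only remove mass.
2. In each intermediate iteration the overshoot equals the number of live requests, which is positive. LPF removes this overshoot from the newly admitted (least-progressed) batch.
3. The batch is never exhausted. Its mass decreases monotonically to the strictly positive value \(g_{m-1}c\), so it stays positive throughout, and no old live mass is touched.

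Hence the relative live set is constant and contiguous, each physical state has exactly \(l_1-m=i\) zeros, and the period divides \(l_1\).

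\textbf{Throughput.} Over one full circuit of \(l_1\) iterations, each live position completes once. The total completed mass is \(\sum_h y_h=c\sum_h g_h=c\,l_1\), so the time-average throughput is \(c_i\).

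For the endpoints: at \(i=0\) the denominator is \(\sum_{j=0}^{l_1-1}w_j=l_1(2l_0+l_1+1)/2\), so \(c_0=x^*\) by \eqref{eq:xstar}. At \(i=l_1-1\) the denominator is \(l_1w_{l_1-1}\), giving the maximal-eviction throughput.

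\textbf{Monotonicity.} Let \(D_i:=(i+1)w_i+\sum_{j>i}w_j\). Then
\[
D_{i+1}-D_i=(i+2)w_{i+1}-(i+1)w_i-w_{i+1}=(i+1)(w_{i+1}-w_i)=i+1>0 .
\]
Since \(M>0\), this gives \(c_i=M/D_i>M/D_{i+1}=c_{i+1}\).
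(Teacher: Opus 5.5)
Your proof is correct, but it reaches the cycle by a different route from the paper. The paper argues by direct construction. It writes down all $l_1$ physical states: the eviction-block states $X_r$ for $r<i$, with a trimmed leading entry $a_r$ fixed by the memory boundary, and the states $X_r$ for $r\ge i$, with one entry $(i+1)b$ and the rest equal to $b$. It then checks the execute--evict--admit map in three cases:
\begin{enumerate}
\item the overflow is trimmed only from stage $r+1$;
\item a completion of $b$ is refilled by exactly $b$;
\item the completion of $(i+1)b$ is refilled by $a_0$.
\end{enumerate}
You instead obtain the cycle as the balanced fixed-support orbit of the block-end chain for the gap vector $(i+1,1,\ldots,1)$. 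The mass pattern $(i+1)c,c,\ldots,c$ then falls out of $y_h=cg_h$, and $c_i$ is forced by the boundary equation.

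This is exactly the existence half of the paper's argument for Proposition~\ref{prop:gap_vector_envelope}, specialized to the contiguous gap vector. Your route therefore shows that contiguous blocks are one instance of a general balanced family, and it explains where $c_i$ comes from. The cost is reliance on the machinery of Proposition~\ref{prop:A4} and Corollary~\ref{cor:A5}, plus a separate lift to the physical LPF dynamics. The paper's explicit construction avoids that lift and exhibits every physical state, including the intermediate trims.

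Your lift is sound, and you rightly flagged it as the delicate step. To make ``trims only remove mass'' airtight, note that during the eviction block the trimmed batch obeys $a_{t-1}=(w_ta_t+Y)/(w_t-1)$, where $Y>0$ is the old live mass and $w_t-1=l_0+t>0$. Positivity of the terminal value $cg_{m-1}$ therefore propagates backward to every intermediate trim and to the admitted amount. Equivalently, the block-end slack is $c\bigl[(l_0+l_1+1)g_{m-1}-l_1\bigr]\ge c(l_0+1)>0$.

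Your throughput count, endpoint checks, and the monotonicity computation $D_{i+1}-D_i=i+1$ coincide with the paper's.
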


\begin{proposition}[Evenly spaced cycles (\textbf{most balanced})]
\label{prop:spaced_cycles}
Let $k\in\{2,3,\ldots,l_1\}$ be a divisor of $l_1$ and write $l_1=kd$. There exists an eviction-level-$(l_1-k)$ limit cycle with $k$ occupied stages per iteration whose occupied stage indices are evenly spaced by~$d$. Its throughput equals
\begin{equation}\label{eq:balanced_k_cycle_throughput}
\bar{T}_{\mathrm{bal}}(k)
=
\frac{M}{d\sum_{q=1}^{k} w_{qd-1}}.
\end{equation}
Moreover, $\bar{T}_{\mathrm{bal}}(k)$ is strictly increasing in~$k$ (and hence strictly decreasing in the eviction level $l_1-k$).
\end{proposition}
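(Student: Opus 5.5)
The statement is Proposition~\ref{prop:spaced_cycles}. We first construct the cycle explicitly, then compute its throughput from the memory boundary, and finally prove monotonicity in $k$ by an averaging argument.

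\emph{Construction.} Take the $k$ live positions evenly spaced, so every predecessor gap equals $g_h=d$. A natural candidate is a balanced block-end state, $u_h\equiv c$, i.e.\ $y_h=cd$ for all $h$, placed at the physical stages $d-1,2d-1,\ldots,kd-1=l_1-1$. The constant is fixed by the memory boundary \eqref{eq:block_boundary} with $S_h=(h+1)d$:
\[
M=\sum_{q=1}^{k}(l_0+qd)\,cd=cd\sum_{q=1}^{k}w_{qd-1},
\]
using $w_{qd-1}=l_0+qd$. By Corollary~\ref{cor:A5}, equal normalized masses are preserved by \eqref{eq:block_map_u}. Since all gaps are equal, the rotated gap vector coincides with the original one. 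Hence the block-end state repeats after one sampled step, which is exactly $d$ physical iterations.

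We then lift this to the physical map using the fixed-support induction in Proposition~\ref{prop:A4}. During the $d-1$ pure-evict iterations, LPF trims only the newly admitted batch. We must check that this trimming never exhausts the batch, so the support pattern stays intact. Concretely:
\begin{itemize}
\item The admitted mass at stage $t$ is determined by the boundary, and it ends at $cd>0$ at stage $d-1$.
\item The trimmed masses form a decreasing sequence ending at $cd$, so they are all at least $cd>0$.
\end{itemize}
Thus the orbit has exactly $k$ occupied stages at every iteration, i.e.\ eviction level $l_1-k$, and it is periodic with period $d$. When $k=l_1$, so $d=1$, it reduces to the fixed point $x^*$. Minimality of the period is not essential here; at worst we note that period $d$ is the smallest for the case $k<l_1$.

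\emph{Throughput.} In each period of $d$ iterations, exactly one live cohort of mass $cd$ completes. The throughput is therefore
\[
\frac{cd}{d}=c=\frac{M}{d\sum_{q=1}^{k} w_{qd-1}},
\]
which is \eqref{eq:balanced_k_cycle_throughput}.

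\emph{Monotonicity in $k$.} This is the main step. Write $l_1=kd$ and
\[
d\sum_{q=1}^{k}w_{qd-1}=kd\,l_0+d^2\frac{k(k+1)}{2}=l_1 l_0+\frac{l_1(l_1+d)}{2}.
\]
This expression is strictly increasing in $d=l_1/k$. Therefore $\bar T_{\mathrm{bal}}(k)$ is strictly increasing in $k$, which settles the claim directly.

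The only step likely to need real care is verifying, along the eviction block, that LPF never hits an old live stage (positivity of the trimmed new batch). I would handle it by writing the boundary equation at each intermediate stage $t$ explicitly, as in the proof of Proposition~\ref{prop:A4}.
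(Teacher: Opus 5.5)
Your proposal is correct and is essentially the paper's argument. During the eviction block your lifted orbit passes through exactly the paper's states $Y_r$, with newest-batch masses $a_r$ fixed by the boundary. The one fact you assert without proof, that these masses decrease strictly to $a_{d-1}=b=cd$, is what the paper checks via the closed form $a_r=(M-bdk(k-1)/2)/w_r-(k-1)b$, whose numerator equals $bk(l_0+d)>0$. Your throughput count and the identity $d\sum_{q=1}^k w_{qd-1}=l_1l_0+l_1(l_1+d)/2$ also coincide with the paper's. The only difference is packaging: you generate the orbit as a balanced block-end state via Corollary~\ref{cor:A5}, as the paper does in the proof of Proposition~\ref{prop:gap_vector_envelope}, whereas for this proposition the paper writes the $d$ physical states explicitly and verifies each execute--evict--admit transition directly.
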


\begin{proposition}[Fixed-support gap-vector throughput envelope]
\label{prop:gap_vector_envelope}
Fix a fixed-relative-support periodic orbit and a block-end representative with
\(m\in\{1,\ldots,l_1\}\) live positions. Equivalently, consider a cycled
equilibrium whose live relative positions rotate without changing their
predecessor-gap pattern. If \(m=1\), set \(g_0=l_1\). If
\(m\ge2\), let \(g_0,\ldots,g_{m-1}\) be its predecessor gaps. Then
\(g_h\in\mathbb Z_{>0}\), \(\sum_h g_h=l_1\), and the orbit's time-averaged
throughput is
\begin{equation}
\label{eq:gap_vector_throughput}
\bar{T}(g)
=
\frac{M}{l_0l_1+\frac12\left(l_1^2+\sum_{h=0}^{m-1}g_h^2\right)}.
\end{equation}
Consequently, among fixed-relative-support periodic orbits with \(m\) live
positions, throughput is minimized by the contiguous-block gap vector, up to
cyclic rotation,
\((l_1-m+1,1,\ldots,1)\), and is maximized by any as-evenly-spaced gap vector.
Equivalently, if \(l_1=am+r\) with \(a\ge1\) and \(0\le r<m\), the maximizing
gap vector has \(r\) gaps equal to \(a+1\) and \(m-r\) gaps equal to \(a\), up to
permutation of the gaps.
The lower endpoint is the contiguous-block throughput
\[
c_{l_1-m}
=
\frac{M}{l_0l_1+\frac12\left(l_1^2+(l_1-m+1)^2+m-1\right)},
\]
and the upper endpoint is
\[
\bar{T}_{\mathrm{even}}(m)
=
\frac{M}{l_0l_1+\frac12\left(l_1^2+r(a+1)^2+(m-r)a^2\right)}.
\]
When \(m\) divides \(l_1\), \(\bar{T}_{\mathrm{even}}(m)\) equals the
evenly-spaced throughput \(\bar{T}_{\mathrm{bal}}(m)\) in
Proposition~\ref{prop:spaced_cycles}.
\end{proposition}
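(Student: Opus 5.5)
The plan is to show first that every fixed-relative-support periodic orbit is \emph{balanced} in the normalized coordinates of Corollary~\ref{cor:A5}. Once that is known, the live masses are proportional to the predecessor gaps, and the throughput reduces to a closed-form function of the gap vector. The envelope statement then becomes an elementary extremal problem for $\sum_h g_h^2$ over compositions of $l_1$ into $m$ positive parts.

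\textbf{Step 1: periodic orbits on a fixed support are balanced.} For $m\ge2$, take a block-end representative $Y$ of the orbit. Because the support pattern never changes along the orbit, the embedded block-end chain of Proposition~\ref{prop:A4} is itself periodic, so $G^{\mathrm{sc}}$ takes the same value after one full period. Now suppose $G^{\mathrm{sc}}(Y)>0$. Lemma~\ref{lem:A6} says $G^{\mathrm{sc}}$ is nondecreasing, and Corollary~\ref{cor:A7} says it strictly increases at least once every $m$ sampled steps. Both cannot hold on a periodic sequence. (Alternatively, Lemma~\ref{lem:A8} gives a support loss, contradicting the fixed support.) Hence $u_h=y_h/g_h=c$ for a single constant $c>0$, that is, $y_h=c\,g_h$. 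For $m=1$ I set $g_0=l_1$ and $y_0=c\,l_1$ by definition.

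\textbf{Step 2: throughput equals $c$, and its formula.} By Corollary~\ref{cor:A5}, masses and gaps rotate together, so at every block-end the completing mass is $c$ times its current predecessor gap. Over one circuit of $l_1$ physical iterations, each of the $m$ labels completes exactly once, and the circuit contains no other completions. The total completed mass per circuit is therefore $c\sum_h g_h=c\,l_1$, which gives $\bar T=c$. Substituting $y_h=c g_h$ into the block-end memory boundary~\eqref{eq:block_boundary} gives
\[
M=c\sum_h (l_0+S_h)g_h .
\]
Using $S_h=\sum_{t\le h}g_t$, the cross sum is
\[
\sum_h S_h g_h=\sum_{t\le h}g_tg_h=\tfrac12\Bigl(\bigl(\textstyle\sum_h g_h\bigr)^2+\sum_h g_h^2\Bigr)=\tfrac12\bigl(l_1^2+\textstyle\sum_h g_h^2\bigr).
\]
Combined with $\sum_h l_0 g_h=l_0l_1$, this yields~\eqref{eq:gap_vector_throughput}. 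For $m=1$ the formula reduces to $M/(l_1(l_0+l_1))$, which matches the maximal-eviction cycle.

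\textbf{Step 3: the envelope.} Throughput is decreasing in $\Phi(g)=\sum_h g_h^2$, where $g$ ranges over positive integers summing to $l_1$. I use the standard exchange argument.
\begin{itemize}[nosep]
\item If some $g_a\ge g_b+2$, replacing $(g_a,g_b)$ by $(g_a-1,g_b+1)$ lowers $\Phi$ by $2(g_a-g_b)-2>0$. So a minimizer of $\Phi$, which maximizes throughput, has all gaps differing by at most one. That means $r$ gaps equal to $a+1$ and $m-r$ gaps equal to $a$, and every such vector gives the same $\Phi$.
\item Conversely, if two gaps satisfy $g_a\ge g_b\ge2$, moving one unit from $g_b$ to $g_a$ raises $\Phi$. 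So the maximum of $\Phi$ is attained only at $(l_1-m+1,1,\ldots,1)$ up to rotation, which is the contiguous block.
\end{itemize}
Plugging these two vectors into~\eqref{eq:gap_vector_throughput} gives the two displayed endpoint formulas. When $m\mid l_1$, I then check that the upper endpoint equals $\bar T_{\mathrm{bal}}(m)$ of Proposition~\ref{prop:spaced_cycles}: with $g_h\equiv d$, one verifies $d\sum_{q=1}^m w_{qd-1}=l_0l_1+\tfrac12(l_1^2+md^2)$ by summing an arithmetic progression.

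\textbf{Main obstacle.} The delicate point is that the endpoints are \emph{attained}, meaning the balanced block-end state with these gaps actually lifts to a legitimate orbit of the physical LPF map. I plan to reuse the argument in the proof of Theorem~\ref{thm:A1_position}: a balanced sampled state returns to itself after one circuit, and the fixed-support induction in Proposition~\ref{prop:A4} reconstructs the intervening pure-evict iterations uniquely. What remains to verify is that the new batch is trimmed but never exhausted during those iterations. I would check this directly: its block-end value is $c\,g_{m-1}>0$, and each pure-evict step removes only the one-token growth of the old live masses. Concretely, I would verify that the admitted batch at the start of a block exceeds $c\,g_{m-1}$ by exactly the accumulated growth over that block, so it stays positive throughout.
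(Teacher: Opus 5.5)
Your proposal is correct and follows essentially the same route as the paper's proof. Both arguments establish balance of the block-end representative, show that the throughput equals $c$, use the identity $\sum_h g_hS_h=\frac12\bigl(l_1^2+\sum_h g_h^2\bigr)$, apply the exchange argument on $\sum_h g_h^2$, and lift the balanced chain via the fixed-support induction of Proposition~\ref{prop:A4}. The differences are minor: you get balance by playing Lemma~\ref{lem:A6} and Corollary~\ref{cor:A7} against periodicity where the paper cites Lemma~\ref{lem:A8}, and your extra checks (uniqueness of the $\sum_h g_h^2$ maximizer, the explicit divisor-case sum, and positivity of the new batch throughout each pure-evict block) only make explicit what the paper asserts.
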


Note that, across levels, the ordering need not be monotone: a cycle at a higher eviction level can outperform one at a lower level when the lower-level cycle clusters its active stages into consecutive positions.

\begin{example}[Cross-level non-monotonicity]
\label{ex:cross_level}
Let $l_0=2$, $l_1=10$, so $w_j = 3+j$.
\begin{enumerate}
\item \emph{Contiguous-block, level~4} (six occupied stages in positions $0$--$5$). By Proposition~\ref{prop:block_cycles},
\[
c_4 = \frac{M}{5\,w_4 + \sum_{j=5}^{9}w_j} = \frac{M}{5\cdot 7 + (8{+}9{+}10{+}11{+}12)} = \frac{M}{85}.
\]
\item \emph{Balanced, level~5} (five occupied stages at positions $1,3,5,7,9$; $k=5$, $d=2$). By Proposition~\ref{prop:spaced_cycles},
\[
\bar{T}_{\mathrm{bal}}(5) = \frac{M}{2(w_1{+}w_3{+}w_5{+}w_7{+}w_9)} = \frac{M}{2(4{+}6{+}8{+}10{+}12)} = \frac{M}{80}.
\]
\end{enumerate}
Since $c_4 = M/85 < M/80 = \bar{T}_{\mathrm{bal}}(5)$, the level-4 cycle achieves \emph{lower} throughput despite having one more occupied stage. Contiguous clustering concentrates requests in adjacent heavy stages ($5w_4 = 35$ dominates the denominator), while even spacing samples stages of mixed weight ($w_1 = 4$ through $w_9 = 12$), yielding a smaller memory-time total.
\end{example}

\begin{proof}[Proof of Proposition~\ref{prop:two_stage_cycles}]
We construct each orbit and verify that it is invariant under the execute-evict-admit map.

\textbf{Adjacent orbit.} Define $b := \bar{T}_{\mathrm{adj}}$ as in~\eqref{eq:adjacent_level_l1_minus_2_throughput} and set
\[
a_r := \frac{M-w_{r+1}b}{w_r}, \quad r=0,1,\ldots,l_1-2,
\qquad
a_{l_1-1} := \frac{M-w_0 b}{w_{l_1-1}} = (l_1-1)b,
\]
where the last equality uses $w_{l_1-1}=w_0+l_1-1$.
Consider the length-$l_1$ orbit with states
\[
X_r=(0,\ldots,0,\;\underbrace{a_r}_{\text{stage }r},\;\underbrace{b}_{\text{stage }r+1},\;0,\ldots,0),
\quad r=0,1,\ldots,l_1-2,
\qquad
X_{l_1-1}=(\underbrace{b}_{\text{stage }0},\,0,\ldots,0,\,\underbrace{a_{l_1-1}}_{\text{stage }l_1-1}).
\]
Each $X_r$ saturates memory: $w_r a_r + w_{r+1} b = M$ (and $w_0 b + w_{l_1-1} a_{l_1-1}=M$).

For $r=0,\ldots,l_1-3$, applying \textit{Execute} to $X_r$ produces two nonzero stages $r+1$ and $r+2$ with values $(a_r,b)$. Because $a_r>a_{r+1}$, the post-execute memory exceeds~$M$ and LPF evicts from stage $r+1$ until it equals $a_{r+1}$, yielding exactly $X_{r+1}$ and leaving no slack for admission.

At $r=l_1-2$, \textit{Execute} completes the $b$ requests at stage $l_1-1$ and shifts $a_{l_1-2}$ to stage $l_1-1$, leaving post-execute memory $w_{l_1-1}a_{l_1-2}<M$. No eviction occurs and admission fills stage~0 with
\[
\frac{M-w_{l_1-1}a_{l_1-2}}{w_0} = b,
\]
where the equality is equivalent to the definition of $b$ in~\eqref{eq:adjacent_level_l1_minus_2_throughput}. This yields $X_{l_1-1}$.

Finally, applying \textit{Execute} to $X_{l_1-1}$ completes the $a_{l_1-1}$ requests at stage $l_1-1$ and shifts $b$ to stage~1; no eviction occurs and admission fills stage~0 with $(M-w_1 b)/w_0=a_0$, returning to~$X^0$. Hence $\{X^0,\ldots,X_{l_1-1}\}$ is a limit cycle.

Only $X_{l_1-2}$ and $X_{l_1-1}$ have a nonzero final stage, so the total completions over one period are $b+a_{l_1-1}=b+(l_1-1)b=l_1 b$, yielding $\bar{T}_{\mathrm{adj}}=b$.

\textbf{Opposite orbit (even $l_1=2d$).} Let $b:=M/(w_{d-1}+w_{l_1-1})$ and define
\[
a_r := \frac{M-w_{r+d}b}{w_r}, \quad r=0,1,\ldots,d-1.
\]
Consider the $d$-periodic orbit with states
\[
Y_r=(0,\ldots,0,\;\underbrace{a_r}_{\text{stage }r},\;0,\ldots,0,\;\underbrace{b}_{\text{stage }r+d},\;0,\ldots,0),
\quad r=0,1,\ldots,d-1.
\]
Each $Y_r$ lies on the memory boundary since $w_r a_r + w_{r+d} b=M$.

For $r=0,\ldots,d-2$, \textit{Execute} shifts $(a_r,b)$ to stages $(r+1,r+d+1)$; since $a_r>a_{r+1}$, memory overflows and LPF evicts from stage $r+1$ until it equals $a_{r+1}$, producing $Y_{r+1}$ with no slack for admission.

At $r=d-1$, the batch $b$ is at the final stage $l_1-1=2d-1$ and completes under \textit{Execute}; the remaining batch $a_{d-1}=b$ shifts to stage~$d$. Post-execute memory is $w_d b < M$, so no eviction occurs and admission fills stage~0 with $a_0=(M-w_d b)/w_0$, producing $Y_0$. Thus $\{Y_0,\ldots,Y_{d-1}\}$ is a limit cycle. Exactly $b$ requests complete over $d$ iterations, giving $\bar{T}_{\mathrm{opp}}=b/d$, which is~\eqref{eq:opposite_level_l1_minus_2_throughput}.
\end{proof}

\begin{proof}[Proof of Proposition~\ref{prop:block_cycles}]
Fix $i$ and write $k:=l_1-i$ for the number of occupied stages. Let $b:=c_i$ as in~\eqref{eq:ci_def}. Define a length-$l_1$ orbit $\{X^0,\ldots,X_{l_1-1}\}$ as follows. For $r=0,1,\ldots,i-1$, set
\[
X_r=(0,\ldots,0,\;\underbrace{a_r}_{\text{stage }r},\;\underbrace{b}_{\text{stages }r+1,\ldots,r+k-1},\;0,\ldots,0),
\]
where $a_r$ is chosen so that $X_r$ lies on the memory boundary:
\begin{equation}\label{eq:block_ar_ri}
w_r a_r + b\sum_{j=r+1}^{r+k-1} w_j = M.
\end{equation}
For $r=i,i+1,\ldots,l_1-1$, define $X_r$ as the cyclic shift of a length-$k$ contiguous block with one larger leading entry:
\[
X_r=(\underbrace{b}_{\text{stages }0,\ldots,r-i-1},\,0,\ldots,0,\;\underbrace{(i+1)b}_{\text{stage }r},\;\underbrace{b}_{\text{stages }r+1,\ldots,l_1-1}).
\]
(Equivalently, $X_r$ has support $\{r,r{+}1,\ldots,l_1{-}1,0,1,\ldots,r{-}i{-}1\}$, a contiguous block of length~$k$ in cyclic order.) Each $X_r$ saturates memory by construction: for $r<i$ this is~\eqref{eq:block_ar_ri}, and for $r\ge i$ it follows from~\eqref{eq:ci_def}.

We now verify invariance under the execute-evict-admit map.

\textbf{Case 1: $r<i$.} The block $\{r,\ldots,r+k-1\}$ does not include the final stage $l_1-1$, so no completion occurs. After \textit{Execute}, $a_r$ shifts to stage $r+1$ and the $b$-mass shifts to stages $r+2,\ldots,r+k$, giving post-execute memory
\[
w_{r+1}a_r + b\sum_{j=r+2}^{r+k} w_j = \Bigl(w_r a_r + b\sum_{j=r+1}^{r+k-1} w_j\Bigr) + a_r + (k-1)b
> M.
\]
Since $r+k\le l_1-1$, the post-execute support is contained in stages $\{r+1,\ldots,r+k\}$ and LPF evicts only from stage $r+1$ until feasibility is restored. The resulting stage-$r{+}1$ value is exactly $a_{r+1}$ (defined by~\eqref{eq:block_ar_ri} with $r{+}1$), producing $X_{r+1}$ with no slack for admission.

\textbf{Case 2: $i\le r\le l_1-2$.} Stage $l_1-1$ is occupied by $b$ and completes under \textit{Execute}. The total number of active requests in $X_r$ equals $(i+1)b+(k-1)b=l_1b$, so $(l_1-1)b$ survive. Since $w_{l_1-1}=w_0+l_1-1$, the post-execute memory is
\[
M + (l_1-1)b - w_{l_1-1}b = M - w_0 b < M.
\]
No eviction occurs and admission fills the slack $w_0b$ with exactly $b$ new stage-0 requests, yielding $X_{r+1}$.

\textbf{Case 3: $r=l_1-1$.} Stage $l_1-1$ holds $(i+1)b$ and completes under \textit{Execute}. The remaining stages $1,\ldots,k-1$ each contain $b$, so the post-execute state has memory $b\sum_{j=1}^{k-1} w_j<M$ and admission fills stage~0 with
\[
a_0=\frac{M-b\sum_{j=1}^{k-1} w_j}{w_0},
\]
which equals the value defined by~\eqref{eq:block_ar_ri} at $r=0$. This returns to $X^0$ and closes the orbit.

Over one period, the final stage holds $b$ for $l_1-i-1$ iterations ($r=i,\ldots,l_1-2$) and $(i+1)b$ for one ($r=l_1-1$). The total completions therefore equal
\[
(l_1-i-1)b + (i+1)b = l_1 b,
\]
so $\bar{T}=b=c_i$.

Finally, to prove~\eqref{eq:ci_strict_decrease}, the denominator in~\eqref{eq:ci_def} satisfies
\[
\Bigl((i+2)w_{i+1}+\sum_{j=i+2}^{l_1-1} w_j\Bigr) - \Bigl((i+1)w_i+\sum_{j=i+1}^{l_1-1} w_j\Bigr)
= (i+1)(w_{i+1}-w_i)=i+1>0,
\]
so $c_{i+1}<c_i$.
\end{proof}

\begin{proof}[Proof of Proposition~\ref{prop:spaced_cycles}]
Let
\[
b:=\frac{M}{\sum_{q=1}^{k} w_{qd-1}}.
\]
For $r=0,1,\ldots,d-1$, define
\[
a_r := \frac{M-b\sum_{q=1}^{k-1} w_{r+qd}}{w_r},
\qquad
Y_r=(0,\ldots,0,\;\underbrace{a_r}_{\text{stage }r},\;0,\ldots,0,\;\underbrace{b}_{\text{stages }r+d,r+2d,\ldots,r+(k-1)d},\;0,\ldots,0),
\]
where indices are in $\{0,\ldots,l_1-1\}$ and $r+(k-1)d\le l_1-1$ since $r\le d-1$. Each $Y_r$ lies on the memory boundary by construction.

The sequence $\{a_r\}_{r=0}^{d-1}$ is strictly decreasing. Since $w_{r+qd}=w_r+qd$,
\[
a_r=\frac{M-b\sum_{q=1}^{k-1}(w_r+qd)}{w_r}=\frac{M-bd\frac{k(k-1)}{2}}{w_r}-(k-1)b,
\]
which decreases strictly with $w_r=w_0+r$.

For $r=0,\ldots,d-2$, \textit{Execute} shifts $(a_r,b,\ldots,b)$ to stages $(r+1,r+1+d,\ldots,r+1+(k-1)d)$. Because $a_r>a_{r+1}$, memory overflows and LPF evicts from the lowest occupied stage $r+1$ until it equals $a_{r+1}$, yielding exactly $Y_{r+1}$ with no slack for admission.

At $r=d-1$, stage $l_1-1=(d-1)+(k-1)d$ holds $b$ and completes. Since $b=M/\sum_{q=1}^{k}w_{qd-1}$, the boundary condition implies $a_{d-1}=b$. The remaining $k-1$ batches all have size $b$ and shift to stages $d,2d,\ldots,(k-1)d$, producing post-execute memory $b\sum_{q=1}^{k-1} w_{qd}<M$. No eviction occurs and admission fills stage~0 with
\[
a_0 = \frac{M-b\sum_{q=1}^{k-1} w_{qd}}{w_0},
\]
returning to $Y_0$. Thus $\{Y_0,\ldots,Y_{d-1}\}$ is a limit cycle with throughput $b/d$, which is~\eqref{eq:balanced_k_cycle_throughput}.

To prove monotonicity in $k$, substitute $w_{qd-1}=w_0+qd-1$ and $d=l_1/k$ into~\eqref{eq:balanced_k_cycle_throughput}:
\[
d\sum_{q=1}^k w_{qd-1}
=
d\Bigl(k(w_0-1)+d\frac{k(k+1)}{2}\Bigr)
=
l_1(w_0-1)+\frac{l_1^2}{2}\frac{k+1}{k}.
\]
	The right-hand side is strictly decreasing in $k$ because $(k+1)/k$ decreases with~$k$, so $\bar{T}_{\mathrm{bal}}(k)$ is strictly increasing in~$k$.
	\end{proof}

\begin{proof}[Proof of Proposition~\ref{prop:gap_vector_envelope}]
If \(m=1\), the single predecessor gap is \(g_0=l_1\), and the memory boundary
gives \(y_0=M/(l_0+l_1)\) at the block-end representative. This mass completes
once every \(l_1\) iterations, so the throughput is
\(M/[l_1(l_0+l_1)]\), which agrees with~\eqref{eq:gap_vector_throughput}.
Hence assume \(m\ge2\).

The block-end representative of a periodic orbit with a fixed relative support
pattern must be balanced. Otherwise \(G^{\mathrm{sc}}>0\), and Lemma~\ref{lem:A8} would
force another support loss in finite time, contradicting periodicity with \(m\)
live positions. Thus all normalized live masses are equal:
\[
u_h=\frac{y_h}{g_h}=c,\qquad h=0,\ldots,m-1.
\]
Thus \(y_h=cg_h\). With \(S_h=\sum_{t=0}^h g_t\), the memory-boundary equation
\eqref{eq:block_boundary} gives
\[
M
=
\sum_{h=0}^{m-1}(l_0+S_h)y_h
=
c\left(l_0l_1+\sum_{h=0}^{m-1}g_hS_h\right).
\]
During the block that begins with oldest live mass \(y_{m-1}=cg_{m-1}\), exactly
\(g_{m-1}\) physical iterations elapse before the next block-end, and
\(cg_{m-1}\) requests complete at the beginning of that block. Over one full
circuit of block-end transitions, the elapsed physical time is
\(\sum_h g_h=l_1\), and the total completed mass is \(\sum_h cg_h=cl_1\).
Hence the time-averaged throughput of the periodic orbit is \(c\).

This also makes the \(l_1\)-step closure equations explicit. Let
\((g^{(s)},y^{(s)})\) denote the \(s\)-th block-end representative along the
orbit. Corollary~\ref{cor:A5} and the balance relation \(y_h=cg_h\) give
\[
g^{(s+1)}=(g^{(s)}_{m-1},g^{(s)}_0,\ldots,g^{(s)}_{m-2}),
\qquad
y^{(s+1)}=(cg^{(s)}_{m-1},cg^{(s)}_0,\ldots,cg^{(s)}_{m-2}).
\]
After \(m\) block-end transitions, these equations return both the gap vector
and the live-mass vector to their initial order. The total elapsed physical
time over those \(m\) block-end transitions is \(\sum_h g_h=l_1\), so the
physical stage labels also return to their initial positions. Thus the
block-end representative satisfies the same closure condition as the physical
\(l_1\)-step cycle.

It remains to simplify the denominator. Since
\[
\sum_{h=0}^{m-1}g_hS_h
=
\sum_{h=0}^{m-1}\sum_{t=0}^{h}g_hg_t
=
\frac12\left(\left(\sum_{h=0}^{m-1}g_h\right)^2+\sum_{h=0}^{m-1}g_h^2\right)
=
\frac12\left(l_1^2+\sum_{h=0}^{m-1}g_h^2\right),
\]
we obtain~\eqref{eq:gap_vector_throughput}.

For fixed \(m\), the denominator in~\eqref{eq:gap_vector_throughput} is increasing
in \(\sum_h g_h^2\). Among positive integer vectors with sum \(l_1\), the sum of
squares is maximized by making one gap as large as possible and all others equal
to one:
\[
\sum_h g_h^2\le (l_1-m+1)^2+(m-1),
\]
with equality exactly at cyclic permutations of
\((l_1-m+1,1,\ldots,1)\). This is the contiguous-block support geometry, and
substituting this value into~\eqref{eq:gap_vector_throughput} gives
\(c_{l_1-m}\).

The sum of squares is minimized when the gaps differ by at most one. Indeed, if
two gaps satisfy \(g_a\ge g_b+2\), replacing them by \(g_a-1\) and \(g_b+1\)
preserves the total sum and decreases the squared sum by
\[
g_a^2+g_b^2-(g_a-1)^2-(g_b+1)^2
=2(g_a-g_b-1)>0.
\]
Iterating this balancing operation yields \(r\) gaps equal to \(a+1\) and
\(m-r\) gaps equal to \(a\), where \(l_1=am+r\). This proves the upper endpoint
\(\bar{T}_{\mathrm{even}}(m)\). Such a gap vector defines an as-evenly-spaced
support pattern on the circular pipeline. For any positive gap vector, the
balanced state \(y_h=cg_h\) satisfies \(u_h\equiv c\), so the normalized
block-end map in Corollary~\ref{cor:A5} preserves the balanced representative after
rotation. The fixed-support induction in Proposition~\ref{prop:A4} then lifts this
balanced block-end orbit to the physical LPF dynamics. The closure equations are
also satisfied: after one block-end transition the gap vector is cyclically
rotated, after \(m\) block-end transitions it returns to its original order, and
the elapsed physical time is \(\sum_{h=0}^{m-1}g_h=l_1\). Thus every live
position has advanced by one full pipeline cycle, so the physical state returns
to the original state and the balanced block-end orbit is a physical limit
cycle. When \(r=0\),
all gaps are equal to \(a=l_1/m\), which is exactly the evenly-spaced family in
Proposition~\ref{prop:spaced_cycles}.
\end{proof}

\subsection{Worst-Case Limit Cycle}
This subsection remains within the saturated-input continuous model of
Theorem~\ref{thm:fcfs}: state coordinates are nonnegative request masses, and a
partial LPF eviction may trim an arbitrary submass of a stage because requests
within the same stage are indistinguishable. The proof of
Lemma~\ref{lem:monotonicity} relies on the following continuous accounting
identity, which we call the \emph{waste decomposition}. Let
\[
W = \sum_{j=0}^{l_1-1} w_j = l_1(2l_0+l_1+1)/2
\]
denote the total memory-time cost required to process a request through all $l_1$ decoding stages.
Over a limit cycle of period $p$, the memory constraint implies $\sum_{j} w_j x^{n}_{j} = M$ for each iteration $n$.
Summing this identity over the $p$ iterations of the cycle and decomposing the total memory-time usage by completed mass versus evicted mass yields
\begin{equation}\label{eq:throughput_decomp}
pM = p\,\bar{T}(\mathcal{C})\,W + E(\mathcal{C}),
\end{equation}
where $E(\mathcal{C}) \ge 0$ denotes the total memory-time consumed by requests that are eventually evicted.
Rearranging gives
\[
\bar{T}(\mathcal{C}) = x^* - \frac{E(\mathcal{C})}{pW},
\]
where $x^* = M/W$ is the throughput of the eviction-free fixed point.
Thus, any limit cycle with eviction satisfies $\bar{T}(\mathcal{C}) < x^*$. The throughput loss relative to the fixed point is proportional to the waste $E(\mathcal{C})$.
The maximal-eviction cycle maximizes this waste, since requests move through the decoding pipeline as a single synchronized batch and eviction trims the batch at every stage.
This regime yields throughput $\bar{T}_{l_1-1}=M/[l_1(l_0+l_1)]$.

\noindent The proof of Lemma~\ref{lem:monotonicity} is deferred to Appendix~\ref{app:aux_lemma_proofs}.

\section{Proof of Theorem~\ref{thm:gcd}: GCD Stability Condition and Extensions}
\label{app:theorem2}

Under the LPF convention in Section~\ref{sec:protocol} and the proportional
admission convention in Section~\ref{sec:multi_class}, Theorem~\ref{thm:gcd}
is stated for the two-class \commoninput{} normalization, where the mechanism
is most transparent and the complete proof can be written without additional
notation. This case contains the essential ideas of the argument: the scalar
cohort recurrence, the limiting survival polynomial, the
unit-circle/root-of-unity characterization, the \finiteinput{} root perturbation,
and the Lyapunov argument for global stability.
Sections~\ref{sec:K_type_extension} and~\ref{sec:hetero_prompt_extension} then
discuss the corresponding \(K\)-class and \heteroinput{} extensions through the
appropriate cohort reductions and weighted survival
polynomials. In the two-class
\commoninput{} setting,
(i)~if $\gcd(l_{1,1}, l_{1,2}) > 1$, the eviction-free equilibrium is
unstable; (ii)~if $\gcd(l_{1,1}, l_{1,2}) = 1$, the system converges globally
to equilibrium from any feasible initial active state in the saturated-input
model.

\begin{lemma}[Entry into the proportional-cohort manifold]
\label{lem:proportional_cohort_entry}
Consider the continuous saturated-input multi-class dynamics under
per-iteration proportional admission \(a_k^n=p_k a^n\) and proportional
within-stage LPF eviction. Let \(d=\max_k l_{1,k}\). Starting from any feasible
active state, after at most \(d\) iterations the state lies on the
proportional-cohort manifold: for every cohort age \(j\), there is an aggregate
cohort mass \(y_j^n\ge 0\) such that
\[
x_{k,j}^n=p_k y_j^n \quad \text{for all classes } k \text{ with } l_{1,k}>j,
\qquad
x_{k,j}^n=0 \quad \text{for } l_{1,k}\le j.
\]
This manifold is forward invariant.
\end{lemma}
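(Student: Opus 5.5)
The plan is to track cohorts by their admission time and show that every cohort admitted under the policy is born proportional and remains proportional for as long as it is active. The proof has three steps.

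\textbf{Step 1 (every non-proportional cohort leaves within \(d\) iterations).} The initial active state may be arbitrary: at each age \(j\), the class composition \(x_{k,j}^0\) need not be proportional. Under the Execute step, a cohort present at time \(0\) at stage \(j\) reaches stage \(j+t\) at time \(t\). It completes, class by class, once \(j+t\ge l_{1,k}\). Evictions only remove mass. Hence after \(d=\max_k l_{1,k}\) iterations, every request present at time \(0\) has either completed or been evicted. Evicted requests rejoin the queue, but they re-enter only through fresh admissions. So at time \(d\), every active request belongs to a cohort admitted at some time \(n\in\{1,\dots,d\}\).

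\textbf{Step 2 (a cohort admitted under the policy stays proportional).} Consider a cohort admitted at time \(n\). At birth, \(x_{k,0}^{n}=p_k a^{n-1}\) for every class; all classes have \(l_{1,k}\ge 1\), so all are active at age \(0\). Set \(y_0^{n}:=a^{n-1}\). I will show by induction on the age \(j\) that the cohort has the form \(x_{k,j}=p_k y_j\) for all \(k\) with \(l_{1,k}>j\), with zero mass for the remaining classes. Execution moves age \(j\) to age \(j+1\). Classes with \(l_{1,k}=j+1\) complete and drop out; the surviving classes keep their masses \(p_k y_j\), so the proportional form is preserved with the smaller active set. LPF eviction then acts stage by stage. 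Within a partially evicted stage, the convention removes each class in proportion to its occupancy, which amounts to multiplying every class mass by a common factor \(1-\theta\). This keeps the form \(p_k y_{j+1}\), with \(y_{j+1}=(1-\theta)y_j\). Stages that are fully evicted go to \(y=0\), and stages that are untouched keep their mass. Admission affects only age \(0\).

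\textbf{Step 3 (conclusion and forward invariance).} Combining the two steps, at time \(d\) every active age-\(j\) cohort has the required form. Since in each iteration exactly one cohort occupies each stage, this gives the aggregate masses \(y_j^d\), and the state lies on the manifold. Forward invariance follows from the same Step 2 computation applied to a state already on the manifold. The one point needing care is when eviction stops partway through stage \(j\) while other stages are untouched; this case is covered because the proportional-removal rule acts on each stage separately. I do not expect any serious obstacle. The main subtlety is the bookkeeping that distinguishes classes completing at age \(l_{1,k}\) from classes that are still active. I will also note that \(a^n\) is well defined and nonnegative because the memory slack after eviction is nonnegative.
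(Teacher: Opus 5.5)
Your proposal is correct and follows essentially the same argument as the paper. Both show that every initial cohort completes or is evicted within $d$ iterations, that each newly admitted cohort is born with class masses $p_k a^n$, and that execution (which only drops finished classes) and proportional within-stage LPF eviction (a common scaling factor within each stage) preserve this form; the same step also gives forward invariance.
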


\noindent The proof is deferred to Appendix~\ref{app:aux_lemma_proofs}.

For the global-convergence statement, Lemma~\ref{lem:proportional_cohort_entry}
allows us to start without loss of generality from the proportional-cohort
manifold.

\paragraph{Proof roadmap.}
Sections~\ref{sec:app_b_dynamics}--\ref{sec:app_b_decomposition} derive the
two-class \commoninput{} admission recurrence, its characteristic polynomial, and
the leading-order decomposition that separates the limiting survival polynomial
from the \finiteinput{} correction. Sections~\ref{sec:app_b_roots}--\ref{sec:finite_prompt_appendix}
analyze the limiting roots and show how the \finiteinput{} correction moves the
root-of-unity modes. Sections~\ref{sec:local_stability}--\ref{sec:global_convergence}
prove local and global stability in the coprime case, completing the proof of
Theorem~\ref{thm:gcd}. Section~\ref{sec:K_type_extension} then gives the
corresponding \(K\)-class cohort reduction and survival-polynomial
substitutions, while Section~\ref{sec:hetero_prompt_extension} explains the
\heteroinput{} weighted-polynomial substitution. These extension sections show
how the same mechanism carries over, rather than introducing additional formal
theorem statements. The later subsections collect \finiteinput{} threshold
refinements and pulse-cycle results used to interpret the non-coprime case.

\subsection{System Dynamics and Characteristic Polynomial}
\label{sec:app_b_dynamics}

Consider two request classes with common \inputlen{} $l_0$ but different decoding lengths $l_{1,1} < l_{1,2}$. Let $p = \lambda_1/(\lambda_1 + \lambda_2)$ and $q = 1-p$.

The token balance at time $n$ yields the recurrence:
\begin{align*}
(l_0+1)x^{n+1}_{0} =\; &(l_0+l_{1,2})q \cdot x^{n-l_{1,2}+1}_{0} + [(l_0+l_{1,1})p - q]x^{n-l_{1,1}+1}_{0} \\
&- \sum_{i=n-l_{1,1}+2}^{n}x^i_0 - q\sum_{i=n-l_{1,2}+2}^{n-l_{1,1}}x^i_0.
\end{align*}
Subtracting consecutive equations and defining $D^n = x^{n+1}_{0} - x^n_{0}$:
\[
(l_0+1)D^n + \sum_{m=1}^{l_{1,1}-1}(l_0+m+1)D^{n-m} + q\sum_{m=l_{1,1}}^{l_{1,2}-1}(l_0+m+1)D^{n-m} = 0.
\]

The characteristic polynomial is:
\begin{equation}
F(z) = (l_0+1)z^{l_{1,2}-1} + \sum_{m=1}^{l_{1,1}-1}(l_0+m+1)z^{l_{1,2}-1-m} + q\sum_{m=l_{1,1}}^{l_{1,2}-1}(l_0+m+1)z^{l_{1,2}-1-m}.
\label{eq:characteristic_app}
\end{equation}

\subsection{Limiting Polynomial and Key Decomposition}
\label{sec:app_b_decomposition}

As $l_0 \to \infty$, define the limiting polynomial:
\[
A(z) = \lim_{l_0 \to \infty} \frac{F(z)}{l_0+1} = z^{l_{1,2}-1} + \sum_{m=1}^{l_{1,1}-1}z^{l_{1,2}-1-m} + q\sum_{m=l_{1,1}}^{l_{1,2}-1}z^{l_{1,2}-1-m}.
\]
Summing geometric series yields the closed form:
\begin{equation}
(1-z)A(z) = -z^{l_{1,2}} + pz^{l_{1,2}-l_{1,1}} + q.
\label{eq:closed_form}
\end{equation}

The characteristic polynomial admits a decomposition that separates leading-order root structure from the $O(1/l_0)$ correction. Define
\[
B(z) = F(z) - (l_0+1)A(z) = \sum_{m=1}^{l_{1,1}-1}mz^{l_{1,2}-1-m} + q\sum_{m=l_{1,1}}^{l_{1,2}-1}mz^{l_{1,2}-1-m}.
\]
We claim that $B(z) = (l_{1,2}-1)A(z) - z A'(z)$. To verify, compute:
\begin{align*}
z A'(z) &= (l_{1,2}-1)z^{l_{1,2}-1} + \sum_{m=1}^{l_{1,1}-1}(l_{1,2}-1-m)z^{l_{1,2}-1-m} + q\sum_{m=l_{1,1}}^{l_{1,2}-1}(l_{1,2}-1-m)z^{l_{1,2}-1-m} \\
&= (l_{1,2}-1)A(z) - B(z).
\end{align*}
Therefore:
\begin{equation}
F(z) = (l_0+1)A(z) + B(z) = (l_0 + l_{1,2})A(z) - z A'(z).
\label{eq:decomposition}
\end{equation}
This decomposition reduces the stability problem to analyzing the roots of $A(z)$ and their perturbation under the $O(1/l_0)$ correction $z A'(z)$.

\subsection{Root Structure of the Limiting Polynomial}
\label{sec:app_b_roots}

We establish four lemmas characterizing the roots of $A(z)$.

\begin{lemma}[Boundedness]
\label{lem:bounded}
All roots of $(1-z)A(z) = 0$ satisfy $|z| \leq 1$.
\end{lemma}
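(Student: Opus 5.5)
Write $a:=l_{1,1}$ and $b:=l_{1,2}$, so that $(1-z)A(z)=-z^{b}+pz^{b-a}+q$ by \eqref{eq:closed_form}, with $p,q>0$ and $p+q=1$. I will argue by contradiction using the triangle inequality. Suppose $z$ is a root of this polynomial with $|z|>1$. Then $z^{b}=pz^{b-a}+q$, and taking moduli gives
\[
|z|^{b}\le p|z|^{b-a}+q .
\]
Since $|z|>1$ and $b-a\ge 1$, we have $|z|^{b-a}<|z|^{b}$, and also $1<|z|^{b}$. The right-hand side is therefore strictly less than $p|z|^{b}+q|z|^{b}=|z|^{b}$. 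This gives $|z|^{b}<|z|^{b}$, a contradiction. Hence every root satisfies $|z|\le 1$.

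The same argument covers the root $z=1$ that the factor $(1-z)$ introduces, since $|1|=1$. The roots of $A$ are a subset of the roots of $(1-z)A(z)$, so they also lie in the closed unit disk. Equivalently, one could divide by $z^{b}$ and set $w=1/z$, which turns the claim into showing that $1=pw^{a}+qw^{b}$ has no solution with $|w|<1$. That version is immediate because $|pw^{a}+qw^{b}|\le p|w|^{a}+q|w|^{b}<p+q=1$.

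I do not expect a real obstacle. The only point to check is that both coefficients $p$ and $q$ are strictly positive and that the exponent $b-a$ is at least one. These hold because $p\in(0,1)$ and $l_{1,1}<l_{1,2}$, and they are exactly what make the inequality strict.
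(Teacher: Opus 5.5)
Your proof is correct and follows the paper's route: apply the triangle inequality to $z^{l_{1,2}}=pz^{l_{1,2}-l_{1,1}}+q$ and derive a contradiction when $|z|>1$. The only difference is that you obtain $p|z|^{l_{1,2}-l_{1,1}}+q<|z|^{l_{1,2}}$ by a termwise comparison, whereas the paper shows that $g(x)=x^{l_{1,2}}-px^{l_{1,2}-l_{1,1}}-q$ satisfies $g(1)=0$ and $g'>0$ on $(1,\infty)$; your step is slightly more direct.
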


\noindent The proof is deferred to Appendix~\ref{app:aux_lemma_proofs}.

\begin{lemma}[Unit Circle Structure]
\label{lem:unit_circle}
If $|z| = 1$ and $(1-z)A(z) = 0$, then $z^g = 1$ where $g = \gcd(l_{1,1}, l_{1,2})$.
\end{lemma}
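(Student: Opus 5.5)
The plan is to work directly with the closed form \eqref{eq:closed_form}. Write \(a=l_{1,1}\), \(b=l_{1,2}\), so a root \(z\) of \((1-z)A(z)\) satisfies
\[
z^{b}=p\,z^{b-a}+q,\qquad p+q=1,\ p,q>0 .
\]
Assume \(|z|=1\). Taking moduli gives \(1=|z^b|=|p z^{b-a}+q|\le p|z^{b-a}|+q=p+q=1\). So equality holds in the triangle inequality for two nonzero complex numbers \(p z^{b-a}\) and \(q\), since \(p,q>0\). Such equality forces them to be positively collinear. Because \(q\) is a positive real, \(p z^{b-a}\) must be a positive real as well. Together with \(|z^{b-a}|=1\), this gives \(z^{b-a}=1\).

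Substituting back yields \(z^{b}=p+q=1\). Combining the two identities, \(z^{a}=z^{b}\,(z^{b-a})^{-1}=1\). Thus \(z^{a}=z^{b}=1\), so the order of \(z\) divides both \(a\) and \(b\), and hence divides \(g=\gcd(a,b)\). Equivalently, by Bézout, \(g=sa+tb\) with \(s,t\in\mathbb{Z}\), so \(z^g=(z^a)^s(z^b)^t=1\) (negative powers are fine since \(|z|=1\)). This proves \(z^g=1\).

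Two edge checks remain. The root \(z=1\) of the factor \((1-z)\) trivially satisfies \(z^g=1\), and it is consistent with the argument above. The case \(b-a=0\) cannot occur, since \(l_{1,1}<l_{1,2}\). The only step needing care is the equality case of the triangle inequality. It requires both summands to be nonzero, which holds because \(p\in(0,1)\). Then \(|u+v|=|u|+|v|\) implies \(u\bar v\ge 0\), and this gives the positive-real conclusion. I do not expect any genuine obstacle here; the lemma is a short consequence of the closed form, and Lemma~\ref{lem:bounded} is not even needed.
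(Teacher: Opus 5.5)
Your proof is correct and follows the paper's argument: equality in the triangle inequality for $|pz^{l_{1,2}-l_{1,1}}+q|\le 1$ forces $z^{l_{1,2}-l_{1,1}}=1$, hence $z^{l_{1,2}}=p+q=1$, and B\'ezout gives $z^g=1$. The only cosmetic difference is that you pass through $z^{l_{1,1}}=1$ and apply B\'ezout to $(l_{1,1},l_{1,2})$, whereas the paper applies it to $(l_{1,2},\,l_{1,2}-l_{1,1})$, which has the same gcd.
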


\noindent The proof is deferred to Appendix~\ref{app:aux_lemma_proofs}.

\begin{lemma}[Exclusion of Unity]
\label{lem:unity}
$z = 1$ is not a root of $A(z)$.
\end{lemma}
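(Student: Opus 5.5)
The plan is to evaluate $A$ at $z=1$ directly from its defining sum rather than from the closed form~\eqref{eq:closed_form}. The closed form is useless here, because its left-hand side $(1-z)A(z)$ vanishes at $z=1$ no matter what $A(1)$ is. So I will return to the definition
\[
A(z)=z^{l_{1,2}-1}+\sum_{m=1}^{l_{1,1}-1}z^{l_{1,2}-1-m}+q\sum_{m=l_{1,1}}^{l_{1,2}-1}z^{l_{1,2}-1-m}.
\]

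Setting $z=1$ makes every monomial equal to one, so the value is just the sum of the coefficients:
\[
A(1)=1+(l_{1,1}-1)+q\,(l_{1,2}-l_{1,1})=l_{1,1}+q\,(l_{1,2}-l_{1,1}).
\]
Since $l_{1,1}\ge 1$, $q=1-p\in(0,1)$, and $l_{1,2}>l_{1,1}$, this gives $A(1)\ge l_{1,1}>0$. Hence $z=1$ is not a root.

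As a cross-check, $A(1)$ also equals $\lim_{z\to1}\frac{-z^{l_{1,2}}+pz^{l_{1,2}-l_{1,1}}+q}{1-z}$. By L'H\^opital this limit is $l_{1,2}-p(l_{1,2}-l_{1,1})=pl_{1,1}+ql_{1,2}$, which agrees with the expression above. In words, $A(1)$ is the mean decoding length, so it is strictly positive. There is no real obstacle in this argument. The only care needed is to avoid deducing anything from the factored identity at $z=1$.

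A remark for later use: every coefficient of $A$ is strictly positive, since each lies in $\{1,q\}$ with $q>0$. This already rules out every positive real root, not just $z=1$. The lemma therefore complements Lemma~\ref{lem:unit_circle}: unit-circle roots of $(1-z)A(z)$ are $g$-th roots of unity, and $z=1$ is contributed only by the factor $(1-z)$. When $g=1$, this leaves $A$ with no unit-circle roots at all.
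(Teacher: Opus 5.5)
Your proof is correct and matches the paper's: both substitute $z=1$ directly into the defining sum to get $A(1)=l_{1,1}+q(l_{1,2}-l_{1,1})=pl_{1,1}+ql_{1,2}>0$. Your L'H\^opital cross-check and your remark that the positive coefficients exclude all positive real roots are both valid, but the lemma does not need them.
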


\noindent The proof is deferred to Appendix~\ref{app:aux_lemma_proofs}.

\begin{lemma}[Non-Trivial $g$-th Roots]
\label{lem:primitive}
If $g = \gcd(l_{1,1}, l_{1,2}) > 1$, every non-trivial $g$-th root of unity $\omega = e^{2\pi ik/g}$ for $k = 1, \ldots, g-1$ is a root of $A(z)$.
\end{lemma}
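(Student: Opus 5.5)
The plan is to evaluate the closed form \eqref{eq:closed_form} at $\omega$ and then divide out the factor $(1-\omega)$. Let $\omega=e^{2\pi ik/g}$ with $1\le k\le g-1$. Then $\omega^g=1$ and $\omega\neq1$.

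Since $g$ divides both $l_{1,1}$ and $l_{1,2}$, it also divides $l_{1,2}-l_{1,1}$. Hence
\[
\omega^{l_{1,2}}=1
\qquad\text{and}\qquad
\omega^{l_{1,2}-l_{1,1}}=1 .
\]
Substituting into \eqref{eq:closed_form} gives
\[
(1-\omega)A(\omega)=-1+p+q=0,
\]
where the last equality uses $p+q=1$. Because $1-\omega\neq0$, we conclude $A(\omega)=0$.

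The only point to check is that \eqref{eq:closed_form} is a genuine polynomial identity valid at every complex $z$, including $z$ on the unit circle. It is not merely a formal geometric-series manipulation requiring $|z|<1$.

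To verify this, I will multiply each finite geometric sum in $A$ by $(1-z)$, using the finite telescoping identity
\[
(1-z)\sum_{s=a}^{b}z^s=z^a-z^{b+1}.
\]
This holds for all $z\in\mathbb{C}$. After reindexing with $s=l_{1,2}-1-m$, the terms are as follows.

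\begin{itemize}
\item The first group (the leading term together with $m=1,\dots,l_{1,1}-1$) covers the exponents $s=l_{1,2}-l_{1,1},\dots,l_{1,2}-1$. It contributes $z^{l_{1,2}-l_{1,1}}-z^{l_{1,2}}$.
\item The $q$-group covers $s=0,\dots,l_{1,2}-l_{1,1}-1$. It contributes $q\bigl(1-z^{l_{1,2}-l_{1,1}}\bigr)$.
\end{itemize}

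Adding the two contributions yields
\[
-z^{l_{1,2}}+(1-q)z^{l_{1,2}-l_{1,1}}+q,
\]
which is \eqref{eq:closed_form} since $1-q=p$. This is the one step I would write out carefully; no other obstacle is expected.
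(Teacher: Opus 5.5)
Your proof is correct and follows the paper's argument essentially verbatim: substitute $\omega$ into the closed form \eqref{eq:closed_form}, use $\omega^{l_{1,2}}=\omega^{l_{1,2}-l_{1,1}}=1$ to get $(1-\omega)A(\omega)=-1+p+q=0$, and cancel $1-\omega\neq0$. Your extra telescoping check that \eqref{eq:closed_form} holds as a polynomial identity for every $z\in\mathbb{C}$ is accurate. It simply writes out the geometric-series step that the paper states without detail.
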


\noindent The proof is deferred to Appendix~\ref{app:aux_lemma_proofs}.

Combining these lemmas: when $g = 1$, Lemmas~\ref{lem:unit_circle} and~\ref{lem:unity} imply that all roots of $A(z)$ lie strictly inside the unit circle; when $g > 1$, Lemma~\ref{lem:primitive} shows there are exactly $g-1$ roots on the unit circle, located at the non-trivial $g$-th roots of unity. Figure~\ref{fig:root_locus} in Section~\ref{sec:multi_class} visualizes this structure: panel~(a) shows the coprime case with all roots inside the unit circle, while panel~(b) shows the non-coprime case with a root at $z = -1$ drifting outside.

\begin{example}[Root structure verification]
\label{ex:root_verification}
For the coprime case ($l_{1,1} = 2$, $l_{1,2} = 3$, $p = q = 1/2$), the limiting polynomial is
\[
A(z) = z^2 + z + \tfrac{1}{2}, \quad \text{roots } z = \frac{-1 \pm \mathrm{i}}{2}, \quad |z| = \frac{1}{\sqrt{2}} \simeq 0.707.
\]
All roots lie strictly inside the unit circle (Lemma~\ref{lem:bounded}), $A(1) = 5/2 > 0$ (Lemma~\ref{lem:unity}), and since $\gcd(2,3) = 1$ no root reaches the boundary (Lemma~\ref{lem:unit_circle}).

For the non-coprime case ($l_{1,1} = 2$, $l_{1,2} = 4$, $\gcd = 2$):
\[
A(z) = z^3 + z^2 + \tfrac{1}{2}z + \tfrac{1}{2} = (z + 1)\!\left(z^2 + \tfrac{1}{2}\right).
\]
The root $z = -1$ sits on the unit circle, as Lemma~\ref{lem:primitive} predicts: $\omega = e^{\mathrm{i}\pi} = -1$ is the unique non-trivial $g$-th root of unity for $g = 2$. The remaining roots $z = \pm \mathrm{i}/\sqrt{2}$ satisfy $|z| = 1/\sqrt{2} < 1$. Thus only the GCD-predicted root reaches the boundary, and it is exactly the root that the IFT will push outside the unit circle for finite $l_0$ (Theorem~\ref{thm:instability}).
\end{example}

\begin{figure}[ht]
\centering
\includegraphics[width=\textwidth]{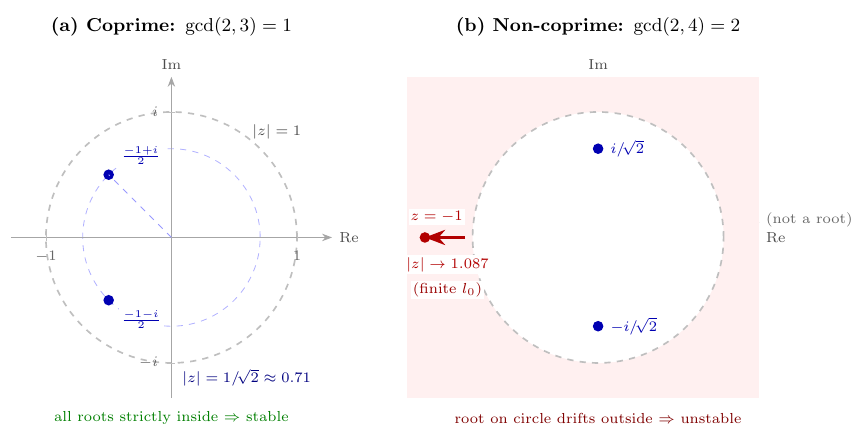}
\caption{Root structure of the limiting polynomial $A(z)$ for the two-class examples ($p = q = 1/2$). (a)~Coprime decode lengths ($l_1^{(1)} = 2$, $l_1^{(2)} = 3$): the polynomial $A(z) = z^2 + z + \tfrac{1}{2}$ has roots at $(-1 \pm i)/2$ with $|z| = 1/\sqrt{2} \approx 0.71$, strictly inside the unit circle (dashed blue). The eviction-free equilibrium is asymptotically stable. (b)~Non-coprime decode lengths ($l_1^{(1)} = 2$, $l_1^{(2)} = 4$, $g = 2$): the polynomial factors as $A(z) = (z + 1)(z^2 + \tfrac{1}{2})$, placing a root at $z = -1$ on the unit circle (red dot). For finite $l_0 = 10$, the Implicit Function Theorem shows this root drifts to $|z| \approx 1.087$ (arrow), establishing instability. The two interior roots (blue dots) at $\pm i/\sqrt{2}$ remain harmless. The root $z = 1$ is never a root of $A(z)$ (Lemma~iii).}
\label{fig:root_locus}
\end{figure}

\subsection[Instability for Non-Coprime Case]{Instability for Non-Coprime Case ($g > 1$)}

\begin{theorem}[Instability]
\label{thm:instability}
Let $\gcd(l_{1,1}, l_{1,2}) = g > 1$. For sufficiently large $l_0$, the characteristic equation $F(z) = 0$ has $g-1$ roots with $|z| > 1$.
\end{theorem}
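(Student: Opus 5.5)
The plan is to perturb the $g-1$ unit-circle roots of the limiting polynomial $A$ using the decomposition \eqref{eq:decomposition}. Set $\varepsilon:=1/(l_0+l_{1,2})$, so that
\[
F(z)=(l_0+l_{1,2})\bigl(A(z)-\varepsilon\, zA'(z)\bigr).
\]
The roots of $F$ are therefore the roots of $H_\varepsilon(z):=A(z)-\varepsilon zA'(z)$. By Lemma~\ref{lem:primitive}, each nontrivial $g$-th root of unity $\omega_k=e^{2\pi ik/g}$, $k=1,\dots,g-1$, is a root of $A$. By Lemmas~\ref{lem:bounded}, \ref{lem:unit_circle} and~\ref{lem:unity}, these are the only roots of $A$ on the closed unit circle; all other roots have modulus strictly less than one.

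\textbf{Step 1: simplicity of each $\omega_k$.} Write $P(z):=(1-z)A(z)=-z^{l_{1,2}}+pz^{l_{1,2}-l_{1,1}}+q$. Since $\omega_k\neq1$, the root $\omega_k$ is simple for $A$ if and only if it is simple for $P$. Using $\omega_k^{l_{1,1}}=\omega_k^{l_{1,2}}=1$,
\[
\omega_kP'(\omega_k)=-l_{1,2}+p(l_{1,2}-l_{1,1})=-(q\,l_{1,2}+p\,l_{1,1})<0 .
\]
This is nonzero, so $\omega_k$ is simple. The Implicit Function Theorem then gives an analytic branch $z_k(\varepsilon)$ with $z_k(0)=\omega_k$ and $H_\varepsilon(z_k(\varepsilon))=0$.

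\textbf{Step 2: first-order drift.} Differentiate $H_\varepsilon(z_k(\varepsilon))=0$ at $\varepsilon=0$. This gives $A'(\omega_k)z_k'(0)-\omega_kA'(\omega_k)=0$, and since $A'(\omega_k)\neq0$,
\[
z_k'(0)=\omega_k .
\]
Hence $z_k(\varepsilon)=\omega_k(1+\varepsilon)+O(\varepsilon^2)$ and $|z_k(\varepsilon)|=1+\varepsilon+O(\varepsilon^2)$. The drift is purely radial and outward, which is the cleanest part of the argument. It yields $|z_k|=1+\Theta(1/l_0)>1$ for all sufficiently large $l_0$, and this happens simultaneously for every $k$ because there are finitely many branches.

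\textbf{Step 3: counting.} I must confirm that exactly $g-1$ roots lie outside the disk, and that no other root of $F$ crosses the unit circle. Both $F$ and $A$ have degree $l_{1,2}-1$, and $H_\varepsilon$ has leading coefficient $1-\varepsilon(l_{1,2}-1)\neq0$ for small $\varepsilon$, so no roots escape to infinity. Apply Hurwitz/Rouché. Around each interior root of $A$ (modulus at most some $\rho<1$), take small disks inside $\{|z|<1\}$; around each $\omega_k$, take disks of radius $r$ small enough that the only root of $H_\varepsilon$ inside is $z_k(\varepsilon)$. On the compact complement of these disks within a large ball, $|A|$ is bounded below while $\varepsilon|zA'|=O(\varepsilon)$. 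So for small $\varepsilon$ every root of $H_\varepsilon$ stays in its disk with the same multiplicity. This gives $g-1$ roots with $|z|>1$ and the remaining $l_{1,2}-g$ roots strictly inside, matching the count in Theorem~\ref{thm:gcd}.

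The main obstacle is making sure the first-order term really controls the modulus. That requires two inputs: the simplicity established in Step 1, and the fact that $\varepsilon\mapsto z_k(\varepsilon)$ is analytic, so the $O(\varepsilon^2)$ remainder is uniform for small $\varepsilon$. Both follow from the explicit computation of $P'(\omega_k)$ above.
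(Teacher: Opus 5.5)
Your proof is correct and follows the paper's argument essentially step for step: you use the decomposition $F=(l_0+l_{1,2})(A-\varepsilon zA')$, show each nontrivial $g$-th root is simple because $(1-\omega)A'(\omega)=-\omega^{-1}(pl_{1,1}+ql_{1,2})\neq 0$, and take the IFT branch with $z'(0)=\omega$, which gives the outward radial drift $|z|=1+\varepsilon+O(\varepsilon^2)$. Your Step~3 proves more than this theorem asserts. The paper establishes that exact count, namely that the remaining $l_{1,2}-g$ roots stay strictly inside the unit disk, separately in Lemma~\ref{lem:noncoprime_complementary_roots}, using the same Rouch\'e argument you sketch.
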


\begin{proof}
Using decomposition~\eqref{eq:decomposition}, define:
\[
G(z, \epsilon) = A(z) - \epsilon z A'(z), \quad \epsilon = \frac{1}{l_0 + l_{1,2}}.
\]
For $\epsilon > 0$: $G(z, \epsilon) = 0 \iff F(z) = 0$ (after dividing~\eqref{eq:decomposition} by $l_0 + l_{1,2}$).

At each non-trivial $g$-th root $\omega$, the Implicit Function Theorem applies: $G(\omega, 0) = A(\omega) = 0$ by Lemma~\ref{lem:primitive}, and $\frac{\partial G}{\partial z}(\omega, 0) = A'(\omega) \neq 0$, as we now verify.

To compute $A'(\omega)$, differentiate~\eqref{eq:closed_form} with respect to $z$:
\[
-A(z) + (1-z)A'(z) = -l_{1,2}z^{l_{1,2}-1} + p(l_{1,2}-l_{1,1})z^{l_{1,2}-l_{1,1}-1}.
\]
At a non-trivial $g$-th root $\omega$ where $A(\omega) = 0$ and $\omega^{l_{1,2}} = \omega^{l_{1,2}-l_{1,1}} = 1$ (by Lemma~\ref{lem:primitive}):
\begin{align*}
(1-\omega)A'(\omega) &= -l_{1,2}\omega^{l_{1,2}-1} + p(l_{1,2}-l_{1,1})\omega^{l_{1,2}-l_{1,1}-1} \\
&= -l_{1,2}\omega^{-1} + p(l_{1,2}-l_{1,1})\omega^{-1} \quad \text{(since $\omega^{l_{1,2}} = \omega^{l_{1,2}-l_{1,1}} = 1$)} \\
&= \omega^{-1}[-l_{1,2} + p(l_{1,2}-l_{1,1})] \\
&= \omega^{-1}[-l_{1,2} + pl_{1,2} - pl_{1,1}] \\
&= -\omega^{-1}[(1-p)l_{1,2} + pl_{1,1}] \\
&= -\omega^{-1}[ql_{1,2} + pl_{1,1}].
\end{align*}
Since $p, q > 0$ and $l_{1,1}, l_{1,2} \geq 1$, we have $ql_{1,2} + pl_{1,1} > 0$. Also $\omega \neq 1$ (non-trivial root), so $1 - \omega \neq 0$. Therefore:
\[
A'(\omega) = \frac{-\omega^{-1}(pl_{1,1} + ql_{1,2})}{1-\omega} \neq 0.
\]

By IFT, there exists a smooth function $z(\epsilon)$ with $z(0) = \omega$ and $G(z(\epsilon), \epsilon) = 0$. Differentiating $G(z(\epsilon), \epsilon) = 0$ with respect to $\epsilon$ at $\epsilon = 0$:
\[
z'(0) = -\frac{\partial G/\partial \epsilon}{\partial G/\partial z}\bigg|_{(\omega,0)} = -\frac{-\omega A'(\omega)}{A'(\omega)} = \omega.
\]
Taylor expansion gives $z(\epsilon) = \omega(1 + \epsilon) + r(\epsilon)$ where $r(\epsilon) = O(\epsilon^2)$. Let $\zeta(\epsilon) = \omega^{-1}r(\epsilon)$; then $\zeta(\epsilon) = O(\epsilon^2)$ and:
\[
|z(\epsilon)| = |1 + \epsilon + \zeta(\epsilon)|.
\]
For sufficiently small $\epsilon>0$, there exists a constant $C>0$ such that $|\zeta(\epsilon)| \leq C\epsilon^2$. Therefore:
\[
|z(\epsilon)| \geq 1 + \epsilon - C\epsilon^2 > 1.
\]
Applying this argument to each of the $g-1$ distinct non-trivial $g$-th roots of unity yields $g-1$ distinct roots of $F(z)=0$ with $|z|>1$ for all sufficiently large $l_0$.
\end{proof}

\begin{lemma}[Complementary Root Count in the Non-Coprime Case]
\label{lem:noncoprime_complementary_roots}
Let \(\gcd(l_{1,1},l_{1,2})=g>1\). For sufficiently large \(l_0\), the
characteristic equation \(F(z)=0\) has exactly \(g-1\) roots outside the unit
disk, all satisfying \(|z|-1=\Theta(1/l_0)\), and the remaining
\(l_{1,2}-g\) roots satisfy \(|z|<1\). Roots are counted with algebraic
multiplicity.
\end{lemma}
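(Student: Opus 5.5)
The plan is a continuity (Rouché/Hurwitz) argument around the limiting polynomial, using the decomposition \eqref{eq:decomposition}. Dividing by \(l_0+l_{1,2}\), the roots of \(F\) coincide with the roots of \(G(z,\epsilon)=A(z)-\epsilon zA'(z)\), \(\epsilon=1/(l_0+l_{1,2})\). This is a polynomial of degree \(l_{1,2}-1\) with leading coefficient \(1-\epsilon(l_{1,2}-1)\neq0\) for small \(\epsilon\). So no roots escape to infinity, and the total count \(l_{1,2}-1\) is preserved.

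First, I will classify the roots of \(A\) using Lemmas~\ref{lem:bounded}--\ref{lem:primitive}. Every root satisfies \(|z|\le1\). Those on the unit circle are exactly the \(g-1\) nontrivial \(g\)-th roots of unity \(\omega\); Lemma~\ref{lem:unit_circle} with Lemma~\ref{lem:unity} excludes every other unimodular root. Each such \(\omega\) is simple, since the proof of Theorem~\ref{thm:instability} computed \(A'(\omega)\neq0\). Hence the remaining \((l_{1,2}-1)-(g-1)=l_{1,2}-g\) roots, counted with multiplicity, lie in the open disk. Let \(\rho_0<1\) be their maximal modulus.

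Next, I localize the roots for small \(\epsilon\). Fix \(r\) with \(\rho_0<r<1\), and small disjoint disks \(D_\omega\) of radius \(\eta\) around each \(\omega\), chosen so that \(A\) has no zeros in \(\{|z|\ge r\}\setminus\bigcup D_\omega\). Since \(\epsilon zA'\to0\) uniformly on compacts, Rouché's theorem (or Hurwitz) applied on \(|z|=r\) and on each \(\partial D_\omega\) gives the following for all small \(\epsilon\). Exactly \(l_{1,2}-g\) roots of \(G(\cdot,\epsilon)\) lie in \(|z|<r<1\), and exactly one lies in each \(D_\omega\). Because the degree is preserved, no roots remain elsewhere. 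One point needs checking: \(A\) has no zeros on the compact set \(\{r\le|z|\le R\}\setminus\bigcup D_\omega\) for a large \(R\), and \(|G|\) stays bounded away from zero for \(|z|\ge R\) by the leading coefficient. That rules out roots wandering outside these regions.

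Finally, the unique root in \(D_\omega\) is the implicit-function branch \(z(\epsilon)=\omega(1+\epsilon)+O(\epsilon^2)\) from Theorem~\ref{thm:instability}. Hence \(|z(\epsilon)|=1+\epsilon+O(\epsilon^2)\), which gives \(|z|-1=\Theta(\epsilon)=\Theta(1/l_0)\), positive for large \(l_0\). This yields exactly \(g-1\) roots outside the unit disk and \(l_{1,2}-g\) strictly inside, with multiplicity. I expect the main obstacle to be the uniformity bookkeeping: a single threshold on \(l_0\) must work for all the Rouché contours simultaneously, and the region outside the disks near the unit circle must contain no roots. Both follow from compactness and the finite number of contours, with \(\epsilon\) chosen as the minimum of finitely many thresholds.
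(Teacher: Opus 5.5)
Your proposal is correct and follows essentially the same route as the paper. You divide the decomposition by \(l_0+l_{1,2}\), classify the roots of \(A\) via Lemmas~\ref{lem:bounded}--\ref{lem:primitive} together with the simplicity computation from Theorem~\ref{thm:instability}, apply Rouch\'e on a circle of radius strictly between \(\rho_0\) and \(1\) to capture the \(l_{1,2}-g\) interior roots, use the implicit-function branches \(\omega(1+\epsilon)+O(\epsilon^2)\) for the \(g-1\) drifted roots, and close by degree counting. The differences are cosmetic: you count the root near each \(\omega\) by Rouch\'e on \(\partial D_\omega\) where the paper uses IFT uniqueness in \(U_\omega\), and your extra annulus and large-\(|z|\) check is redundant once the count \((l_{1,2}-g)+(g-1)=l_{1,2}-1\) equals the degree.
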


\noindent The proof is deferred to Appendix~\ref{app:aux_lemma_proofs}.

\begin{lemma}[Generic finite exit from a local no-eviction neighborhood]
\label{lem:generic_finite_exit}
Assume \(\gcd(l_{1,1},l_{1,2})=g>1\) and \(l_0\) is sufficiently large. There
exists a neighborhood \(\mathcal{N}\) of the eviction-free equilibrium, within
the proportional-cohort manifold, such that LPF eviction is inactive throughout
\(\mathcal{N}\). While the trajectory remains in \(\mathcal{N}\), the exact
saturated-input dynamics coincide with the no-eviction affine stage-state map.
For every initial perturbation in \(\mathcal{N}\) with nonzero projection onto
the unstable eigenspace, the trajectory exits \(\mathcal{N}\) in finite time.
The exceptional perturbations are contained in a proper linear subspace and
therefore have Lebesgue measure zero in the local state coordinates.
\end{lemma}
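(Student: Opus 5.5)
The plan has three parts: show that eviction is locally inactive near the equilibrium, identify the exact map there as an affine map whose spectrum is that of $F$, and then show that any component along the unstable modes grows until the trajectory leaves the neighborhood.

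\textbf{Step 1: eviction is inactive near the equilibrium.} On the proportional-cohort manifold (Lemma~\ref{lem:proportional_cohort_entry}), write the state as aggregate cohort masses $(y_0,\dots,y_{l_{1,2}-1})$ on the memory boundary. At the eviction-free equilibrium every $y_j=x^*>0$. The post-execution memory is then
\[
M - (\text{memory released by completions}) + (\text{growth}),
\]
and it is strictly below $M$: a completion frees about $(l_0+l_{1,k})p_k x^*$ tokens, while growth adds at most $l_{1,2}x^*$. So the slack is a positive margin of order $l_0 x^*$. By continuity there is a ball $\mathcal N$ on which:
\begin{itemize}
\item all coordinates stay positive;
\item post-execution memory stays strictly below $M$;
\item the required admission stays positive.
\end{itemize}
On $\mathcal N$, LPF eviction never triggers. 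The saturated-input map therefore reduces to the affine stage-state map: shift the cohorts, and fill stage $0$ via the balance equation. This is the companion-type recurrence of Section~\ref{sec:app_b_dynamics}.

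\textbf{Step 2: spectrum of the linear part.} Subtract the fixed point to get a linear map $J$ on the deviations $e^n$, restricted to the memory-boundary hyperplane. The nonzero eigenvalues of $J$ are exactly the roots of $F$; the differenced recurrence accounts for the one zero eigenvalue noted in Theorem~\ref{thm:gcd}. By Lemma~\ref{lem:noncoprime_complementary_roots}, for large $l_0$ there are $g-1$ roots with $|z|>1$, and all remaining roots satisfy $|z|\le 1$.

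Decompose $e^n = u^n + s^n$ using the $J$-invariant splitting $E^u\oplus E^{cs}$, where $E^u$ is the span of generalized eigenvectors for the $g-1$ unstable roots. Let $P$ be the spectral projector onto $E^u$, which commutes with $J$. Then the exceptional initial perturbations are those with $Pe^0=0$, i.e.\ $e^0\in\ker P=E^{cs}$. This is a proper linear subspace since $g\ge 2$, hence of Lebesgue measure zero.

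\textbf{Step 3: finite exit.} Suppose $Pe^0\neq0$ and the trajectory remained in $\mathcal N$ forever. Then $e^n=J^ne^0$ exactly, because the map is affine there. Hence
\[
Pe^n=(J|_{E^u})^n Pe^0 .
\]
Every eigenvalue of $J|_{E^u}$ has modulus $\ge \rho_u>1$, so $J|_{E^u}$ is invertible with spectral radius of its inverse $<1$. By Gelfand's formula, $\|(J|_{E^u})^{-n}\|\le C\rho^{-n}$ for some $1<\rho<\rho_u$. This gives
\[
\|Pe^n\|\ge C^{-1}\rho^n\|Pe^0\|\to\infty .
\]
But $\|Pe^n\|\le \|P\|\,\|e^n\|$ is bounded, since $\mathcal N$ is bounded. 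This is a contradiction, so the trajectory exits $\mathcal N$ in finite time.

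\textbf{Main obstacle.} The main obstacle is Step 1: making precise that the affine no-eviction map really coincides with the exact LPF dynamics on a full neighborhood. This requires checking that the slack margin is uniform and positive. The delicate part is that completions of class $1$ and class $2$ occur at different lags, so one must confirm that the memory freed per iteration at equilibrium, about $x^*(l_0+\bar l)$, strictly exceeds the growth term $\sum_j y_j$ for every iteration, not just on average. With common input $l_0$ large, this follows because the released amount scales like $l_0 x^*$ while the growth scales like $l_{1,2}x^*$.

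A secondary point is the multiplicity of roots. Jordan blocks are harmless because the argument in Step 3 uses only the spectral radius of $(J|_{E^u})^{-1}$.
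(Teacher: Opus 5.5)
Your proposal is correct and follows the paper's proof essentially step for step: continuity of the post-execution memory around the equilibrium gives a no-eviction neighborhood, the exact dynamics there are linear with spectrum supplied by Lemma~\ref{lem:noncoprime_complementary_roots}, and invertibility of the unstable block forces the unstable component to be unbounded. The paper's Step~1 is simpler than yours, since the equilibrium slack is exactly $V_0x^*>0$ and no large-$l_0$ comparison of released memory versus growth is needed. The other difference is cosmetic: on the memory-boundary hyperplane your restricted map has no zero eigenvalue at all (its characteristic polynomial is a constant multiple of $F$, and $F(0)\neq0$), whereas the paper works with $\Phi$ on all of $\mathbb{R}^{l_{1,2}}$ and places that zero eigenvalue in the stable subspace.
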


\noindent The proof is deferred to Appendix~\ref{app:aux_lemma_proofs}.

Lemma~\ref{lem:noncoprime_complementary_roots} gives the exact unstable/stable
root count used in Theorem~\ref{thm:gcd}. Lemma~\ref{lem:generic_finite_exit}
then connects this spectral instability to the local dynamical implication:
outside the measure-zero set of perturbations with zero unstable projection, a
non-coprime system cannot remain forever in the neighborhood where eviction is
inactive and the no-eviction recurrence is exact.
The later pulse-cycle instance in Section~\ref{sec:pulse_cycle_return}
then makes this non-coprime recurrent-eviction behavior explicit through
a period-\(g\) pulse cycle and its local return map.

\subsection[Local Stability for Coprime Case]{Local Stability for Coprime Case ($g = 1$)}
\label{sec:local_stability}

The local stability proof does not require a generic simplicity assumption on
the roots of \(A\). We use Rouch\'e's theorem to count roots with multiplicity
inside a contour strictly contained in the unit disk.

\begin{theorem}[Local Stability]
\label{thm:local_stability}
Let $\gcd(l_{1,1}, l_{1,2}) = 1$. For every $p \in (0,1)$ and for sufficiently large
$l_0$, all roots of $F(z) = 0$ satisfy $|z| < 1$.
\end{theorem}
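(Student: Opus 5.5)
Our plan is to treat $F$ as a small perturbation of $A$ via the decomposition \eqref{eq:decomposition}. After dividing by $l_0+l_{1,2}$, the roots of $F$ are exactly the zeros of
\[
G(z,\epsilon)=A(z)-\epsilon\, zA'(z),\qquad \epsilon=(l_0+l_{1,2})^{-1}.
\]
We then count zeros inside a contour strictly within the unit disk using Rouch\'e's theorem. This avoids any simplicity assumption on the roots of $A$.

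First, we locate the roots of $A$. Since $\gcd(l_{1,1},l_{1,2})=1$, Lemma~\ref{lem:bounded} places every root of $(1-z)A(z)$ in the closed unit disk. Lemma~\ref{lem:unit_circle} says any root on the unit circle satisfies $z^g=z=1$, and Lemma~\ref{lem:unity} excludes $z=1$ as a root of $A$. Hence all $l_{1,2}-1$ roots of $A$ lie in the open disk, so $\rho_\infty:=\max\{|z|:A(z)=0\}<1$. We fix a radius $r\in(\rho_\infty,1)$, for instance $r=(1+\rho_\infty)/2$. Then $A$ has no zeros on $|z|=r$, and by compactness $m_r:=\min_{|z|=r}|A(z)|>0$. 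We also set $K_r:=\max_{|z|=r}|zA'(z)|<\infty$. Both constants depend only on $p$, $l_{1,1}$ and $l_{1,2}$, not on $l_0$.

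Next, we apply Rouch\'e's theorem on $|z|=r$. Whenever $\epsilon K_r<m_r$, that is, $l_0+l_{1,2}>K_r/m_r$, we have $|\epsilon zA'(z)|<|A(z)|$ on the circle. So $G(\cdot,\epsilon)$ and $A$ have the same number of zeros inside $|z|<r$, counted with multiplicity, namely $l_{1,2}-1$.

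It remains to check that $G(\cdot,\epsilon)$ has no other zeros, which is a degree count. The polynomial $F$ has degree $l_{1,2}-1$ with leading coefficient $l_0+1\neq 0$ by \eqref{eq:characteristic_app}. Equivalently, $G$ has leading coefficient $1-\epsilon(l_{1,2}-1)=(l_0+1)/(l_0+l_{1,2})>0$, so it has exactly $l_{1,2}-1$ roots in total. All of them therefore lie in $|z|<r<1$, which proves the theorem with the explicit threshold $l_0>K_r/m_r-l_{1,2}$.

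The main obstacle is only the strictness $\rho_\infty<1$, that is, ruling out roots of $A$ on the unit circle. This rests entirely on Lemmas~\ref{lem:unit_circle} and~\ref{lem:unity}, which we take as given. If we had to verify Lemma~\ref{lem:unit_circle} directly, the argument would use the closed form \eqref{eq:closed_form}. For $|z|=1$, the equation $z^{l_{1,2}}=pz^{l_{1,2}-l_{1,1}}+q$ is an equality case of the triangle inequality, which forces $z^{l_{1,2}-l_{1,1}}=1$ and hence $z^{l_{1,2}}=1$. Therefore $z^{\gcd}=1$, and the coprime hypothesis gives $z=1$.
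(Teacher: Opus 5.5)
Your proposal is correct and follows essentially the same route as the paper: you place all roots of $A$ strictly inside the unit disk via Lemmas~\ref{lem:bounded}--\ref{lem:unity}, then apply Rouch\'e's theorem to $A-\epsilon zA'$ on a circle of radius strictly between $\max_j|\alpha_j|$ and $1$. You also check explicitly that the leading coefficient $(l_0+1)/(l_0+l_{1,2})$ is nonzero, so that $F$ has exactly $l_{1,2}-1$ roots, which makes precise the degree count the paper uses implicitly.
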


\begin{proof}
When $g = 1$, Lemmas~\ref{lem:unit_circle} and~\ref{lem:unity} imply all roots
of $A(z)$ satisfy $|\alpha_j| < 1$. Let
$r = \max_j |\alpha_j| < 1$, and choose $\rho$ with $r<\rho<1$. On the circle
$|z|=\rho$, the polynomial $A$ has no zeros, so
\[
m_\rho:=\min_{|z|=\rho}|A(z)|>0,\qquad
M_\rho:=\max_{|z|=\rho}|z A'(z)|<\infty .
\]
Using decomposition~\eqref{eq:decomposition} with
\(\epsilon = 1/(l_0+l_{1,2})\), the finite equation is
\[
A(z)-\epsilon z A'(z)=0 .
\]
For all sufficiently large \(l_0\), \(\epsilon M_\rho<m_\rho\) on
\(|z|=\rho\). Rouch\'e's theorem then implies that
\(A-\epsilon z A'\) and \(A\) have the same number of roots in
\(|z|<\rho\). Since \(A\) has degree \(l_{1,2}-1\), this accounts for all
roots of \(F(z)=0\), and they all satisfy \(|z|<\rho<1\).
\end{proof}

\begin{example}[IFT Root Drift]\label{ex:ift_drift}
For the two-class examples ($l_{1,1} = 2$, $l_{1,2} = 3$ or $4$, $p = q = 1/2$), Figure~\ref{fig:ift_perturbation} plots the spectral radius $|z_{\max}|$ of $F(z)$ as $l_0$ increases. In the non-coprime case ($l_{1,1} = 2$, $l_{1,2} = 4$, $\gcd = 2$), the root at $\omega = -1$ drifts to $|z| \simeq 1 + 1/(l_0 + 4)$, remaining above $1$ for all finite $l_0$. In the coprime case ($l_{1,1} = 2$, $l_{1,2} = 3$), $|z_{\max}| \to 1/\sqrt{2} \simeq 0.707$ as $l_0 \to \infty$, with a safety margin that persists at every finite $l_0$.
\end{example}

\begin{figure}[ht]
\centering
\includegraphics[width=\textwidth]{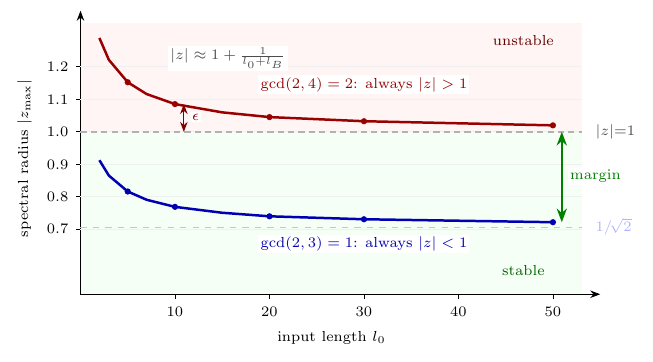}
\caption{Spectral radius of $F(z)$ versus \inputlen{} $l_0$ ($p = q = 1/2$). Non-coprime case ($l_1^{(1)} = 2$, $l_1^{(2)} = 4$, red): the root at $z = -1$ of $A(z)$ drifts to $|z| > 1$ for every finite $l_0$ (Theorem~\ref{thm:instability}). Coprime case ($l_1^{(1)} = 2$, $l_1^{(2)} = 3$, blue): all roots remain strictly inside the unit circle (Theorem~\ref{thm:local_stability}), with a safety margin that persists as $l_0 \to \infty$. Both curves approach their limiting values ($1$ and $1/\sqrt{2}$) as $l_0 \to \infty$, confirming the IFT first-order approximation $|z| \approx |\alpha|(1 + 1/(l_0 + l_1^{(2)}))$.}
\label{fig:ift_perturbation}
\end{figure}

\subsection{Global Convergence for Coprime Case}
\label{sec:global_convergence}

Local stability (Theorem~\ref{thm:local_stability}) shows the equilibrium is stable to small perturbations. We now prove global convergence: the system reaches equilibrium from \emph{any feasible initial active state} in the saturated-input model, even when eviction occurs during transients. Figure~\ref{fig:gcd_trajectories}(a) in Section~\ref{sec:multi_class} illustrates this convergence for the coprime case $\gcd(2,3) = 1$.

Throughout this section, we work in the asymptotic regime $l_0 \to \infty$: memory scales as $M = \beta l_0$ for fixed $\beta > 0$; decoding lengths $l_{1,1}, l_{1,2}$ are constants; and $\gcd(l_{1,1}, l_{1,2}) = 1$. All asymptotic notation is with respect to $l_0 \to \infty$. As $l_0$ grows, eviction perturbations become $O(1/l_0)$ while state variables remain $O(1)$, so the contractive linear dynamics dominate.

\subsubsection{Normalized System}

We write $X^n_s$ for the number of requests at stage $s$ at time $n$ (equivalently $x^{n}_{s}$ in the earlier notation). The original state $X^n \in \mathbb{R}^{l_{1,2}}$ has non-uniform equilibrium: $X_s^* = x^*$ for $s < l_{1,1}$ and $X_s^* = qx^*$ for $s \geq l_{1,1}$, where $x^*$ denotes the equilibrium admission rate. (At equilibrium, a fraction $p$ of admissions are class~1 and $q$ are class~2. For $s < l_{1,1}$, both classes contribute, giving $px^* + qx^* = x^*$ requests at stage $s$. For $s \geq l_{1,1}$, only class 2 requests remain, giving $qx^*$.) Define the normalized system:
\[
Y^n_s = \begin{cases}
X^n_s & s < l_{1,1} \\
X^n_s / q & s \geq l_{1,1}
\end{cases}
\]
with weights $V_s = l_0 + s + 1$ for $s < l_{1,1}$ and $V_s = q(l_0 + s + 1)$ for $s \geq l_{1,1}$. The equilibrium becomes uniform: $Y^* = [x^*, \ldots, x^*]^\top$.

The equilibrium admission rate $x^*$ satisfies:
\[
x^* = \frac{M}{\sum_s V_s} = \frac{\beta l_0}{\sum_{s=0}^{l_{1,1}-1}(l_0+s+1) + q\sum_{s=l_{1,1}}^{l_{1,2}-1}(l_0+s+1)}.
\]
The denominator expands to $(l_{1,1} + q(l_{1,2}-l_{1,1}))l_0 + \Theta(1) = (pl_{1,1} + ql_{1,2})l_0 + \Theta(1)$. Letting $\kappa = pl_{1,1} + ql_{1,2}$, we have:
\[
x^* = \frac{\beta l_0}{\kappa l_0 + \Theta(1)} = \frac{\beta}{\kappa} + O(1/l_0) = \Theta(1).
\]

\subsubsection{Linear Dynamics and Lyapunov Function}

The deviation $y^n = Y^n - Y^*$ evolves under the transition matrix $\Phi \in \mathbb{R}^{l_{1,2} \times l_{1,2}}$:
\[
\Phi = \begin{pmatrix}
-V_1/V_0 & -V_2/V_0 & \cdots & -V_{l_{1,2}-1}/V_0 & 0 \\
1 & 0 & \cdots & 0 & 0 \\
\vdots & & \ddots & & \vdots \\
0 & 0 & \cdots & 1 & 0
\end{pmatrix}.
\]
Since the last column of $\Phi$ is zero, $z = 0$ is an eigenvalue. The remaining eigenvalues are determined by the $(l_{1,2}-1) \times (l_{1,2}-1)$ leading principal submatrix, which is a companion matrix whose characteristic polynomial is $F(z)/(l_0+1)$. Hence the nonzero eigenvalues of $\Phi$ are exactly the $l_{1,2} - 1$ roots of $F(z) = 0$.

\begin{lemma}[Spectral Bound]
\label{lem:spectral}
When $\gcd(l_{1,1}, l_{1,2}) = 1$, there exists $l_{0,\star}$ such that for $l_0 \geq l_{0,\star}$, all eigenvalues of $\Phi$ satisfy $|z_j| \leq \rho < 1$ for some fixed $\rho$ independent of $l_0$.
\end{lemma}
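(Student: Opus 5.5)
The argument combines the eigenvalue description of $\Phi$ given just before the lemma with the root-localization result of Theorem~\ref{thm:local_stability}. The only new ingredient is a uniform radius.

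First, I would record the spectral decomposition already noted. The last column of $\Phi$ vanishes, so $0$ is an eigenvalue. Expanding along that column shows $\det(zI-\Phi)=z\,\det(zI-\Phi_{0})$, where $\Phi_0$ is the leading $(l_{1,2}-1)\times(l_{1,2}-1)$ companion block. Its characteristic polynomial is $F(z)/(l_0+1)$, which is monic with the coefficients $V_s/V_0$ read off from~\eqref{eq:characteristic_app}. Hence the spectrum of $\Phi$ is $\{0\}$ together with the roots of $F$, counted with multiplicity, and it suffices to bound the roots of $F$ uniformly.

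Second, I would apply the Rouch\'e argument from the proof of Theorem~\ref{thm:local_stability} with the radius fixed once and for all. Under $g=1$, Lemmas~\ref{lem:bounded}, \ref{lem:unit_circle} and~\ref{lem:unity} give that every root $\alpha_j$ of $A$ satisfies $|\alpha_j|<1$. The polynomial $A$ depends only on $p$, $l_{1,1}$ and $l_{1,2}$, not on $l_0$. Set $r:=\max_j|\alpha_j|<1$ and choose $\rho:=(1+r)/2$, which is also independent of $l_0$. On the circle $|z|=\rho$ define $m_\rho=\min|A|>0$ and $M_\rho=\max|zA'|$; both are $l_0$-free constants. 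By the decomposition~\eqref{eq:decomposition}, the roots of $F$ are the roots of $A-\epsilon zA'$ with $\epsilon=1/(l_0+l_{1,2})$. Choose $l_{0,\star}$ so that $M_\rho/(l_{0,\star}+l_{1,2})<m_\rho$. Then for every $l_0\ge l_{0,\star}$, Rouch\'e's theorem places all $l_{1,2}-1$ roots of $F$ in $|z|<\rho$. Together with the zero eigenvalue, every eigenvalue of $\Phi$ satisfies $|z_j|<\rho$.

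The main point to check is that nothing hidden depends on $l_0$. Two facts need confirming. First, $A$ is exactly the $l_0$-independent polynomial of Section~\ref{sec:app_b_decomposition}; in the normalized weights $V_s$, the factor $q$ multiplies the tail terms consistently with~\eqref{eq:characteristic_app}. Second, the degree of $F$ does not drop, since its leading coefficient is $l_0+1\neq0$, so no roots escape to infinity. Once these hold, the bound $\rho$ is uniform for all $l_0\ge l_{0,\star}$, which is the statement of the lemma.
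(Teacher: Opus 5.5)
Your proposal is correct and follows the paper's proof. The paper cites the Rouch\'e argument of Theorem~\ref{thm:local_stability}, with a radius $\rho\in(r,1)$ fixed by the $l_0$-independent limiting polynomial $A$, and identifies the roots of $F$ with the nonzero eigenvalues of $\Phi$, the remaining eigenvalue being $0$; you simply spell out the determinant factorization and the uniformity in $l_0$ explicitly.
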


\noindent The proof is deferred to Appendix~\ref{app:aux_lemma_proofs}.

\begin{lemma}[Lyapunov Matrix Bounds]
\label{lem:lyapunov_bounds}
For $l_0 \ge l_{0,\star}$ (Lemma~\ref{lem:spectral}), let $\Phi=\Phi(l_0)$ and let $P=P(l_0)$ be the unique positive definite solution to the discrete Lyapunov equation $\Phi^\top P \Phi - P = -I$. There exists $C_P>0$, independent of $l_0$, such that $I \preceq P \preceq C_P I$.
\end{lemma}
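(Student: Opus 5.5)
Since $\Phi$ has spectral radius $\rho(\Phi)\le\rho<1$ by Lemma~\ref{lem:spectral}, the discrete Lyapunov equation has the unique positive definite solution
\[
P=\sum_{t=0}^{\infty}(\Phi^\top)^t\Phi^t .
\]
The lower bound is immediate: the $t=0$ term is $I$ and every other term is positive semidefinite, so $P\succeq I$. The whole task is therefore the upper bound, i.e.\ a bound $\|\Phi^t\|_2\le K\theta^t$ with $K$ and $\theta<1$ independent of $l_0$. Once this is available,
\[
P\preceq \sum_t \|\Phi^t\|_2^2\, I\preceq \frac{K^2}{1-\theta^2}\,I =: C_P I .
\]

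To obtain this uniform bound, I will exploit that $\Phi(l_0)$ converges as $l_0\to\infty$. Its entries are $-V_s/V_0$ in the first row and the fixed shift structure below. Since $V_s/V_0=1+O(1/l_0)$ for $s<l_{1,1}$ and $V_s/V_0=q+O(1/l_0)$ for $s\ge l_{1,1}$, we get
\[
\Phi(l_0)\to\Phi_\infty ,\qquad \|\Phi(l_0)-\Phi_\infty\|=O(1/l_0).
\]
The limit $\Phi_\infty$ is the companion-type matrix whose nonzero eigenvalues are the roots of $A$; its remaining eigenvalue is $0$. In the coprime case all of these lie in $|z|\le r<1$ by Lemmas~\ref{lem:unit_circle} and \ref{lem:unity}.

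Next, fix $\theta$ with $\max(r,\rho)<\theta<1$. Since $\rho(\Phi_\infty)<\theta$, Gelfand's formula gives an integer $N$ with $\|\Phi_\infty^N\|_2\le(\theta/2)^N\cdot$ (any margin), say $\|\Phi_\infty^N\|_2\le \tfrac12\theta^N$. The map $\Phi\mapsto\Phi^N$ is continuous, so for $l_0$ large enough $\|\Phi(l_0)^N\|_2\le\theta^N$. Also $\|\Phi(l_0)\|_2\le B$ uniformly, because the entries are bounded by $\max_s V_s/V_0\le 1+O(l_{1,2}/l_0)$. Writing $t=kN+s$ with $0\le s<N$ then yields
\[
\|\Phi(l_0)^t\|_2\le B^{s}\theta^{kN}\le (B/\theta)^{N}\theta^t ,
\]
so $K=(B/\theta)^N$ works for all sufficiently large $l_0$.

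Finally, the finitely many remaining values $l_0\in[l_{0,\star},L_1)$ are handled directly. Each such $\Phi(l_0)$ is Schur stable by Lemma~\ref{lem:spectral}, so each gives a finite $P(l_0)$, and I take the maximum of $\|P(l_0)\|$ over this finite set. If $l_0$ is treated as a continuous parameter, I would instead use compactness together with continuity of $P$ in $\Phi$ on the Schur-stable set.

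I expect the main obstacle to be uniformity under possible non-normality. An eigenvalue bound alone does not control $\|\Phi^t\|$, and the transient constant could in principle blow up with $l_0$. The convergence $\Phi(l_0)\to\Phi_\infty$ together with the $N$-step contraction argument is exactly what removes this issue, so the key point to verify carefully is that the entries of $\Phi$ have a finite limit, so that the $O(1/l_0)$ perturbation is controlled uniformly.
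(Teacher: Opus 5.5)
Your proof is correct, but it extracts the uniform bound by a different mechanism than the paper.

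Both arguments use the same underlying fact: the family $\{\Phi(l_0)\}$ converges to a Schur-stable limit $\Phi_\infty$, whose nonzero eigenvalues are the roots of $A$. Hence the family has compact closure inside the Schur-stable set.

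\textbf{The paper's route.} It vectorizes the Lyapunov equation as $(I-\Phi^\top\otimes\Phi^\top)\mathrm{vec}(P)=\mathrm{vec}(I)$. It notes $\rho(\Phi^\top\otimes\Phi^\top)=\rho(\Phi)^2<1$, and then invokes continuity of the matrix inverse on the compact closure. This covers all $l_0\ge l_{0,\star}$ at once.

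\textbf{Your route.} You prove a uniform transient estimate $\|\Phi(l_0)^t\|_2\le K\theta^t$. You get it from Gelfand's formula applied at $\Phi_\infty$ plus an $N$-step contraction that persists for large $l_0$. You then treat the finitely many integer values $l_0\in[l_{0,\star},L_1)$ separately.

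\textbf{Comparison.} The paper's argument is shorter and avoids the case split. Yours gives an explicit decay rate and transient constant. It also makes explicit a point the paper leaves implicit: the limit point $\Phi_\infty$ is itself Schur stable, which is needed for invertibility on the closure.

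Three small fixes:
\begin{enumerate}
\item The bound you first write with $(\theta/2)^N$ should simply read $\|\Phi_\infty^N\|_2\le\tfrac12\theta^N$. That is what Gelfand's formula gives for large $N$, since $\rho(\Phi_\infty)<\theta$.
\item The step $B^s\theta^{kN}\le (B/\theta)^N\theta^t$ needs $B\ge\theta$. This holds automatically, because the subdiagonal ones give $\|\Phi(l_0)\|_2\ge 1$.
\item To place all roots of $A$ strictly inside the unit disk, you also need Lemma~\ref{lem:bounded}, not just Lemmas~\ref{lem:unit_circle} and~\ref{lem:unity}. Alternatively, use continuity of the spectral radius together with Lemma~\ref{lem:spectral}.
\end{enumerate}
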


\noindent The proof is deferred to Appendix~\ref{app:aux_lemma_proofs}.

The Lyapunov function $L(y) = y^\top P y$ satisfies $\|y\|_2^2 \leq L(y) \leq C_P \|y\|_2^2$ and $L(\Phi y) - L(y) = -\|y\|_2^2$.

\begin{example}[Transition matrix and Lyapunov bounds]
\label{ex:transition_matrix}
Continuing the coprime example ($l_{1,1} = 2$, $l_{1,2} = 3$, $l_0 = 10$, $p = q = 1/2$, $M = 118$), the normalized weights are $V_0 = 11$, $V_1 = 12$, $V_2 = 13/2$, giving $\sum V_s = 59/2$ and equilibrium $x^* = 2 \cdot 118/59 = 4$. The transition matrix is
\[
\Phi = \begin{pmatrix}
-12/11 & -13/22 & 0 \\
1 & 0 & 0 \\
0 & 1 & 0
\end{pmatrix},
\]
a companion matrix whose nonzero eigenvalues satisfy $z^2 + \frac{12}{11}z + \frac{13}{22} = 0$, equivalently $F(z)/(l_0+1) = 0$. The roots are $z = (-12 \pm \mathrm{i}\sqrt{142})/22$ with modulus $|z| = \sqrt{13/22} \simeq 0.769$, confirming $\rho < 1$ (Lemma~\ref{lem:spectral}).

Solving the Lyapunov equation $\Phi^\top P \Phi - P = -I$ yields $C_P = \|P\|_2 \simeq 10.8$. The lower bound $P \succeq I$ is tight: $\lambda_{\min}(P) = 1$ (the $k = 0$ term in the series $P = \sum_k (\Phi^k)^\top \Phi^k$ already contributes $I$). The resulting convergence rate $1 - 1/C_P \simeq 0.907$ per step is conservative compared to the observed spectral rate $\rho^2 \simeq 0.59$ (see Example~\ref{ex:lyapunov_illustration}).
\end{example}

\subsubsection{Eviction Bounds}

\begin{lemma}[Physical Bounds]
\label{lem:physical}
The total number of in-flight requests is $O(1)$, and the number of evicted requests per step is $O(1/l_0)$.
\end{lemma}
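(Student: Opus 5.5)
Both claims follow from the memory boundary together with the large-input scaling $M=\beta l_0$. For the first claim I would bound the total in-flight mass directly. Every active request at any stage $s$ occupies at least $l_0+1$ tokens. The post-admission state lies on the memory boundary, so
\[
(l_0+1)\sum_s X^n_s \;\le\; \sum_s (l_0+s+1)X^n_s \;=\; M=\beta l_0 .
\]
This gives $\sum_s X^n_s\le \beta l_0/(l_0+1)\le\beta=O(1)$ uniformly in $n$ and $l_0$, for both classes together. Each coordinate, and in particular the admission $X^n_0$, is therefore $O(1)$. This also justifies the claim made earlier that the state variables remain $\Theta(1)$ while $x^*=\beta/\kappa+O(1/l_0)$.

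For the eviction bound I would compare the post-execution memory $\tilde M^n$ with $M$. The execute step starts from a state with memory at most $M$. Completions only remove memory, and every surviving request grows by exactly one token. Hence
\[
\tilde M^n \;\le\; M+\sum_{s<l_{1,2}-1}X^n_s \;\le\; M+\beta .
\]
The overshoot $\tilde M^n-M$ that LPF must remove is therefore at most $\beta=O(1)$ tokens. Each evicted request frees at least $l_0+1$ tokens, since the least-progressed stage it sits in has weight $\ge l_0+1$. So the evicted mass per step is at most $\beta/(l_0+1)=O(1/l_0)$.

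Because of the proportional-within-stage rule, this bound holds separately for each class. By Lemma~\ref{lem:proportional_cohort_entry} it also holds in the normalized coordinates $Y$: there the evicted amount is scaled by at most $1/q$, which is a constant since $p\in(0,1)$ is fixed.

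There is no real obstacle here. The one point needing care is the transient before the state reaches the memory boundary or the proportional-cohort manifold. An arbitrary feasible initial state still has memory at most $M$, and that is all the argument above uses; it never requires equality. So the bounds hold from time $0$. I would state them in the form
\[
\sum_s X^n_s\le\beta, \qquad e^n\le \beta/(l_0+1),
\]
where $e^n$ is the evicted mass at step $n$. This is the form needed later, where the eviction is treated as an $O(1/l_0)$ additive disturbance to the contractive linear map $\Phi$ in the Lyapunov argument.
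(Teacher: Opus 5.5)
Your proof is correct and follows the paper's argument essentially step for step. The memory constraint, together with the minimum per-request footprint $l_0+1$, bounds the in-flight population by $M/(l_0+1)=O(1)$; the execute step can overshoot $M$ by at most that population; and each evicted request frees at least $l_0+1$ tokens, so the evicted mass per step is $O(1/l_0)$. Your explicit constants $\beta$ and $\beta/(l_0+1)$, and your remark that only $M^n\le M$ (not equality) is used, are refinements of the same route.
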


\noindent The proof is deferred to Appendix~\ref{app:aux_lemma_proofs}.

The actual evolution is $y^n = \Phi y^{n-1} + e^n$, where $e^n \in \mathbb{R}^{l_{1,2}}$ captures the deviation caused by eviction.

\begin{lemma}[Perturbation Bound]
\label{lem:perturbation}
There exists a constant $C_e>0$, independent of $l_0$, such that $\|e^n\|_2 \le C_e/l_0$ for all $n$.
\end{lemma}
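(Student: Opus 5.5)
The plan is to write the exact one-step dynamics in normalized deviation coordinates as the linear no-eviction map plus an eviction correction. I then bound each coordinate of the correction by the evicted mass times bounded weight ratios. Let $\hat Y^n$ be the state the no-eviction affine map would produce from $Y^{n-1}$ (shift plus memory-balance admission), and let $Y^n$ be the actual post-evict-admit state. Since $Y^*$ is a fixed point of that affine map, we get $e^n = Y^n - \hat Y^n$.

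\textbf{Step 1: nonzero entries of $e^n$.} In the execute step the stage shift is identical in both maps. Eviction under LPF only removes mass from the least-progressed occupied stages. Hence $e^n$ has two kinds of entries: negative entries at the stages from which mass is trimmed, and an entry at stage $0$ reflecting that the admission differs from the balance formula. For the stage-$0$ entry, note the following.
\begin{itemize}
\item If overflow occurs, the actual admission is $0$, while the formula would prescribe a negative admission.
\item After trimming, the actual state lies on the memory boundary, which fixes the discrepancy.
\end{itemize}
Writing the boundary identity for both $\hat Y^n$ and $Y^n$, namely $\sum_s V_s \hat Y^n_s = M$ (affine extension) and $\sum_s V_s Y^n_s = M$, gives $\sum_s V_s e^n_s = 0$. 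So the stage-$0$ entry equals $-V_0^{-1}\sum_{s\ge1}V_s e^n_s$.

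\textbf{Step 2: convert to an $\ell_2$ bound.} By Lemma~\ref{lem:physical}, the evicted mass per step is $O(1/l_0)$. This bounds $\sum_{s\ge1}|e^n_s|$ by $C/l_0$, after dividing by $q$ for stages $s\ge l_{1,1}$; that division is a constant independent of $l_0$. All weight ratios satisfy $V_s/V_0 = (l_0+s+1)/(l_0+1)\cdot\{1 \text{ or } q\} \le 1 + l_{1,2}$. Therefore the stage-$0$ entry is also at most $(1+l_{1,2})C/l_0$. Combining, $\|e^n\|_2\le\|e^n\|_1\le C_e/l_0$ with $C_e=(2+l_{1,2})C/q$, which is independent of $l_0$ and of $n$.

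\textbf{Main obstacle.} The delicate point is Step 1's claim that eviction affects only a few coordinates, with magnitude tied to the evicted mass. If overflow exceeds what a single partial stage can absorb, LPF removes several whole stages. I would handle this by not localizing at all: I only use that the total removed mass equals the evicted mass, and then the memory-balance identity, so the number of affected stages never enters the bound.

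A second subtlety is proportional within-stage eviction across the two classes. Lemma~\ref{lem:proportional_cohort_entry} keeps the state on the proportional-cohort manifold, so the normalized coordinate is still well defined. It is also reasonable to take the $O(1/l_0)$ eviction bound of Lemma~\ref{lem:physical} as uniform in $n$. The reason is that it comes from the per-step memory growth $\sum_s X_s^n = O(1)$ tokens divided by stage weights of order $l_0$.
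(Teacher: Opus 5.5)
Your proposal is correct and follows essentially the paper's argument. You write $e^n$ as the gap between the actual and no-eviction updates, bound the trimmed entries at stages $s\ge1$ by the $O(1/l_0)$ evicted mass from Lemma~\ref{lem:physical} divided by $\min(1,q)$, and pass from $\ell_1$ to $\ell_2$. The only minor difference is at stage~$0$: the paper bounds $|e^n_0|=(E^S-M)/V_0$ directly from the $O(1)$ overflow, whereas you obtain the same bound from $\sum_s V_s e^n_s=0$ and the bounded ratios $V_s/V_0$. This is an equivalent restatement, since the memory freed by eviction equals the overflow.
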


\noindent The proof is deferred to Appendix~\ref{app:aux_lemma_proofs}.

Figure~\ref{fig:perturbation_bound} illustrates the mechanism behind Lemmas~\ref{lem:physical}--\ref{lem:perturbation}. Panel~(a) contrasts token growth per step ($O(1)$) with the memory freed by a single eviction ($\geq l_0 + 1$): at $l_0 = 100$, one eviction clears roughly $9$ times the overflow.
Panel~(b) plots the resulting upper bound on evicted requests per step as a
function of \(l_0\). This bound is uniform over all time steps \(n\) and ensures
\(\|e^n\|_2 \leq C_e/l_0\), which is small enough for the Lyapunov contraction
to dominate the perturbation when \(l_0\) is large.

\begin{figure}[ht]
\centering
\includegraphics[width=\textwidth]{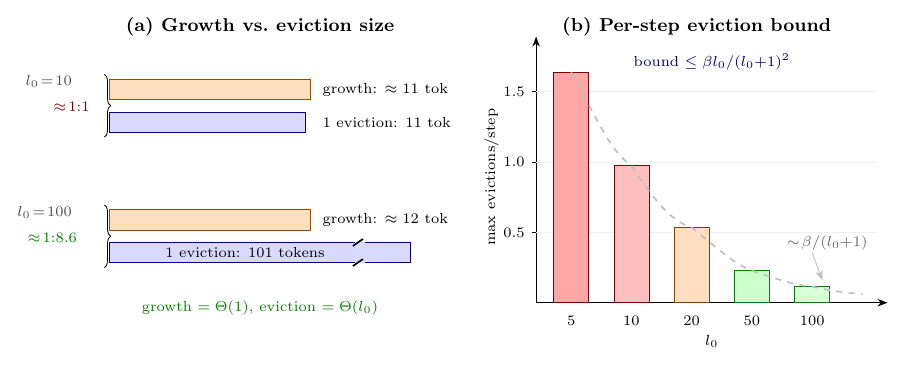}
\caption{Eviction perturbation bound (Lemmas~\ref{lem:physical}--\ref{lem:perturbation}). (a)~Token growth per step ($\Theta(1)$) versus memory freed by one eviction ($\geq l_0 + 1$). At $l_0 = 100$, one eviction clears $\approx\!9$ times the overflow, so each step triggers at most a fraction of an eviction. (b)~Upper bound on evicted requests per step as a function of $l_0$ ($\beta = 11.8$). The bound $\beta l_0/(l_0+1)^2 = O(1/l_0)$ is uniform over all time steps $n$; it guarantees $\|e_n\|_2 \leq C_e/l_0$ for a constant $C_e$ independent of $n$.}
\label{fig:perturbation_bound}
\end{figure}

\subsubsection{Convergence Proof}

\begin{theorem}[Global Convergence]
\label{thm:global}
Let $\gcd(l_{1,1}, l_{1,2}) = 1$ and $M = \beta l_0$. For every \(p\in(0,1)\) and
sufficiently large $l_0$, the system converges exponentially to equilibrium
from any feasible initial active state in the saturated-input model.
\end{theorem}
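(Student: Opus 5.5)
The plan is a perturbed-Lyapunov argument in the normalized coordinates \(y^n=Y^n-Y^*\). By Lemma~\ref{lem:proportional_cohort_entry}, after at most \(l_{1,2}\) iterations the state lies on the forward-invariant proportional-cohort manifold. During those iterations all quantities stay bounded, because Lemma~\ref{lem:physical} gives \(O(1)\) in-flight mass. So we may assume \(y^0\) lies on the manifold with \(\|y^0\|_2\le R_0\), where \(R_0\) is independent of \(l_0\). On the manifold the exact dynamics read \(y^n=\Phi y^{n-1}+e^n\). Here \(\Phi\) is the no-eviction transition matrix; its spectrum is bounded by \(\rho<1\) (Lemma~\ref{lem:spectral}), and the eviction correction satisfies \(\|e^n\|_2\le C_e/l_0\) (Lemma~\ref{lem:perturbation}).

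\textbf{Contraction to a small ball.} Take \(L(y)=y^\top P y\) with \(P\) from Lemma~\ref{lem:lyapunov_bounds}, so that \(I\preceq P\preceq C_P I\) uniformly in \(l_0\). Expanding gives
\[
L(\Phi y+e)=L(\Phi y)+2e^\top P\Phi y+e^\top P e\le L(y)-\|y\|_2^2+2C_P\|\Phi\|_2\|e\|_2\|y\|_2+C_P\|e\|_2^2 .
\]
Since \(\|\Phi\|_2\) is bounded uniformly in \(l_0\) (its entries are ratios \(V_s/V_0=O(1)\)), Young's inequality gives
\[
L(y^n)\le\bigl(1-\tfrac{1}{2C_P}\bigr)L(y^{n-1})+C'/l_0^2 .
\]
Consequently \(L(y^n)\) decays geometrically into a ball of radius \(O(1/l_0)\).

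\textbf{Exact convergence.} An ultimate bound of size \(O(1/l_0)\) is not yet convergence, so a second step is needed. The plan is to show that evictions stop entirely once the deviation is small. Near \(Y^*\), the post-execution memory equals \(M\) minus the completion release plus the growth term, and at equilibrium the release exceeds the growth by exactly the stage-\(0\) admission \(V_0x^*=\Theta(l_0)\). If the deviation is \(O(1/l_0)\), the overshoot \(\tilde M^n-M\) stays negative, so no eviction occurs and \(e^n=0\). More carefully: on the \(O(1/l_0)\) ball the post-execute memory is at most \(M-V_0x^*+O(1)\cdot O(1/l_0)\cdot l_0<M\), because a mass deviation \(\delta\) changes memory by \(O(l_0\delta)\). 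This requires the ball radius to be \(c/l_0\) with \(c\) small relative to \(x^*\); that holds because the ultimate bound is \(\sqrt{C_P C'}/l_0\) while the margin needed is \(\Theta(1)\) in normalized mass units. Once \(e^n\equiv0\), we have \(y^n=\Phi^n y^{N}\), which converges exponentially at rate \(\rho\). Combining with the geometric transient gives exponential convergence from any feasible initial state.

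\textbf{Main obstacle.} The hardest step is the uniformity needed for the first bound: the constants \(C_P\), \(C_e\) and \(\|\Phi\|_2\) must be independent of \(l_0\), and the perturbation bound must hold even far from equilibrium, where evictions can be large relative to the deviation. That is exactly why the argument relies on the already-established Lemmas~\ref{lem:lyapunov_bounds} and~\ref{lem:perturbation}. I would first check that Lemma~\ref{lem:perturbation} indeed bounds \(e^n\) globally, including the proportional within-stage LPF eviction on the cohort manifold. I would then verify the no-eviction margin computation above, since it is what upgrades ultimate boundedness to convergence.
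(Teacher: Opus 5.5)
Your proposal is correct and follows essentially the same route as the paper: a Lyapunov function $L(y)=y^\top P y$ for the Schur-stable matrix $\Phi$, the uniform $O(1/l_0)$ eviction perturbation, entry into a neighborhood where the stage-$0$ admission margin $V_0x^*=\Theta(l_0)$ rules out eviction, and then exact linear exponential convergence. The only difference is bookkeeping. You use Young's inequality to get the recursion $L(y^n)\le(1-\frac{1}{2C_P})L(y^{n-1})+C'/l_0^2$ and drive the state into an $O(1/l_0)$ ball, which also avoids the paper's a priori bound on $\sup_n\|y^n\|_2$. The paper instead uses a fixed $\Theta(1)$ sublevel set $\{L<\delta^2\}$, with a uniform decrement $\delta^2/(2C_P)$ outside it.
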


\begin{proof}
We construct the neighborhood explicitly. Define:
\[
\delta = \frac{\beta}{4 C_2 \kappa} > 0, \quad \text{where } C_2 = \sqrt{l_{1,2}} \cdot \max_s V_s / l_0.
\]
Since $\max_s V_s \le l_0 + l_{1,2}$, we have $C_2 \le \sqrt{l_{1,2}}(1 + l_{1,2}/l_0)$ and in particular $C_2 \le 2\sqrt{l_{1,2}}$ for large $l_0$. Thus $\delta=\Theta(1)$ and is effectively independent of $l_0$. Define the Lyapunov sublevel set $\mathcal{N} = \{y : L(y) < \delta^2\}$. Since $L(y) \geq \|y\|_2^2$ (Lemma~\ref{lem:lyapunov_bounds}), every $y \in \mathcal{N}$ satisfies $\|y\|_2 < \delta$.

We first show that no eviction occurs inside $\mathcal{N}$. The post-shift memory $E^S$ (before admission) is:
\[
E^S = \sum_{s=0}^{l_{1,2}-2} V_{s+1} Y^n_s = \sum_{s=0}^{l_{1,2}-2} V_{s+1}(x^* + y_{n,s}).
\]
At equilibrium, $E^{S,*} = \sum_{s=0}^{l_{1,2}-2} V_{s+1} x^* = M - V_0 x^*$. The deviation satisfies:
\[
|E^S - E^{S,*}| = \left| \sum_{s=0}^{l_{1,2}-2} V_{s+1} y_{n,s} \right| \leq \max_s V_s \cdot \sqrt{l_{1,2}} \|y^n\|_2 \leq C_2 l_0 \|y^n\|_2.
\]
When $\|y^n\|_2 \leq \delta$:
\begin{align*}
E^S - M &\leq -V_0 x^* + C_2 l_0 \delta \\
&= -\frac{(l_0+1)\beta}{\kappa + O(1/l_0)} + C_2 l_0 \cdot \frac{\beta}{4 C_2 \kappa} \\
&\leq -\frac{\beta l_0}{2\kappa} + \frac{\beta l_0}{4\kappa} = -\frac{\beta l_0}{4\kappa} < 0.
\end{align*}
Hence, no eviction occurs inside $\mathcal{N}$ for sufficiently large $l_0$.

Outside $\mathcal{N}$, the Lyapunov energy strictly decreases. The one-step change is:
\begin{align*}
\Delta L_n &= L(y^n) - L(y^{n-1}) = (\Phi y^{n-1} + e^n)^\top P (\Phi y^{n-1} + e^n) - (y^{n-1})^\top P y^{n-1} \\
&= (y^{n-1})^\top (\Phi^\top P \Phi - P) y^{n-1} + 2 (e^n)^\top P \Phi y^{n-1} + (e^n)^\top P e^n \\
&= -\|y^{n-1}\|_2^2 + R_n,
\end{align*}
where the residual term is $R_n = 2(e^n)^\top P \Phi y^{n-1} + (e^n)^\top P e^n$. By Lemmas~\ref{lem:lyapunov_bounds} and~\ref{lem:perturbation}:
\[
|R_n| \leq 2\|e^n\|_2 \cdot \|P\|_2 \cdot \|\Phi\|_2 \cdot \|y^{n-1}\|_2 + \|P\|_2 \|e^n\|_2^2 \leq \frac{C_3}{l_0} \|y^{n-1}\|_2 + \frac{C_4}{l_0^2},
\]
where we used $\|P\|_2 \le C_P$ (Lemma~\ref{lem:lyapunov_bounds}) and $\|e^n\|_2 \le C_e/l_0$ (Lemma~\ref{lem:perturbation}). Moreover, for all sufficiently large $l_0$, the coefficients satisfy $|V_s/V_0|\le 2$, so $\|\Phi\|_2 \le \|\Phi\|_F \le \sqrt{5(l_{1,2}-1)}=:C_\Phi$. Thus $C_3 = 2 C_P C_\Phi C_e$ and $C_4 = C_P C_e^2$.

Outside $\mathcal{N}$, $L(y^{n-1}) \geq \delta^2$, so $\|y^{n-1}\|_2^2 \geq L(y^{n-1})/C_P \geq \delta^2/C_P$. Hence:
\[
\Delta L_n \leq -\frac{\delta^2}{C_P} + \frac{C_3 M_y}{l_0} + \frac{C_4}{l_0^2},
\]
where $M_y = \sup_n \|y^n\|_2 = O(1)$. Indeed, $\sum_s X^n_s\le M/(l_0+1)=O(1)$ implies $\|Y^n\|_2 \le \|Y^n\|_1 \le \frac{1}{\min(1,q)}\sum_s X^n_s=O(1)$, while $\|Y^*\|_2=\sqrt{l_{1,2}}\,x^*=\Theta(1)$, so $\|y^n\|_2\le \|Y^n\|_2+\|Y^*\|_2=O(1)$ uniformly in $n$. Choose $l_{0,3}$ such that $(C_3 M_y + C_4)/l_0 \leq \delta^2/(2C_P)$ for $l_0 \geq l_{0,3}$. Then $\Delta L_n \leq -\delta^2/(2C_P) < 0$.

The system enters $\mathcal{N}$ in finite time: the initial Lyapunov value satisfies $L(y_0) \leq C_P M_y^2$, and the guaranteed decrease of $\delta^2/(2C_P)$ per step gives entry within $T^* = \lceil 2 C_P^2 M_y^2 / \delta^2 \rceil$ steps. Once inside, the system remains in $\mathcal{N}$: since $e^n = 0$ inside $\mathcal{N}$ (no eviction occurs there), $L(y^n) = L(y^{n-1}) - \|y^{n-1}\|_2^2 < L(y^{n-1})$, so the sublevel set is invariant.

Finally, the convergence inside $\mathcal{N}$ is exponential. With $e^n = 0$, the dynamics reduce to $y^n = \Phi y^{n-1}$. Using $L(\Phi y)-L(y)=-\|y\|_2^2$ and $L(y)\le C_P\|y\|_2^2$:
\[
L(y^n) = L(y^{n-1})-\|y^{n-1}\|_2^2 \le \left(1-\frac{1}{C_P}\right)L(y^{n-1}).
\]
Iterating gives $L(y^n)\le (1-1/C_P)^{n-n_0}L(y^{n_0})$, hence $\|y^n\|_2\to 0$ geometrically and the convergence is exponential.
\end{proof}

The following example traces each phase of the convergence mechanism on a concrete instance.

\begin{example}[Lyapunov convergence illustration]
\label{ex:lyapunov_illustration}
Using the coprime parameters of Example~\ref{ex:coprime} ($l_{1,1} = 2$, $l_{1,2} = 3$, $l_0 = 10$, $p = q = 1/2$, $M = 118$), the transition matrix $\Phi \in \mathbb{R}^{3 \times 3}$ has spectral radius $\rho \simeq 0.769$, the discrete Lyapunov equation $\Phi^\top P \Phi - P = -I$ gives $C_P \simeq 10.8$, and the eviction-free threshold is $\delta \simeq 0.57$ (so $\delta^2 \simeq 0.32$).

Near equilibrium, the post-shift memory
\(E^{S,*}=V_1x^*+V_2x^*=74\) consumes \(63\%\) of \(M=118\), leaving
\(V_0x^*=44\) units for new admissions. Under the worst perturbation
\(\|y\|=\delta\), the memory deviation is at most
\(C_2l_0\delta\simeq12\) units, so the admission margin remains \(32\) units
above zero (Figure~\ref{fig:lyapunov_step1}).

Starting from $y_0 = (1, 0, 0)^\top$, the energy $L(y_0) = 8.70$ decreases by $\|y^{n-1}\|^2$ at each step. The guaranteed decrease outside $\mathcal{N}$ exceeds $\delta^2/(2C_P) \simeq 0.015$ per step, but the actual decreases range from $0.19$ to $2.55$. The energy crosses $\delta^2$ at $T^* = 8$ (Figure~\ref{fig:lyapunov_step23}).

Inside $\mathcal{N}$, no eviction occurs, so the pure linear dynamics
$y^n = \Phi y^{n-1}$ take over. The trajectory remains in
\(\mathcal{N}\) and converges geometrically:
\[
L(y^n)\le (1-1/C_P)^{n-n_0}L(y^{n_0}).
\]
This gives a theoretical rate of \(1-1/C_P\simeq0.907\) per step, while the
observed decay is closer to \(\rho^2\simeq0.59\). The state spirals to the
origin in the \((y_0,y_1)\)-plane (Figure~\ref{fig:lyapunov_step45}).
\end{example}

\begin{figure}[ht]
\centering
\includegraphics[width=\textwidth]{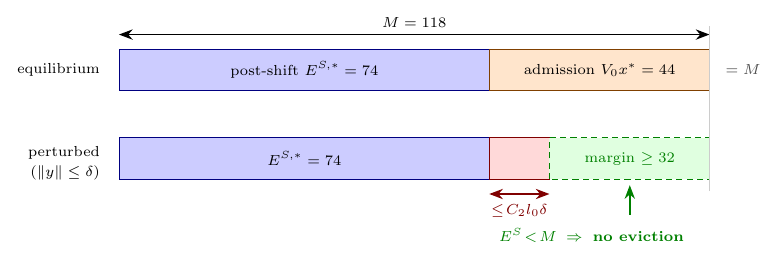}
\caption{Step~1 illustration (Example~\ref{ex:lyapunov_illustration}): post-shift memory budget. At equilibrium, admitting $x^* = 4$ requests fills memory to exactly $M$. Under worst-case perturbation $\|y\| \leq \delta$, the deviation $C_2 l_0 \delta \approx 12$ leaves a margin of $32$ units, so no eviction is triggered inside $\mathcal{N}$.}
\label{fig:lyapunov_step1}
\end{figure}

\begin{figure}[ht]
\centering
\includegraphics[width=\textwidth]{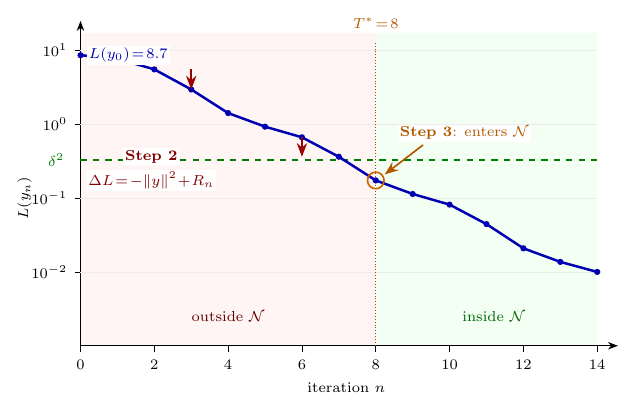}
\caption{Steps~2--3 illustration (Example~\ref{ex:lyapunov_illustration}): energy descent on log scale. Outside $\mathcal{N}$ (red background), the Lyapunov energy $L(y_n)$ decreases at each step (Step~2): the actual decreases ($0.19$--$2.55$) far exceed the guaranteed minimum $\delta^2/(2C_P) \approx 0.015$. The energy crosses $\delta^2$ at $T^* = 8$ (Step~3, orange dot).}
\label{fig:lyapunov_step23}
\end{figure}

\begin{figure}[ht]
\centering
\includegraphics[width=\textwidth]{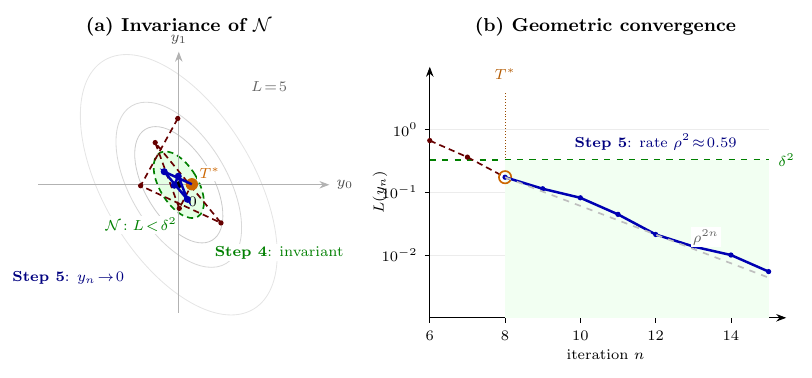}
\caption{Steps~4--5 illustration (Example~\ref{ex:lyapunov_illustration}). (a)~State-space trajectory in the $(y_0, y_1)$ plane. The system enters $\mathcal{N} = \{L < \delta^2\}$ (green dashed ellipse) at $T^* = 8$ (orange) and remains inside (Step~4, invariance). Blue segments show the trajectory spiraling to the origin under pure linear dynamics (Step~5). (b)~Energy $L(y_n)$ on log scale near $T^*$. Inside $\mathcal{N}$, the decay tracks the geometric rate $\rho^2 \approx 0.59$ (gray dashed).}
\label{fig:lyapunov_step45}
\end{figure}

The mechanism behind this convergence is the scaling of eviction perturbations. When $M = \beta l_0$, the number of in-flight requests is $O(1)$, so each perturbation is $O(1/l_0)$. As $l_0 \to \infty$, these perturbations vanish relative to the $O(1)$ state variables, and the contractive linear dynamics dominate.

\subsection{Finite-Input Spectral Drift and Stability Threshold}
\label{sec:finite_prompt_appendix}

This subsection proves the \finiteinput{} statements used in
Section~\ref{sec:sensitivity_analysis}. Throughout this subsection, set
\(a:=l_{1,1}\), \(n:=l_{1,2}\), \(q:=1-p\), and \(\theta_n:=a/n\).  The proof has
three parts. First, we quantify how each limiting root moves when
\(\varepsilon=(l_0+n)^{-1}\) is positive. Second, we identify the
limiting root closest to the unit circle. Third, we balance these two
effects to obtain the finite-\(l_0\) stability threshold.

We write the corresponding finite-\(l_0\) characteristic equation as
\(F_\varepsilon(z)\), where
\[
F_\varepsilon(z)
=
\frac{1}{\varepsilon}A(z)-z A'(z).
\]

\begin{lemma}[Multiple roots are away from the threshold shell]
\label{lem:multiple_roots_separated}
Fix \(p\in(0,1)\). There are constants \(c_p>0\) and \(N_p\) such that, for
every coprime pair \(1\le a<n\), \(n\ge N_p\), any non-unit multiple root
\(\zeta\) of \(P_n(z):=z^n-pz^{n-a}-q\) satisfies
\[
|\zeta|\le 1-\frac{c_p}{n}.
\]
Hence possible multiple roots of \(A\) are separated from the
\(O(n^{-3})\) threshold shell in which the two dominant roots cross the unit
circle.
\end{lemma}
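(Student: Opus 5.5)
A multiple root \(\zeta\) of \(P_n(z)=z^n-pz^{n-a}-q\) satisfies both \(P_n(\zeta)=0\) and \(P_n'(\zeta)=0\), so I would eliminate the power \(\zeta^{n-a}\) between the two equations. Here
\[
P_n'(z)=nz^{n-1}-p(n-a)z^{n-a-1}.
\]
First, \(\zeta=0\) is not a root, since \(P_n(0)=-q\neq0\). For \(\zeta\neq0\), multiplying \(P_n'(\zeta)=0\) by \(\zeta\) gives
\[
p\zeta^{n-a}=\frac{n}{n-a}\,\zeta^n .
\]
Substituting this into \(P_n(\zeta)=0\) yields
\[
\zeta^n\Bigl(1-\frac{n}{n-a}\Bigr)=q,
\qquad\text{that is,}\qquad
\zeta^n=-\frac{q(n-a)}{a}.
\]
Returning to the derivative relation then gives
\[
\zeta^{n-a}=-\frac{qn}{pa}.
\]

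Next I take moduli. Writing \(\rho=|\zeta|\), the two relations become
\[
\rho^n=\frac{q(n-a)}{a},\qquad \rho^{n-a}=\frac{qn}{pa}.
\]
Dividing the first by the second gives
\[
\rho^{a}=\frac{p(n-a)}{n}=p(1-\theta_n).
\]
This is the key identity: the multiplicity condition pins \(|\zeta|\) down exactly.

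The bound then follows from \(\rho^a\le p<1\). Taking logarithms,
\[
\log\rho=\frac{1}{a}\log\bigl(p(1-a/n)\bigr)\le\frac{\log p}{a}\le\frac{\log p}{n},
\]
where the last step uses \(a<n\) and \(\log p<0\). Hence
\[
\rho\le e^{\log p/n}=p^{1/n}\le 1-\frac{c_p}{n}
\]
for all \(n\ge N_p\), with any \(c_p<\log(1/p)\), say \(c_p=\tfrac12\log(1/p)\), via \(e^{-x}\le 1-x/2\) for small \(x\). In fact the bound holds for every \(n\) with \(c_p=1-p\), since \(e^{-x}\le 1-(1-e^{-1})x\) is not needed: \(p^{1/n}\le 1-(1-p)/n\) follows directly from Bernoulli/concavity. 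Coprimality plays no role here.

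Finally, since \(\zeta\ne1\), the roots of \(A\) are exactly the roots of \(P_n\) other than \(1\), by the closed form \((1-z)A(z)=-P_n(z)\). A multiple root of \(A\) is therefore a multiple root of \(P_n\), unless \(1\) were involved, and Lemma~\ref{lem:unity} excludes that. The threshold shell is at distance \(O(n^{-3})\) from the unit circle, while \(c_p/n\gg n^{-3}\) for large \(n\), so multiple roots lie away from it.

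I expect no step to be a real obstacle. The only care needed is the consistency check on the signs of \(\zeta^n\) and \(\zeta^{n-a}\), which do not affect the modulus argument.
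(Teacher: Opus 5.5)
Your argument is correct and is essentially the paper's: the double-root condition forces \(|\zeta|^a=p(1-a/n)\), hence \(|\zeta|\le p^{1/a}\le p^{1/n}\le 1-c_p/n\). The paper reads \(\zeta^a=p(1-a/n)\) directly off \(P_n'(\zeta)=0\), so your elimination through \(\zeta^n\) and \(\zeta^{n-a}\) is an unnecessary detour; the paper also takes \(c_p=\tfrac12\log(1/p)\) for large \(n\), whereas your Bernoulli bound \(p^{1/n}\le 1-(1-p)/n\) gives \(c_p=1-p\) for every \(n\).
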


\noindent The proof is deferred to Appendix~\ref{app:aux_lemma_proofs}.

\begin{theorem}[Spectral drift under finite \inputlen{}]
\label{thm:spectral_drift}
Assume \(\gcd(l_{1,1},l_{1,2})=1\). Let \(\alpha\) be a simple root of the
limiting polynomial \(A\), i.e., \(A(\alpha)=0\) and \(A'(\alpha)\neq0\).
Then there exists a unique root branch \(z_\alpha(\varepsilon)\) of
\(F_\varepsilon\) with \(z_\alpha(0)=\alpha\), and constants
\(\varepsilon_\alpha>0\) and \(K_\alpha>0\), such that for all
\(0\le\varepsilon\le\varepsilon_\alpha\),
\begin{equation}
\label{eq:spectral_drift}
\left|z_{\alpha}(\varepsilon)-\alpha(1+\varepsilon)-c_2(\alpha)\varepsilon^2\right|
\le
K_\alpha\varepsilon^3,
\qquad
c_2(\alpha)=
\frac{\alpha}{2}\left(\frac{\alpha A''(\alpha)}{A'(\alpha)}+2\right).
\end{equation}
Along coprime families with \(l_{1,2}\to\infty\), the two dominant limiting
branches are simple. Let \(\rho(\varepsilon)\) denote the spectral radius of the
corresponding finite-\(l_0\) linearization. On each dominant branch, the
second-order modulus correction is positive. Moreover, for every fixed
threshold-scale constant \(E>0\), there exist constants \(K_\rho>0\) and
\(N_\rho\ge1\) such that, for all \(l_{1,2}\ge N_\rho\) and all
\(0\le\varepsilon\le E(l_{1,2})^{-3}\),
\begin{equation}
\label{eq:rho_l0}
\left|\rho(\varepsilon)-\rho_\infty(1+\varepsilon)\right|
\le
K_\rho\,l_{1,2}\varepsilon^2.
\end{equation}
\end{theorem}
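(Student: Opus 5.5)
The plan has two parts: a local second-order expansion of a single simple root, and then a uniform version of that expansion along coprime families with $l_{1,2}\to\infty$.

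\emph{Local expansion.} Write $F_\varepsilon=0$ in the equivalent form
\[
G(z,\varepsilon):=A(z)-\varepsilon zA'(z)=0,
\]
exactly as in the proof of Theorem~\ref{thm:instability}. At a simple root $\alpha$ we have $G(\alpha,0)=0$ and $\partial_zG(\alpha,0)=A'(\alpha)\neq0$. Since $G$ is polynomial, hence analytic, in $(z,\varepsilon)$, the analytic implicit function theorem gives a unique analytic branch $z_\alpha(\varepsilon)$ near $\alpha$.

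The coefficients come from implicit differentiation.
\begin{itemize}
\item First order: $z'(0)=\alpha A'(\alpha)/A'(\alpha)=\alpha$.
\item Second order: differentiate $G(z(\varepsilon),\varepsilon)=0$ twice at $\varepsilon=0$. This gives
\[
A''(\alpha)z'^2+A'(\alpha)z''-2\frac{d}{dz}\bigl(zA'\bigr)\big|_{\alpha}\,z'=0 .
\]
With $z'=\alpha$ and $(zA')'=A'+zA''$, this yields $A'z''=2\alpha A'+\alpha^2A''$. Hence $c_2=z''/2=\frac{\alpha}{2}\bigl(\alpha A''/A'+2\bigr)$, matching~\eqref{eq:spectral_drift}.
\end{itemize}
The cubic remainder bound $K_\alpha\varepsilon^3$ follows from Cauchy estimates on a small disk, where analyticity holds with explicit radius $\varepsilon_\alpha$.

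\emph{Uniform version.} The hard part is making constants uniform in $n=l_{1,2}$ on the threshold scale $\varepsilon\le E n^{-3}$.

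\emph{Step 1: locate the dominant roots.} I would identify the dominant roots of $P_n(z)=z^n-pz^{n-a}-q=(1-z)A(z)$ via the closed form~\eqref{eq:closed_form}. Writing $z=e^{w}$ with $w$ small and expanding around $z=1$, the two dominant roots should be a conjugate pair $\alpha_\pm=\exp(\pm 2\pi i/(n\kappa')+O(n^{-2})-\gamma n^{-3})$, where $1-\rho_\infty=\Theta(n^{-3})$. This is the cubic gap estimate the section refers to. Simplicity of these roots is supplied by Lemma~\ref{lem:multiple_roots_separated}: any multiple root lies in $|z|\le1-c_p/n$, far from the threshold shell.

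\emph{Step 2: quantitative IFT with $n$-dependent constants.} I would rerun the implicit function argument with $n$-dependent constants. I need a lower bound on $|A'(\alpha_\pm)|$ and upper bounds on $A''$ and $A'''$ in a disk of radius $r_n\asymp n^{-2}$ around $\alpha_\pm$. The ratio $\alpha A''/A'$ should be $O(n)$, since derivatives scale like $n$ per order near the unit circle. Then $c_2=O(n)$, and the $\varepsilon^3$ remainder is $O(n^2\varepsilon^3)=O(n\varepsilon^2)$ for $\varepsilon\le En^{-3}$. Together with $|\alpha(1+\varepsilon)|=\rho_\infty(1+\varepsilon)$ this gives~\eqref{eq:rho_l0} on each dominant branch.

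\emph{Sign claim.} For the positivity of the second-order modulus correction I would compute $\mathrm{Re}(c_2/\alpha)$ from the explicit asymptotic $A'$ and $A''$ at $\alpha_\pm$. The expectation is that it reduces to $1+\mathrm{Re}(\alpha A''/2A')>0$, with the leading $O(n)$ term having positive real part.

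\emph{Step 3: other roots stay inside.} I would show that every other root of $F_\varepsilon$ remains below $\rho_\infty(1+\varepsilon)$. Nondominant roots of $A$ satisfy $|z|\le\rho_\infty-\Omega(n^{-3})$ or smaller, and their drift is at most $O(n\varepsilon)$ in relative terms by Rouché on circles separating them. I would apply Rouché as in Theorem~\ref{thm:local_stability}, on an annulus boundary $|z|=\rho_\infty-\eta_n$ with $\min|A|$ bounded below by the root separation.

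\emph{Main obstacle.} The step I expect to be hardest is the uniform lower bound on $|A|$ and $|A'|$ near the threshold shell. This is needed both for the IFT radius and for the Rouché count. I would try first to derive it from the explicit form $P_n$ by factoring near $z=1$: the function $(1-z)^{-1}P_n$ behaves like a cubic-scale perturbation of $\kappa n(1-z)$ to leading order.
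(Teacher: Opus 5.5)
Your local expansion (analytic IFT for $G(z,\varepsilon)=A(z)-\varepsilon zA'(z)$, $z'(0)=\alpha$, and the value of $c_2$) is correct and matches the paper. Deducing simplicity of the dominant roots from Lemma~\ref{lem:multiple_roots_separated} plus the cubic gap is also a valid alternative to the paper's direct lower bound on $|A'(\alpha)|$. The problems are in the uniform part.

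\textbf{Main gap: Step 3 cannot preserve the ordering at the threshold scale.} A relative drift of $O(n\varepsilon)$ is $O(En^{-2})$ at $\varepsilon=En^{-3}$. That is much larger than the $cn^{-3}$ gap between the dominant modulus and the next small-phase roots, so it says nothing about which root is largest. Even the sharp displacement, about $\varepsilon$, exceeds that gap once $E$ passes a fixed constant. Every root in the near-unit shell moves outward by roughly $\varepsilon$, so nondominant roots cross any fixed separating circle $|z|=\rho_\infty-\eta_n$ (which must have $\eta_n\lesssim n^{-3}$). Rouch\'e's hypothesis then fails on that circle, and you cannot reach ``every fixed $E$''.

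What is missing is a second-order expansion that is uniform over all branches in the shell: $|z_m(\varepsilon)|=|\alpha_m|\{1+\varepsilon+O(n\varepsilon^2)\}$, with constants independent of $m,a,n$. All these roots are then dilated by the common factor $1+\varepsilon$, and the $cn^{-3}$ separation survives up to $O(n\varepsilon^2)=O(n^{-5})$. The paper gets this from Lemma~\ref{lem:uniform_log_perturbation}: $\nu_m(\eta)=\mu(x_m,\theta_n)-\eta+O(\eta^2)$ with $\eta=n\varepsilon$, where the removable factor $z/(1-z)$ is controlled by $|1-z|\ge c/n$. It handles phases $|x_m|\ge\delta$ by Lemma~\ref{lem:near_unit_chart} (those roots are $\Omega(n^{-1})$ inside) and multiple roots by Lemma~\ref{lem:multiple_roots_separated}. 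Your Step 2 estimates could in principle be made uniform over the small-phase shell to give the same conclusion, but Step 3 as written replaces this with a bound that is too weak.

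\textbf{Second problem: Step 1 mislocates the dominant roots.} Write $a=l_{1,1}$, $n=l_{1,2}$ and $z=\omega e^{-\mu/n}$ with $\omega^n=1$. Then $P_n(z)=0$ becomes $e^{-\mu}-pe^{\mathrm{i}x}e^{-(1-\theta_n)\mu}-q=0$, where the phase is fixed by $\omega^{-a}=e^{\mathrm{i}x}$, and for small $x$ one has $\Re\mu\approx pq\,x^2/\{2(q+p\theta_n)^3\}$.

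Coprimality makes $\omega\mapsto\omega^{-a}$ a permutation of the $n$-th roots of unity. The dominant pair therefore has $x=\pm2\pi/n$, i.e.\ it lies near $\omega=e^{\mp2\pi\mathrm{i}a^*/n}$ with $aa^*\equiv1\pmod n$. For $l_{1,1}=2$ and odd $n$ this is near $z=-1$.

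Expanding around $z=1$ only sees phases $x\equiv-2\pi ak/n$ with $k$ small. For $k=\pm1$ and fixed $a\ge2$ these roots are about $a^2$ times deeper than the dominant ones. For $\theta_n\to\theta\in(0,1)$ they are $\Omega(n^{-1})$ inside the disk. So your ansatz $\alpha_\pm=\exp(\pm2\pi\mathrm{i}/(n\kappa')+\cdots)$ is right only when $a^*$, reduced to $(-n/2,n/2]$, stays bounded (e.g.\ $l_{1,1}=1$).

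At the true dominant roots it is $\alpha^a$, not $\alpha$, that equals $1+O(n^{-1})$, and this is what your explicit computations need:
\begin{itemize}
\item $P_n'(\alpha)=\alpha^{n-a-1}\{n\alpha^a-p(n-a)\}$ has modulus $\approx n(q+p\theta_n)$.
\item Together with $|1-\alpha|\ge c/n$, this gives $A''/A'=P_n''/P_n'+2/(1-\alpha)=O(n)$.
\item The sign claim, which you leave as an expectation, follows from $c_2(\alpha)/\alpha=(1-\alpha)^{-1}+\alpha P_n''(\alpha)/\{2P_n'(\alpha)\}$. Here $\Re(1-\alpha)^{-1}>0$ for $|\alpha|<1$. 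The exact identity $\alpha P_n''/P_n'=n\{r-p(1-\theta_n)^2\}/\{r-p(1-\theta_n)\}-1$ with $r=\alpha^a=1+O(n^{-1})$ has real part at least $qn-O(1)$.
\end{itemize}
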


\begin{proposition}[Spectral gap scaling]
\label{prop:spectral_gap_scaling}
Along any coprime family with \(l_{1,2}\to\infty\), with
\(\theta_n:=l_{1,1}/l_{1,2}\), there exist constants
\(K_{\mathrm{gap}}>0\) and \(N_{\mathrm{gap}}\ge1\) such that, for all
\(l_{1,2}\ge N_{\mathrm{gap}}\),
\begin{equation}
\label{eq:tight_spectral_gap}
\left|1-\rho_\infty-c(\theta_n,p)(l_{1,2})^{-3}\right|
\le
K_{\mathrm{gap}}(l_{1,2})^{-4},
\qquad
c(\theta_n,p)=
\frac{2\pi^2p(1-p)}{((1-p)+p\theta_n)^3}.
\end{equation}
\end{proposition}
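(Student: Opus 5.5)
Since $(1-z)A(z)=-P_n(z)$ with $P_n(z):=z^n-pz^{n-a}-q$ (notation of Lemma~\ref{lem:multiple_roots_separated}, $a=l_{1,1}$, $n=l_{1,2}$, $\theta_n=a/n$), I would study the roots of $P_n$ other than $z=1$ and locate all of them explicitly near the $n$-th roots of unity. Write a candidate root as $z=\omega e^{t}$ with $\omega=e^{2\pi i k/n}$ and $t$ small. The equation becomes
\[
e^{nt}-q \;=\; p\,e^{-i\psi}\,e^{(n-a)t},
\qquad \omega^{-a}=e^{-i\psi},\quad \psi=\tfrac{2\pi m}{n},\ m\equiv -ak \pmod n,\ |m|\le n/2 .
\]
Because $\gcd(a,n)=1$, the map $k\mapsto m$ is a bijection of $\mathbb Z/n$. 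The case $m=0$ ($k=0$) is the trivial root $z=1$, and each $m\neq0$ is a candidate nontrivial root. Setting $s=nt$ and expanding in powers of $s$ and $\psi$, the first-order balance gives
\[
s\bigl(1-p(1-\theta_n)\bigr)=s\,(q+p\theta_n)\approx -ip\psi ,
\]
which is purely imaginary. The real part of $s$ therefore first appears at second order. Expanding the modulus identity $|e^{s}-q|=p\,|e^{(1-\theta_n)s}|$ to second order, and writing $s=-n\delta+i\eta$, I expect
\[
n\delta=\frac{pq\,\psi^2}{2(q+p\theta_n)^3}+O(\psi^3).
\]
Hence $1-|z|=\delta=\frac{2\pi^2pq\,m^2}{(q+p\theta_n)^3n^3}+O(m^3n^{-4})$. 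The case $m=\pm1$ gives exactly $c(\theta_n,p)n^{-3}$; it is attainable because $a$ is invertible mod $n$, so some $k$ satisfies $ak\equiv\mp1$.

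The steps, in order, are as follows.

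\textbf{Step (i): local expansion for bounded $m$.} For $|m|\le m_0$, a fixed constant, I would make the expansion rigorous. Apply the implicit function theorem, or Rouch\'e on a disc of radius $\asymp n^{-3}$ around the approximate root $\omega e^{t_m^{(2)}}$ built from the second-order solution $t_m^{(2)}$. The remainder bound should be uniform in $\theta_n\in[0,1)$ and give the $O(n^{-4})$ error. Uniformity is possible because $q+p\theta_n\ge q>0$, and all Taylor coefficients in $s$ and $\psi$ are bounded uniformly.

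\textbf{Step (ii): lower bound for larger $|m|$.} For $m_0<|m|\le n/2$, it suffices to show $1-|z|\ge 2c(\theta_n,p)n^{-3}$, so these roots cannot be dominant. For $|m|\le \varepsilon_0 n$, the same expansion with $\psi$ small but not $O(1/n)$ still gives $\delta\gtrsim m^2/n^3$. For $|\psi|\ge\varepsilon_0$, I would use a direct modulus argument. If $|z|=1-\delta$ with $n\delta$ small, then $|e^{s}-q|\approx 1-q$ forces $e^{s}$ to be nearly real positive. But $|pe^{-i\psi}e^{(1-\theta)s}|=p$ with phase bounded away from $0$ would then violate the triangle inequality unless $n\delta\gtrsim\psi^2$. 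This gives $1-|z|\ge c/n$, in the spirit of Lemma~\ref{lem:multiple_roots_separated}.

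\textbf{Step (iii): root counting.} Steps (i)--(ii) produce one root per $m\neq0$, hence $n-1$ roots, which is exactly $\deg A=n-1$. So no other roots exist, and $\rho_\infty=\max_m|z_m|$ is attained at $m=\pm1$. Combining with step (i) yields \eqref{eq:tight_spectral_gap}.

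I expect the main obstacle to be step (ii) in the intermediate regime $m_0\ll|m|\ll n$. There the Taylor expansion in $\psi$ must be controlled uniformly up to $\psi=\varepsilon_0$, and the leading term $m^2/n^3$ must beat the cubic error $O(m^3/n^4)$. I would first try to handle this directly through the exact modulus identity rather than a series. Squaring $|e^{s}-q|^2=p^2|e^{(1-\theta)s}|^2$ gives a real equation in which $\mathrm{Re}\,s$ enters monotonically and $1-\cos(\cdot)$ gives the quadratic lower bound. This avoids needing a separate perturbative argument at each scale of $m$.
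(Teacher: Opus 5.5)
Your route is the paper's route. You use the logarithmic chart $z=\omega e^{s/n}$ around $n$-th roots of unity, coprimality to make every phase $\psi=2\pi m/n$ occur, the quadratic law $-\Re s=\frac{pq}{2(q+p\theta)^3}\psi^2+O(|\psi|^3)$, and a triangle-inequality separation for phases bounded away from zero. The intermediate regime you call the main obstacle is harmless. The branch equation $e^{s}-q=pe^{-i\psi}e^{(1-\theta)s}$ involves $n$ only through the parameters $(\psi,\theta)$. So one implicit-function argument in the continuous variables $(\psi,\theta)\in[-\delta,\delta]\times[0,1]$ gives the quadratic law for all $|m|\le\delta n/(2\pi)$ at once. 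It is uniform because the $s$-derivative at the origin equals $q+p\theta\ge q$. Shrink $\delta$ until the cubic remainder is at most half the quadratic term $Q\psi^2$. Then phase $|m|=k\ge2$ has $-\Re s\ge\tfrac12Qk^2(2\pi/n)^2>\tfrac32Q(2\pi/n)^2$, which is an upper bound for $|m|=1$, so $|m|=1$ is dominant.

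The step that fails as written is (iii). Step (ii) only bounds the moduli of roots; it does not construct a root for each large phase, and in general no such root exists. Take $a=n-1$ and $p>q$. Once $r^n<pr-q$, Rouch\'e on $|z|=r$ with $q/p<r<1$ shows that $z^n-pz-q$ has exactly one root in $|z|<r$, near $-q/p$. So $A$ has only $n-2$ roots in $r<|z|\le1$, against $n-1$ labels $m\ne0$. The claim ``one root per $m\neq0$, hence all $n-1$ roots'' is therefore false. Degree counting cannot exclude a second near-unit root attached to a small phase, and such a root would spoil the identification of $\rho_\infty$.

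What you need instead is the uniqueness statement in Lemma~\ref{lem:near_unit_chart}. Suppose $\Re s=0$ and $|\Im s|\le\pi$. Then both $e^{s}$ and $e^{-i\psi}e^{(1-\theta)s}$ are unimodular, and equality in $|q+pv|\le1$ forces $e^{s}=1$ and $e^{-i\psi}=1$, hence $s=0$ and $\psi=0$. A compactness argument then gives three cases:
\begin{enumerate}[nosep]
\item every root with $-\Re s\le\eta$ and $|\psi|\le\delta$ lies in the implicit-function neighborhood, so it is the branch root;
\item every root with $|\psi|\ge\delta$ has $-\Re s\ge c$;
\item roots with $-\Re s>\eta$ are already at distance $\gtrsim1/n$ inside the disk.
\end{enumerate}
This is your own step-(ii) observation applied to all phases, and it replaces the counting argument entirely.
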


\begin{corollary}[Stability threshold]
\label{cor:l0_threshold}
Along any coprime family with \(l_{1,2}\to\infty\), there exist constants
\(K_S>0\) and \(N_0\ge1\) such that, for all \(l_{1,2}\ge N_0\), the minimum
stable \inputlen{}
\[
l_{0,\min}
:=
\min\Bigl\{l_0\in\mathbb{Z}_{\ge0}:
\rho\bigl((l_0+l_{1,2})^{-1}\bigr)<1
\Bigr\}
\]
satisfies
\[
\left|(l_{0,\min}+l_{1,2})(1-\rho_\infty)-1\right|
\le
K_S(l_{1,2})^{-2}.
\]
\end{corollary}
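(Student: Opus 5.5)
The plan is to combine Theorem~\ref{thm:spectral_drift} with Proposition~\ref{prop:spectral_gap_scaling} and solve for the crossing point. The crossing point is governed by the map \(\varepsilon\mapsto\rho(\varepsilon)\) on the threshold scale \(\varepsilon\le E\,l_{1,2}^{-3}\). Write \(n=l_{1,2}\) and \(\gamma:=1-\rho_\infty\). By Proposition~\ref{prop:spectral_gap_scaling}, \(\gamma=c(\theta_n,p)n^{-3}+O(n^{-4})\). Since \(c(\theta_n,p)\) is bounded above and below along the family (\(\theta_n\in[0,1)\), \(p\) fixed), we get \(\gamma\asymp n^{-3}\). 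I will choose \(E:=4\sup_n c(\theta_n,p)\), so that the candidate \(\varepsilon_\star:=\gamma/(1-\gamma)\), at which \(\rho_\infty(1+\varepsilon_\star)=1\), lies well inside \([0,E n^{-3}]\).

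\textbf{Upper bound on \(l_{0,\min}\).} Take \(\varepsilon=(1-\eta)\varepsilon_\star\) with \(\eta=K n^{-2}\). Estimate~\eqref{eq:rho_l0} gives
\[
\rho(\varepsilon)\le 1-\rho_\infty\eta\varepsilon_\star+K_\rho n\varepsilon^2 .
\]
The negative term is of order \(\eta n^{-3}=K n^{-5}\), and the error term is \(O(n\cdot n^{-6})=O(n^{-5})\). Choosing \(K\) large therefore forces \(\rho(\varepsilon)<1\). This shows that every \(l_0\) with \((l_0+n)^{-1}\le(1-Kn^{-2})\varepsilon_\star\) is stable, but that is not enough by itself. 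To conclude \(l_{0,\min}\le \varepsilon_\star^{-1}(1+O(n^{-2}))-n+1\), I also need that the smallest stable \(l_0\) is not much larger than this. For that I argue monotonically on the other side.

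\textbf{Lower bound.} For \(\varepsilon\in[(1+\eta)\varepsilon_\star, En^{-3}]\), the same estimate gives \(\rho(\varepsilon)\ge 1+\rho_\infty\eta\varepsilon_\star-K_\rho n\varepsilon^2>1\), so these \(l_0\) are unstable. The remaining issue is \(\varepsilon> En^{-3}\), i.e.\ small \(l_0\). This is the main obstacle: ruling out ``stable islands'' at small \(l_0\) where \eqref{eq:rho_l0} says nothing. The paper mentions a no-stable-islands lemma, and I would invoke it (or prove it) as follows. The dominant branches, continued to larger \(\varepsilon\), stay outside the unit disk: for \(\varepsilon\ge En^{-3}\) I would show \(F_\varepsilon\) has a root of modulus \(\ge1\). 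Concretely, I would apply Rouch\'e/argument-principle on the unit circle to \(A-\varepsilon zA'\), counting zeros outside the disk. The count can only change when a root crosses \(|z|=1\), i.e.\ when \(A(e^{i\phi})=\varepsilon e^{i\phi}A'(e^{i\phi})\) for some \(\phi\). Using the closed form~\eqref{eq:closed_form}, crossings correspond to \(\varepsilon\)-values determined by a real equation in \(\phi\), and I would check that the only crossing occurs at the dominant pair near \(\varepsilon_\star\).

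\textbf{Rounding.} With the crossing localized to the window \(\varepsilon\in[(1-\eta)\varepsilon_\star,(1+\eta)\varepsilon_\star]\), the integer minimizer satisfies
\[
(l_{0,\min}+n)=\varepsilon_\star^{-1}\bigl(1+O(n^{-2})\bigr)+O(1).
\]
Since \(\varepsilon_\star^{-1}=\gamma^{-1}-1\asymp n^3\), multiplying by \(\gamma\) gives \(|(l_{0,\min}+n)\gamma-1|\le K_S n^{-2}\). Here the \(O(1)\) rounding and the \(-1\) contribute only \(O(n^{-3})\), which completes the argument.
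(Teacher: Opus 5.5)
Your argument is essentially the paper's proof. On the threshold scale you combine \eqref{eq:rho_l0} with Proposition~\ref{prop:spectral_gap_scaling} to confine the crossing to the window $\varepsilon\in[(1-Kn^{-2})\varepsilon_\star,(1+Kn^{-2})\varepsilon_\star]$. You then rule out stability for every larger $\varepsilon$ up to $1/n$ (i.e.\ down to $l_0=0$) by citing Lemma~\ref{lem:no_stable_islands}, and finish with the same rounding bookkeeping. With that lemma invoked, the proof is complete.

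One small point of direction: exhibiting a single stable $l_0$ already gives the upper bound on $l_{0,\min}$. It is the lower bound that needs every smaller $l_0$ to be unstable, which is what your second step supplies.

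Do not replace the lemma by the argument-principle sketch you outline. Its key claim is false: the dominant pair is not the only crossing of the unit circle in $[\varepsilon_\star,1/n]$. By Lemma~\ref{lem:uniform_log_perturbation}, the branch at phase $2\pi k/n$ satisfies $\nu=\mu(x_m,\theta_n)-n\varepsilon+O(n^2\varepsilon^2)$, with $\Re\mu(x_m,\theta_n)\approx k^2\,\Re\mu(2\pi/n,\theta_n)$. So for each fixed $k$ this branch leaves the disk near $\varepsilon\approx k^2(1-\rho_\infty)$, and many non-dominant branches exit before $\varepsilon=1/n$. The ``check'' you propose would therefore fail.

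What is true, and all that is needed, is that at least one root stays outside the disk. The paper proves this by splitting $\eta=n\varepsilon$ into three ranges, which avoids tracking crossing directions altogether:
\begin{itemize}
\item for $\eta\le\eta_0$, the dominant branch is continued and remains outside;
\item for $\eta\in[\eta_0,(1+p)/2]$, an explicit outside root is constructed in a fixed small phase window;
\item for $\eta\ge(1+p)/2$, the product of root moduli $q/(1-\eta+\eta/n)$ exceeds one.
\end{itemize}
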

\[
P_n(z):=z^n-pz^{n-a}-q .
\]
Since \(A(1)=pa+qn>0\), the unit root of \(P_n\) is removed by the
\((1-z)\) factor in~\eqref{eq:closed_form}. Thus the roots of \(A\) are
exactly the non-unit roots of \(P_n\).

\begin{lemma}[Near-unit logarithmic chart]
\label{lem:near_unit_chart}
Fix \(p\in(0,1)\). There exist constants \(\delta,\eta,c>0\), independent of
the coprime pair \(1\le a<n\), such that every root \(z\ne1\) of
\(P_n\) with \(|z|\ge e^{-\eta/n}\) can be represented as
\[
z=\omega e^{-\mu/n},
\qquad
\omega^n=1,\qquad \Im\mu\in[-\pi,\pi],
\]
after choosing an \(n\)-th root \(\omega\) nearest to the argument of
\(z\). If \(x\in(-\pi,\pi]\) is defined by \(\omega^{-a}=e^{\mathrm{i}x}\), then
\[
H(\mu,x,\theta_n)=0,\qquad
\theta_n=a/n,
\]
where
\[
H(\mu,x,\theta):=e^{-\mu}-p e^{\mathrm{i}x}e^{-(1-\theta)\mu}-q .
\]
For \(|x|\le\delta\) and \(\Re\mu\le\eta\), this equation has the unique
solution \(\mu=\mu(x,\theta)\) near zero. For \(|x|\ge\delta\), every such
near-unit solution satisfies \(\Re\mu\ge c\).
\end{lemma}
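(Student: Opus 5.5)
The identity $H=0$ is a direct rewriting of $P_n(z)=0$ in the logarithmic chart. Both remaining claims then follow from one compactness argument on the fixed box $[0,\eta]\times[-\pi,\pi]\times[0,1]$ in the variables $(\Re\mu,\Im\mu,\theta)$. The only root of $H$ on the boundary line $\Re\mu=0$ is $\mu=0$, attained only at $x=0$, and it is simple uniformly in $\theta$.

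\emph{Chart and reduction.} Let $z\ne1$ be a root of $P_n$ and write $z=re^{\mathrm{i}\varphi}$. By Lemma~\ref{lem:bounded}, $r\le1$; by assumption, $r\ge e^{-\eta/n}$. Pick the $n$-th root of unity $\omega=e^{2\pi\mathrm{i}k/n}$ closest to $e^{\mathrm{i}\varphi}$, so that $z/\omega=re^{\mathrm{i}\psi}$ with $|\psi|\le\pi/n$. Set $\mu:=-n\log r-\mathrm{i}n\psi$. Then $z=\omega e^{-\mu/n}$, with $\Im\mu\in[-\pi,\pi]$ and $0\le\Re\mu\le\eta$. Since $\omega^n=1$, we have $z^n=e^{-\mu}$, and $z^{n-a}=\omega^{-a}e^{-(1-\theta_n)\mu}=e^{\mathrm{i}x}e^{-(1-\theta_n)\mu}$. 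Substituting into $P_n(z)=0$ gives exactly $H(\mu,x,\theta_n)=0$. From here on I treat $\theta$ as a free parameter in $[0,1]$. This closure of $\{a/n\}$ is what makes every constant independent of the pair $(a,n)$.

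\emph{Boundary analysis.} Restrict to $\Re\mu=0$, say $\mu=\mathrm{i}\tau$ with $|\tau|\le\pi$. Then $|e^{-\mu}|=1$ and $|pe^{\mathrm{i}x}e^{-(1-\theta)\mu}+q|\le p+q=1$. Equality in this triangle inequality requires both summands to point in the direction of $e^{-\mathrm{i}\tau}$. In particular $q>0$ forces $e^{-\mathrm{i}\tau}=1$, hence $\tau=0$, and then $e^{\mathrm{i}x}=1$, hence $x=0$. So on the compact boundary set $\{\Re\mu=0,\ |\Im\mu|\le\pi,\ x\in[-\pi,\pi],\ \theta\in[0,1]\}$ the zeros of $H$ are exactly the points $(\mu,x)=(0,0)$, for every $\theta$.

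\emph{Local uniqueness for small $|x|$.} At the zero $(0,0,\theta)$ we have $\partial_\mu H=-1+p(1-\theta)=-(q+p\theta)$, whose modulus is at least $q>0$ uniformly in $\theta\in[0,1]$. The second derivatives of $H$ are bounded on the box. The implicit function theorem, applied with constants made uniform by compactness in $\theta$, therefore gives radii $r_0,\delta_0>0$ such that for $|x|\le\delta_0$ there is a unique zero $\mu(x,\theta)$ with $|\mu|<r_0$. To rule out other zeros with $|x|\le\delta$ and $\Re\mu\le\eta$, I argue by contradiction and compactness. If no such $\delta,\eta$ existed, there would be zeros with $\Re\mu\to0$, $x\to0$ and $|\mu|\ge r_0$. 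A limit point would be a zero on $\Re\mu=0$ with $x=0$ and $\mu\ne0$, contradicting the boundary analysis.

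\emph{Lower bound for $|x|\ge\delta$.} The function $|H|$ is continuous and positive on the compact set $\{\Re\mu=0,\ |\Im\mu|\le\pi,\ \delta\le|x|\le\pi,\ \theta\in[0,1]\}$, by the boundary analysis. By uniform continuity it stays positive on the thickened set $\{0\le\Re\mu<c\}$ for some $c>0$. Hence every near-unit root with $|x|\ge\delta$ satisfies $\Re\mu\ge c$. Finally, I shrink $\eta$ if necessary so that it is compatible with the previous step.

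\emph{Main obstacle.} The main difficulty is making all constants uniform in the pair $(a,n)$. The step I expect to need the most care is the order of choices: first fix $\delta$ from the implicit-function step, then choose $c$ from that $\delta$, and finally choose $\eta$. I handle this by working throughout on the closed parameter range $\theta\in[0,1]$ and using only compactness and the uniform bound $|\partial_\mu H|\ge q$ at the zero.
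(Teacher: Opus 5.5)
Your proposal is correct and follows essentially the same route as the paper. Both use the principal-logarithm chart around the nearest $n$-th root of unity, the triangle-inequality equality argument showing that the only zero of $H$ on $\Re\mu=0$ is $(\mu,x)=(0,0)$, a uniform implicit-function step from $H_\mu(0,0,\theta)=-(q+p\theta)$, and compactness over $\theta\in[0,1]$ for both the uniqueness and the $|x|\ge\delta$ bound. The only differences are cosmetic: you cite Lemma~\ref{lem:bounded} for $\Re\mu\ge0$ where the paper reproves the disk bound directly, and you use a uniform-continuity thickening where the paper uses a sequential-compactness contradiction.
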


\noindent The proof is deferred to Appendix~\ref{app:aux_lemma_proofs}.

\subsubsection{Proof of Proposition~\ref{prop:spectral_gap_scaling}}
\label{sec:tight_bound_proof}

We next evaluate the limiting spectral gap.  The limiting roots of \(A\) are
most transparent in logarithmic coordinates around the \(n\)-th roots of
unity. For each
\(m\in\{0,\ldots,n-1\}\), let
\[
x_m:=\text{the principal representative of }\frac{2\pi m}{n}\pmod{2\pi},
\qquad x_m\in(-\pi,\pi].
\]
Because \(\gcd(a,n)=1\), multiplication by \(a\) is invertible modulo \(n\).
Hence there is a unique \(u_m\pmod n\) such that
\[
a u_m\equiv -m\pmod n.
\]
Define
\[
\omega_{m,n}:=e^{2\pi \mathrm{i} u_m/n}.
\]
Then \(\omega_{m,n}^n=1\) and \(\omega_{m,n}^{-a}=e^{\mathrm{i}x_m}\).

For \(m\ne0\), the near-unit root chart in Lemma~\ref{lem:near_unit_chart}
parameterizes the root branch associated with this phase as
\[
z=\omega_{m,n}\exp(-\mu/n).
\]
Substituting this expression into \(P_n(z)=0\) gives
\[
H(\mu,x_m,\theta_n)=0,
\qquad
\theta_n:=a/n,
\]
where
\[
H(\mu,x,\theta)
:=
e^{-\mu}
-p e^{\mathrm{i}x}e^{-(1-\theta)\mu}
-q .
\]
At phase \(x=0\), the solution is \(\mu=0\) for every
\(\theta\in[0,1]\). Moreover,
\[
H_\mu(0,0,\theta)=-(q+p\theta)\neq0.
\]
Thus the implicit-function theorem gives a smooth local branch
\(\mu(x,\theta)\), uniformly for \(\theta\in[0,1]\),
such that
\[
H(\mu(x,\theta),x,\theta)=0,
\qquad
\mu(0,\theta)=0.
\]

Let \(D(\theta):=q+p\theta\). Differentiating implicitly at \(x=0\) gives
\[
\mu_x(0,\theta)=-i\frac{p}{D(\theta)}
\]
and
\[
\mu_{xx}(0,\theta)=\frac{pq}{D(\theta)^3}.
\]
Consequently, near the boundary phase \(x=0\),
\[
\Re\mu(x,\theta)
=
\frac{pq}{2D(\theta)^3}x^2+O(|x|^3),
\]
uniformly in \(\theta\). Branches bounded away from \(x=0\) are separated from
the unit circle by Lemma~\ref{lem:near_unit_chart}: for \(|x|\ge\delta\),
near-unit roots satisfy \(\Re\mu\ge c_\delta>0\), and therefore
\(|z|\le e^{-c_\delta/n}=1-\Theta(n^{-1})\). Roots outside the
near-unit chart have an even larger radial gap.

Because \(a\) and \(n\) are coprime, the nonzero phase set is
\[
\{\pm 2\pi/n,\pm4\pi/n,\ldots\}.
\]
After shrinking \(\delta\), the Taylor remainder is at most one half of the
quadratic term for \(|x|\le\delta\), uniformly in \(\theta\in[0,1]\). Hence
among phases with \(|x|\le\delta\), the roots closest to the unit circle are
exactly those with \(|x|=2\pi/n\). For these branches,
\[
\Re\mu(2\pi/n,\theta_n)
=
\frac{2\pi^2pq}{D(\theta_n)^3}n^{-2}
+O(n^{-3}).
\]
Therefore
\[
\rho_\infty
=
\exp\!\left(
-\frac{1}{n}\Re\mu(2\pi/n,\theta_n)
\right)
=
1-
\frac{2\pi^2pq}{(q+p\theta_n)^3}n^{-3}
+O(n^{-4}).
\]
Returning to the original notation \(\theta_n=l_{1,1}/l_{1,2}\) and \(n=l_{1,2}\) gives
\[
1-\rho_\infty
=
\frac{2\pi^2p(1-p)}
{((1-p)+p\theta_n)^3}
\,(l_{1,2})^{-3}
+O((l_{1,2})^{-4}),
\]
which is~\eqref{eq:tight_spectral_gap}.

The same estimates also give the separation needed later. If
\(x_k=2\pi k/n\) and \(2\le k\le \delta n/(2\pi)\), then
\[
\Re\mu(x_k,\theta_n)-\Re\mu(x_1,\theta_n)
\ge c(k^2-1)n^{-2},
\]
so the corresponding moduli are below the dominant modulus by at least
\(c n^{-3}\). If \(|x_k|\ge\delta\), Lemma~\ref{lem:near_unit_chart} gives
\(1-|\alpha_k|\ge c_\delta n^{-1}\). Thus every non-dominant limiting branch
is separated from the dominant modulus by at least \(c n^{-3}\).

\subsubsection{Proof of Theorem~\ref{thm:spectral_drift}}

Let \(\alpha\) be a simple root of \(A\).  The key point is that the
\finiteinput{} correction enters the characteristic equation as a small
radial perturbation of the limiting polynomial.  Dividing
\eqref{eq:decomposition} by \(l_0+n\), the finite-\(l_0\) characteristic
equation is
\[
G(z,\varepsilon):=A(z)-\varepsilon z A'(z)=0,
\qquad
\varepsilon=\frac{1}{l_0+n}.
\]
At \((z,\varepsilon)=(\alpha,0)\), we have
\(G(\alpha,0)=0\) and \(G_z(\alpha,0)=A'(\alpha)\neq0\).
The analytic implicit-function theorem therefore identifies a unique root
branch \(z_\alpha(\varepsilon)\), analytic in \(\varepsilon\), with
\(z_\alpha(0)=\alpha\).

Differentiating \(G(z_\alpha(\varepsilon),\varepsilon)=0\) at
\(\varepsilon=0\) gives
\[
z_\alpha'(0)=\alpha .
\]
Thus the first-order finite-\(l_0\) effect pushes the root radially outward:
the direction of motion is the root itself.  To obtain the stated remainder,
expand one order further.  Write
\[
z_\alpha(\varepsilon)
=\alpha+\alpha\varepsilon+c_2(\alpha)\varepsilon^2+O(\varepsilon^3).
\]
Substituting this expansion into
\(A(z)-\varepsilon z A'(z)=0\) and matching the
\(\varepsilon^2\) terms gives
\[
c_2(\alpha)
=
\frac{\alpha}{2}
\left(
\frac{\alpha A''(\alpha)}{A'(\alpha)}+2
\right).
\]
Analyticity of the branch gives the stated \(O(\varepsilon^3)\) remainder on a
sufficiently small branch neighborhood.

To use this expansion at the threshold scale, we need a uniform bound on the
second-order coefficient along the dominant roots. For such a root, write
\[
\alpha=\omega_{m,n}\exp(-\mu(x_m,\theta_n)/n),
\qquad |x_m|=2\pi/n.
\]
Since \(\mu(x_m,\theta_n)=O(n^{-1})\), we have
\[
\alpha^a=e^{-ix_m}e^{-\theta_n\mu(x_m,\theta_n)}=1+O(n^{-1}).
\]
Moreover,
\[
P_n'(\alpha)
=
\alpha^{n-a-1}\{n\alpha^a-p(n-a)\},
\]
and therefore
\[
n\alpha^a-p(n-a)
=
n\{q+p\theta_n+O(n^{-1})\}.
\]
Since \(q+p\theta_n\ge q>0\), \(|P_n'(\alpha)|\ge c n\) for large \(n\). Since
\(\alpha\ne1\), \(P_n(\alpha)=0\), and \(A=-P_n/(1-z)\) by
\eqref{eq:closed_form}, we have
\[
A'(\alpha)=-\frac{P_n'(\alpha)}{1-\alpha}.
\]
Moreover, \(|1-\alpha|=O(n^{-1})\) on the dominant branches, so
\(|A'(\alpha)|\ge c'n^2\). In particular, the dominant roots are simple
uniformly along the coprime family. The same calculation gives
\[
P_n''(\alpha)
=
\alpha^{n-a-2}
\{n(n-1)\alpha^a-p(n-a)(n-a-1)\}
=O(n^2),
\]
so
\[
\left|\frac{P_n''(\alpha)}{P_n'(\alpha)}\right|=O(n)
\]
uniformly on the dominant branches. Also
\[
|1-\alpha|^{-1}=O(n).
\]
Indeed, \(\omega_{m,n}\ne1\), the nearest nontrivial \(n\)-th root of unity is
at distance \(2\sin(\pi/n)\) from \(1\), and
\(\alpha-\omega_{m,n}=O(n^{-2})\) on the dominant branches. Since the constant
sign in \(A=-P_n/(1-z)\) cancels in logarithmic derivatives,
\[
\frac{A''(\alpha)}{A'(\alpha)}
=
\frac{P_n''(\alpha)}{P_n'(\alpha)}
+\frac{2}{1-\alpha}
=O(n).
\]
Consequently, along each dominant branch,
\[
|z_\alpha(\varepsilon)|
=
|\alpha|\bigl(1+\varepsilon+O(n\varepsilon^2)\bigr).
\]
The same expansion also identifies the sign of the second-order modulus
correction. Write
\[
z_\alpha(\varepsilon)
=
\alpha\left(1+\varepsilon+d_\alpha\varepsilon^2+O(\varepsilon^3)\right),
\qquad
d_\alpha:=\frac{c_2(\alpha)}{\alpha}
=1+\frac{\alpha A''(\alpha)}{2A'(\alpha)}.
\]
Then
\[
|z_\alpha(\varepsilon)|
=
|\alpha|\left(1+\varepsilon+\Re(d_\alpha)\varepsilon^2
+O(\varepsilon^3)\right).
\]
We check \(\Re(d_\alpha)>0\) on the dominant branches directly. Using
\(A=-P_n/(1-z)\) at a non-unit root,
\[
d_\alpha
=
\frac{1}{1-\alpha}
+\frac{\alpha P_n''(\alpha)}{2P_n'(\alpha)} .
\]
Let
\[
r_\alpha:=\alpha^a .
\]
The derivative formulas above give the exact identity
\[
\frac{\alpha P_n''(\alpha)}{P_n'(\alpha)}
=
n\,
\frac{r_\alpha-p(1-\theta_n)^2}
{r_\alpha-p(1-\theta_n)}
-1 .
\]
For dominant roots, \(r_\alpha=e^{-ix_m}e^{-\theta_n\mu(x_m,\theta_n)}\) with
\(|x_m|=2\pi/n\), so \(r_\alpha=1+O(n^{-1})\). Hence
\[
\frac{r_\alpha-p(1-\theta_n)^2}
{r_\alpha-p(1-\theta_n)}
=
\frac{q+2p\theta_n-p\theta_n^2}{q+p\theta_n}
+O(n^{-1}).
\]
The real part of the leading ratio is bounded below by a positive constant
uniformly over \(\theta_n\in[0,1]\), because \(q>0\) and
\[
q+2p\theta_n-p\theta_n^2 \ge q .
\]
The remaining term has nonnegative real part. Indeed, if
\(\alpha=re^{\mathrm{i}\varphi}\) with \(r<1\), then
\[
\Re\frac{1}{1-\alpha}
=
\frac{1-r\cos\varphi}{|1-\alpha|^2}
>0 .
\]
Consequently the \(n\)-order positive term above cannot be offset by
\((1-\alpha)^{-1}\). There is a constant \(c_*>0\) such that
\[
\Re(d_\alpha)\ge c_* n
\]
on the dominant branches for all sufficiently large \(n\). This proves that
the second-order modulus correction is positive.

We now pass from the branch expansion to the spectral radius. No genericity
assumption on \(p\) is needed. The threshold-scale argument only uses the two
dominant phases \(|x_m|=2\pi/n\), whose roots are simple uniformly by the
calculation above. Possible multiple roots are non-dominant by
Lemma~\ref{lem:multiple_roots_separated}; they are already \(\Omega(n^{-1})\)
inside the unit disk and therefore cannot affect an \(O(n^{-3})\) crossing.

For \(0<|x_m|\le\delta\), the same logarithmic-coordinate calculation,
formalized in Lemma~\ref{lem:uniform_log_perturbation} below, gives
\[
\nu_m(\eta)=\mu(x_m,\theta_n)-\eta+O(\eta^2),
\qquad \eta=n\varepsilon,
\]
uniformly in \(m,a,n\). Hence
\[
|z_m(\varepsilon)|
=
|\alpha_m|\{1+\varepsilon+O(n\varepsilon^2)\}
\]
for all small-phase branches. Among these branches, Proposition~\ref{prop:spectral_gap_scaling}
shows that the two dominant phases are separated from all other small phases by
at least \(c n^{-3}\) in limiting modulus. At the threshold scale
\(\varepsilon\le E n^{-3}\), the perturbation error \(O(n\varepsilon^2)\) is
only \(O(n^{-5})\), so the dominant branches remain the only candidates for the
spectral radius near the crossing. For \(|x_m|\ge\delta\),
Lemma~\ref{lem:near_unit_chart} gives \(\Re\mu\ge c_\delta\), so the limiting
roots are at radial distance \(\Omega(n^{-1})\) from the unit circle. By
Rouch\'e's theorem on contours enclosing the corresponding root clusters
\(A-\varepsilon zA'\) has the same multiplicities as \(A\) there, and those
finite-\(\varepsilon\) clusters remain at distance \(\Omega(n^{-1})\) from the
unit circle for \(\varepsilon\le E n^{-3}\), after reducing \(E\) if needed.
Combining these estimates yields
\[
\left|\rho(\varepsilon)-\rho_\infty(1+\varepsilon)\right|
\le K_\rho n\varepsilon^2
\]
for \(0\le\varepsilon\le E n^{-3}\), which is~\eqref{eq:rho_l0}.

\begin{lemma}[Uniform \finiteinput{} perturbation in logarithmic coordinates]
\label{lem:uniform_log_perturbation}
Fix \(p\in(0,1)\). There are constants \(\delta,\eta_0,C,N>0\), depending only
on \(p\), such that the following holds for all coprime \(1\le a<n\), \(n\ge N\).
For every nonzero phase \(0<|x_m|\le\delta\), the \finiteinput{} equation has a
unique logarithmic branch
\[
z_{m,n}(\eta)=\omega_{m,n}\exp(-\nu_{m,n}(\eta)/n),
\qquad
\eta:=n\varepsilon,
\]
with \(\nu_{m,n}(0)=\mu(x_m,\theta_n)\), for \(0\le\eta\le\eta_0\), and
\[
\nu_{m,n}(\eta)
=
\mu(x_m,\theta_n)-\eta+R_{m,n}(\eta),
\qquad
|R_{m,n}(\eta)|\le C\eta^2 .
\]
\end{lemma}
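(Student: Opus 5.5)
The plan is to rewrite the finite-input equation \(A(z)-\varepsilon zA'(z)=0\) in the logarithmic chart of Lemma~\ref{lem:near_unit_chart}, where it becomes \(O(\eta)\)-close to the limiting equation \(H(\mu,x,\theta)=0\). A quantitative implicit-function argument, with constants uniform in \((m,a,n)\), then gives the branch. The first step removes the \((1-z)\) factor. By~\eqref{eq:closed_form}, \(A=-P_n/(1-z)\), so
\[
zA'(z)=-\frac{zP_n'(z)}{1-z}-\frac{zP_n(z)}{(1-z)^2}.
\]
Hence, for \(z\neq1\), the finite equation is equivalent to
\[
P_n(z)\Bigl(1-\frac{\varepsilon z}{1-z}\Bigr)-\varepsilon zP_n'(z)=0 .
\]

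The second step substitutes \(z=\omega_{m,n}e^{-\nu/n}\) and uses \(\omega_{m,n}^n=1\) and \(\omega_{m,n}^{-a}=e^{\mathrm{i}x_m}\). This gives \(P_n(z)=H(\nu,x_m,\theta_n)\). It also gives \(zP_n'(z)=nz^n-p(n-a)z^{n-a}=-nH_\mu(\nu,x_m,\theta_n)\). With \(\eta=n\varepsilon\), the equation becomes
\[
\Psi_{m,n}(\nu,\eta):=H(\nu,x_m,\theta_n)\bigl(1+\eta\kappa_{m,n}(\nu)\bigr)+\eta H_\mu(\nu,x_m,\theta_n)=0,
\qquad
\kappa_{m,n}(\nu):=-\frac{z}{n(1-z)}.
\]
The step I expect to be the main obstacle is showing that \(\kappa_{m,n}\) and \(\partial_\nu\kappa_{m,n}\) are bounded uniformly in \(m,a,n\) for \(|\nu|\le r\), with a fixed small \(r\). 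The difficulty is that \(\omega_{m,n}\) is not close to \(1\): only \(\omega^{-a}\) is. What I would use is that \(m\neq0\) forces \(u_m\not\equiv0\), so \(|1-\omega_{m,n}|\ge 2\sin(\pi/n)\ge 4/n\). Also \(|1-e^{-\nu/n}|\le 2r/n\). Together these give \(|1-z|\ge c/n\) for \(r\) small, so \(|\kappa|=O(1)\) and \(\partial_\nu\kappa=O(1/n)\).

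The third step is a uniform quantitative IFT. At \(\eta=0\), \(\Psi=H\) has the root \(\mu(x_m,\theta_n)\) from Lemma~\ref{lem:near_unit_chart}. There \(|\mu|=O(|x_m|)\) is small once \(\delta\) is small, and \(H_\mu(\mu,x,\theta)=-(q+p\theta)+O(|\mu|+|x|)\), so \(|H_\mu|\ge q/2\) on \(\{|\nu|\le r\}\) after shrinking \(\delta\) and \(r\). All second derivatives of \(\Psi\) in \((\nu,\eta)\) on this compact set are bounded by constants depending only on \(p\), by the bounds on \(\kappa\) above. Differentiating \(\Psi(\nu(\eta),\eta)=0\) at \(\eta=0\) gives \(H_\mu\nu'+\kappa H+H_\mu=0\). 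Since \(H=0\) there, \(\nu'(0)=-1\), which is the claimed linear term.

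For existence with a remainder bounded by \(C\eta^2\) uniformly, I would run a Newton/contraction map \(\nu\mapsto\nu-\Psi(\nu,\eta)/H_\mu(\mu)\) on the disk \(|\nu-(\mu-\eta)|\le C\eta^2\). Equivalently, I would apply Rouché on that disk comparing \(\Psi\) with its linearization \(H_\mu(\mu)(\nu-\mu+\eta)\); the difference is \(O(|\nu-\mu|^2+\eta|\nu-\mu|+\eta^2)\le C'\eta^2\), which is below \(\tfrac q2 C\eta^2\) for \(C\) large and \(\eta\le\eta_0\) small. Uniqueness of the branch in the chart follows from the same contraction on the larger disk \(|\nu-\mu|\le r/2\), since \(|\partial_\nu\Psi|\ge q/4\) there. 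Finally, \(|x_m|\le\delta\) and \(\Re\nu\le\eta_0\) keep the root within the region where Lemma~\ref{lem:near_unit_chart} identifies it as the unique near-unit solution for this phase. Translating back via \(z=\omega_{m,n}e^{-\nu/n}\) gives the stated branch.
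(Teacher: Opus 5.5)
Your proposal is correct and follows essentially the paper's route: after clearing the $(1-z)$ factor and passing to the chart $z=\omega_{m,n}e^{-\nu/n}$, you arrive at exactly the paper's equation $H+\eta H_\nu-\tfrac{\eta}{n}\tfrac{z}{1-z}H=0$, control the last term uniformly via $m\neq0\Rightarrow|1-z|\ge c/n$, and obtain $\nu'(0)=-1$ from a uniform implicit-function argument, which you make more explicit (Newton/Rouch\'e) than the paper does. One harmless slip: when $\omega_{m,n}$ is the $n$-th root of unity adjacent to $1$ (e.g.\ $a=1$ at the dominant phase), $\partial_\nu\kappa_{m,n}=z/(n^2(1-z)^2)$ is only $O(1)$, not $O(1/n)$, but your argument uses only boundedness.
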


\noindent The proof is deferred to Appendix~\ref{app:aux_lemma_proofs}.

\begin{lemma}[No stable islands before the asymptotic threshold]
\label{lem:no_stable_islands}
Let \(\Delta_n:=1-\rho_\infty\). There are constants \(C,N>0\), depending only
on \(p\), such that for all coprime \(1\le a<n\), \(n\ge N\), the \finiteinput{}
polynomial is unstable for every physical input value satisfying
\[
\Delta_n+C n\Delta_n^2
\le
\varepsilon
\le
\frac1n .
\]
Consequently, the first stable integer \inputlen{} has the same asymptotic
location as the local crossing of the two dominant branches.
\end{lemma}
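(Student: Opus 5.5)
The plan is to follow one of the two dominant root branches (the phases $|x_m|=2\pi/n$) in the logarithmic chart of Lemma~\ref{lem:near_unit_chart}, and to show that this branch lies outside the unit disk over the whole window. One unstable root is enough for instability, so the non-dominant branches can be ignored. Write $\eta:=n\varepsilon$, so the window becomes $n\Delta_n+Cn^2\Delta_n^2\le\eta\le1$. Write the branch as $z_{m,n}(\eta)=\omega_{m,n}\exp(-\nu_{m,n}(\eta)/n)$. Then $|z_{m,n}(\eta)|>1$ if and only if $\Re\nu_{m,n}(\eta)<0$. Proposition~\ref{prop:spectral_gap_scaling} gives $\rho_\infty=\exp(-\Re\mu(x_m,\theta_n)/n)$. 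Hence $\Re\mu=-n\log(1-\Delta_n)=n\Delta_n+O(n\Delta_n^2)$, and $n\Delta_n=\Theta(n^{-2})$ is tiny.

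\textbf{Small-$\eta$ regime.} For $0\le\eta\le\eta_1:=\min(\eta_0,1/(4C_R))$, where $C_R$ is the constant in Lemma~\ref{lem:uniform_log_perturbation}, that lemma gives
\[
\Re\nu\le n\Delta_n+C'n\Delta_n^2-\eta+C_R\eta^2 .
\]
Take $\eta\ge n\Delta_n+Cn^2\Delta_n^2$ with $\eta\le 2n\Delta_n$ near the lower end. Then $C_R\eta^2\le4C_Rn^2\Delta_n^2$, and the right-hand side is at most $C'n\Delta_n^2+(4C_R-C)n^2\Delta_n^2$. This is negative once $C>4C_R+C'$, since $n\Delta_n^2\le n^2\Delta_n^2$. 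For larger $\eta\le\eta_1$ we have $-\eta+C_R\eta^2\le-\tfrac34\eta$, which dominates $n\Delta_n+O(n\Delta_n^2)$ because $\eta\ge 2n\Delta_n$. This settles $[\,n\Delta_n+Cn^2\Delta_n^2,\ \eta_1]$.

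\textbf{Large-$\eta$ regime, $\eta\in[\eta_1,1]$.} This is the main obstacle, because Lemma~\ref{lem:uniform_log_perturbation} only controls $\eta\le\eta_0$. The first step is to redo the logarithmic substitution exactly at finite $\eta$. Use $A=-P_n/(1-z)$, so that $A-\varepsilon zA'=0$ becomes
\[
(1-z)\bigl(P_n-\varepsilon zP_n'\bigr)-\varepsilon zP_n=0 .
\]
With $z=\omega e^{-\nu/n}$ and $zP_n'=nz^n-p(n-a)z^{n-a}$, this reads
\[
H(\nu,x,\theta_n)-\eta\bigl(e^{-\nu}-p(1-\theta_n)e^{ix}e^{-(1-\theta_n)\nu}\bigr)-\frac{\eta\,z}{n(1-z)}P_n(z)=0 .
\]
On the dominant phase, $|1-z|\asymp n^{-1}$, so the last term is of the same order as $\eta P_n$ and must be retained. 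Because $x=2\pi/n\to0$, the limiting equation is $x=0$, i.e.\ $e^{ix}=1$. On $\{x=0\}$, $\nu\equiv0$ still solves the equation for every $\eta$. The limiting problem is therefore the real one-parameter family around $\nu=0$, and the implicit-function derivative $\partial_x\nu$ can be computed uniformly in $\eta\in[0,1]$ and $\theta\in[0,1]$, provided the $\nu$-derivative stays nondegenerate. That derivative is $-(q+p\theta)+\eta\,(\cdots)$ plus the contribution of the last term, and its nonvanishing on $[0,1]$ has to be verified explicitly.

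One then expands $\Re\nu(x,\eta)$ as $\tfrac12\Re\nu_{xx}(0,\eta)x^2+O(x^3)$, i.e.\ to second order in $x$ at $x=0$ (the linear term is purely imaginary by the computation above). The aim is to show that the coefficient, which equals $pq/D^3>0$ at $\eta=0$, changes sign for small $\eta$ and stays strictly negative, bounded away from zero, on $[\eta_1,1]$. If that holds, $\Re\nu\le -c\,n^{-2}<0$ there, and since $\varepsilon\le 1/n$ is exactly $\eta\le 1$, this covers the rest of the window.

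I would first try to obtain this coefficient in closed form, as a rational function of $(\eta,\theta,p)$, and check its sign directly. If that fails, the fallback is a continuity and argument-principle count. Show that $F_\varepsilon$ has no root on the unit circle near $z=e^{\pm2\pi i/n}$ for $\eta\in[\eta_1,1]$; together with the small-$\eta$ regime, this shows that no pair of roots can re-enter the disk inside the window. Either route yields instability throughout $[\Delta_n+Cn\Delta_n^2,1/n]$. Combined with \eqref{eq:rho_l0}, it places $l_{0,\min}$ at the local crossing.
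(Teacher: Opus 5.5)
Your small-$\eta$ step is essentially the paper's first range: you continue a dominant phase via Lemma~\ref{lem:uniform_log_perturbation} and use $\Re\mu(2\pi/n,\theta_n)=n\Delta_n+O(n\Delta_n^2)$, and that part is fine. The genuine gap is the regime $\eta\in[\eta_1,1]$, where your base point is wrong. Taking $\nu=0$ at phase $x=0$ forces $\omega=1$ by coprimality, hence $z=1$. That is exactly the root you introduced by clearing the factor $1-z$: it is a double root of $(1-z)(P_n-\varepsilon zP_n')-\varepsilon zP_n$, and it is not a root of $A-\varepsilon zA'$. On every genuine branch $\omega_{m,n}\neq 1$, and at $(\nu,x)=(0,0)$ one has $H=0$ but $H+\eta H_\nu=-\eta(q+p\theta)\neq 0$. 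So there is no family with $\nu(0,\eta)\equiv 0$, and the sign of $\Re\nu$ is not decided by an $x^2$-coefficient changing sign.

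What actually happens, and what the paper uses, is different. At phase zero the reduced equation $H+\eta H_\nu=0$ has the real solution $e^{-\nu_0}=z_*(\eta,\theta)>1$. Here $z_*$ is the largest root of $f_{\eta,\theta}(z)=(1-\eta)z-p\{1-\eta(1-\theta)\}z^{1-\theta}-q$, which satisfies $f_{\eta,\theta}(1)=-\eta(q+p\theta)<0$. Hence $\Re\nu=-\Theta(1)$ for $\eta$ of order one, and the instability sits in the zeroth-order term, not at size $-cn^{-2}$.

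Your claim that $|1-z|\asymp n^{-1}$ on the dominant phase is also false in general. For $a=2$ and odd $n$, $\omega_{\pm1,n}=-e^{\mp i\pi/n}$ lies next to $-1$. The claim holds only in special families such as $a=n-1$, and there the term $\frac{\eta z}{n(1-z)}H$ is $O(1)$ with a family-dependent coefficient, so no single limiting problem exists. The paper avoids both issues by realizing the reduced root $z_*$ at a phase $x_m$ in a fixed window $[x_0,2x_0]$, chosen with $|\omega_{m,n}-1|\ge s$. Such a phase exists by a counting argument, since $m\mapsto u_m$ is a permutation. With that choice the singular term is $O(n^{-1})$ and a uniform implicit-function argument applies.

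Second, no logarithmic-chart continuation can be uniform up to $\eta=1$. At $\eta=1$, $f_{1,\theta}(z)=-p\theta z^{1-\theta}-q<0$ has no root, and $z_*(\eta,\theta)\to\infty$ as $\eta\uparrow 1$. This mirrors the degeneration of the leading coefficient $1-\eta+\eta/n$. That is why the paper stops the chart at $(1+p)/2$ and finishes with Vieta. The constant coefficient is $q$, so the product of the root moduli is $q/(1-\eta+\eta/n)>1$ once $\eta\ge 1-q/2$ and $n$ is large.

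Your fallback does not close the gap either. Excluding unit-circle roots only near $e^{\pm 2\pi i/n}$ (which, as noted, is not where the dominant roots lie) does not prevent some root from re-entering the disk elsewhere on the circle. A continuity argument would need the whole unit circle to be root-free for every $\eta$ in the window, and you have no tool for that. An existence argument for an outside root at each fixed $\eta$, as in the paper, avoids the issue.
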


\noindent The proof is deferred to Appendix~\ref{app:aux_lemma_proofs}.

\subsubsection{Proof of Corollary~\ref{cor:l0_threshold}}

Let \(\Delta_n:=1-\rho_\infty\). Proposition~\ref{prop:spectral_gap_scaling}
gives \(\Delta_n=\Theta(n^{-3})\). By Theorem~\ref{thm:spectral_drift},
\[
\rho(\varepsilon)
=
\rho_\infty(1+\varepsilon)+O(n\varepsilon^2).
\]
The proof of Proposition~\ref{prop:spectral_gap_scaling} shows that every
non-dominant limiting branch is below the dominant modulus by at least
\(c n^{-3}\). At the threshold scale \(\varepsilon=O(\Delta_n)=O(n^{-3})\),
the finite-\(l_0\) perturbation error is
\[
O(n\varepsilon^2)=O(n\Delta_n^2)=O(n^{-5}),
\]
which is smaller than this separation. Thus the dominant branches alone
determine the first crossing of the unit circle near the stability threshold:
for a sufficiently large constant \(C\), a dominant branch has modulus larger
than one whenever
\[
\varepsilon\ge \Delta_n+C n\Delta_n^2
\]
within the threshold-scale neighborhood. Lemma~\ref{lem:no_stable_islands}
extends this instability to the whole physical range above the crossing window,
up to \(\varepsilon\le1/n\). Conversely, all roots lie strictly inside the unit
disk whenever
\[
\varepsilon\le \Delta_n-C n\Delta_n^2 .
\]
The stability boundary is therefore determined up to an \(O(n\Delta_n^2)\)
error by
\[
1=\rho_\infty(1+\varepsilon)+O(n\varepsilon^2).
\]
Since the boundary has \(\varepsilon=\Theta(\Delta_n)\), this gives
\[
\varepsilon
=
\Delta_n+O(n^{-5}),
\qquad
\frac{\Delta_n}{\varepsilon}
=
1+O(n^{-2}).
\]
Because \(\varepsilon=(l_0+n)^{-1}\) is strictly decreasing in \(l_0\), the
instability interval in Lemma~\ref{lem:no_stable_islands} covers every physical
integer input length below the local crossing window. Integer rounding in
\(l_0\) does not affect this order. If the real threshold
is \(s=l_0+n\), replacing \(s\) by a neighboring integer changes
\(s\Delta_n\) by at most \(\Delta_n=O(n^{-3})\), which is absorbed by the
claimed \(O(n^{-2})\) tolerance. Using
\(\varepsilon=(l_{0,\min}+n)^{-1}\), with
Lemma~\ref{lem:no_stable_islands} ruling out earlier stable islands, yields
\[
(l_{0,\min}+n)(1-\rho_\infty)=1+O(n^{-2}),
\]
which is the claimed bound after enlarging the constant.

This corollary implies Proposition~\ref{prop:finite_prompt_threshold}. Indeed,
Proposition~\ref{prop:spectral_gap_scaling} gives
\[
1-\rho_\infty
=
\frac{2\pi^2p(1-p)}{((1-p)+p\theta_n)^3}\,n^{-3}
+O(n^{-4}),
\qquad
\theta_n=a/n.
\]
Here and above, for each fixed \(p\in(0,1)\), the constants in the
\(O(\cdot)\) terms are uniform in \(n\) and in the coprime sequence.
The corollary gives
\[
l_{0,\min}+n
=
\frac{1+O(n^{-2})}{1-\rho_\infty}.
\]
Substituting the spectral-gap expansion and using
\(\theta_n\to\theta\in[0,1)\) yields
\[
l_{0,\min}
=
\frac{((1-p)+p\theta)^3}{2\pi^2p(1-p)}\,n^3
+o(n^3).
\]
Since \(n=l_{1,2}\), this is exactly the asymptotic threshold stated in
Proposition~\ref{prop:finite_prompt_threshold}.

\subsection{Extension to Multiple Request Types}
\label{sec:K_type_extension}

We first explain the \(K\geq 3\) extension for request classes with decoding
lengths \(l_{1,1}<\cdots<l_{1,K}\), proportions \(p_1,\ldots,p_K\), and a common
\inputlen{} \(l_0\). Heterogeneous \inputlens{} are handled separately in
Section~\ref{sec:hetero_prompt_extension}. Write \(d:=l_{1,K}\). The argument
uses the same embedded linearization as in the two-class case; the new object is
the survival weight \(\alpha_s\), which replaces the two-class piecewise
normalization. The main substitutions are the following: the class
composition at a stage is summarized by \(\alpha_s\), the memory weight becomes
\(V_s=\alpha_s(l_0+s+1)\), and the two-class polynomial is replaced by the
survival polynomial in~\eqref{eq:K_type_limiting}. The following paragraphs
give the corresponding root-location, \finiteinput{} perturbation, and
global-convergence checks under these substitutions.

\subsubsection{Cohort reduction}

Under per-iteration proportional admission \(a^n_k=p_k a^n\) and
proportional within-stage eviction, the full class-stage occupancy process
collapses to a \(d\)-dimensional \emph{cohort manifold}:
\[
x^n_{k,s}=p_k Y^n_s\;\;(s<l_{1,k}),\qquad x^n_{k,s}=0\;\;(s\ge l_{1,k}),
\]
for a single vector \(Y^n\in\mathbb{R}_+^d\). Shift preserves within-stage
proportions, proportional eviction scales all classes in a mixed stage by the same
factor, and proportional admission injects a new mixed cohort. From any feasible
initial active state, after at most \(d\) iterations all pre-existing requests
have completed or been evicted; thereafter every surviving cohort was admitted
under the proportional rule, so the process lies on this manifold.

Define the \emph{survival weight} and \emph{effective stage weight}:
\[
\alpha_s:=\sum_{k:l_{1,k}>s}p_k,\qquad V_s:=\alpha_s(l_0+s+1),\qquad s=0,\ldots,d-1.
\]
The survival weight \(\alpha_s\) is nonincreasing, with \(\alpha_0=1\) and
\(\alpha_{d-1}=p_K>0\); it drops exactly at the decoding lengths. Total memory
is \(\sum_{s=0}^{d-1}V_s Y^n_s\), and the eviction-free equilibrium is
\(Y^*=x^*\mathbf{1}\), where \(x^*=M/\sum_s V_s\).

\subsubsection{Characteristic polynomial}

On the cohort manifold, the linearized no-eviction dynamics have the same
companion form as in the two-class case:
\(y^n=\Phi y^{n-1}\), with first-row entries \(-V_{j+1}/V_0\). The nonzero
eigenvalues are the roots of
\[
F_{l_0}(z) = \sum_{s=0}^{d-1}V_sz^{d-1-s} = (l_0+d)A(z)-z A'(z),
\]
where the limiting polynomial is
\begin{equation}\label{eq:K_type_limiting}
(1-z)A(z) = -z^d + \sum_{k=1}^K p_kz^{d-l_{1,k}}.
\end{equation}

\subsubsection[Root structure of A]{Root structure of $A(z)$}

All roots of \(A\) lie in the closed unit disk. Indeed, if
\(|z|>1\), then
\[
|z|^d\le\sum_k p_k|z|^{d-l_{1,k}}<|z|^d,
\]
a contradiction. To locate the unit-circle roots, note that
\(\gcd(d-l_{1,1},\ldots,d-l_{1,K-1},d)=g\): any divisor of \(d\) and
\(d-l_{1,i}\) also divides \(l_{1,i}\). If \(|z|=1\) and \(A(z)=0\),
equality in the triangle inequality forces \(z^{d-l_{1,k}}=1\) for every
\(k\), and B\'ezout's identity gives \(z^g=1\). Since
\(A(1)=\sum_k p_k l_{1,k}>0\), only non-trivial \(g\)-th roots can be roots of
\(A\). At each such root \(\omega\), differentiating
\eqref{eq:K_type_limiting} gives
\[
(1-\omega)A'(\omega)=-\omega^{-1}\sum_k p_k l_{1,k},
\]
so \(A'(\omega)\neq0\).

\subsubsection[Instability when g is greater than one]{Instability when $g>1$}

Each non-trivial \(g\)-th root \(\omega\) satisfies \(A(\omega)=0\) and
\(A'(\omega)\neq0\). Applying the implicit-function theorem to
\[
G(z,\varepsilon)=A(z)-\varepsilon z A'(z),
\qquad
\varepsilon=\frac{1}{l_0+d},
\]
gives \(z'(0)=\omega\), and hence
\[
z(\varepsilon)=\omega(1+\varepsilon)+O(\varepsilon^2).
\]
Thus the \finiteinput{} correction induces the same outward radial drift as in
the two-class system. Each unit-circle root moves outside the unit disk for
sufficiently small positive \(\varepsilon\), producing \(g-1\) unstable
eigenvalues.

\subsubsection[Local stability when g equals one]{Local stability when $g=1$}

When \(g=1\), all roots of \(A\) lie strictly inside the unit circle. Let
\(r_*:=\max_j|\alpha_j|<1\), and choose \(\rho\in(r_*,1)\). On
\(|z|=\rho\), \(|A(z)|\ge m_\rho>0\), while
\(|\varepsilon z A'(z)|\le \varepsilon M_\rho\). For \(l_0\) large
enough that \(\varepsilon M_\rho<m_\rho\), Rouch\'e's theorem implies that
\(F_{l_0}/(l_0+d)\) and \(A\) have the same number of roots in
\(|z|<\rho\), namely all \(d-1\). Therefore
\(\rho(\Phi)\le\rho<1\).

\subsubsection[Global convergence when g equals one]{Global convergence when $g=1$}

The global-convergence check follows the structure of
Section~\ref{sec:global_convergence} after replacing the two-class normalization
weights by \(\alpha_s\). The only points that require checking are the
population bound, the size of the eviction perturbation, and the no-eviction
neighborhood.
All three checks use the same estimates as in the two-class proof. Since
\(\alpha_s\ge p_K>0\), the in-service population
\[
N^n=\sum_s\alpha_s Y^n_s\le \frac{M}{l_0+1}
\]
is \(O(1)\), uniformly in \(K\). Each shift increases memory by at most \(N^n\)
tokens. Restoring feasibility therefore requires evicting only \(O(1/l_0)\)
requests per step, so the perturbation from the no-eviction linearized dynamics
satisfies \(\|e^n\|_2=O(1/l_0)\) in the \(Y\)-coordinates. The Schur bound for
\(\Phi\) gives the Lyapunov function
\(\mathcal{L}(y)=y^\top Py\) with \(I\preceq P\preceq C_P I\); here \(C_P\) may
depend on the fixed \(K\)-class instance but not on \(l_0\). Therefore
\[
\mathcal{L}(y^n)-\mathcal{L}(y^{n-1})\le -\|y^{n-1}\|_2^2+O(1/l_0).
\]
For \(\|y^{n-1}\|_2\ge\delta\), this drift is strictly negative when \(l_0\)
is large enough. The no-eviction neighborhood is obtained exactly as in the
two-class proof: the equilibrium leaves a stage-0 admission margin of order
\(l_0\), while the memory deviation is at most \(O(l_0\|y\|_2)\). Choosing
\(\delta>0\) small enough therefore rules out eviction whenever
\(\|y\|_2<\delta\). Since \(\mathcal{L}\ge0\), the embedded trajectory reaches
this neighborhood in finite time. Thereafter \(e^n=0\), and the Schur-stable
linear dynamics imply exponential convergence.

\subsubsection[Spectral gap scaling for K at least three]{Spectral gap scaling for $K\geq 3$}

The log-coordinate parameterization of Section~\ref{sec:tight_bound_proof}
extends to \(K\) classes. For the polynomial~\eqref{eq:K_type_limiting}, write
\(n:=l_{1,K}\) and parameterize the \(u\)-th branch as
\(z=\omega_u\exp(-\mu/n)\), where \(\omega_u\) is an \(n\)-th root of
unity. The branch equation becomes
\[
\sum_{k=1}^K p_k e^{\mathrm{i}x_k(u)}e^{\beta_{k,n}\mu} = 1,
\qquad x_k(u) := -\frac{2\pi u l_{1,k}}{n}\!\!\pmod{2\pi},
\]
with \(\beta_{k,n}:=l_{1,k}/n\) and \(x_K(u)=0\). The implicit-function theorem
gives a local branch \(\mu(x)\) with
\[
\Re\mu(x) = \frac{1}{2D}\sum_{k=1}^K p_k\Bigl(x_k - \beta_k\bar{x}\Bigr)^2 + O(\|x\|^3),
\qquad D:=\sum_k p_k\beta_k,\quad \bar{x}:=\frac{\sum_k p_k x_k}{D}.
\]
The quadratic term is positive definite; in particular, the \(k=K\) term
contributes \(p_K\bar{x}^2\). Thus the spectral gap is governed by the smallest
nonzero value of \(\Re\mu\) over the discrete phases \(x(u)\),
\(u=1,\ldots,n-1\).

\noindent{\bf Fixed shorter lengths.} If \(l_{1,1},\ldots,l_{1,K-1}\) are fixed and
\(n\to\infty\), the branch \(u^*\) satisfying
\(\gcd(l_{1,1},\ldots,l_{1,K-1})\,u^*\equiv 1\pmod n\) yields phases
\(x_k=O(1/n)\). The resulting gap has the cubic scaling
\[
1-\rho_\infty = c_{\mathrm{fix}}\,n^{-3}+O(n^{-4}),
\quad
c_{\mathrm{fix}} = \frac{2\pi^2}{p_K}\sum_{k=1}^{K-1}p_k(a_k^\#)^2 + \frac{2\pi^2}{p_K^2}\Bigl(\sum_{k=1}^{K-1}p_k a_k^\#\Bigr)^2,
\]
where \(a_k^\#:=l_{1,k}/\gcd(l_{1,1},\ldots,l_{1,K-1})\). For \(K=2\), this reduces to
\(2\pi^2p/q^2\).

\noindent{\bf Bounded resonance.} More generally, suppose there exist a fixed
integer \(\bar u\) and bounded integers \(c_1,\ldots,c_{K-1}\) such that
\(\bar u l_{1,k}\equiv c_k\pmod n\) for all \(k\). Then
\[
1-\rho_\infty = c_{\mathrm{res}}\,n^{-3}+O(n^{-4}),
\quad
c_{\mathrm{res}} = \frac{2\pi^2}{D}\sum_{k=1}^K p_k\Bigl(c_k - \beta_k\frac{\sum_j p_j c_j}{D}\Bigr)^2,
\]
with \(c_K:=0\). This includes near-consecutive families such as
\((n,n{+}1,\ldots,n{+}K{-}1)\).

\noindent{\bf Generic ratios.} When no bounded resonance exists, the minimizing
branch index grows with \(n\). The gap exponent then depends on the simultaneous
Diophantine approximation properties of
\((\beta_1,\ldots,\beta_{K-1})\). The numerical evidence is consistent with the
generic exponent \(1+2/(K-1)\), giving \(\alpha=2\) for \(K=3\) and
\(\alpha=5/3\) for \(K=4\). A proof of this generic exponent remains open.

\subsection{Heterogeneous \Inputterm{} Lengths}
\label{sec:hetero_prompt_extension}

We now justify Remark~\ref{rem:hetero_prompt}. The \commoninput{} proof above
sets \(l_{0,1}=l_{0,2}\) only to keep the recurrence and polynomial notation
light. The \heteroinput{} case differs only in the memory weights carried
by surviving cohorts, not in the completion phases. If heterogeneous inputs
scale as
\[
l_{0,k}=r_k L+O(1),\qquad r_k>0,
\]
then the same cohort-balance derivation replaces the \commoninput{} limiting
polynomial by a weighted survival polynomial. For \(K\) classes, let
\(R=\sum_{k=1}^K p_k r_k\) and \(l_{1,K}=\max_k l_{1,k}\). The limiting
polynomial can be written as
\[
A_r(z)
=
\sum_{m=0}^{l_{1,K}-1}
\left(\sum_{k:\,l_{1,k}>m} p_k r_k\right)
z^{l_{1,K}-1-m},
\]
or equivalently,
\[
(1-z)A_r(z)
=
-Rz^{l_{1,K}}
+\sum_{k=1}^K p_k r_k\,z^{l_{1,K}-l_{1,k}}.
\]
Thus heterogeneous inputs do not change the timing exponents
\(l_{1,K}-l_{1,k}\); they only replace the class probabilities \(p_k\) by
positive weights \(p_k r_k\) in the survival coefficients. The triangle
inequality argument used in Lemmas~\ref{lem:bounded}--\ref{lem:primitive}
therefore carries over after normalizing by \(R\): equality on the unit
circle still forces all active phases \(z^{l_{1,k}}\) to agree, which is
exactly the same root-of-unity condition governed by
\(\gcd(l_{1,1},\ldots,l_{1,K})\).

The remaining steps follow the same outline as in the \commoninput{} proof, with
constants changed by the weights \(p_k r_k\). For \(g>1\),
the non-trivial \(g\)-th roots still satisfy \(A_r(\omega)=0\), and
differentiating the weighted closed form gives
\[
(1-\omega)A_r'(\omega)
=-\omega^{-1}\sum_{k=1}^K p_k r_k l_{1,k}\neq0.
\]
The corresponding finite-\(L\) perturbation has a positive radial first-order
coefficient, whose value depends on \(p_k\) and \(r_k\), so the same
root-of-unity mechanism produces outward drift. For \(g=1\), all
non-equilibrium limiting roots are strictly inside the unit disk, so Rouch\'e's
theorem keeps the finite-\(L\) roots inside for sufficiently large \(L\). The
global Lyapunov step is checked by the same estimates: because each \(r_k>0\)
is fixed, all effective stage weights are \(\Theta(L)\), the active population
remains \(O(1)\), and each eviction perturbation is \(O(1/L)\).
Thus heterogeneous inputs change constants through the positive ratios \(r_k\),
while preserving the spectral and Lyapunov mechanisms used in the two-class
\commoninput{} case.

\begin{remark}[Stable initial conditions in non-coprime systems]
When \(g>1\), some initial conditions remain stable because the instability is
spectral rather than global across all directions. The general solution admits a
Vandermonde decomposition into the equilibrium mode \(z_0=1\), unstable
modes \(z_1,\ldots,z_{g-1}\) with \(|z_j|>1\), and stable
modes \(z_g,\ldots,z_{l_{1,2}-1}\) with \(|z_j|<1\). Stability
holds exactly when the initial perturbation has no projection onto the unstable
subspace. The equilibrium \(X^*=x^*(1,\ldots,1)^\top\) itself lies outside that
unstable subspace.
\end{remark}

\subsection{Pulse Cycle}
\label{sec:pulse_cycle_return}

This subsection gives a complementary finite-\(l_0\) calculation for the
non-coprime case \(g>1\). The finite-exit result above shows that synchronized
GCD modes push generic local perturbations out of the no-eviction neighborhood;
the instance below makes the resulting recurrent-eviction behavior explicit
by exhibiting a two-class \commoninput{} pulse cycle and computing its local
return map.
Recall that we require
\[
a^n_1 = p a^n,\qquad a^n_2 = q a^n
\]
at every iteration. If LPF partially evicts a mixed stage, it removes classes 1
and 2 in proportion to their occupancies in that stage.

\begin{lemma}[Cohort reduction for the strict mixing rule]
\label{lem:strict_cohort}
Consider the two-class \commoninput{} system with decoding lengths \(l_{1,1}<l_{1,2}\),
and write \(a:=l_{1,1}\) and \(b:=l_{1,2}\). Under strict per-iteration proportional
admission and proportional within-stage eviction, the class-stage process admits
a scalar cohort representation \(y^{n}_{j}\) such that
\[
x^n_{1,j}=
\begin{cases}
p\,y^{n}_{j}, & j<a,\\
0, & j\ge a,
\end{cases}
\qquad
x^n_{2,j}=q\,y^{n}_{j},\qquad 0\le j\le b-1.
\]
Moreover, the normalized variables of the coprime proof satisfy
\[
Y^n_j=y^{n}_{j},
\qquad 0\le j\le b-1,
\]
and the resulting cohort dynamics have effective weights
\[
V_j=
\begin{cases}
l_0+1+j, & 0\le j\le a-1,\\
q(l_0+1+j), & a\le j\le b-1.
\end{cases}
\]
\end{lemma}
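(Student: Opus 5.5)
The proof is an induction on iterations: I will show that the proportional-cohort structure, once it holds, is preserved by each of the four protocol steps (Execute, Arrive, Evict, Admit). The invariant to propagate is \(x^n_{1,j}=p\,y^n_j\) for \(j<a\), \(x^n_{1,j}=0\) for \(j\ge a\), and \(x^n_{2,j}=q\,y^n_j\) for all \(j<b\). Lemma~\ref{lem:proportional_cohort_entry} already guarantees entry into this manifold after at most \(b\) iterations and its forward invariance. So for the two-class strict mixing rule I only need to specialize that statement and then compute the induced weights. I will nonetheless re-verify invariance step by step, because the scalar representation must be defined consistently on stages \(j\ge a\), where only class~2 is present.

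\textbf{Execute.} The shift copies stage \(j-1\) into stage \(j\) for both classes. Class~1 completes at \(j=a-1\), so after the shift stage \(a\) carries no class-1 mass. Setting \(\tilde y_j=y_{j-1}\) therefore preserves the invariant for every \(j\ge1\), with stage~0 empty.

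\textbf{Evict.} LPF removes mass from the lowest occupied stages.
\begin{itemize}
\item A fully evicted stage simply sets \(y_j=0\).
\item A partially evicted mixed stage \(j<a\) removes each class in proportion to its occupancy, i.e.\ scales both \(p\,y_j\) and \(q\,y_j\) by a common factor. This amounts to replacing \(y_j\) by a smaller scalar.
\item For \(j\ge a\) the stage is pure class~2, and trimming it just reduces \(y_j\).
\end{itemize}

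\textbf{Admit.} Strict proportional admission injects \(p\,a^n\) and \(q\,a^n\) at stage~0. Defining \(y^{n+1}_0=a^n\) restores the form.

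The one real subtlety is initial states that are not on the manifold. Here I will argue exactly as in Lemma~\ref{lem:proportional_cohort_entry}. After \(b\) iterations every pre-existing request has either completed (pure-class stages age out by stage \(b-1\)) or been evicted. Every surviving cohort was therefore created by a proportional admission and modified only by the two operations above, both of which preserve proportions.

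\textbf{Weights and normalized variables.} The memory in the cohort variables is
\[
\sum_{j<a}(l_0+1+j)(p+q)\,y_j+\sum_{a\le j<b}(l_0+1+j)\,q\,y_j,
\]
since \(p+q=1\) on mixed stages. This gives \(V_j=l_0+1+j\) for \(j<a\) and \(V_j=q(l_0+1+j)\) for \(j\ge a\).

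Finally, the aggregate stage count of the coprime proof is \(X^n_j=(p+q)y_j=y_j\) for \(j<a\) and \(X^n_j=q\,y_j\) for \(j\ge a\). The normalization \(Y_j=X_j\) for \(j<a\) and \(Y_j=X_j/q\) for \(j\ge a\) therefore returns \(Y^n_j=y^n_j\), matching the statement.
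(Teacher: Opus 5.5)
Your proposal is correct and follows the paper's own proof. Both argue by induction over iterations: proportional admission creates a fresh $p:q$ cohort, execution ages it and drops class~1 after age $a-1$, and proportional within-stage LPF eviction rescales a mixed stage by a common factor. Both then read off the total occupancies ($y_j$ for $j<a$, $qy_j$ for $j\ge a$) to get $Y^n_j=y^n_j$ and the stated weights $V_j$.

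Your explicit appeal to Lemma~\ref{lem:proportional_cohort_entry} for initial states off the cohort manifold is a small addition that the paper leaves implicit in the base case of its induction.
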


\noindent The proof is deferred to Appendix~\ref{app:aux_lemma_proofs}.

\begin{proposition}[Two-class \commoninput{} exact pulse cycle]
\label{prop:exact_pulse_cycle}
\label{prop:two_type_pulse}
Assume \(g=\gcd(a,b)>1\), and write \(h:=b/g\). For
\(r=0,\ldots,g-1\), define
\[
J_r=\{r,r+g,\ldots,r+(h-1)g\},
\qquad
\beta=\frac{M}{\sum_{m=0}^{h-1}V_{g-1+mg}},
\]
and
\[
\alpha_r
=
\frac{M-\beta\sum_{m=1}^{h-1}V_{r+mg}}{V_r}.
\]
Let \(Z^{(r)}\in\mathbb R^b\) be the state whose only nonzero coordinates are
\[
Z^{(r)}_{r}=\alpha_r,
\qquad
Z^{(r)}_{r+mg}=\beta,\qquad m=1,\ldots,h-1.
\]
Then
\[
Z^{(0)}\to Z^{(1)}\to \cdots \to Z^{(g-1)}\to Z^{(0)}
\]
is an exact period-\(g\) orbit. Its average throughput is
\[
\bar{T}_{\mathrm{pulse}}
=
\frac{2M}{a(2l_0+a+g)+q(b-a)(2l_0+a+b+g)}.
\]
\end{proposition}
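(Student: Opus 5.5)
The plan is to verify the orbit directly in the cohort coordinates of Lemma~\ref{lem:strict_cohort}, mirroring the proof of Proposition~\ref{prop:spaced_cycles}. By that lemma, under strict proportional admission and proportional within-stage eviction, the two-class dynamics reduce to a single-class-like system in the scalar cohort variables \(y_j\), with memory \(\sum_j V_j y_j\), stage shift \(y_{j+1}\leftarrow y_j\), completion mass released at stage \(a-1\) (class 1, fraction \(p\)) and at stage \(b-1\) (class 2). Since \(g\) divides both \(a\) and \(b\), the stages \(r+mg\) never straddle the class-1 exit point in a way that breaks the support pattern: the support \(J_r\) shifts to \(J_{r+1}\) under execution for \(r<g-1\), and \(a-1\in J_{g-1}\), \(b-1\in J_{g-1}\). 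Each \(Z^{(r)}\) lies on the memory boundary by the definition of \(\alpha_r\), and \(\alpha_{g-1}=\beta\) follows from the definition of \(\beta\).

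For the transitions \(r\to r+1\) with \(r\le g-2\), no live coordinate is at stage \(a-1\) or \(b-1\) (these are \(\equiv g-1 \pmod g\)), so no completion occurs; execution raises memory by \(\sum_{j\in J_r}(V_{j+1}-V_j)y_j>0\), noting that \(V_{j+1}-V_j\) equals \(1\) or \(q\) except at the jump \(j=a-1\), which is not in \(J_r\). LPF then trims only the least-progressed stage \(r+1\). I will show \(\alpha_r\) is strictly decreasing in \(r\) (since \(V_{r+mg}\) increases with \(r\) and \(V_r=l_0+1+r\) for \(r<g\le a\)), hence the trim leaves exactly \(\alpha_{r+1}>0\) at stage \(r+1\), yielding \(Z^{(r+1)}\) with zero slack. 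I must check that the required trim does not exceed the mass at stage \(r+1\), i.e., \(\alpha_{r+1}\ge 0\), which follows from \(\beta\sum_{m\ge1}V_{r+1+mg}\le \beta\sum_{m=0}^{h-1}V_{g-1+mg}=M\).

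For the closing step \(r=g-1\): all coordinates equal \(\beta\). Execution completes the class-2 cohort at stage \(b-1\) (and class-1 mass at stage \(a-1\) leaves, reflected by the drop \(V_a=qV\)-type weight), shifting the remaining cohorts to stages \(mg\), \(m=1,\dots,h-1\), with post-execution memory \(\beta\sum_{m=1}^{h-1}V_{mg}\). I must verify this is \(<M\) so no eviction occurs; this comparison with \(\sum_{m=0}^{h-1}V_{g-1+mg}\) is the delicate point, because at the index \(mg=a\) the weight drops by the factor \(q\) while other indices rise by \(1\) (or \(q\)). The term \(V_{g-1}\) (order \(l_0\)) absent on the left dominates these \(O(1)\) increments for the relevant regime, and I expect this inequality to be the main obstacle to state cleanly. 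Admission then fills stage \(0\) with \((M-\beta\sum_{m\ge1}V_{mg})/V_0=\alpha_0\), returning to \(Z^{(0)}\).

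Finally, throughput: over one period, completions occur only at the step from \(Z^{(g-1)}\): class-1 mass \(p\beta\) and class-2 mass \(q\beta\), total \(\beta\), so \(\bar T_{\mathrm{pulse}}=\beta/g\). The formula follows by evaluating \(g\sum_{m=0}^{h-1}V_{g-1+mg}\), splitting at \(a/g\) terms with weight \(l_0+(m+1)g\) and the rest multiplied by \(q\); the arithmetic sums give \(\tfrac a2(2l_0+a+g)\) and \(\tfrac{q(b-a)}2(2l_0+a+b+g)\), matching the stated expression.
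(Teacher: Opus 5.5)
Your route is the paper's: check each phase map directly, then read off $\bar T_{\mathrm{pulse}}=\beta/g$.

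\begin{itemize}
\item For $r\le g-2$, execution moves $J_r$ to $J_{r+1}$ with no completion, and LPF trims only stage $r+1$ down to $\alpha_{r+1}$.
\item At $r=g-1$, the cohorts at $a-1$ and $b-1$ complete, no eviction occurs, and admission inserts $\alpha_0$.
\end{itemize}

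The one step you leave open is $\beta\sum_{m=1}^{h-1}V_{mg}<M$ at the wrap phase. It is not delicate, and it holds for every $l_0$, not only in a large-input regime. It is exactly $\alpha_0>0$, which is the $r+1=0$ instance of the termwise comparison you already use to get $\alpha_{r+1}\ge0$.

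Since $g\mid a$, for every $s\in\{0,\ldots,g-1\}$ the index $s+mg$ is below $a$ if and only if $m<a/g$. So $mg$ and $g-1+mg$ always lie on the same branch of the weight profile, and $V_{mg}\le V_{g-1+mg}$. The drop at index $a$ never separates a compared pair. The increments $V_{g-1+mg}-V_{mg}\ge0$ help rather than hurt, so there is no competition with an $O(l_0)$ term to resolve. Therefore
\[
\sum_{m=1}^{h-1}V_{mg}\;\le\;\sum_{m=1}^{h-1}V_{g-1+mg}\;<\;\sum_{m=0}^{h-1}V_{g-1+mg},
\]
with strictness coming from the extra term $V_{g-1}=l_0+g>0$.

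The paper's proof relies on the same fact, in the form $M-\beta S_0=V_0\alpha_0>0$. It uses it both for the strict decrease of $\alpha_r$ and for the slack at the wrap phase.

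The same extra term also upgrades your $\alpha_{r+1}\ge0$ to $\alpha_{r+1}>0$. You need the strict version so that $Z^{(r+1)}$ has support exactly $J_{r+1}$.
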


\begin{proof}
Write \(u=l_0+1\). For \(r=0,\ldots,g-1\), let
\[
S_r:=\sum_{m=1}^{h-1}V_{r+mg}.
\]
Then \(\alpha_r=(M-\beta S_r)/(u+r)\). Within each branch of the weight
profile, \(V_{j+g}-V_j\) is constant, so
\[
S_r=S_0+c r,
\qquad
c=\frac{a}{g}-1+q\!\left(\frac{b}{g}-\frac{a}{g}\right)\ge 0.
\]
Hence
\[
\alpha_r-\alpha_{r+1}
=
\frac{M-\beta S_0+\beta c u}{(u+r)(u+r+1)}>0
\qquad
\text{for }r=0,\ldots,g-2,
\]
because \(M-\beta S_0=V_0\alpha_0>0\). Also,
\[
\alpha_{g-1}
=
\frac{M-\beta\sum_{m=1}^{h-1}V_{g-1+mg}}{V_{g-1}}
=
\beta.
\]

Fix \(r\le g-2\). Starting from \(Z^{(r)}\), the execute step shifts the
support from \(J_r\) to \(J_{r+1}\). The youngest occupied age \(r+1\) carries
mass \(\alpha_r\), and all older occupied ages in \(J_{r+1}\) carry mass
\(\beta\). The post-execute memory exceeds \(M\) by exactly
\[
V_{r+1}(\alpha_r-\alpha_{r+1})>0.
\]
This overflow is smaller than the full youngest-stage load
\(V_{r+1}\alpha_r\). Hence LPF trims only the youngest occupied age, reducing
its mass from \(\alpha_r\) to \(\alpha_{r+1}\). No new admission occurs, so the
next state is \(Z^{(r+1)}\).

At phase \(r=g-1\), every occupied age already carries mass \(\beta\). Execute
removes the age-\((b-1)\) cohort, shifts the remaining \(h-1\) cohorts to ages
\(g,2g,\ldots,(h-1)g\), and creates slack
\[
M-\sum_{m=1}^{h-1}V_{mg}\beta = V_0\alpha_0.
\]
No eviction is needed. Admission therefore inserts a new age-0 cohort of mass
\(\alpha_0\), reconstructing \(Z^{(0)}\).

Finally,
\[
\sum_{m=0}^{h-1}V_{g-1+mg}
=
\frac{a(2l_0+a+g)+q(b-a)(2l_0+a+b+g)}{2g}.
\]
Since one cohort of mass \(\beta\) completes every \(g\) iterations,
\[
\bar{T}_{\mathrm{pulse}}=\frac{\beta}{g}
=
\frac{2M}{a(2l_0+a+g)+q(b-a)(2l_0+a+b+g)}.
\]
\end{proof}

\begin{proposition}[Local return map for the two-class pulse cycle]
\label{prop:pulse_return_map}
In a neighborhood of the pulse cycle from
Proposition~\ref{prop:exact_pulse_cycle}, the phase maps are affine. For
\(r=0,\ldots,g-2\), if
\[
z=(z_0,\ldots,z_{h-1})
\]
denotes the masses on the support \(J_r\), then
\[
\Phi_r(z_0,\ldots,z_{h-1})
=
\left(
\frac{M-\sum_{m=1}^{h-1}V_{r+1+mg}z_m}{V_{r+1}},
z_1,\ldots,z_{h-1}
\right).
\]
At the wrap phase,
\[
\Phi_{g-1}(z_0,\ldots,z_{h-1})
=
\left(
\frac{M-\sum_{m=1}^{h-1}V_{mg}z_{m-1}}{V_0},
z_0,\ldots,z_{h-2}
\right).
\]
Let \(\mathcal R=\Phi_{g-1}\circ\cdots\circ\Phi_0\) be the \(g\)-step return
map at phase \(0\). Then \(D\mathcal R\) has one exact zero eigenvalue, and its
remaining eigenvalues are the roots of the compressed two-class polynomial
\begin{equation}
\label{eq:compressed_pulse_poly_app}
\widehat F(z)
=
\sum_{m=0}^{a/g-1}\Bigl(\frac{l_0}{g}+1+m\Bigr)z^{h-1-m}
+
q\sum_{m=a/g}^{h-1}\Bigl(\frac{l_0}{g}+1+m\Bigr)z^{h-1-m}.
\end{equation}
\end{proposition}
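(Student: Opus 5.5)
The plan is to argue in three steps: persistence of the combinatorial pattern, composition of the phase maps, and a reduction of the linear part of $\mathcal R$ to a scalar cohort recurrence.

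\emph{Step 1 (affine phase maps).} I will show that on a small neighborhood of the pulse cycle the LPF/admission pattern of Proposition~\ref{prop:exact_pulse_cycle} persists. Work on the cohort manifold of Lemma~\ref{lem:strict_cohort}, so the state is the vector $z$ of masses on $J_r$ with weights $V_j$. For $r\le g-2$, the proof of Proposition~\ref{prop:exact_pulse_cycle} gives two strict inequalities: the post-execute overflow is $V_{r+1}(\alpha_r-\alpha_{r+1})>0$, and it is strictly smaller than the youngest load $V_{r+1}\alpha_r$. At the wrap phase the slack is $V_0\alpha_0>0$. Strict inequalities are stable under small perturbations. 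Hence nearby states still trim only the youngest occupied age for $r\le g-2$, and still admit without eviction at $r=g-1$. In both cases the post-step state lies on $\mathcal B$. Solving the boundary equation for the single modified coordinate gives exactly the displayed formulas for $\Phi_r$ and $\Phi_{g-1}$. Note that $\Phi_r$ for $r\le g-2$ overwrites $z_0$ and never reads it.

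\emph{Step 2 (composition and the zero eigenvalue).} Since each $\Phi_r$ with $r\le g-2$ discards the old $z_0$, the composition $\Phi_{g-2}\circ\cdots\circ\Phi_0$ keeps $z_1,\dots,z_{h-1}$ and sets
\[
w_0=\frac{M-\sum_{m=1}^{h-1}V_{g-1+mg}z_m}{V_{g-1}}.
\]
Applying $\Phi_{g-1}$ then gives
\[
\mathcal R(z)=\Bigl(\tfrac{M-V_g w_0-\sum_{m=2}^{h-1}V_{mg}z_{m-1}}{V_0},\,w_0,\,z_1,\dots,z_{h-2}\Bigr).
\]
The map $\mathcal R$ is affine and does not depend on $z_0$. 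Therefore the first column of $D\mathcal R$ vanishes, which gives one exact zero eigenvalue. The remaining eigenvalues are those of the $(h-1)\times(h-1)$ block acting on $(z_1,\dots,z_{h-1})$.

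\emph{Step 3 (identifying the polynomial).} Rather than expanding a determinant, I will read the linear part as a scalar recurrence. Along the cycle, one new cohort of mass $\beta^k$ is admitted every $g$ iterations. Each admitted cohort is then trimmed once per phase until it settles, so at the wrap phase the live cohorts sit at ages $g-1+mg$ with masses fixed by the later phases. Writing the boundary equation at successive wrap phases in terms of the last $h$ settled cohort masses gives
\[
\sum_{m=0}^{h-1}V_{g-1+mg}\,\beta^{k-m}=M .
\]
Subtracting consecutive equations, exactly as in the derivation of~\eqref{eq:multi_recurrence}, yields a homogeneous recurrence for $\Delta^k=\beta^{k+1}-\beta^k$. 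Its coefficients are the cumulative weights $V_{g-1+mg}$, which are affine in $m$: the slope is $g$ on the class-1 range $m<a/g$, and $qg$ on the range $m\ge a/g$. Dividing by $g$, each coefficient becomes $l_0/g+1+m$ on the first range and $q(l_0/g+1+m)$ on the second, up to a common $O(1/g)$ offset. This is how I expect $\widehat F$ in~\eqref{eq:compressed_pulse_poly_app} to arise.

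Finally, I will match these recurrence modes to eigenvectors of $D\mathcal R$ through the shift structure $(w_0,z_1,\dots,z_{h-2})$, in the same way the companion matrix $\Phi$ was handled in Section~\ref{sec:global_convergence}.

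The main obstacle is the exact bookkeeping in Step 3. Two points need care. First, the intermediate trims make $w_0$ depend on $V_{g-1+mg}$, while the wrap admission uses $V_{mg}$. Second, the constant term in the compressed coefficients must come out exactly as $l_0/g+1$ and not $l_0/g+1/g$. If the differenced recurrence does not land exactly on $\widehat F$, I will fall back on computing $\det(zI-D\mathcal R)$ directly. I would expand along the shift structure, using the elimination $w_0\mapsto z$ and the identity $V_{mg}-V_{g-1+(m-1)g}=V_{mg}-V_{mg-1}$, which equals $1$ or $q$. Then I would verify that the resulting coefficients, after multiplying by $V_0V_{g-1}/g$, agree with those of~\eqref{eq:compressed_pulse_poly_app}.
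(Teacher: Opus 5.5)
Your route is the same as the paper's. Persistence of the strict LPF/admission inequalities gives the affine phase maps. Since $\mathcal R$ never reads $z_0$, you get $D\mathcal R=\begin{pmatrix}0&*\\0&K\end{pmatrix}$, where $K$ is the companion matrix of $(z_1,\dots,z_{h-1})\mapsto(w_0,z_1,\dots,z_{h-2})$, with first row $-W_m/W_0$ and $W_m:=V_{g-1+mg}$. Your Step~3 recurrence $\sum_m W_m\beta^{k-m}=M$ is exactly that first row. So you need neither the differencing nor a separate mode-to-eigenvector matching: $\det(zI-D\mathcal R)=z\,W_0^{-1}\sum_{m=0}^{h-1}W_m z^{h-1-m}$.

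The one piece you leave open is the coefficient identification, and it closes exactly, with no $O(1/g)$ offset.
\begin{itemize}
\item The sampled ages $(m+1)g-1$ are $\le a-1$ precisely when $m\le a/g-1$. There $W_m=l_0+(m+1)g=g(l_0/g+1+m)$.
\item For $m\ge a/g$, $W_m=q\bigl(l_0+(m+1)g\bigr)=qg(l_0/g+1+m)$.
\item Hence $\sum_m W_m z^{h-1-m}=g\,\widehat F(z)$. The constant $l_0/g+1$ comes from $W_0=V_{g-1}=l_0+g$, not from $V_0$.
\end{itemize}

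Your other worry, about the wrap weights $V_{mg}$, is already answered by your own Step~2. Those weights occur only in the $z_0$-row, i.e.\ in the $*$ block, which does not affect the spectrum.

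So drop the fallback determinant expansion. As written it is also wrong in two places. No factor $V_0$ enters $\det(zI-D\mathcal R)$. And the identity $V_{mg}-V_{mg-1}\in\{1,q\}$ fails at $m=a/g$: there the class-1 part of the cohort completes, and $V_a-V_{a-1}=q-p(l_0+a)$.
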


\begin{proof}
The first point is local branch consistency. The strict inequalities that define
the pulse cycle also define an open neighborhood on which the LPF branch does
not change.
The strict inequalities \(\alpha_r>\alpha_{r+1}\) for \(r\le g-2\), together
with \(\alpha_0>0\), persist in a small neighborhood of each phase state. On
that neighborhood, the LPF trim pattern is fixed: for \(r\le g-2\), only the
youngest occupied age is partially trimmed; for \(r=g-1\), no eviction occurs
and admission fills the slack at age \(0\). The displayed affine maps follow
directly from execute, LPF trim, and admission.

We next reduce the \(g\)-phase return map to the nontrivial coordinates.
Now write
\[
x=(x_0,\ldots,x_{h-2}) := (z_1,\ldots,z_{h-1}),
\qquad
W_m:=V_{g-1+mg},\quad m=0,\ldots,h-1.
\]
Every non-wrap phase deletes the current youngest coordinate and leaves the
older coordinates unchanged. Therefore the \(g\)-step return map is determined
by \(x\) alone:
\[
\mathcal R_{\mathrm{red}}(x_0,\ldots,x_{h-2})
=
\left(
\frac{M-\sum_{m=1}^{h-1}W_m x_{m-1}}{W_0},
x_0,\ldots,x_{h-3}
\right).
\]
Its Jacobian is the companion matrix
\[
K=
\begin{pmatrix}
-W_1/W_0 & -W_2/W_0 & \cdots & -W_{h-1}/W_0 \\
1 & 0 & \cdots & 0 \\
0 & 1 & \ddots & 0 \\
\vdots & & \ddots & \vdots \\
0 & 0 & \cdots & 1 & 0
\end{pmatrix},
\]
so the full Jacobian has block form
\[
D\mathcal R=
\begin{pmatrix}
0 & *\\
0 & K
\end{pmatrix},
\]
This block-triangular form gives one exact zero eigenvalue. The remaining
\(h-1\) eigenvalues are the roots of the characteristic polynomial of \(K\).

It remains only to identify this companion polynomial. The dictionary is the
compressed coprime two-class dictionary
\[
\bar a:=a/g,\qquad \bar b:=b/g=h,\qquad \ell_0:=l_0/g,
\]
with effective weights sampled along the \(g\)-spaced pulse support,
\(W_m:=V_{g-1+mg}\). In these variables,
\[
\frac{W_m}{g}=\frac{V_{g-1+mg}}{g}
=
\begin{cases}
\ell_0+1+m, & 0\le m\le \bar a-1,\\
q(\ell_0+1+m), & \bar a\le m\le \bar b-1,
\end{cases}
\]
so the companion polynomial of \(K\) is exactly
\[
\widehat F(z)
=
\sum_{m=0}^{\bar a-1}(\ell_0+1+m)z^{\bar b-1-m}
+
q\sum_{m=\bar a}^{\bar b-1}(\ell_0+1+m)z^{\bar b-1-m},
\]
which is~\eqref{eq:compressed_pulse_poly_app}.
\end{proof}

\begin{corollary}[Local pulse stability]
\label{cor:pulse_local}
The pulse cycle from Proposition~\ref{prop:exact_pulse_cycle} is locally
asymptotically stable if and only if the polynomial \(\widehat F\)
in~\eqref{eq:compressed_pulse_poly_app} is Schur stable. In particular, for
generic \(q\in(0,1)\) and sufficiently large \(l_0\), the pulse cycle is locally
asymptotically stable.
\end{corollary}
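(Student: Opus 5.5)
The statement to prove is Corollary~\ref{cor:pulse_local}. The plan has two parts: an equivalence that reduces local stability of the pulse cycle to Schur stability of \(\widehat F\), and a root-location argument for generic \(q\) and large \(l_0\).

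\textbf{Reduction to the return map.} By Proposition~\ref{prop:pulse_return_map}, on a neighborhood of each phase state the LPF branch is fixed and every phase map \(\Phi_r\) is affine. Hence the \(g\)-step return map \(\mathcal R\) is exactly affine near \(Z^{(0)}\), with Jacobian \(D\mathcal R\); no linearization error enters.

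For an affine map with fixed point \(Z^{(0)}\), local asymptotic stability is equivalent to \(\rho(D\mathcal R)<1\). The spectrum of \(D\mathcal R\) is \(\{0\}\) together with the roots of \(\widehat F\), so this is the same as Schur stability of \(\widehat F\).

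To pass from the return map back to the orbit in the sense of Definition~\ref{def:stability}, I use continuity. The intermediate maps \(\Phi_0,\dots,\Phi_{g-2}\) are affine, hence Lipschitz near the cycle. So convergence of the phase-0 subsequence forces convergence of the whole trajectory to \(\mathcal C\).

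A starting point near some other \(Z^{(r)}\) reaches a neighborhood of \(Z^{(0)}\) within at most \(g\) steps. The proportional-cohort manifold of Lemma~\ref{lem:strict_cohort} ensures the stage state is exactly the vector \(z\) used in the return map.

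For the converse, if \(\widehat F\) has a root with \(|z|\ge1\), the corresponding eigenvector of \(K\) gives an affine orbit that does not converge. When \(|z|>1\) it also leaves the neighborhood, so asymptotic stability fails. This proves the equivalence.

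\textbf{The "in particular" clause.} Write \(\ell_0=l_0/g\), \(\bar a=a/g\) and \(\bar b=h\). Then \(\gcd(\bar a,\bar b)=1\), and \(\widehat F\) is exactly the two-class common-input characteristic polynomial \eqref{eq:characteristic_app} with data \((\ell_0,\bar a,\bar b,q)\). The one difference is that \(\ell_0\) may be non-integer, but the decomposition \eqref{eq:decomposition} is purely algebraic and does not use integrality.

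Dividing \(\widehat F\) by \(\ell_0+\bar b\) gives \(A(z)-\varepsilon zA'(z)\) with \(\varepsilon=1/(\ell_0+\bar b)\). Here \(A\) is the limiting polynomial attached to the coprime pair \((\bar a,\bar b)\), so the argument of Theorem~\ref{thm:local_stability} applies.

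By Lemmas~\ref{lem:unit_circle} and~\ref{lem:unity}, coprimality puts all roots of \(A\) strictly inside the unit disk. Rouch\'e's theorem on a circle \(|z|=\rho<1\) then keeps all roots of \(\widehat F\) inside for large \(l_0\).

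\textbf{Role of "generic \(q\)" and the main obstacle.} The Rouch\'e argument works for every \(q\in(0,1)\), so genericity is not actually needed; I would remark that the result holds for all \(q\). The one degenerate case to check is \(h=1\), which means \(b=g\). Then \(g\mid a<b=g\) is impossible, so \(h\ge2\) and \(\widehat F\) is nonconstant; if \(\bar a=\ldots\) degenerates, the statement is vacuous and Schur stability is trivial.

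The step I expect to be the real obstacle is the equivalence itself. I must verify that the open neighborhood where the LPF branch is frozen is invariant under the affine dynamics whenever \(\rho(D\mathcal R)<1\). A Lyapunov sublevel set for \(K\), chosen small enough to lie inside the branch-consistency neighborhood, should handle this, in the same way as the set \(\mathcal N\) in Theorem~\ref{thm:global}.
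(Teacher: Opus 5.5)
Your proposal is correct and follows the paper's proof essentially step for step. The paper argues that the return map is affine, that $\rho(D\mathcal R)<1$ is equivalent to Schur stability of $\widehat F$, and that the compressed pair $(a/g,b/g)$ is coprime, so the Rouch\'e argument of Theorem~\ref{thm:local_stability} applies. Your remarks that this argument works for every $q\in(0,1)$ and tolerates non-integer $\ell_0=l_0/g$ are correct refinements. Your extra bookkeeping (Lipschitz transfer between phases, invariance of the branch neighborhood, $h\ge2$) only makes explicit what the paper leaves implicit. One correction: Lemma~\ref{lem:strict_cohort} controls class composition, not support. Small mass on stages outside $J_r$ is instead removed because LPF trims it or it completes within $b$ iterations without triggering admissions; after that, your on-support return-map argument applies.
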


\begin{proof}
The local return map is affine in a neighborhood of the pulse cycle, so local
asymptotic stability is equivalent to \(\rho(D\mathcal R)<1\).
Proposition~\ref{prop:pulse_return_map} reduces this condition to
\(\rho(K)<1\), equivalently Schur stability of \(\widehat F\). Thus the
compressed polynomial gives the local stability criterion for this recurrent-
eviction pulse cycle. The final claim
follows because the compressed pair
\((\bar a,\bar b)=(a/g,b/g)\) is coprime, and the proof of
Theorem~\ref{thm:local_stability} depends only on the coefficient profile of the
polynomial. Applying that argument to \(\widehat F\) gives Schur stability for
generic \(q\) and sufficiently large \(\ell_0=l_0/g\), hence for sufficiently
large \(l_0\).
\end{proof}

\begin{example}[Pulse return map for \((2,4)\)]
\label{ex:pulse_return_map_24}
Let \(a=2\), \(b=4\), and \(g=2\). At phase \(0\), the state is \(z=(x,y)\) on ages \(0\) and \(2\), and
\[
\Phi_0(x,y)
=
\left(
\frac{M-V_3 y}{V_1},
y
\right)
=
\left(
\frac{M-q(l_0+4)y}{l_0+2},
y
\right).
\]
At phase \(1\), the state is \((u,v)\) on ages \(1\) and \(3\), and
\[
\Phi_1(u,v)
=
\left(
\frac{M-V_2 u}{V_0},
u
\right)
=
\left(
\frac{M-q(l_0+3)u}{l_0+1},
u
\right).
\]
Therefore
\[
D(\Phi_1\circ\Phi_0)
=
\begin{pmatrix}
0 & \dfrac{V_2V_3}{V_0V_1}\\[8pt]
0 & -\dfrac{V_3}{V_1}
\end{pmatrix},
\]
so the eigenvalues are
\[
z_1=0,
\qquad
z_2=-\frac{V_3}{V_1}
=
-\frac{q(l_0+4)}{l_0+2}.
\]
Hence the exact local stability condition is
\[
q<\frac{l_0+2}{l_0+4}.
\]
For the symmetric case \(p=q=1/2\), this inequality holds for every \(l_0\ge 1\).
\end{example}

\subsection{Summary}

This appendix proves Theorem~\ref{thm:gcd} in the two-class \commoninput{}
normalization and explains the corresponding extension arguments:
\begin{enumerate}[nosep]
\item The characteristic polynomial decomposes as $F(z) = (l_0+l_{1,2})A(z) - z A'(z)$.
\item Roots of $A(z)$ on the unit circle are exactly the non-trivial $g$-th roots of unity (Lemmas~\ref{lem:bounded}--\ref{lem:primitive}).
\item For $g > 1$: IFT shows these roots drift outside the unit circle, causing instability (Theorem~\ref{thm:instability}).
\item For $g = 1$: All roots remain inside the unit circle; Lyapunov analysis proves global convergence from any feasible initial active state in the saturated-input model (Theorems~\ref{thm:local_stability} and~\ref{thm:global}).
\item The non-coprime two-class system also admits an exact period-\(g\) pulse cycle whose local return-map eigenvalues are given by a compressed coprime polynomial (Section~\ref{sec:pulse_cycle_return}).
\item The $K\geq 3$ and \heteroinput{} discussions use the same survival-polynomial argument with positive class weights (Sections~\ref{sec:K_type_extension}--\ref{sec:hetero_prompt_extension}).
\end{enumerate}

\section{Proofs of Auxiliary Lemmas}
\label{app:aux_lemma_proofs}

This appendix collects the proofs of the auxiliary lemmas used in Appendices~\ref{app:theorem1} and~\ref{app:theorem2}. The lemma statements are kept in the main proof flow, while the technical details are gathered here.

\subsection{Proof of Lemma~\ref{lm:A1}}

\begin{proof}[Proof of Lemma~\ref{lm:A1}]
Under eviction-free operation, the memory boundary is preserved and the stage-0
admission satisfies
\begin{equation}\label{eq:balance}
(l_0+1)x^{n+1}_{0}
=
(l_0+l_1)x^{n}_{l_1-1}-\sum_{j=0}^{l_1-2}x^{n}_{j}.
\end{equation}
Rewriting gives
\begin{equation}\label{eq:balance_appendix_re}
x^{n+1}_{0}
=
x^{n}_{l_1-1}
+
\frac{l_1x^{n}_{l_1-1}-\sum_{j=0}^{l_1-1}x^{n}_{j}}{l_0+1}.
\end{equation}

For \(j\ge 1\), the shift relation \(x^{n+1}_{j}=x^{n}_{j-1}\) preserves all
non-final stage values. Therefore the only new coordinate is
\(x^{n+1}_{0}\).

If the maximum is attained at some stage \(j\le l_1-2\), then the same value
reappears at stage \(j+1\), so \(\bar x^{n+1}\ge \bar x^n\). If the maximum is
attained at the final stage, then \(x^{n}_{l_1-1}\ge \sum_j x^{n}_{j}/l_1\),
and \eqref{eq:balance_appendix_re} gives
\[
x^{n+1}_{0}\ge x^{n}_{l_1-1}=\bar x^n.
\]
Hence \(\bar x^{n+1}\ge \bar x^n\).

The minimum is similar. If it is attained at some stage \(j\le l_1-2\), it
shifts unchanged to stage \(j+1\), so \(\underline x^{n+1}\le \underline x^n\).
If it is attained at the final stage, then \(x^{n}_{l_1-1}\le \sum_j x^{n}_{j}/l_1\),
and \eqref{eq:balance_appendix_re} gives
\[
x^{n+1}_{0}\le x^{n}_{l_1-1}=\underline x^n.
\]
Hence \(\underline x^{n+1}\le \underline x^n\), and so \(G^{n+1}\ge G^n\).
\end{proof}

\subsection{Proof of Lemma~\ref{lem:A2}}

\begin{proof}[Proof of Lemma~\ref{lem:A2}]
Suppose the system stays eviction-free. The argument has three parts. First, the
running maximum must converge if no eviction ever occurs. Second, whenever a
near-maximum cohort reaches the final stage, the balance equation creates a
new admission that jumps a fixed positive amount above the current maximum.
Third, convergence forces such near-maximum final-stage events to occur along
late renewals, which contradicts the fixed jump.

By Lemma~\ref{lm:A1}, the running
maximum \(\bar x^n\) is non-decreasing. The memory constraint implies
\[
\bar x^n(l_0+1)\le \sum_{j=0}^{l_1-1}(l_0+1+j)x^{n}_{j}=M.
\]
Thus \(\bar x^n\) is bounded and converges; in particular
\(\bar x^{n+1}-\bar x^n\to0\).

We next quantify the jump that occurs when the final stage is close to the
running maximum. Fix a time \(n\), write
\(a_n:=\bar x^n-x^n_{l_1-1}\ge0\), and set
\[
S_n:=\sum_{j=0}^{l_1-1}(\bar x^n-x^n_j).
\]
Since \(G^n\ge G^0\), we have \(S_n\ge G^0\). Rewriting
\eqref{eq:balance_appendix_re} relative to \(\bar x^n\) gives
\begin{equation}\label{eq:first_support_renewal_increment}
x^{n+1}_0-\bar x^n
=
\frac{S_n-(l_0+l_1+1)a_n}{l_0+1}.
\end{equation}
Consequently, whenever the final-stage coordinate is within
\(G^0/[2(l_0+l_1+1)]\) of the running maximum, the next admission exceeds the
current maximum by at least \(G^0/[2(l_0+1)]\).

It remains to connect this local jump estimate to the long-run trajectory. A
cohort that attains the running maximum has two possibilities before it reaches
the final stage. If no larger admission appears first, then the cohort reaches
stage \(l_1-1\) still attaining the running maximum, and the preceding
paragraph gives the fixed positive jump. If a larger admission appears first,
then the running maximum has already increased, and we restart the same
argument from this newer maximizing cohort. Thus the only way to avoid the
fixed jump would be to keep renewing the running maximum before the tracked
cohort reaches the final stage.

That last possibility is also incompatible with convergence. Since
\(\bar x^n\) is monotone and convergent, for every fixed window length \(q\),
\[
\bar x^{n+q}-\bar x^n\to0 .
\]
Apply this with \(q=l_1-1\) along sufficiently late renewal times. During the
next \(l_1-1\) iterations, the tracked cohort advances from its admission stage
to the final stage, while the running maximum can increase only by
\(o(1)\). Therefore, for all sufficiently late renewals, that cohort reaches the
final stage within \(G^0/[2(l_0+l_1+1)]\) of the then-current running maximum.
The jump estimate \eqref{eq:first_support_renewal_increment} then forces a
new increase of at least \(G^0/[2(l_0+1)]\), contradicting
\(\bar x^{n+1}-\bar x^n\to0\).

Hence the eviction-free regime must break down in finite time. The resulting
LPF eviction creates the first eviction-induced support loss in the
relative-position representation, moving the trajectory from full support to
the next lower live-support level. Since the fixed point is the unique
eviction-free boundary state with \(G^0=0\), it is unstable.
\end{proof}

\subsection{Proof of Lemma~\ref{lem:A3}}

\begin{proof}[Proof of Lemma~\ref{lem:A3}]
Zero mass shifts as zero, so \(P_r\) remains empty until it next reaches the
final stage. Starting from
the boundary \(M\), every other live position increases its footprint by one
token during execute, while \(P_r\)'s completion releases zero memory. The
post-execution memory is therefore
\[
\tilde M^n=M+\sum_{q\neq r} z^{n}_{q}>M.
\]
In the continuous formulation of Section~\ref{sec:model}, LPF eviction may trim
a fractional amount and restores the total to exactly~\(M\), leaving no slack
for admission: the new stage-0 batch is zero. That batch is exactly the mass
that would have been assigned to \(P_r\) after wrap-around, so
\(z^{n+1}_{r}=0\). Repeating the argument each time \(P_r\) returns to
stage~0 shows that \(P_r\) never revives.
\end{proof}

\subsection{Proof of Lemma~\ref{lem:A6}}

\begin{proof}[Proof of Lemma~\ref{lem:A6}]
Let
\[
\underline u:=\min_h u_h,
\qquad
\overline u:=\max_h u_h.
\]
The shifted coordinates \(u_0,\dots,u_{m-2}\) reappear unchanged in \(Y'\), so
only the loss of the oldest value \(u_{m-1}\) can reduce the spread.

If \(u_{m-1}\) is neither the unique maximum nor the unique minimum, then both
\(\underline u\) and \(\overline u\) remain among the shifted coordinates, so the
spread cannot decrease.

If \(u_{m-1}=\overline u\), then every term \(u_{m-1}-u_h\) in
\eqref{eq:block_map_u} is nonnegative, hence \(u'_0\ge \overline u\). The old
minimum remains among the shifted coordinates, so
\[
G^{\mathrm{sc}}(Y')\ge \overline u-\underline u=G^{\mathrm{sc}}(Y).
\]
If \(G^{\mathrm{sc}}(Y)>0\), then some \(u_h<\overline u\), so the numerator in
\eqref{eq:block_map_u} is strictly positive and \(u'_0>\overline u\); thus the
spread strictly grows.

The case \(u_{m-1}=\underline u\) is symmetric: every term in the numerator of
\eqref{eq:block_map_u} is nonpositive, so \(u'_0\le \underline u\), while the
old maximum remains among the shifted coordinates. Therefore the spread again
cannot decrease, and it is strict when \(G^{\mathrm{sc}}(Y)>0\).
\end{proof}

\subsection{Proof of Lemma~\ref{lem:A8}}

\begin{proof}[Proof of Lemma~\ref{lem:A8}]
Suppose, toward contradiction, that the support pattern remains fixed forever.
The proof mirrors Lemma~\ref{lem:A2}, but now on the block-end chain and in the
normalized coordinates \(u_h=y_h/g_h\). We first obtain a convergent running
maximum, then show that an oldest coordinate close to that maximum creates a
uniform positive jump, and finally use convergence to force such close-oldest
events along late renewals.

Index the sampled states by \(r\), and write
\[
U^r:=\max_h u^r_h,\qquad G^r:=G^{\mathrm{sc}}(Y_r).
\]
The proof of Lemma~\ref{lem:A6} shows that \(U^r\) is non-decreasing. Since
\(u^r_h\in[0,M/(l_0+1)]\), \(U^r\) is bounded and therefore converges; hence
\(U^{r+1}-U^r\to0\). Also \(G^r\ge G^0>0\) by Lemma~\ref{lem:A6}.

We now isolate the one-step estimate that drives the contradiction.
At a sampled step \(r\), let the oldest normalized coordinate be
\(u^r_{m-1}\), and set \(a_r:=U^r-u^r_{m-1}\). Define
\[
S_r:=\sum_{h=0}^{m-2}g_h\,(U^r-u^r_h).
\]
If \(a_r\le G^0/[2(l_0+l_1)]\), then the oldest coordinate is not the
minimum, so a minimum-attaining coordinate survives among
\(h=0,\ldots,m-2\), and \(S_r\ge G^r\ge G^0\). Rewriting
\eqref{eq:block_map_u} relative to \(U^r\) gives
\begin{equation}\label{eq:sampled_renewal_increment}
u^{r+1}_0-U^r
=
\frac{S_r-(l_0+l_1)a_r}{l_0+g_{m-1}} .
\end{equation}
Thus, whenever the oldest coordinate is within
\(G^0/[2(l_0+l_1)]\) of the running maximum, the new coordinate exceeds the
current maximum by at least \(G^0/[2(l_0+l_1)]\).

It remains to show that the hypothesis of this estimate must hold arbitrarily
late. Follow any coordinate that attains the running maximum. If it reaches the
oldest slot before a larger newly created coordinate appears, then the estimate
above gives the fixed positive jump. If a larger coordinate appears first, then
\(U^r\) has already increased and we restart the tracking argument from this
new coordinate. Hence avoiding the fixed jump would require infinitely many
such renewals.

But infinite late renewals are also impossible under convergence. Since
\(U^r\) is monotone and convergent, for the fixed window length \(m-1\),
\[
U^{r+m-1}-U^r\to0 .
\]
After a renewal at sampled time \(r\), the renewing coordinate reaches the
oldest slot within at most \(m-1\) sampled steps unless an even larger renewal
occurs first, in which case we restart from that later and larger coordinate.
Along sufficiently late renewals, the total increase of \(U^r\) during this
window is less than \(G^0/[2(l_0+l_1)]\). Therefore the tracked coordinate
reaches the oldest slot within \(G^0/[2(l_0+l_1)]\) of the then-current running
maximum, and \eqref{eq:sampled_renewal_increment} again forces a fixed positive
jump. This contradicts \(U^{r+1}-U^r\to0\).

Therefore the fixed-support affine continuation cannot remain in the positive
orthant forever. The final step is to translate this affine contradiction back
to the physical LPF dynamics. On a fixed support pattern, Proposition~\ref{prop:A4}
gives the affine block-end update only on the branch where every live
coordinate stays positive and LPF trims only the prescribed least-progressed
mass during the intervening eviction block. Leaving the positive orthant is
therefore exactly reaching a branch boundary: some live mass is exhausted by
LPF. At that boundary \(y_h=g_hu_h=0\) for at least one live position, and by
Lemma~\ref{lem:A3} the emptied relative position cannot revive. Thus the next
sampled state lies on a strictly smaller support pattern after finitely many
sampled steps.
\end{proof}

\subsection{Proof of Lemma~\ref{lem:A9}}

\begin{proof}[Proof of Lemma~\ref{lem:A9}]
If \(|I|=1\), the trajectory is already on the single-live-position stratum and
cannot be captured by a nonmaximal orbit, so the claim is empty. Assume
\(|I|\ge2\). We prove first that, for each fixed finite horizon, the set of
initial states captured by a nonmaximal balanced orbit by that horizon has
relative Lebesgue measure zero in \(\mathcal B_I\). The desired claim then
follows by taking the countable union over horizons. The finite-horizon proof
has four steps. First, finite-time capture can be checked at a block-end
representative of the eventual balanced orbit. Second, the LPF dynamics on a
fixed finite horizon split the initial face into finitely many affine branch
cells, after discarding lower-dimensional branch boundaries. Third, on each
nondegenerate branch cell, the corresponding branch map is built from affine
isomorphisms and affine submersions, so preimages of relative-measure-zero
target sets remain relative-measure-zero. Fourth, the nonmaximal balanced
targets form only finitely many points in each positive-dimensional target
face.

\emph{Step 1: finite-time capture and boundary faces.}
Here ``captured'' means that the trajectory equals a state on the
balanced orbit from some finite time onward. For finite-time capture at an
arbitrary physical phase, passing to the next block-end gives the corresponding
balanced sampled representative. By Lemma~\ref{lem:A3} the live-position set can only
shrink, and on any eventual live set of size \(m\ge2\), the block-end chain is
governed by Proposition~\ref{prop:A4}: either \(G^{\mathrm{sc}}=0\), in which case the
sampled state is the balanced representative and determines the full periodic
orbit, or \(G^{\mathrm{sc}}>0\), in which case Lemma~\ref{lem:A8} forces another support
loss in finite time. In the rest of the proof, ``measure zero'' always means
zero relative Lebesgue measure on the affine hull of the current
memory-boundary face.

\emph{Step 2: finite-horizon branch cells.}
Fix a finite horizon \(T\). Up to time \(T\), only finitely many LPF branches
are possible: at each step, a branch records the current support pattern and,
whenever LPF evicts, the ordered set of stages exhausted by eviction together
with the unique stage that is partially trimmed. On any fixed branch, all LPF
ordering inequalities and positivity inequalities are fixed, so the
execute--evict--admit update is affine on the relative interior of that branch
cell.

We first remove the branch boundaries. These are the states where one of the
strict branch inequalities becomes an equality: an eviction depth exactly
exhausts a stage, the eviction stops with no positive partially trimmed
survivor, or the partial trim has zero size. For a fixed step and a fixed
branch history, each such equality is affine in the current branch coordinates,
so it defines a proper affine face unless it is empty. There are only finitely
many such faces up to horizon \(T\). Their preimages in the initial face also
have measure zero by the pullback argument of Step~3 below. Thus, after
discarding a finite union of relative-measure-zero sets, it suffices to work on
the relative interiors of nondegenerate branch cells.

On such a cell, we group fixed-support physical steps into the exact block-end
map from Proposition~\ref{prop:A4} and treat support-loss steps separately;
equivalently, the finite-horizon map is a finite composition of elementary
execute--evict--admit affine pieces on the same LPF branch. The only maps that
can appear in this composition are the following two types: affine
isomorphisms on a fixed boundary stratum, and affine submersions onto a
lower-dimensional boundary stratum.

\emph{Step 3: pullback of measure-zero sets through branch maps.}
We now verify these two map types and their null-set pullback property.
\begin{itemize}[nosep]
\item In the eviction-free regime, the boundary map is affine and invertible.
It shifts \(x_j\) to \(x_{j+1}\) for \(j<l_1-1\), and the new admission is
\[
x'_0=\frac{(l_0+l_1)x_{l_1-1}-\sum_{j=0}^{l_1-2}x_j}{l_0+1}.
\]
The associated linear map has determinant
\(\pm(l_0+l_1)/(l_0+1)\neq0\). It preserves the weighted memory hyperplane
\(\sum_j w_jx_j=M\), so its restriction to the boundary affine hull is an
affine isomorphism.
\item On a fixed support pattern, the block-end map in normalized coordinates
\(u_h=y_h/g_h\) is affine and invertible. It shifts old normalized masses and
has
\[
\frac{\partial u'_0}{\partial u_{m-1}}
=\frac{l_0+l_1}{l_0+g_{m-1}}\neq0,
\]
so its full Jacobian is nonsingular. This map sends the old boundary hyperplane
to the rotated new boundary hyperplane, hence restricts to an affine
isomorphism between the corresponding block-end strata.
\item At a support-loss branch, the map to the lower-dimensional stratum is an
affine submersion. Let \(\tilde x\) be the post-execute, pre-eviction vector on
a fixed nondegenerate LPF branch. Let \(E\) be the live positions fully
exhausted by LPF, let \(q\) be the unique position partially trimmed with
trim amount \(0<\tau<\tilde x_q\), and let \(J\) be the surviving live positions
after the step. For a genuine support-loss branch \(E\neq\emptyset\); if
\(E=\emptyset\), the branch remains on the same support pattern. Thus
\(q\in J\). Set \(H:=J\setminus\{q\}\). Branches with no positive partially
trimmed survivor, with zero trim, or with \(\tau=\tilde x_q\), are exactly the
measure-zero branch-boundary cases already discarded. On the target boundary face,
use \(z=(x'_h)_{h\in H}\) as local affine coordinates; the remaining target
coordinate is determined by
\[
x'_q=\frac{M-\sum_{h\in H} w_h z_h}{w_q}.
\]
On the relative interior of the source branch, the pre-eviction masses
\((\tilde x_h)_{h\in H}\) are distinct affine functions of the source state.
By the LPF order, the trim does not reach any position in \(H\) before it stops
partway through position \(q\), so the positions in \(H\) are not trimmed during
this eviction step.
Complete them to a source affine chart \((z,\eta)\) as follows. Use the affine
image of the source memory-boundary equation to solve for the non-target
coordinate \(\tilde x_q\), and let \(\eta\) collect the remaining non-target
source coordinates, including fully exhausted masses and, if present, the mass
that completes during the execute step. This chart is valid on a sufficiently
small relative neighborhood inside the branch cell because all LPF ordering and
positivity inequalities are strict there. The partial-trim amount follows
affinely from the prescribed overflow, the completed mass, and the exhausted
masses. In these coordinates, the target free coordinates are exactly the
surviving copied-or-shifted masses \(x'_h=\tilde x_h=z_h\), \(h\in H\). Hence,
after affine changes of coordinates, the branch has the normal form
\[
(z,\eta)\longmapsto z,
\]
where \(z\in\mathbb R^{|H|}=\mathbb R^{|J|-1}\) are target coordinates and
\(\eta\) collects the discarded fiber coordinates. Its Jacobian contains an
identity block of size \(|J|-1\), so it has full row rank and the branch is a
submersion.
\end{itemize}
It remains to justify why these maps preserve relative nullity under preimage.
Affine isomorphisms are bi-Lipschitz on the affine hulls of the relevant
strata, so they preserve and reflect measure-zero sets. For an affine
submersion, affine changes of source and target coordinates put the map in the
normal form \((z,\eta)\mapsto z\) on each bounded polyhedral branch cell. Hence
for a relative-measure-zero target set \(N\), its preimage has measure
\[
\lambda\{(z,\eta):z\in N\}
=\int_N \lambda(\text{fiber over }z)\,dz=0
\]
by Fubini, because the memory boundary makes the branch cell bounded and each
fiber has finite measure. Therefore the preimage of a relative-measure-zero set
under any finite composition of these branch maps also has relative measure
zero.

\emph{Step 4: finite balanced targets and countable union over horizons.}
For any fixed relative support pattern and physical phase with \(m\ge2\) live
positions, the balanced orbit contributes a single point of the corresponding
stratum. Indeed, if the live masses are \(y_0,\ldots,y_{m-1}\) with predecessor
gaps \(g_0,\ldots,g_{m-1}\), balance requires
\[
\frac{y_0}{g_0}=\frac{y_1}{g_1}=\cdots=\frac{y_{m-1}}{g_{m-1}},
\]
so \(y_h=cg_h\), and the memory boundary fixes \(c\). There are finitely many
relative support patterns and at most \(l_1\) physical phases. Therefore the
target nonmaximal balanced states form a finite set in each stratum, hence a
measure-zero set because the target stratum has dimension \(m-1\ge1\).

Fix the horizon \(T\). By Steps~2 and~3, on every nondegenerate branch cell,
the preimage of this finite balanced target set has relative measure zero in
the initial face. The discarded branch boundaries also have relative measure
zero, and there are only finitely many branch sequences up to \(T\). Hence the
set captured by a nonmaximal balanced orbit by time \(T\) has relative measure
zero in \(\mathcal B_I\). Finally, finite-time capture means capture by some
finite horizon, so the full exact-capture set is the countable union over
\(T=0,1,2,\ldots\) of these finite-horizon null sets. Countable subadditivity
preserves measure zero and proves the claim.
\end{proof}

\subsection{Proof of Lemma~\ref{lem:max_cycle_local_absorption}}

\begin{proof}[Proof of Lemma~\ref{lem:max_cycle_local_absorption}]
Choose
\[
\epsilon_0<\frac12\min_{r\ne s}\|C_r-C_s\|
\]
and replace the given \(\epsilon\) by \(\min\{\epsilon,\epsilon_0\}\). Then a
state within this distance of \(\mathcal C_{\max}\) has a unique closest phase.
Fix such a phase \(r\). Write the coordinate at stage \(r\) as the principal
coordinate and all other coordinates as residual coordinates. By the
memory-boundary equation, the principal coordinate differs from \(M/w_r\) by a
linear combination of the residual coordinates. Thus, by taking
\(\delta\) small enough, the total residual memory can be made arbitrarily
small and the principal mass remains bounded away from zero, uniformly over
the finitely many phases.
More concretely, choose this total residual memory smaller than a fixed
fraction of \(\min_r M/w_r\); then, even if all higher-stage residual cohorts
complete during an execute step, the nonfinal phase still has strictly positive
overflow and therefore no new admission can occur before LPF eviction restores
the boundary.

We claim that for sufficiently small phase neighborhoods the trajectory cannot
leave the prescribed \(\epsilon\)-tube before it reaches the single-live-position
stratum. If \(r<l_1-1\), the principal cohort advances to stage \(r+1\). At
the cycle point \(C_r\), this advancement creates overflow
\[
(w_{r+1}-w_r)\frac{M}{w_r}=\frac{M}{w_r}>0.
\]
For sufficiently small residual mass, the same strict overflow persists after
accounting for any residual cohorts that complete during the execute step.
Continuous LPF then restores the memory boundary exactly. It removes any
lower-progress residual cohorts before it can trim the advanced principal
cohort, and if residual cohorts at higher stages remain, their total memory is
still of the same small order as the initial residual memory. The advanced
principal coordinate is then determined by the boundary equation and therefore
differs from \(M/w_{r+1}\) by only the memory contribution of those higher-stage
residuals. If \(r=l_1-1\), the principal cohort completes and the admission
step creates the next principal cohort at stage \(0\); any residual cohorts
again have only the small memory inherited from the initial residuals, and the
stage-0 principal mass is fixed by the boundary equation up to that same small
residual memory.

No new residual cohort is created in this local tube: at nonfinal phases the
strict overflow makes the continuous LPF step fill the boundary by eviction,
so there is no additional admission, while at the final phase the new admitted
cohort is precisely the next principal cohort. Existing residual cohorts either
are evicted by LPF when they lag behind the principal cohort or complete after
at most \(l_1\) physical iterations when they lie ahead. Hence, for small enough
\(\delta\), all iterates before absorption remain within the prescribed
\(\epsilon\)-tube around the corresponding cycle phase.

Now take \(X^0\notin\mathcal E\) in this \(\delta\)-tube. If the trajectory is
not already on the single-live-position stratum, then it has at least two live
positions and cannot stay on, or be captured by, any nonmaximal balanced orbit:
the eviction-free fixed point and all intermediate balanced limit cycles are
included in \(\mathcal E\). Theorem~\ref{thm:A1_position} therefore forces
support losses until only one live position remains. Once a single live
position remains, the memory-boundary equation fixes its mass uniquely at the
current phase, namely \(M/w_r\), so the state is exactly \(C_r\). From then on
the trajectory follows \(\mathcal C_{\max}\). Thus the trajectory reaches
\(\mathcal C_{\max}\) in finite time, remains within \(\epsilon\) of the cycle
at all times, and then has distance zero from the cycle.
\end{proof}

\subsection{Proof of Lemma~\ref{lem:proportional_cohort_entry}}

\begin{proof}[Proof of Lemma~\ref{lem:proportional_cohort_entry}]
Each cohort present at time zero has age at least zero and therefore either
completes or is evicted within at most \(d\) iterations. Hence, after \(d\)
iterations, every live cohort was admitted after time zero. Such a newly
admitted cohort enters with class masses \(p_k a^n\), so it has the proportional
composition stated above at age zero. One execution step only advances the
cohort age and removes classes whose decoding length has ended. If LPF evicts
part of a mixed stage, the proportional within-stage rule multiplies every live
class mass in that stage by the same factor, so the ratios \(p_k\) among the
surviving classes are preserved. Therefore all live cohorts after time \(d\)
have the displayed form, and the same argument shows that the set of such
states is forward invariant.
\end{proof}

\subsection{Proof of Lemma~\ref{lem:bounded}}

\begin{proof}[Proof of Lemma~\ref{lem:bounded}]
From~\eqref{eq:closed_form}, any root $z \neq 1$ satisfies $z^{l_{1,2}} = pz^{l_{1,2}-l_{1,1}} + q$. Taking absolute values and applying the triangle inequality:
\[
|z|^{l_{1,2}} = |pz^{l_{1,2}-l_{1,1}} + q| \leq p|z|^{l_{1,2}-l_{1,1}} + q.
\]
Define $g(x) = x^{l_{1,2}} - px^{l_{1,2}-l_{1,1}} - q$ for $x \geq 0$. First, $g(1) = 1 - p - q = 0$. Second, for $x > 1$:
\[
g'(x) = l_{1,2} x^{l_{1,2}-1} - p(l_{1,2}-l_{1,1}) x^{l_{1,2}-l_{1,1}-1} = x^{l_{1,2}-l_{1,1}-1}[l_{1,2} x^{l_{1,1}} - p(l_{1,2}-l_{1,1})].
\]
Since $x > 1$ implies $x^{l_{1,1}} > 1$, the bracket exceeds $l_{1,2} - p(l_{1,2}-l_{1,1}) = ql_{1,2} + pl_{1,1} > 0$, so $g'(x) > 0$ for $x > 1$. By the mean value theorem, $g(x) > g(1) = 0$ for all $x > 1$.

Now suppose $|z| > 1$. Then $g(|z|) > 0$, i.e., $|z|^{l_{1,2}} > p|z|^{l_{1,2}-l_{1,1}} + q$. This contradicts the bound $|z|^{l_{1,2}} \leq p|z|^{l_{1,2}-l_{1,1}} + q$. Hence $|z| \leq 1$.
\end{proof}

\subsection{Proof of Lemma~\ref{lem:unit_circle}}

\begin{proof}[Proof of Lemma~\ref{lem:unit_circle}]
For $|z| = 1$, we have $|z|^{l_{1,2}} = 1$ and $|pz^{l_{1,2}-l_{1,1}} + q| \leq p + q = 1$ (by triangle inequality). Since $(1-z)A(z) = 0$ requires equality in~\eqref{eq:closed_form}, we need $|z^{l_{1,2}}| = |pz^{l_{1,2}-l_{1,1}} + q|$, i.e., $1 = |pz^{l_{1,2}-l_{1,1}} + q|$.

Since $|pz^{l_{1,2}-l_{1,1}}|=p>0$ and $|q|=q>0$, triangle-inequality equality requires $z^{l_{1,2}-l_{1,1}}$ to be a positive real of modulus one, hence $z^{l_{1,2}-l_{1,1}}=1$. Then $z^{l_{1,2}}=p+q=1$, so
\begin{align*}
z^{l_{1,2}} &= 1, \\
z^{l_{1,2} - l_{1,1}} &= 1.
\end{align*}
By the definition of GCD, $g = \gcd(l_{1,1}, l_{1,2}) = \gcd(l_{1,2}, l_{1,2} - l_{1,1})$. Since $z^{l_{1,2}} = 1$ and $z^{l_{1,2}-l_{1,1}} = 1$, we have $z^{\gcd(l_{1,2}, l_{1,2}-l_{1,1})} = 1$, i.e., $z^g = 1$ (by B\'ezout's identity: there exist integers $a, b$ such that $g = a \cdot l_{1,2} + b \cdot (l_{1,2} - l_{1,1})$).
\end{proof}

\subsection{Proof of Lemma~\ref{lem:unity}}

\begin{proof}[Proof of Lemma~\ref{lem:unity}]
Direct substitution into the definition of $A(z)$:
\begin{align*}
A(1) &= 1^{l_{1,2}-1} + \sum_{m=1}^{l_{1,1}-1} 1^{l_{1,2}-1-m} + q\sum_{m=l_{1,1}}^{l_{1,2}-1} 1^{l_{1,2}-1-m} \\
&= 1 + (l_{1,1} - 1) \cdot 1 + q \cdot (l_{1,2} - l_{1,1}) \cdot 1 \\
&= l_{1,1} + q(l_{1,2} - l_{1,1}) \\
&= l_{1,1} + (1-p)(l_{1,2} - l_{1,1}) \quad \text{(since $q = 1-p$)} \\
&= l_{1,1} + l_{1,2} - l_{1,1} - p(l_{1,2} - l_{1,1}) \\
&= l_{1,2} - pl_{1,2} + pl_{1,1} \\
&= (1-p)l_{1,2} + pl_{1,1} \\
&= ql_{1,2} + pl_{1,1} > 0.
\end{align*}
Since $p, q > 0$ and $l_{1,1}, l_{1,2} \geq 1$, we have $A(1) > 0$, so $z = 1$ is not a root of $A(z)$.
\end{proof}

\subsection{Proof of Lemma~\ref{lem:primitive}}

\begin{proof}[Proof of Lemma~\ref{lem:primitive}]
Write $l_{1,1} = ag$ and $l_{1,2} = bg$ with $\gcd(a,b) = 1$. For any $\omega$ with $\omega^g = 1$ and $\omega \neq 1$: $\omega^{l_{1,1}} = (\omega^g)^a = 1$, $\omega^{l_{1,2}} = (\omega^g)^b = 1$, and $\omega^{l_{1,2}-l_{1,1}} = (\omega^g)^{b-a} = 1$. Substituting into~\eqref{eq:closed_form}: $(1-\omega)A(\omega) = -1 + p + q = 0$. Since $\omega \neq 1$, we have $1 - \omega \neq 0$, and therefore $A(\omega) = 0$.
\end{proof}

\subsection{Proof of Lemma~\ref{lem:noncoprime_complementary_roots}}

\begin{proof}[Proof of Lemma~\ref{lem:noncoprime_complementary_roots}]
Write
\[
G_\epsilon(z):=A(z)-\epsilon zA'(z),
\qquad
\epsilon:=\frac{1}{l_0+l_{1,2}} .
\]
By the decomposition~\eqref{eq:decomposition}, the roots of \(F\) are exactly
the roots of \(G_\epsilon\). The degree of \(G_\epsilon\) is \(l_{1,2}-1\) for
all sufficiently small \(\epsilon\), since the leading coefficient is
\(1-\epsilon(l_{1,2}-1)\).

First consider the roots of the limiting polynomial \(A\). By
Lemmas~\ref{lem:bounded}--\ref{lem:primitive}, every root of \(A\) lies in the
closed unit disk, the only roots on the unit circle are the non-trivial
\(g\)-th roots of unity, and \(z=1\) is not a root. Moreover, the computation
in the proof of Theorem~\ref{thm:instability} gives \(A'(\omega)\ne0\) at each
non-trivial \(g\)-th root \(\omega\). Hence these \(g-1\) unit-circle roots are
simple, and the remaining \(l_{1,2}-g\) roots of \(A\), counted with algebraic
multiplicity, lie strictly inside the unit disk.

Let \(r<1\) be the largest modulus of these remaining roots, and choose
\(\rho\) with \(r<\rho<1\). Then \(A\) has no zeros on \(|z|=\rho\), so
\[
m_\rho:=\min_{|z|=\rho}|A(z)|>0,
\qquad
M_\rho:=\max_{|z|=\rho}|zA'(z)|<\infty .
\]
For sufficiently small \(\epsilon\), \(\epsilon M_\rho<m_\rho\). Rouch\'e's
theorem on \(|z|=\rho\) implies that \(G_\epsilon\) and \(A\) have the same
number of roots in \(|z|<\rho\), namely \(l_{1,2}-g\). These roots are stable
because \(\rho<1\).

It remains to locate the roots associated with the unit-circle roots. Because
there are finitely many non-trivial \(g\)-th roots, choose pairwise disjoint
neighborhoods \(U_\omega\) around them. Shrinking a common
\(\epsilon_0>0\) if necessary, the implicit-function argument in
Theorem~\ref{thm:instability} gives, for every \(0<\epsilon<\epsilon_0\) and
each non-trivial \(g\)-th root \(\omega\), a unique zero
\(z_\omega(\epsilon)\in U_\omega\) of \(G_\epsilon\). Moreover
\(\partial_zG_\epsilon(z_\omega(\epsilon))\ne0\), so this zero is simple.
The neighborhoods are disjoint, hence these roots are distinct. They satisfy
\[
z_\omega(\epsilon)=\omega(1+\epsilon)+O(\epsilon^2),
\qquad
|z_\omega(\epsilon)|-1=\Theta(\epsilon)=\Theta(1/l_0)>0 .
\]
Thus \(G_\epsilon\) has \(g-1\) unstable roots. Together with the
\(l_{1,2}-g\) roots inside \(|z|<\rho\), this accounts for all
\(l_{1,2}-1\) roots of \(G_\epsilon\). Therefore no other finite-\(l_0\) roots
exist, and the asserted root count follows.
\end{proof}

\subsection{Proof of Lemma~\ref{lem:generic_finite_exit}}

\begin{proof}[Proof of Lemma~\ref{lem:generic_finite_exit}]
Use the normalized two-class cohort coordinates
\[
Y_s=\begin{cases}
X_s, & s<l_{1,1},\\
X_s/q, & s\ge l_{1,1},
\end{cases}
\qquad
V_s=\begin{cases}
l_0+s+1, & s<l_{1,1},\\
q(l_0+s+1), & s\ge l_{1,1}.
\end{cases}
\]
The eviction-free equilibrium is \(Y^*=x^*\mathbf{1}\). Let \(E^S(Y)\) denote
the memory immediately after the execution/completion step and before
admission. At equilibrium, the next admission has mass \(x^*>0\), so
\[
M-E^S(Y^*)=V_0x^*>0 .
\]
By continuity, after shrinking a neighborhood of \(Y^*\) if needed, every
\(Y\in\mathcal{N}\) satisfies \(Y_s\ge0\) and
\[
|E^S(Y)-E^S(Y^*)|<\frac12 V_0x^* .
\]
Therefore \(M-E^S(Y)>\frac12 V_0x^*>0\), the next admission amount
\((M-E^S(Y))/V_0\) is positive, and feasibility is restored by admission alone.
No LPF eviction is triggered anywhere in \(\mathcal{N}\).

On this local no-eviction region, subtracting the equilibrium gives the exact
linear stage-state dynamics
\[
y^{n+1}=\Phi y^n,\qquad y^n:=Y^n-Y^* .
\]
The matrix \(\Phi\) is the companion/shift matrix for the no-eviction dynamics:
it has one zero eigenvalue, and its nonzero eigenvalues are exactly the roots
of \(F(z)=0\). Lemma~\ref{lem:noncoprime_complementary_roots} shows that these
nonzero eigenvalues consist of \(g-1\) unstable roots and \(l_{1,2}-g\) stable
roots. Thus \(\Phi\) has a spectral
splitting \(E^u\oplus E^s\), with \(\dim E^u=g-1\), and the zero eigenvalue is
included in \(E^s\). Let \(P_u\) be the spectral projection onto \(E^u\).

If \(P_u y^0\ne0\), then along the no-eviction dynamics
\[
P_u y^n=\Phi_u^n P_u y^0,
\]
where \(\Phi_u=\Phi|_{E^u}\) has all eigenvalues outside the unit circle. Hence
\(\Phi_u^{-n}\to0\). If \(\{\Phi_u^nP_u y^0\}_{n\ge0}\) were bounded, then
\[
P_u y^0=\Phi_u^{-n}\Phi_u^nP_u y^0\to0,
\]
a contradiction. Therefore the unstable component is unbounded as long as the
linear no-eviction dynamics continue. Since \(\mathcal{N}\) is bounded after
shrinking it if necessary, the trajectory must leave \(\mathcal{N}\) in finite
time.

The exceptional initial perturbations in \(\mathcal{N}\) are contained in
\(\ker P_u\), a proper linear subspace of the local state space. They therefore
have Lebesgue measure zero in local coordinates.
\end{proof}

\subsection{Proof of Lemma~\ref{lem:spectral}}

\begin{proof}[Proof of Lemma~\ref{lem:spectral}]
The proof of Theorem~\ref{thm:local_stability} gives a radius \(\rho<1\) and
an \(l_{0,\star}\) such that every root of \(F(z)=0\) lies in
\(|z|<\rho\) for \(l_0\ge l_{0,\star}\). These roots are exactly the nonzero
eigenvalues of \(\Phi\), and the remaining eigenvalue is \(0\).
\end{proof}

\subsection{Proof of Lemma~\ref{lem:lyapunov_bounds}}

\begin{proof}[Proof of Lemma~\ref{lem:lyapunov_bounds}]
Since all eigenvalues of $\Phi$ satisfy $|z_j| < 1$, the series $P = \sum_{k=0}^{\infty} (\Phi^k)^\top \Phi^k$ converges to the unique solution of $\Phi^\top P \Phi - P = -I$. The identity $P - \Phi^\top P \Phi = I$ gives $P \succeq I$. For the upper bound, note that the entries of $\Phi(l_0)$ are ratios $V_s/V_0$, so $\Phi(l_0)$ depends continuously on $1/(l_0+1)$ and the family $\{\Phi(l_0): l_0\ge l_{0,\star}\}$ has compact closure.

Vectorizing the Lyapunov equation gives:
\[
\big(I - \Phi^\top \otimes \Phi^\top\big)\,\mathrm{vec}(P) = \mathrm{vec}(I).
\]
Since $\rho(\Phi)\le \rho<1$ (Lemma~\ref{lem:spectral}), we have $\rho(\Phi^\top \otimes \Phi^\top)=\rho(\Phi)^2\le \rho^2<1$, so $I-\Phi^\top \otimes \Phi^\top$ is invertible for all $l_0\ge l_{0,\star}$. The inverse depends continuously on $\Phi$, hence $\|(I-\Phi^\top \otimes \Phi^\top)^{-1}\|_2$ is bounded on the compact closure. Consequently $\|P\|_2$ is uniformly bounded for $l_0\ge l_{0,\star}$, implying $P \preceq C_P I$ for some constant $C_P$ independent of $l_0$.
\end{proof}

\subsection{Proof of Lemma~\ref{lem:physical}}

\begin{proof}[Proof of Lemma~\ref{lem:physical}]
Let $N^n = \sum_{s=0}^{l_{1,2}-1} X^n_s$ be the total number of (actual) requests in the system at time $n$. Each request occupies at least $l_0+1$ tokens, so:
\[
N^n \le \frac{M}{l_0+1} = \frac{\beta l_0}{l_0+1} = O(1).
\]
In one iteration, each in-flight request generates one new token during decoding, so the post-shift token increase is at most $N^n = O(1)$. Completions can only reduce memory, and the admission controller never increases memory beyond $M$ (it admits at most the available slack). Therefore, the number of tokens that must be removed by eviction in any step is at most the $O(1)$ overflow created by the shift.

Since evicting one request frees at least $l_0+1$ tokens, the number of evicted requests per step is at most $O(1)/(l_0+1)=O(1/l_0)$.
\end{proof}

\subsection{Proof of Lemma~\ref{lem:perturbation}}

\begin{proof}[Proof of Lemma~\ref{lem:perturbation}]
The perturbation $e^n = y^n - \Phi y^{n-1}$ arises when eviction truncates the linear update. At stage~$0$, eviction replaces the ideal admission $\tilde{a}_n<0$ with zero, so $|e^n_0|=|\tilde{a}_n|=(E^S-M)/V_0=O(1)/(l_0+1)=O(1/l_0)$ by Lemma~\ref{lem:physical}. At stages $s\ge 1$, the eviction removes $O(1/l_0)$ requests in total (Lemma~\ref{lem:physical}), giving $\sum_{s\ge 1}|(e^n)_s|\le O(1/l_0)/\min(1,q)=O(1/l_0)$. Hence $\|e^n\|_2\le\|e^n\|_1=O(1/l_0)$.
\end{proof}

\subsection{Proof of Lemma~\ref{lem:multiple_roots_separated}}

\begin{proof}[Proof of Lemma~\ref{lem:multiple_roots_separated}]
If \(P_n(\zeta)=P_n'(\zeta)=0\) and \(\zeta\ne1\), then \(\zeta\ne0\) and
\[
n\zeta^{n-1}=p(n-a)\zeta^{n-a-1},
\qquad\text{so}\qquad
\zeta^a=p\left(1-\frac{a}{n}\right).
\]
Therefore
\[
|\zeta|\le p^{1/a}\le p^{1/n}
=\exp\!\left(\frac{\log p}{n}\right)
\le 1-\frac{-\log p}{2n}
\]
for all sufficiently large \(n\). This proves the claim with
\(c_p=-\log(p)/2>0\). Since the dominant threshold shell has radial width
\(O(n^{-3})\) around the unit circle, these multiple roots cannot determine the
first crossing.
\end{proof}

\subsection{Proof of Lemma~\ref{lem:near_unit_chart}}

\begin{proof}[Proof of Lemma~\ref{lem:near_unit_chart}]
First note that all roots of \(P_n\) lie in the closed unit disk. Indeed, if
\(R:=|z|>1\), then
\[
R^n=|pz^{n-a}+q|\le pR^{n-a}+q<pR^n+q,
\]
which implies \(R^n<1\), a contradiction. Hence the logarithmic coordinates
used below always satisfy \(\Re\mu\ge0\).

The representation is just the principal logarithm in the sector around a
nearest \(n\)-th root of unity: once \(\omega\) is chosen, set
\(\mu=-n\operatorname{Log}(z/\omega)\), with \(\Im\mu\in[-\pi,\pi]\). Since
\(\omega^n=1\) and \(\omega^{-a}=e^{\mathrm{i}x}\), substituting
\(z=\omega e^{-\mu/n}\) into \(P_n(z)=0\) gives the displayed
equation for \(H\).
If a root lies on a sector boundary and has two nearest \(n\)-th roots of
unity, fix either one by an arbitrary deterministic tie-breaking rule. The
estimates below are invariant under this choice. We use the chart only for
coverage and uniform bounds; a boundary root that admits two labels is counted
once as a root of \(P_n\).

The only point that needs care is uniformity. At \((\mu,x)=(0,0)\),
\[
H_\mu(0,0,\theta)=-(q+p\theta),
\]
which is bounded away from zero uniformly over \(\theta\in[0,1]\), since
\(q>0\). The implicit-function theorem therefore gives a unique branch
\(\mu(x,\theta)\) for \(|x|\le\delta\), with \(\delta\) chosen uniformly in
\(\theta\). We also need that this local branch accounts for all solutions in
the near-unit strip when \(|x|\le\delta\). This follows by compactness. At
\(x=0\) and \(\Re\mu=0\), writing
\[
u=e^{-\mu},\qquad v=e^{-(1-\theta)\mu},
\]
the equation \(H(\mu,0,\theta)=0\) gives \(u=pv+q\) with
\(|u|=|v|=1\). Equality in the triangle inequality forces \(u=v=1\), and
therefore \(\mu=0\) in the strip \(\Im\mu\in[-\pi,\pi]\). Consequently, after
choosing \(\eta\) and \(\delta\) small enough, there are no solutions of
\(H(\mu,x,\theta)=0\) with
\[
0\le\Re\mu\le\eta,\qquad |x|\le\delta,\qquad \Im\mu\in[-\pi,\pi],
\]
outside the fixed IFT neighborhood of \(\mu=0\). Thus the displayed branch is
the unique near-unit solution for small phases.

If the last assertion failed, compactness would give a limit
\((\mu_*,x_*,\theta_*)\) with \(\Re\mu_*=0\), \(|x_*|\ge\delta\), and
\[
e^{-\mu_*}-p e^{\mathrm{i}x_*}e^{-(1-\theta_*)\mu_*}-q=0 .
\]
Writing \(u=e^{-\mu_*}\) and
\(v=e^{\mathrm{i}x_*}e^{-(1-\theta_*)\mu_*}\), we have \(|u|=|v|=1\). The equation
\(u=pv+q\) can hold with \(p,q>0\) and \(|u|=|v|=1\) only when \(u=v=1\), by
equality in the triangle inequality. Hence \(x_*=0\pmod{2\pi}\), contradicting
\(|x_*|\ge\delta\) with \(x_*\in(-\pi,\pi]\). Therefore phases bounded away
from zero have \(\Re\mu\ge c\) uniformly.
\end{proof}

\subsection{Proof of Lemma~\ref{lem:uniform_log_perturbation}}

\begin{proof}[Proof of Lemma~\ref{lem:uniform_log_perturbation}]
We keep the removable denominator in \(A=-P_n/(1-z)\), because this is
the term that must be controlled uniformly. In the chart
\(z=\omega_{m,n}e^{-\nu/n}\), set \(t:=1/n\) and
\[
H(\nu,x,\theta)=e^{-\nu}-p e^{\mathrm{i}x}e^{-(1-\theta)\nu}-q .
\]
Since \(A=-H/(1-z)\) and
\(z A'(z)=-n\partial_\nu A(z)\), the equation
\[
A(z)-\varepsilon z A'(z)=0
\]
is equivalent, after multiplying by the nonzero factor \(-(1-z)\), to
\[
\mathcal J(\nu,\eta,x,\theta,t)
:=
H+\eta H_\nu
-t\eta\frac{z}{1-z}H
=0,
\qquad \eta=n\varepsilon .
\]
At \(\eta=0\), this reduces to \(H(\nu,x,\theta)=0\), whose solution is
\(\nu=\mu(x,\theta)\). Moreover,
\[
\mathcal J_\nu(\mu(x,\theta),0,x,\theta,t)
=H_\nu(\mu(x,\theta),x,\theta),
\]
and this derivative is bounded away from zero uniformly for \(|x|\le\delta\)
and \(\theta\in[0,1]\), after shrinking \(\delta\), because
\(H_\nu(0,0,\theta)=-(q+p\theta)\le -q<0\).

The only uniformity issue is the last term in \(\mathcal J\). On the branches
under consideration \(m\ne0\), and the principal
logarithm has \(|\Im\nu|\le\pi\). Thus, on the unit-scale
neighborhood used by the implicit-function theorem,
\[
|1-z|\ge c/n,
\qquad
\left|t\frac{z}{1-z}\right|\le C .
\]
All derivatives of \(H\) are uniformly bounded on this neighborhood. The
analytic implicit-function theorem therefore applies with constants
independent of \(m,a,n\).

Differentiating
\(\mathcal J(\nu_{m,n}(\eta),\eta,x_m,\theta_n,1/n)=0\) at \(\eta=0\) gives
\[
\nu_{m,n}'(0)
=-\frac{\mathcal J_\eta}{\mathcal J_\nu}
=-\frac{H_\nu}{H_\nu}
=-1,
\]
where the term containing \(H\) vanishes because \(H(\mu(x_m,\theta_n),x_m,\theta_n)=0\).
The same uniform bounds control the second \(\eta\)-derivative of
\(\nu_{m,n}\), so Taylor's theorem gives the stated
\(O(\eta^2)\) remainder.
\end{proof}

\subsection{Proof of Lemma~\ref{lem:no_stable_islands}}

\begin{proof}[Proof of Lemma~\ref{lem:no_stable_islands}]
The lemma rules out the only global pathology not excluded by the local
expansion alone: stability could, in principle, reappear at a smaller input
length after the dominant local crossing. We prove a stronger root-existence
statement: throughout the physical range above, at least one root lies outside
the unit disk.

Let \(\eta=n\varepsilon\). We split the interval
\([n\Delta_n+C n^2\Delta_n^2,1]\) into three ranges. On the small-\(\eta\)
range, the dominant branch itself is outside. On an intermediate range, we
construct an outside root in a fixed small phase window. On the large-\(\eta\)
range, the product of the root moduli already exceeds one.

First consider \(\eta\le\eta_0\), where \(\eta_0\) is the constant from
Lemma~\ref{lem:uniform_log_perturbation}. Write the remainder in that lemma as
\(|R(\eta)|\le C_R\eta^2\), and reduce \(\eta_0\), if necessary, so that
\(C_R\eta^2\le \eta/4\) on \(0\le\eta\le\eta_0\). On the dominant phase
\(x_1=2\pi/n\),
Lemma~\ref{lem:uniform_log_perturbation} gives the continuation
\[
\begin{aligned}
z_*(\eta)&=\omega_{1,n}\exp(-\nu_*(\eta)/n),\\
\nu_*(\eta)&=\mu(2\pi/n,\theta_n)-\eta+O(\eta^2)
\end{aligned}
\]
for \(0\le\eta\le\eta_0\). Since
\[
\rho_\infty
=
\exp\{-\Re\mu(2\pi/n,\theta_n)/n\},
\]
we have
\[
\Re\mu(2\pi/n,\theta_n)
=
n\Delta_n+O(n\Delta_n^2).
\]
Thus, after enlarging \(C\), the excess
\(\eta-\Re\mu(2\pi/n,\theta_n)\) dominates both error terms. More explicitly,
if \(\eta\) is comparable to \(n\Delta_n\), then
\[
|R(\eta)|\le C_R\eta^2=O(n^2\Delta_n^2),
\]
so choosing \(C\) larger than the conversion and remainder constants gives
\(\eta-\Re\mu(2\pi/n,\theta_n)>|R(\eta)|\). If \(\eta\) is larger, the reduced
bound \(C_R\eta^2\le\eta/4\) gives the same conclusion. Therefore
\[
\Re\nu_*(\eta)<0
\qquad
\text{whenever}\qquad
\varepsilon\ge \Delta_n+C n\Delta_n^2
\quad\text{and}\quad
\eta\le\eta_0 .
\]
The corresponding root satisfies
\(|z_*(\eta)|=\exp(-\Re\nu_*(\eta)/n)>1\), so the system is unstable throughout
this part of the range.

We next cover an intermediate range that is bounded away from both endpoints.
Set
\[
\eta_1:=\frac{1+p}{2}<1,
\]
and reduce \(\eta_0\), if necessary, so that \(0<\eta_0<\eta_1\). For
\(\eta\in[\eta_0,\eta_1]\), consider the reduced logarithmic equation at
phase \(x=0\),
\[
H(\nu,0,\theta)+\eta H_\nu(\nu,0,\theta)=0 .
\]
Writing \(z=e^{-\nu}\) and \(\gamma:=1-\theta\), and using the positive real
branch of \(z^{1-\theta}\) for \(z\ge1\), this equation becomes
\[
f_{\eta,\theta}(z)
:=
(1-\eta)z
-p\{1-\eta(1-\theta)\}z^{1-\theta}
-q
=0 .
\]
For \(z\ge1\), the function \(f_{\eta,\theta}\) is convex, because
\[
f_{\eta,\theta}''(z)
=p\{1-\eta(1-\theta)\}\theta(1-\theta)z^{-\theta-1}\ge0 .
\]
Moreover,
\[
f_{\eta,\theta}(1)=-\eta(q+p\theta)\le-\eta_0 q<0,
\qquad
f_{\eta,\theta}(z)\to\infty
\quad\text{as }z\to\infty ,
\]
uniformly over \((\eta,\theta)\in[\eta_0,\eta_1]\times[0,1]\). To see the
uniformity, use \(z^{1-\theta}\le z\) for \(z\ge1\) to get
\[
f_{\eta,\theta}(z)
\ge
\{q(1-\eta)+p\eta\theta\}z-q
\ge q(1-\eta_1)z-q .
\]
Hence there is a largest root
\(z_*(\eta,\theta)>1\). At this root,
\[
f_{\eta,\theta}'(z_*)
=(1-\eta)\theta+\frac{(1-\theta)q}{z_*}>0 ,
\]
where we used
\[
p\{1-\eta(1-\theta)\}z_*^{1-\theta}
=(1-\eta)z_*-q .
\]
At the endpoint cases \(\theta=0\) and \(\theta=1\), the equation reduces to
the same explicit root \(z_*=(1-\eta)^{-1}\), and the derivative remains
positive. The positive derivative makes this root locally continuous in
\((\eta,\theta)\) by the implicit-function theorem, and the preceding uniform
growth bound keeps it in a compact \(z\)-interval. Therefore compactness over
\((\eta,\theta)\in[\eta_0,\eta_1]\times[0,1]\) gives constants \(r_0,m_0>0\)
such that
\[
\begin{aligned}
z_*(\eta,\theta)&\ge e^{r_0}>1,\\
f_{\eta,\theta}'(z_*(\eta,\theta))&\ge m_0 .
\end{aligned}
\]
Equivalently, the reduced equation has a simple solution
\(\nu_0(\eta,\theta):=-\log z_*(\eta,\theta)\) with
\(\Re\nu_0\le-r_0\), uniformly over this compact parameter set.

We then realize this reduced outside root as an actual finite-\(n\) root. The
derivative lower bound above lets us apply the implicit-function theorem
uniformly once more, now with the phase \(x\) also allowed to vary; the
fractional power is taken using the local analytic branch around the positive
real root \(z_*(\eta,\theta)\). Thus, after
choosing \(x_0>0\) sufficiently small, the reduced equation
\[
H(\nu,x,\theta)+\eta H_\nu(\nu,x,\theta)=0
\]
has a solution \(\nu_0(\eta,\theta,x)\) for every
\(\eta\in[\eta_0,\eta_1]\), \(\theta\in[0,1]\), and
\(x\in[0,2x_0]\), with
\[
\Re\nu_0(\eta,\theta,x)\le-\frac{3r_0}{4}.
\]
Choose \(I=[x_0,2x_0]\subset(0,\delta)\), where \(\delta\) is from
Lemma~\ref{lem:near_unit_chart}. We use the following elementary counting fact:
for all large \(n\), the lattice \(\{2\pi m/n\}\) has at least \(c_I n\) points
in \(I\), while the set of \(n\)th roots satisfying \(|\omega-1|\le s\) has at
most \(C_s s n\) points. Because \(m\mapsto u_m\) is a permutation modulo \(n\),
choosing \(s>0\) small enough leaves at least one phase \(x_m\in I\) with
\(|\omega_{m,n}-1|\ge s\). For this phase and for \(\nu\) in
a fixed neighborhood of \(\nu_0(\eta,\theta_n,x_m)\),
\[
\left|\frac1n\frac{z}{1-z}\right|=O(n^{-1}),
\qquad
z=\omega_{m,n}e^{-\nu/n}.
\]
Thus the full finite-\(n\) logarithmic equation
\[
H+\eta H_\nu
-\frac{\eta}{n}\frac{z}{1-z}H=0
\]
is an \(O(n^{-1})\) perturbation of the reduced equation, uniformly over
\(\eta\in[\eta_0,\eta_1]\). The implicit-function theorem gives a nearby
solution \(\nu_{m,n}(\eta)\) satisfying
\[
\Re\nu_{m,n}(\eta)\le -\frac{r_0}{2}.
\]
The associated root has
\[
|z_{m,n}(\eta)|
=\exp\{-\Re\nu_{m,n}(\eta)/n\}>1 .
\]
Therefore the \finiteinput{} polynomial is unstable throughout
\(\eta\in[\eta_0,\eta_1]\).

For the large-\(\eta\) range, consider \(\eta\in[\eta_1,1]\). The
\finiteinput{} polynomial has degree \(n-1\), constant coefficient \(q\), and
leading coefficient
\[
1-\varepsilon(n-1)=1-\eta+\frac{\eta}{n}.
\]
The product of the moduli of all \(n-1\) roots is therefore
\[
\frac{q}{1-\eta+\eta/n}.
\]
For \(\eta\ge\eta_1=(1+p)/2=1-q/2\), the denominator is at most
\(q/2+1/n\), which is strictly smaller than \(q\) for all sufficiently large
\(n\). Hence the product of the root moduli is larger than one, and at least
one root must lie outside the unit disk.

Combining the three ranges proves instability for every
\(\varepsilon\in[\Delta_n+C n\Delta_n^2,1/n]\). Stable islands before the
asymptotic threshold are therefore impossible.
\end{proof}

\subsection{Proof of Lemma~\ref{lem:strict_cohort}}

\begin{proof}[Proof of Lemma~\ref{lem:strict_cohort}]
Admission creates a fresh mixed cohort in the exact proportions \(p:q\). The
execute step advances each surviving cohort by one age; class 1 leaves after age
\(a-1\), while class 2 leaves after age \(b-1\). Proportional within-stage
eviction rescales all classes in a mixed stage by the same factor and therefore
preserves the within-stage composition. Induction over \(n\) gives the displayed
cohort representation. For \(j<a\), the total occupancy is
\(x^n_{1,j}+x^n_{2,j}=y^{n}_{j}\); for \(j\ge a\), only class 2 remains, so
the total occupancy is \(q y^{n}_{j}\). Hence the normalized variables used
above satisfy \(Y^n_j=y^{n}_{j}\), with the effective stage weights stated in
the lemma.
\end{proof}

\subsection{Proof of Lemma~\ref{lem:monotonicity}}

\begin{proof}[Proof of Lemma~\ref{lem:monotonicity}]
Let $\mathcal{C}=\{X_1,\ldots,X_p\}$ be a limit cycle of period~$p$ on the memory boundary ($M^n=M$ at every iteration). Write $X^n=(x^{n}_{0},\ldots,x^{n}_{l_1-1})$ and let $y_n := x^{n}_{l_1-1}$ denote the number of completions at iteration~$n$, so that $\bar{T}(\mathcal{C})=\frac{1}{p}\sum_{n=1}^{p} y_n$.
By the waste decomposition~\eqref{eq:throughput_decomp}, $\bar{T}(\mathcal{C}) = x^* - E(\mathcal{C})/(pW) \le x^*$, with equality only when no eviction occurs ($E = 0$).
All subpopulations below are submasses of this continuous state. When a
subpopulation cuts through a stage, the within-stage tie-breaking is fixed so
that the non-protected submass is trimmed before the protected submass; this is
an admissible representative of the continuous LPF convention in
Section~\ref{sec:model}.

We now lower bound $\bar{T}(\mathcal{C})$ by a direct counting argument. Define
\[
m := \frac{M}{w_{l_1-1}}=\frac{M}{l_0+l_1}.
\]
Fix any starting index $t\in\{1,\ldots,p\}$ and consider the $l_1$ consecutive iterations $t,t{+}1,\ldots,t{+}l_1{-}1$ (with indices taken modulo~$p$). We claim that at least $m$ requests complete over this block:
\begin{equation}\label{eq:l1_window_lb}
\sum_{r=0}^{l_1-1} y_{t+r} \;\ge\; m.
\end{equation}
To see this, observe that at time~$t$ the total number of active requests is at least~$m$ because
\[
M=\sum_{j=0}^{l_1-1} w_j x^{t}_{j} \le w_{l_1-1}\sum_{j=0}^{l_1-1} x^{t}_{j}.
\]
Consider any subpopulation of exactly $m$ active request mass at time~$t$ with the \emph{largest} stage indices.\footnote{If the cutoff falls within a stage, take an arbitrary $m$-mass subpopulation from that stage; requests within a stage are indistinguishable in the continuous model.} Call this the \emph{protected} mass. At any future iteration before it completes, each protected unit occupies at most $w_{l_1-1}$ tokens, so the protected subpopulation uses at most $m\,w_{l_1-1}=M$ tokens at all times (including immediately after \textit{Execute}). Evicting \emph{only} non-protected mass therefore suffices to restore feasibility after any \textit{Execute} step: if we hypothetically removed every non-protected unit, the remaining protected mass would still fit in memory.

Under LPF, evictions occur in increasing order of stage index, so protected mass can only be evicted after all less-progressed non-protected mass has been removed. At the cutoff stage that may contain both protected and non-protected mass, the amount to remove never exceeds the non-protected mass, because the protected mass alone is feasible; by the continuous within-stage convention fixed above, the non-protected mass is evicted first. Consequently, the protected subpopulation is never evicted and advances one stage per iteration until completion. Because each unit traverses $l_1$ stages, all protected mass completes within the next $l_1$ iterations, proving~\eqref{eq:l1_window_lb}.

Summing~\eqref{eq:l1_window_lb} over $t=1,\ldots,p$ and noting that each completion $y_n$ appears in exactly $l_1$ of the resulting window sums gives
\[
l_1\sum_{n=1}^{p} y_n \;\ge\; pm.
\]
Dividing by $pl_1$ yields
\[
\bar{T}(\mathcal{C})=\frac{1}{p}\sum_{n=1}^{p} y_n \;\ge\; \frac{m}{l_1}=\frac{M}{l_1(l_0+l_1)}=\bar{T}_{l_1-1}.
\]

We next characterize when equality holds. Define the total number of active requests at time~$n$ by
\[
N_n := \sum_{j=0}^{l_1-1} x^{n}_{j}.
\]
Since $w_j\le w_{l_1-1}$ for all $j$, the memory constraint implies
\begin{equation}\label{eq:Nn_lb_m}
N_n \ge \frac{M}{w_{l_1-1}}=m,
\end{equation}
with equality if and only if $x^{n}_{0}=\cdots=x^{n}_{l_1-2}=0$ (i.e., all active requests are at stage~$l_1-1$).

We claim that if there exists some index $t$ with $y_t>0$ and $N_t>m$, the corresponding length-$l_1$ completion window is \emph{strictly} larger than~$m$:
\begin{equation}\label{eq:l1_window_strict}
\sum_{r=0}^{l_1-1} y_{t+r} \;>\; m.
\end{equation}
Indeed, set
\[
\delta := \min\{y_t,\; N_t-m\},
\]
which is strictly positive under $y_t>0$ and $N_t>m$. Consider the subpopulation of size $m{+}\delta$ consisting of the active request mass at time~$t$ with the largest stage indices, using the same within-stage convention as above. This enlarged protected subpopulation includes all stage-$(l_1{-}1)$ mass, so exactly $y_t$ of it completes at iteration~$t$. Immediately after the \textit{Execute} step at iteration~$t$, the remaining enlarged protected mass equals
\[
(m+\delta)-y_t \le m,
\]
where we used $\delta \le y_t$. Since every unit occupies at most $w_{l_1-1}$ tokens, the remaining protected mass fits in memory: its total footprint is at most $m\,w_{l_1-1}=M$ (and thereafter it only decreases as further completions occur). Consequently, evicting only non-protected mass suffices to restore feasibility after any subsequent \textit{Execute} step. Under LPF, non-protected mass (which is weakly less progressed at time~$t$ and remains so under stage advancement) is evicted before protected mass, and the continuous within-stage convention removes non-protected mass first whenever the eviction front reaches the cutoff stage. Hence none of the $m+\delta$ enlarged protected mass is ever evicted, and all of it completes within the next $l_1$ iterations. This proves~\eqref{eq:l1_window_strict}.

If \(\mathcal{C}\) is not the maximal-eviction cycle, it never visits a state
with \(N_t=m\). Indeed, if \(N_t=m\) for some~\(t\), then by the equality case
of~\eqref{eq:Nn_lb_m} we must have \(X_t=(0,\ldots,0,m)\). Starting from this
state, the execute-evict-admit map deterministically generates the
maximal-eviction cycle, so \(\mathcal{C}\) itself would be maximal, a
contradiction. Hence \(N_t>m\) for all~\(t\) on any non-maximal limit cycle.
By~\eqref{eq:l1_window_lb}, there exists \(t\) with \(y_t>0\); applying
\eqref{eq:l1_window_strict} at such~\(t\) gives one strict window sum. Summing
window sums over \(t=1,\ldots,p\) then yields
\(l_1\sum_{n=1}^{p}y_n>pm\), i.e.,
\(\bar{T}(\mathcal{C})>m/l_1\).

Finally, the maximal-eviction cycle achieves $\bar{T}=m/l_1$, so equality holds if and only if $\mathcal{C}$ is the maximal-eviction cycle.
\end{proof}

\section{Numerical Supplement}
\label{app:numerical_details}

This appendix contains the numerical experiments behind Section~\ref{sec:numerics}.
The main text keeps one representative figure per mechanism. The supplementary
figures give the supporting model-based simulations, Vidur simulations,
real-GPU experiments, and parameter sweeps.

Throughout this appendix, ``model-based simulator'' denotes a direct
implementation of the discrete-time memory and admission recursions in
Section~\ref{sec:model}. It keeps the model's state variables, capacity
constraint, greedy admission rule, and LPF eviction rule, and adds stochastic
arrivals only in experiments that explicitly study open-system behavior. These
runs use the paper's state equations as the simulator and isolate the
mathematical mechanism.

``Vidur'' denotes the large-scale LLM-serving simulator of
\citet{agrawal2024vidur}, which reports simulation errors below 5\% for
latency and throughput. Unless otherwise stated, our Vidur experiments use the
Sarathi scheduler~\citep{sarathi2024}, Meta-Llama-3-8B, and a single NVIDIA
A100 GPU. Vidur simulates serving iterations and tracks request-level KV cache
memory, so it serves as an intermediate simulation environment between the
model-based simulator and physical GPU experiments. In the Vidur simulations,
pending requests are considered in arrival order and admitted when memory is
available; the per-iteration admission cap
based on the eviction-free admission rate is added only in the rate-limited
variant. When resident KV cache later exceeds capacity, Vidur records an
eviction event and returns the affected request to the waiting queue. We report
these events as the system-level counterparts of model evictions. The eviction
priority follows
the LPF abstraction used in the model, removing least-progressed active
requests first.

\subsection{Model-based GCD simulations}
\label{app:additional_gcd_simulations}

The model-based simulator implements the discrete-time memory and admission
dynamics of Section~\ref{sec:model}. Time is measured in scheduling iterations,
memory is measured in the same normalized KV cache units as the model, and a
newly admitted request contributes its \inputlen{} KV cache before growing by
one unit per decoding step. The first two figures isolate the deterministic GCD
mechanism under saturated input: coprime decoding lengths desynchronize
completions and converge to the eviction-free equilibrium, whereas non-coprime
decoding lengths align completions and generate eviction cycles.

\begin{figure}[ht]
    \centering
    \begin{minipage}{0.48\linewidth}
        \centering
        \includegraphics[width=\linewidth]{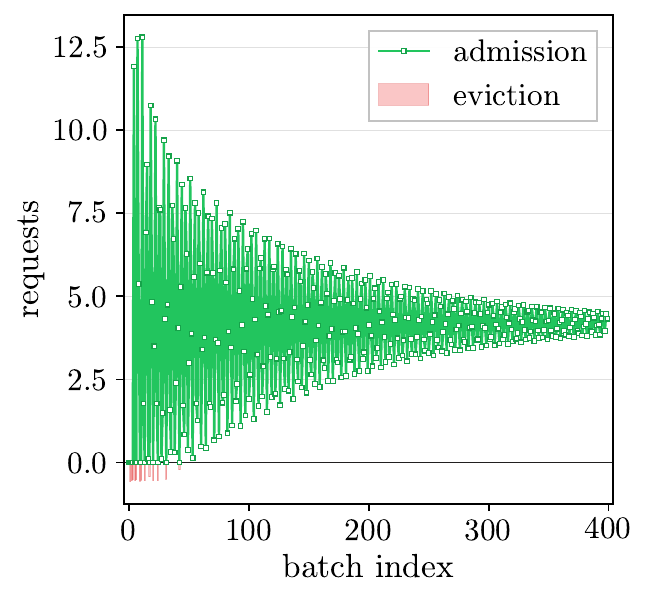}
    \end{minipage}
    \begin{minipage}{0.48\linewidth}
        \centering
        \includegraphics[width=\linewidth]{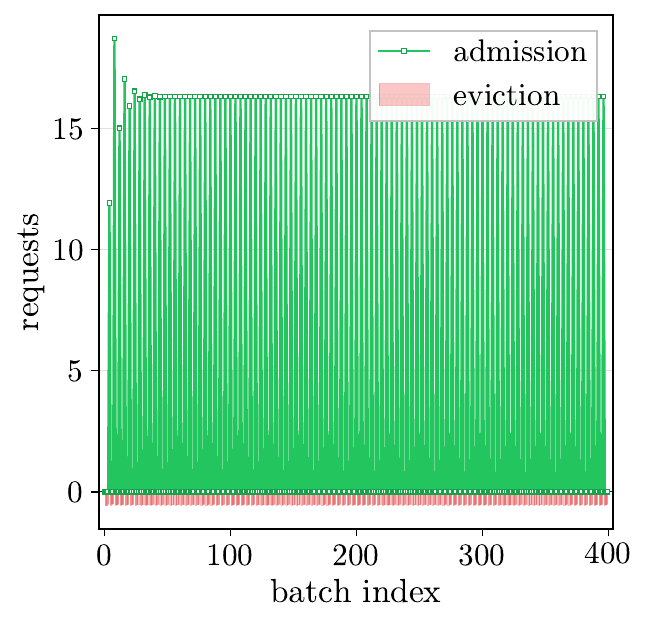}
    \end{minipage}
    \caption{Two-class model-based GCD comparison under greedy admission with
    common \inputlen{} \(l_0=40\) and \(p=1/2\). The coprime decoding-length
    pair \((l_{1,1},l_{1,2})=(4,7)\) converges to an eviction-free
    equilibrium, while the non-coprime pair \((4,8)\) exhibits persistent
    oscillations and eviction.}
    \label{fig:exp_1}
\end{figure}

\begin{figure}[ht]
    \centering
    \begin{minipage}{0.3\linewidth}
        \centering
        \includegraphics[width=\linewidth]{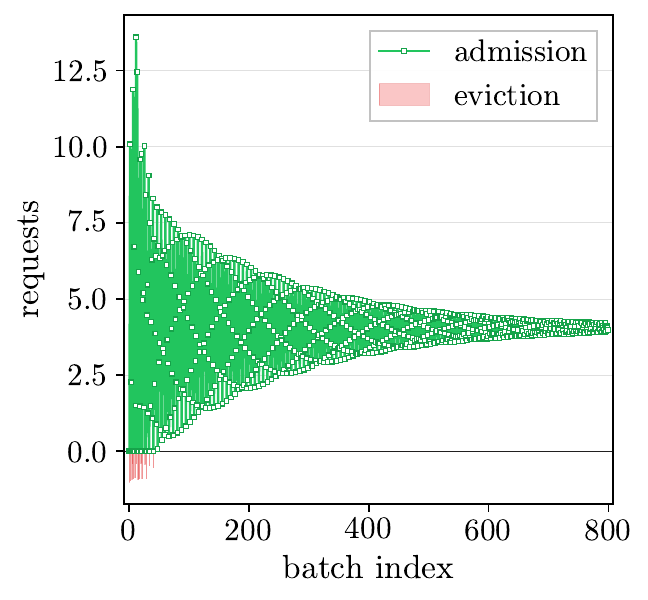}
    \end{minipage}
    \begin{minipage}{0.3\linewidth}
        \centering
        \includegraphics[width=\linewidth]{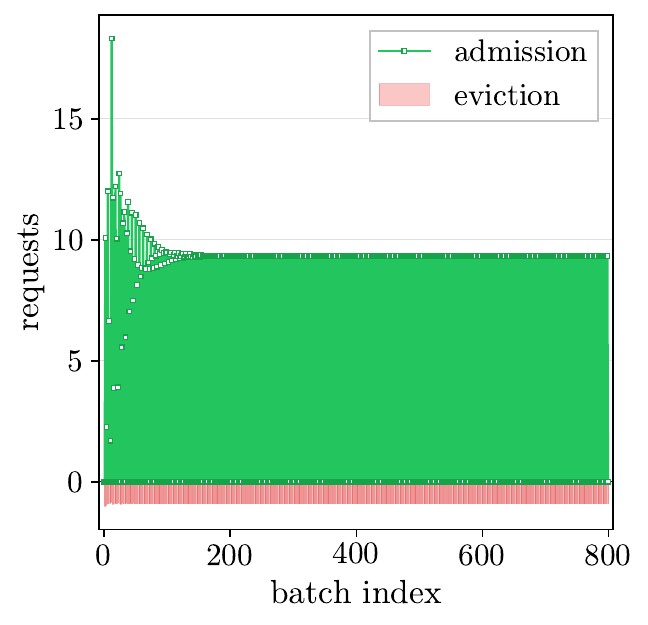}
    \end{minipage}
    \begin{minipage}{0.3\linewidth}
        \centering
        \includegraphics[width=\linewidth]{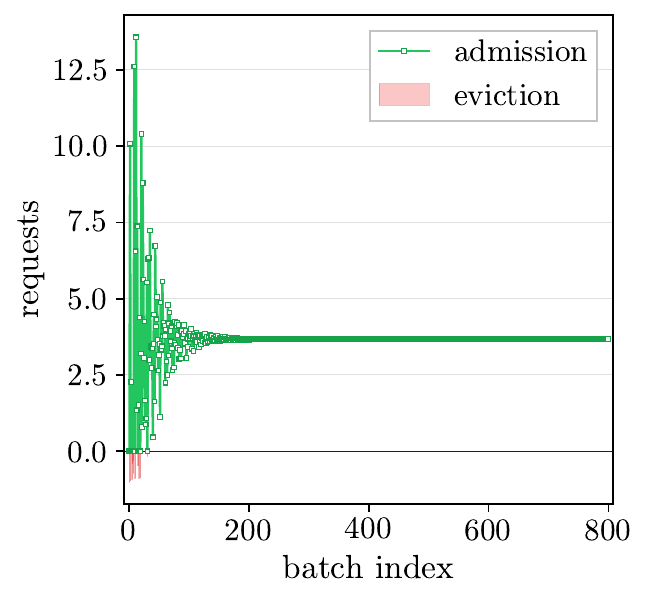}
    \end{minipage}
    \caption{Three-class model-based GCD comparison under greedy admission
    with common \inputlen{} \(l_0=30\) and equal class proportions. The
    coprime decoding-length sets \(l_1\in\{2,7,12\}\) and
    \(l_1\in\{2,9,12\}\) converge to eviction-free equilibria; the non-coprime
    set \(l_1\in\{2,8,12\}\) exhibits persistent oscillations.}
    \label{fig:exp_2}
\end{figure}

\FloatBarrier

\subsection{Mixing mechanisms and additional metrics}
\label{app:mixing_details}

Section~\ref{subsec:request_mixing} gives the representative serving-system
mixing result. The supplementary figures and tables separate the model-based
mechanism from the metric changes induced by coprime mixing and partial-GCD
reduction. In the model-based runs, routing changes only the set of decoding
lengths pooled on a node; in the Vidur runs, the same routing change is tested
under Sarathi's serving simulation, which tracks request-level KV cache memory.

\begin{figure}[ht]
    \centering
    \includegraphics[width=0.95\linewidth]{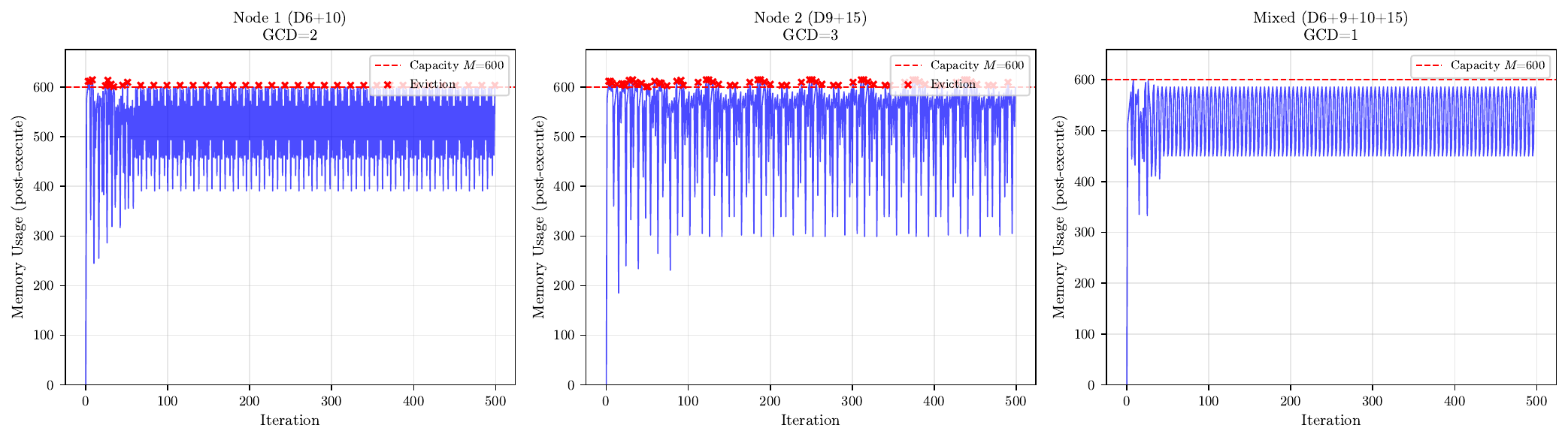}
    \caption{Request mixing dynamics in the model-based simulator with
    \(M=600\) and common \inputlen{} \(l_0=30\). Segregated routing assigns
    decoding lengths \(\{6,10\}\) to one node and \(\{9,15\}\) to another,
    producing periodic eviction because the two nodes have non-coprime GCDs.
    Uniformly mixing all four decoding lengths \(l_1\in\{6,9,10,15\}\) gives
    effective GCD \(=1\), desynchronizes completions, and converges to an
    eviction-free equilibrium.}
    \label{fig:mixing_comparison}
\end{figure}

\begin{table}[htbp]
\centering
\caption{Request mixing effect (model-based simulator with $M=600$ and common \inputlen{} $l_0=30$).
Segregating classes on separate nodes causes periodic eviction due to non-coprime GCDs (2 and 3).
Uniformly mixing classes creates effective GCD$=1$ and is consistent with near-zero eviction through desynchronization in this setup.
Mixing cuts evictions by 100\% (88$\rightarrow$0), improves latency by 4.0\%, and increases throughput by 5.3\% versus the segregated average.}
\label{tab:mixing_effect}
\small
\begin{tabular}{@{}lcccc@{}}
\toprule
\textbf{Routing Strategy} & \textbf{Evictions} & \textbf{Latency (s)} & \textbf{Throughput (req/s)} & \textbf{Configuration} \\
\midrule
Segregated $\{6,10\}$ & 63 & 42.50 & 48.2 & $l_1\in\{6,10\}$ \\
Segregated $\{9,15\}$ & 112 & 45.30 & 46.8 & $l_1\in\{9,15\}$ \\
\midrule
Segregated (average) & 88 & 43.90 & 47.5 & --- \\
\midrule
\textbf{Mixed (all classes)} & \textbf{0} & \textbf{42.10} & \textbf{50.0} & $l_1\in\{6,9,10,15\}$ \\
\bottomrule
\end{tabular}
\vspace{1mm}
\parbox{0.95\linewidth}{\footnotesize Note. Segregated average is the average of the two segregated nodes. Mixed uses the same two-node budget with all request classes pooled. Throughput is completed requests per second.}
\end{table}

\begin{figure}[ht]
    \centering
    \includegraphics[width=0.98\linewidth]{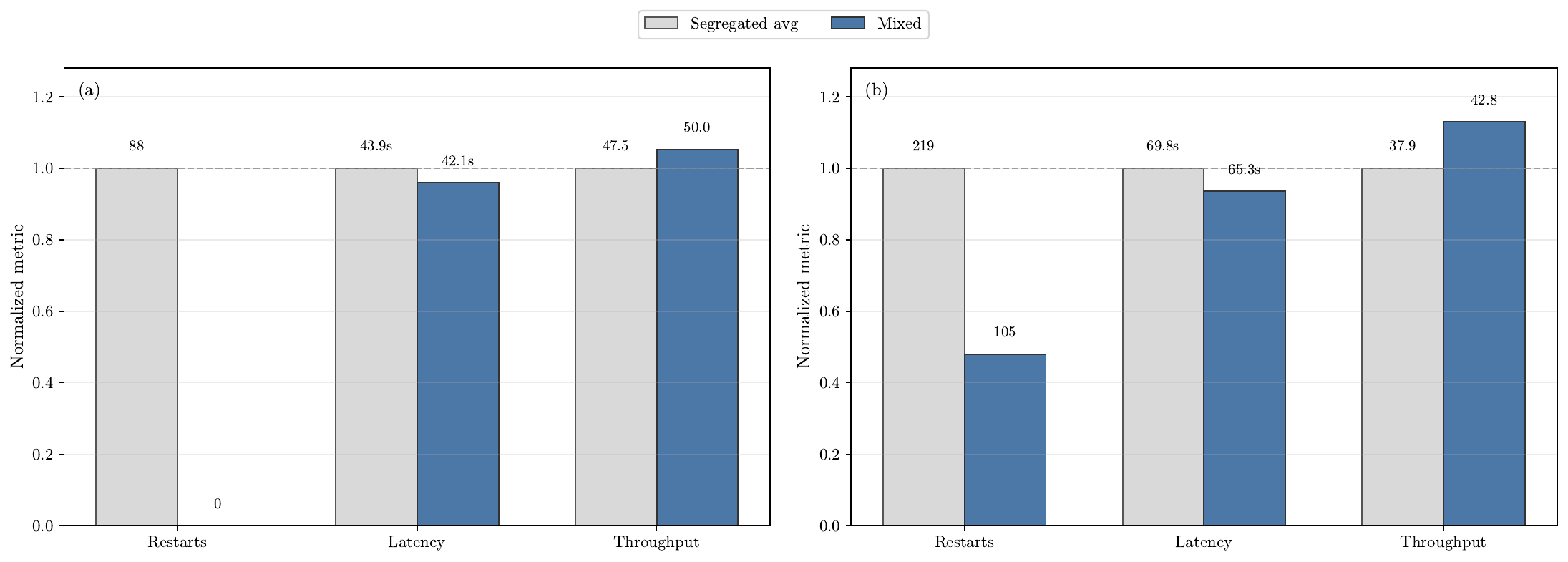}
    \caption{Model-based simulator summary under Poisson arrivals: mixed
    routing versus the segregated average. Coprime mixing eliminates evictions
    over this simulated horizon, while partial-GCD reduction still lowers eviction pressure,
    latency, and throughput loss.}
    \label{fig:mixing_summary_cpu}
\end{figure}

\begin{figure}[ht]
    \centering
    \includegraphics[width=0.98\linewidth]{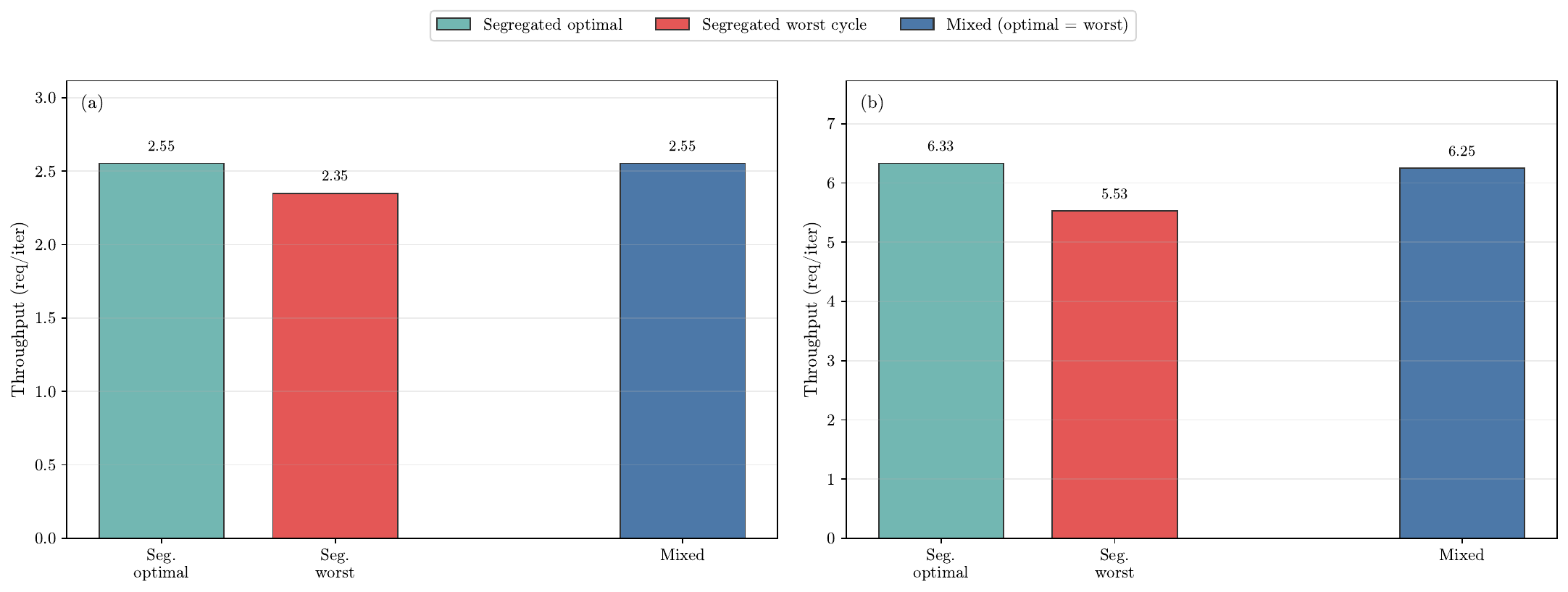}
    \caption{Throughput mechanism for request mixing. Mixing removes the gap
    between the segregated optimal throughput and the synchronized worst-cycle
    throughput. The gain comes from preventing synchronization from driving
    the system into a lower-throughput cycle.}
    \label{fig:throughput_mixing_curve}
\end{figure}

\begin{figure}[ht]
    \centering
    \includegraphics[width=0.95\linewidth]{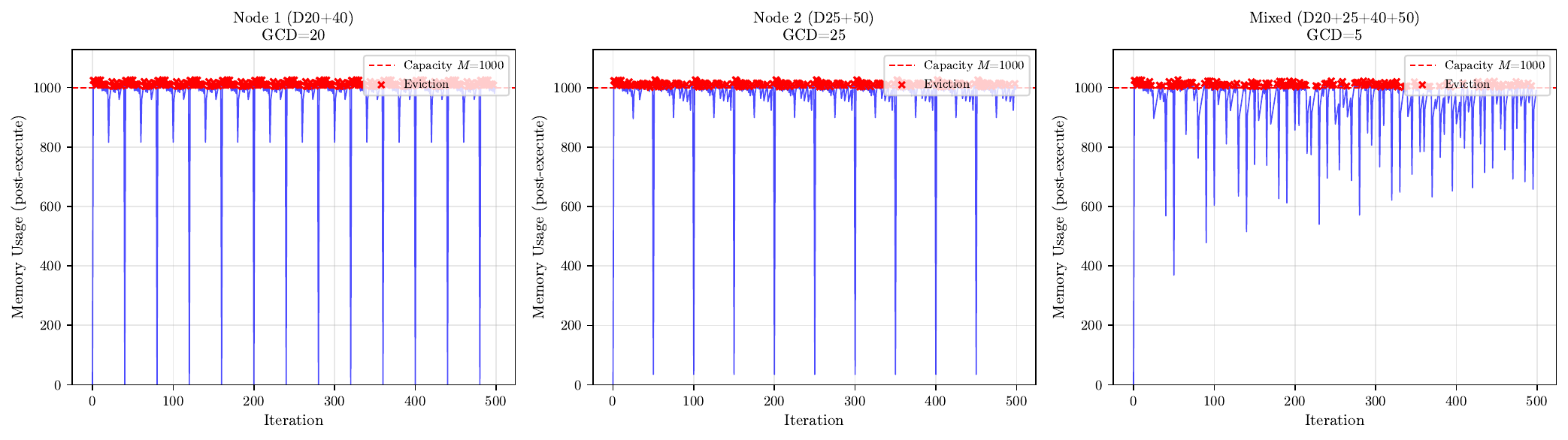}
    \caption{Partial GCD reduction in the model-based simulator with
    \(M=1000\) and common \inputlen{} \(l_0=30\). Segregated nodes with
    decoding lengths \(\{20,40\}\) and \(\{25,50\}\) have high GCDs and severe
    periodic eviction; mixing all four classes reduces the effective GCD to 5 and
    cuts eviction pressure.}
    \label{fig:mixing_partial_gcd}
\end{figure}

\begin{table}[htbp]
\centering
\caption{Partial GCD reduction effect (model-based simulator with $M=1000$ and common \inputlen{} $l_0=30$).
Segregating classes on separate nodes with high GCDs (20 and 25) causes severe periodic eviction.
Uniformly mixing classes reduces effective GCD to 5, achieving 52\% fewer evictions through partial desynchronization.
This demonstrates that mixing benefits extend beyond coprime configurations.}
\label{tab:mixing_partial_gcd}
\small
\begin{tabular}{@{}lcccc@{}}
\toprule
\textbf{Routing Strategy} & \textbf{Evictions} & \textbf{Latency (s)} & \textbf{Throughput (req/s)} & \textbf{Configuration} \\
\midrule
Segregated $\{20,40\}$ & 248 & 68.20 & 38.5 & $l_1\in\{20,40\}$, GCD=20 \\
Segregated $\{25,50\}$ & 190 & 71.40 & 37.2 & $l_1\in\{25,50\}$, GCD=25 \\
\midrule
Segregated (average) & 219 & 69.80 & 37.9 & --- \\
\midrule
\textbf{Mixed (all classes)} & \textbf{105} & \textbf{65.30} & \textbf{42.8} & $l_1\in\{20,25,40,50\}$, GCD=5 \\
\bottomrule
\end{tabular}
\vspace{1mm}
\parbox{0.95\linewidth}{\footnotesize Note. Segregated average is the average of the two segregated nodes. Mixed uses the same two-node budget with all request classes pooled. Throughput is completed requests per second.}
\end{table}

\begin{figure}[ht]
    \centering
    \includegraphics[width=0.95\linewidth]{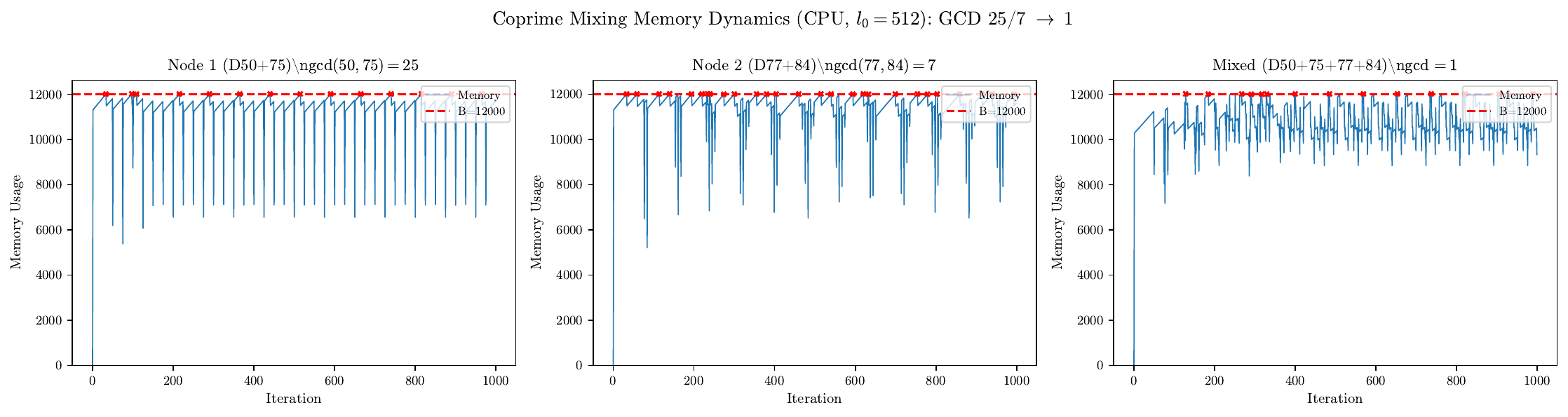}
    \caption{Coprime mixing memory dynamics with common \inputlen{}
    \(l_0=512\). Mixing decoding-length sets
    \(\{50,75\}\) and \(\{77,84\}\) produces combined GCD \(=1\) and removes the
    periodic overflow observed under segregated routing.}
    \label{fig:vidur_mixing_coprime}
\end{figure}

\begin{figure}[ht]
    \centering
    \includegraphics[width=0.95\linewidth]{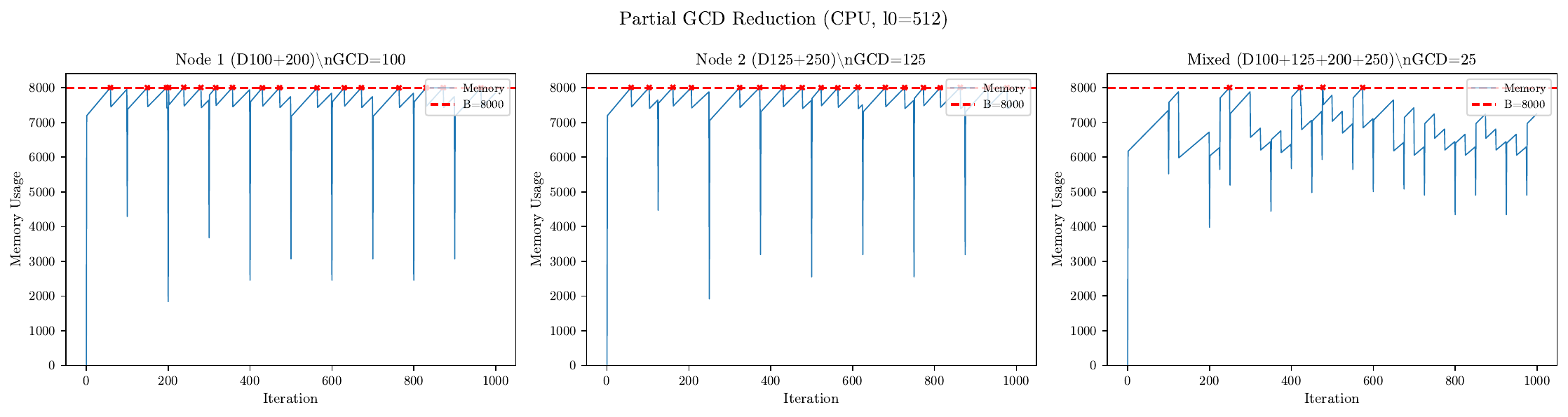}
    \caption{Partial-GCD mixing memory dynamics with common \inputlen{}
    \(l_0=512\). Mixing decoding-length sets
    \(\{100,200\}\) and \(\{125,250\}\) reduces the effective GCD from 100/125
    to 25 and weakens synchronization.}
    \label{fig:vidur_mixing_partial_gcd}
\end{figure}

\begin{table}[htbp]
\centering
\caption{Request mixing effect with 20k requests, common \inputlen{} $l_0=512$, and overloaded arrivals.
Segregating classes on separate nodes causes periodic eviction due to non-coprime GCDs (25 and 7).
Uniformly mixing classes creates effective GCD$=1$ and has 98.7\% fewer evictions.
Holding the workload, scheduler, and two-device budget fixed, mixing cuts evictions by 1,338 (1,356$\rightarrow$18), improves latency by 17.4\%, and increases throughput by 19.0\% versus the segregated average.}
\label{tab:vidur_mixing_coprime}
\small
\begin{tabular}{@{}lcccc@{}}
\toprule
\textbf{Routing Strategy} & \textbf{Evictions} & \textbf{Latency (s)} & \textbf{Throughput (req/s)} & \textbf{Configuration} \\
\midrule
Segregated $\{50,75\}$ & 1,059 & 136.3 & 73.4 & $l_1\in\{50,75\}$, GCD=25 \\
Segregated $\{77,84\}$ & 1,640 & 183.2 & 54.8 & $l_1\in\{77,84\}$, GCD=7 \\
\midrule
Segregated (average) & 1,349 & 159.8 & 64.1 & --- \\
\midrule
\textbf{Mixed (all classes)} & \textbf{18} & \textbf{131.9} & \textbf{76.2} & \textbf{$l_1\in\{50,75,77,84\}$, GCD=1} \\
\bottomrule
\end{tabular}
\vspace{1mm}
\parbox{0.95\linewidth}{\footnotesize Note. Segregated average is the average of the two segregated devices. Mixed uses the same two-device budget with all request classes pooled. Throughput is completed requests per second.}
\end{table}

\begin{table}[htbp]
\centering
\caption{Partial GCD reduction effect with 20k requests and common \inputlen{} $l_0=512$.
Segregating classes on separate nodes with high GCDs (100 and 125) causes severe periodic eviction.
Uniformly mixing classes reduces effective GCD to 25 and has 34.0\% fewer evictions.
Holding the workload, scheduler, and two-device budget fixed, mixing cuts evictions by 1,259 (3,708$\rightarrow$2,449), improves latency by 7.2\%, and increases throughput by 5.8\% versus the segregated average.
This demonstrates that mixing benefits extend beyond coprime configurations.}
\label{tab:vidur_mixing_partial_gcd}
\small
\begin{tabular}{@{}lcccc@{}}
\toprule
\textbf{Routing Strategy} & \textbf{Evictions} & \textbf{Latency (s)} & \textbf{Throughput (req/s)} & \textbf{Configuration} \\
\midrule
Segregated $\{100,200\}$ & 3296 & 367.9 & 27.3 & $l_1\in\{100,200\}$, GCD=100 \\
Segregated $\{125,250\}$ & 4121 & 473.4 & 21.2 & $l_1\in\{125,250\}$, GCD=125 \\
\midrule
Segregated (average) & 3708 & 420.6 & 24.2 & --- \\
\midrule
\textbf{Mixed (all classes)} & \textbf{2449} & \textbf{390.4} & \textbf{25.6} & \textbf{$l_1\in\{100,125,200,250\}$, GCD=25} \\
\bottomrule
\end{tabular}
\vspace{1mm}
\parbox{0.95\linewidth}{\footnotesize Note. Segregated average is the average of the two segregated devices. Mixed uses the same two-device budget with all request classes pooled. Throughput is completed requests per second.}
\end{table}

\FloatBarrier

\subsection{Vidur and real-GPU experiments}
\label{app:real_system_diagnostics}

The following experiments ask whether the observed dynamics are consistent with
the synchronization mechanism after reintroducing serving-system details. The
real-GPU experiments use an implemented serving stack. These experiments test
whether similar completion synchronization, memory growth, and eviction
patterns appear beyond the stylized simulator.

In the fixed-workload Vidur and real-GPU experiments, requests are generated
according to the stated class proportions and submitted at a sufficiently high
rate at the beginning of the run. They therefore form a persistent waiting queue
\(Q^n\), and the scheduler admits from this queue whenever memory is available.
This construction approximates the saturated-input regime analyzed in the
model, while the open-system experiments in \Cref{subsec:gcd_effect} separately
study Poisson arrivals.

\begin{figure}[ht]
    \centering
    \includegraphics[width=0.95\linewidth]{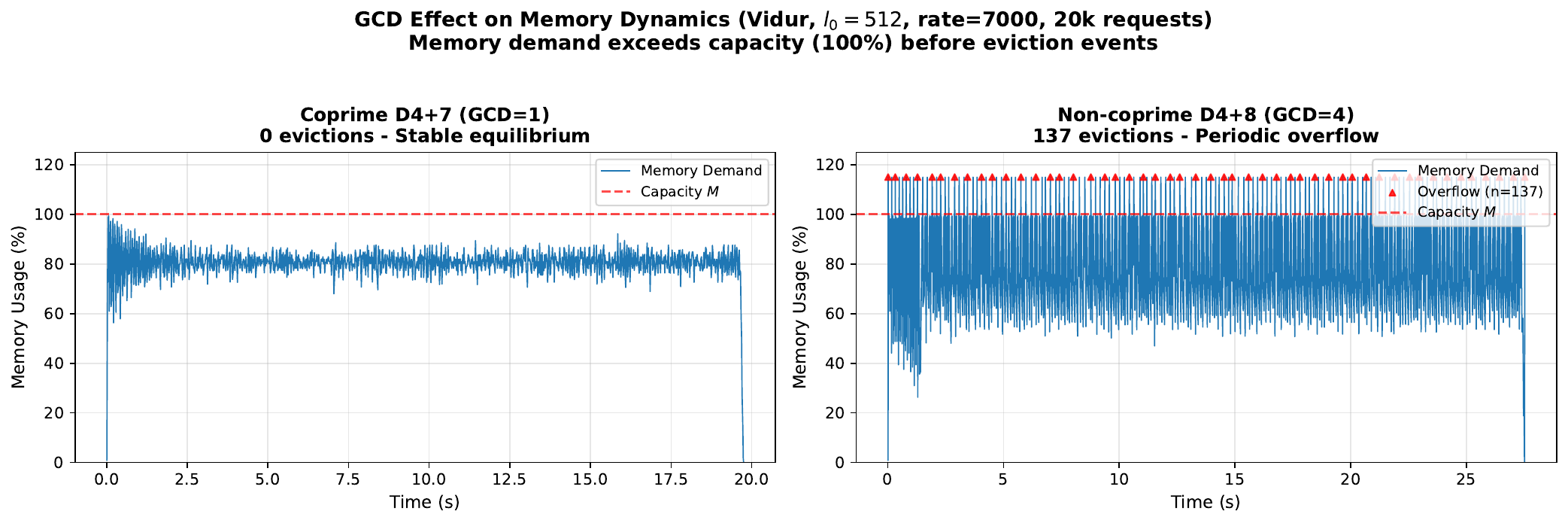}
    \caption{GCD effect on memory dynamics with common \inputlen{}
    \(l_0=512\), rate \(=7000\), and 20k requests. Coprime decoding lengths
    \(\{4,7\}\) converge
    to a stable memory equilibrium with zero eviction; non-coprime decoding
    lengths \(\{4,8\}\) exhibit periodic overflow and eviction.}
    \label{fig:gcd_effect_vidur}
\end{figure}

\begin{figure}[ht]
    \centering
    \begin{minipage}{0.48\linewidth}
        \centering
        \includegraphics[width=\linewidth]{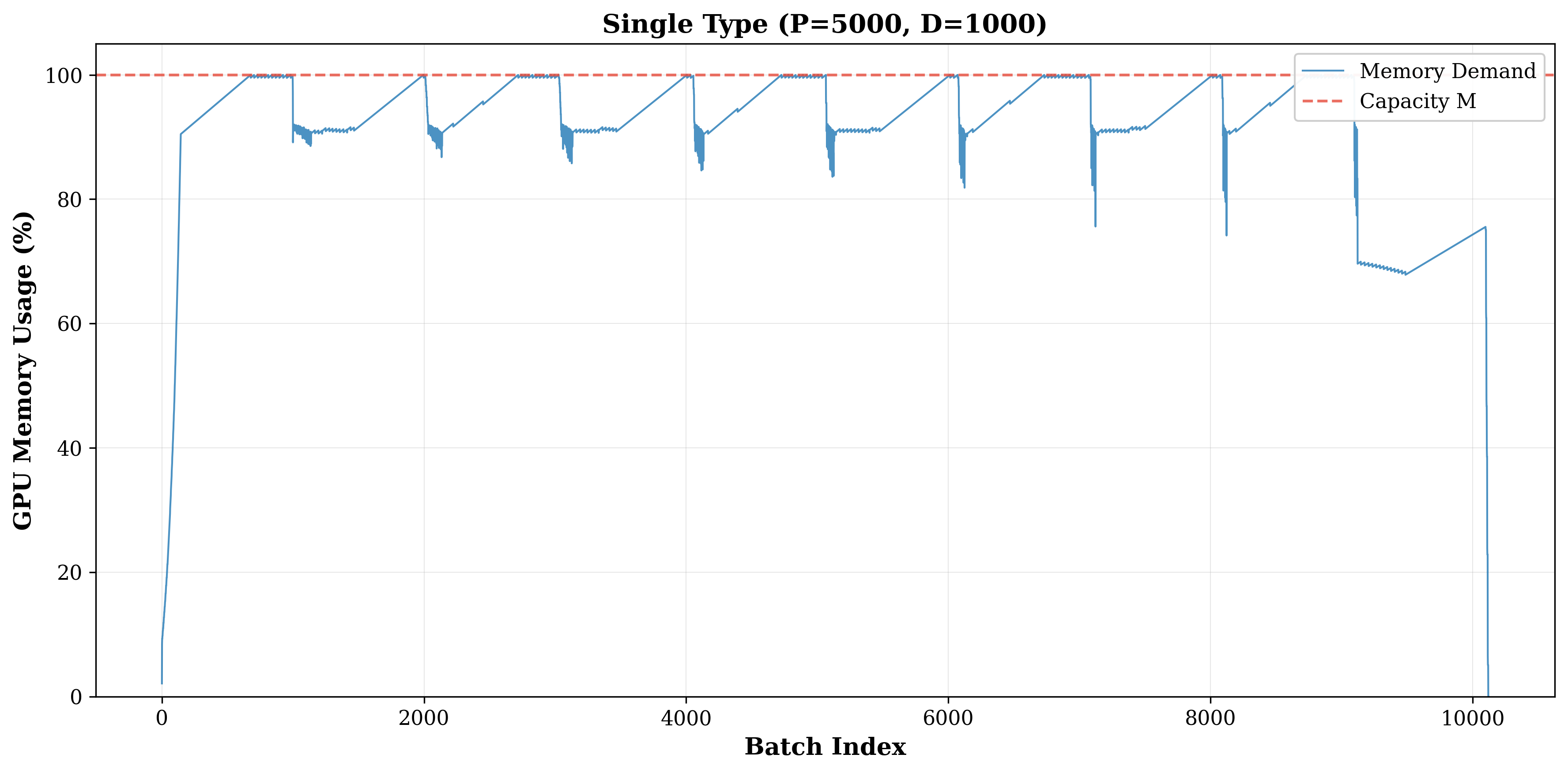}
    \end{minipage}
    \hfill
    \begin{minipage}{0.48\linewidth}
        \centering
        \includegraphics[width=\linewidth]{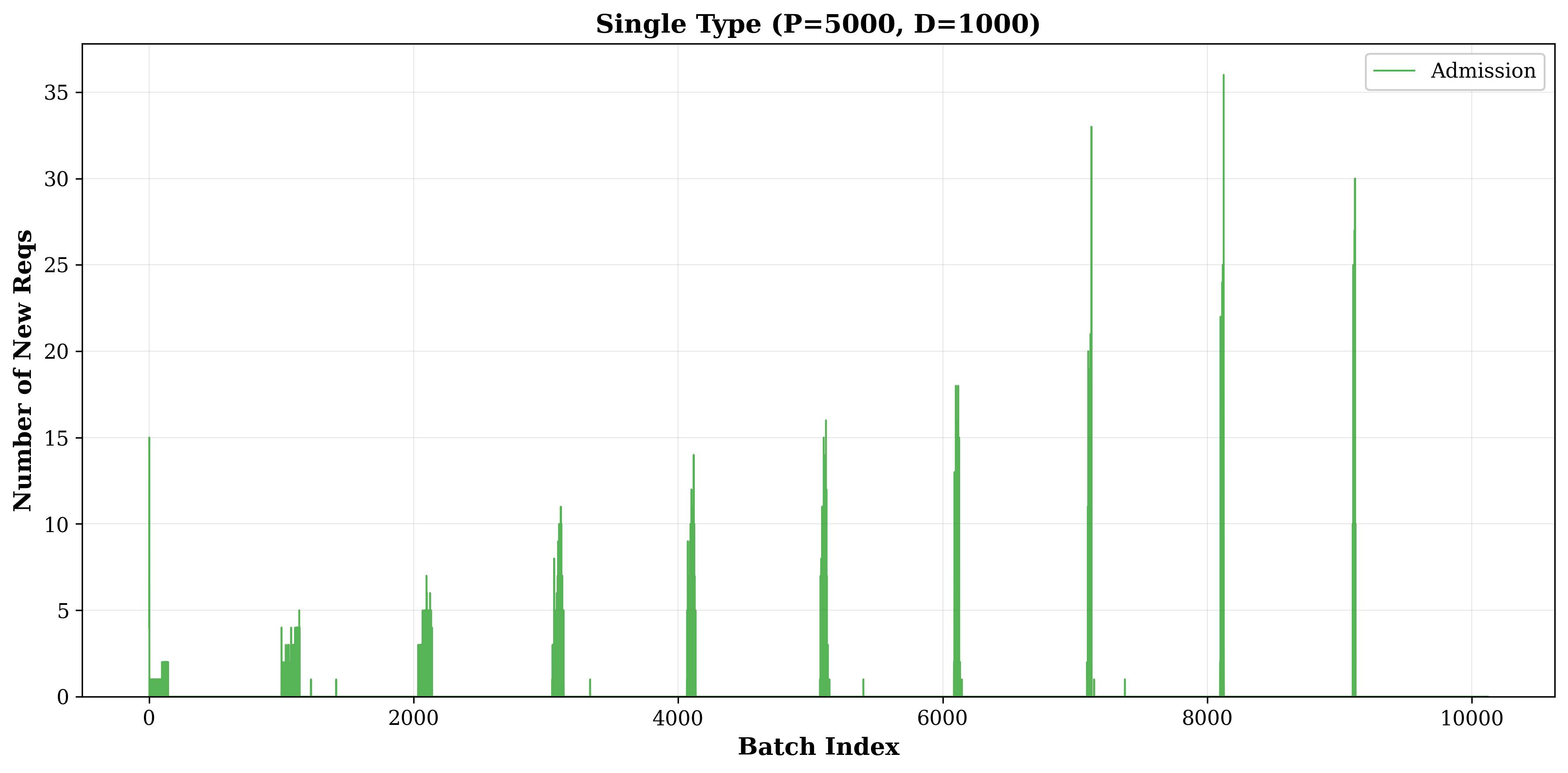}
    \end{minipage}
    \caption{Homogeneous real-GPU experiment (\(l_0=5000\), \(l_1=1000\)). The figure
    records physical memory usage and admitted requests over time. Memory
    repeatedly saturates capacity, triggers eviction, and drops; admissions
    mirror the same cycle.}
    \label{fig:gpu_single_type}
\end{figure}

\begin{figure}[ht]
    \centering
    \begin{minipage}{0.48\linewidth}
        \centering
        \includegraphics[width=\linewidth]{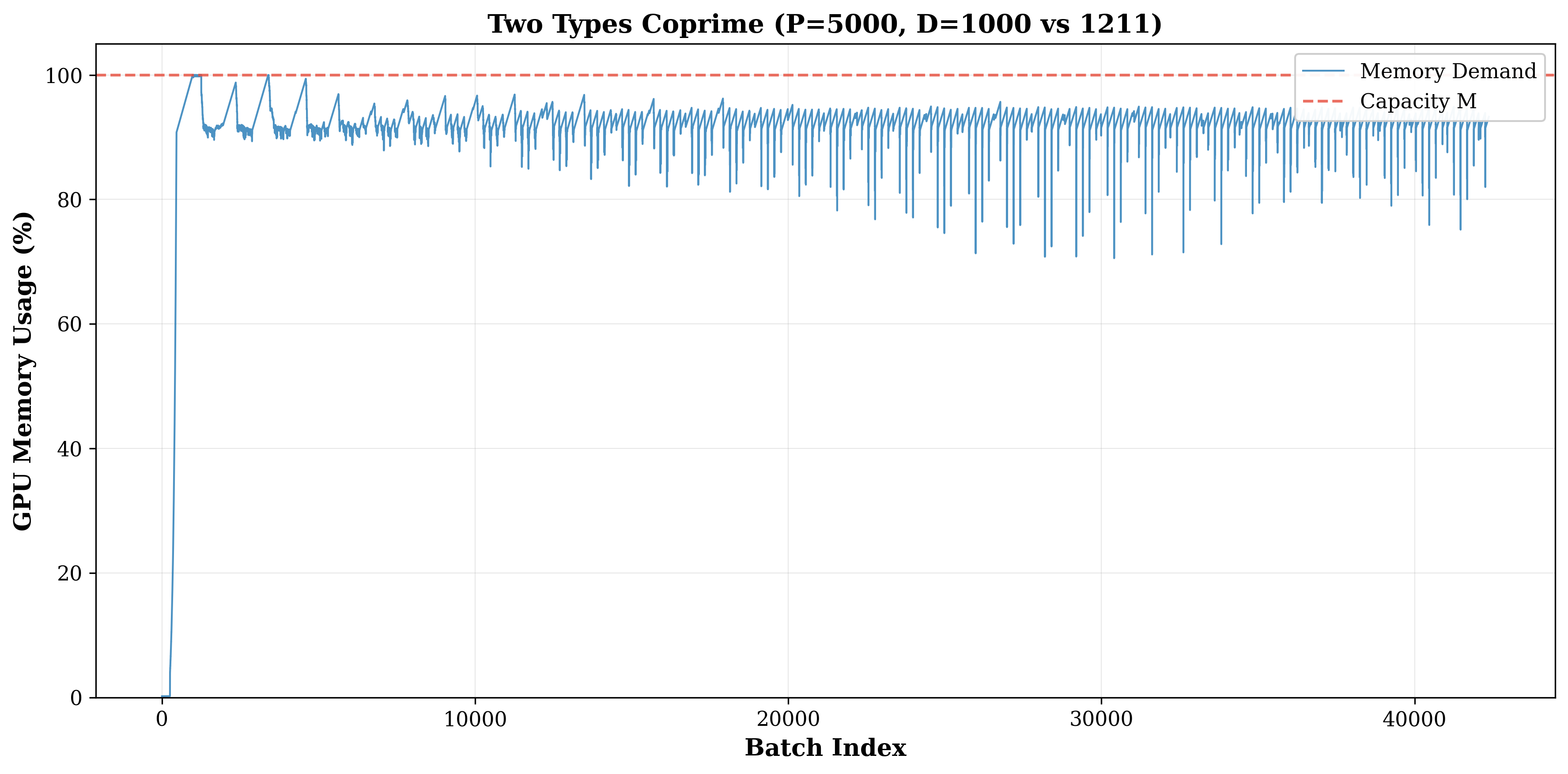}
    \end{minipage}
    \hfill
    \begin{minipage}{0.48\linewidth}
        \centering
        \includegraphics[width=\linewidth]{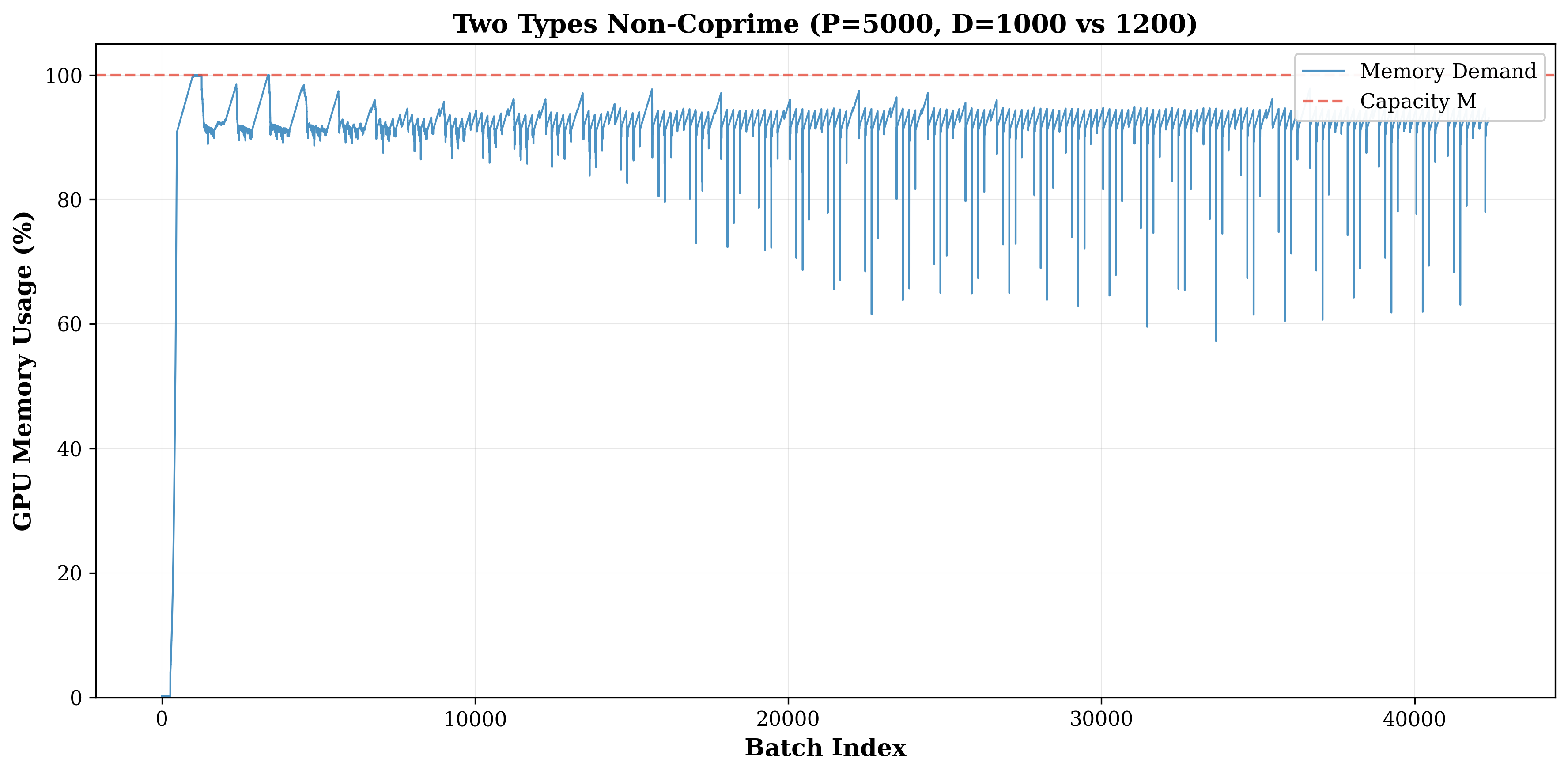}
    \end{minipage}

    \vspace{0.5em}

    \begin{minipage}{0.48\linewidth}
        \centering
        \includegraphics[width=\linewidth]{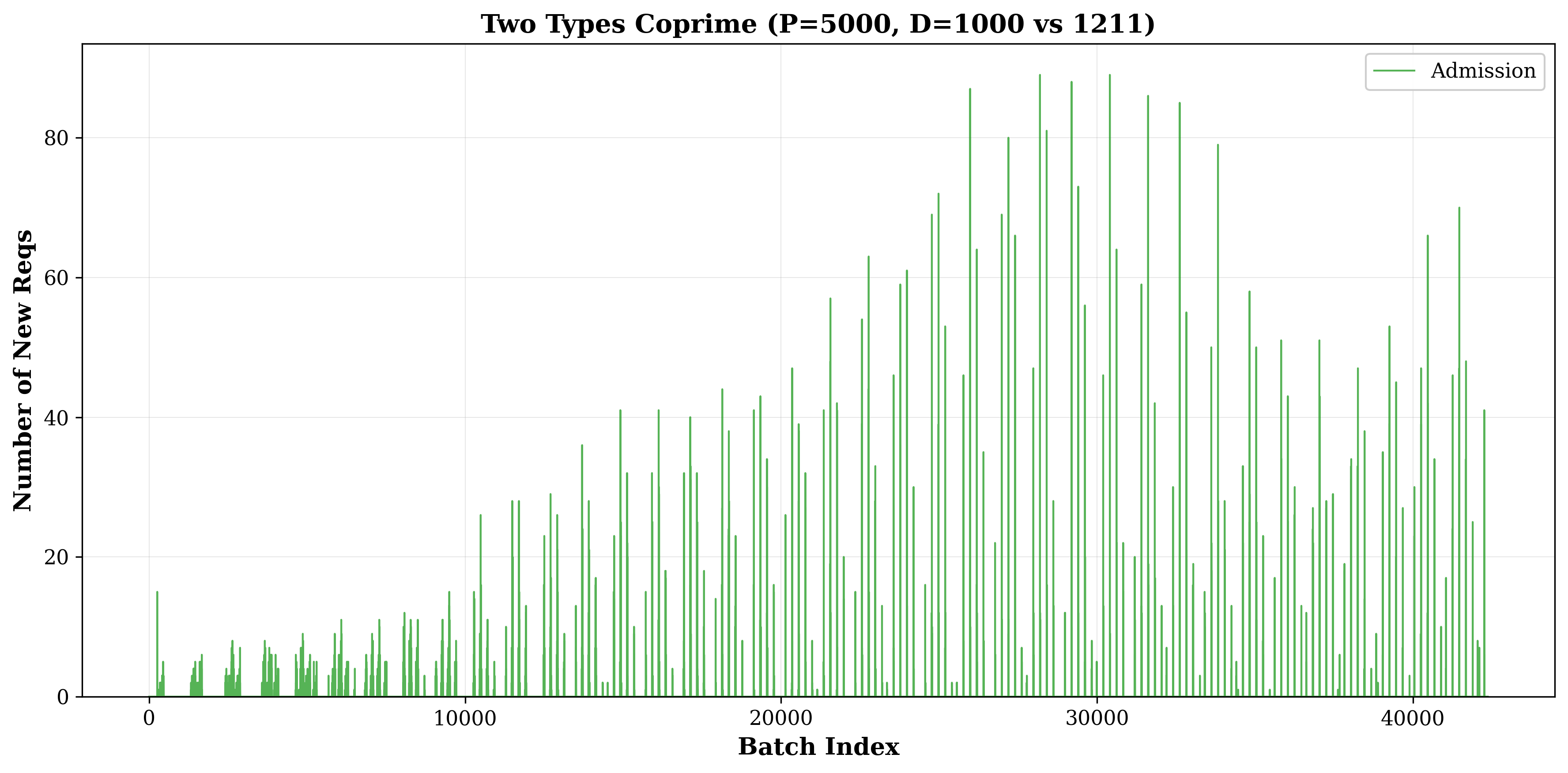}
    \end{minipage}
    \hfill
    \begin{minipage}{0.48\linewidth}
        \centering
        \includegraphics[width=\linewidth]{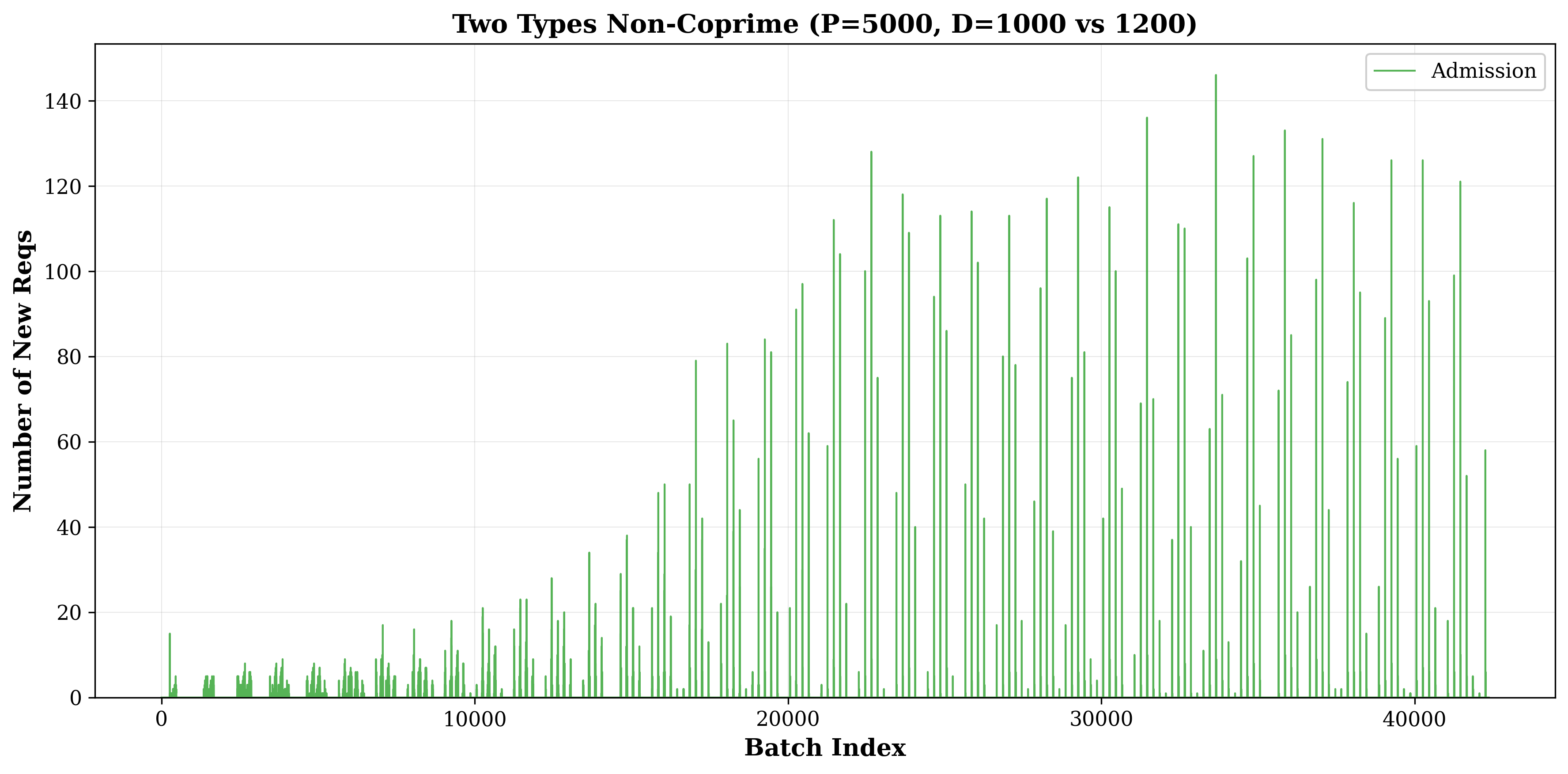}
    \end{minipage}
    \caption{Two-class real-GPU experiments with common \inputlen{}
    \(l_0=5000\). Columns compare the coprime decoding-length pair
    \((1000,1211)\) with the non-coprime pair \((1000,1200)\); rows show
    physical memory usage and admitted requests. The non-coprime case exhibits
    stronger synchronized memory pressure.}
    \label{fig:gpu_two_type_grid}
\end{figure}

\begin{figure}[ht]
    \centering
    \includegraphics[width=0.98\linewidth]{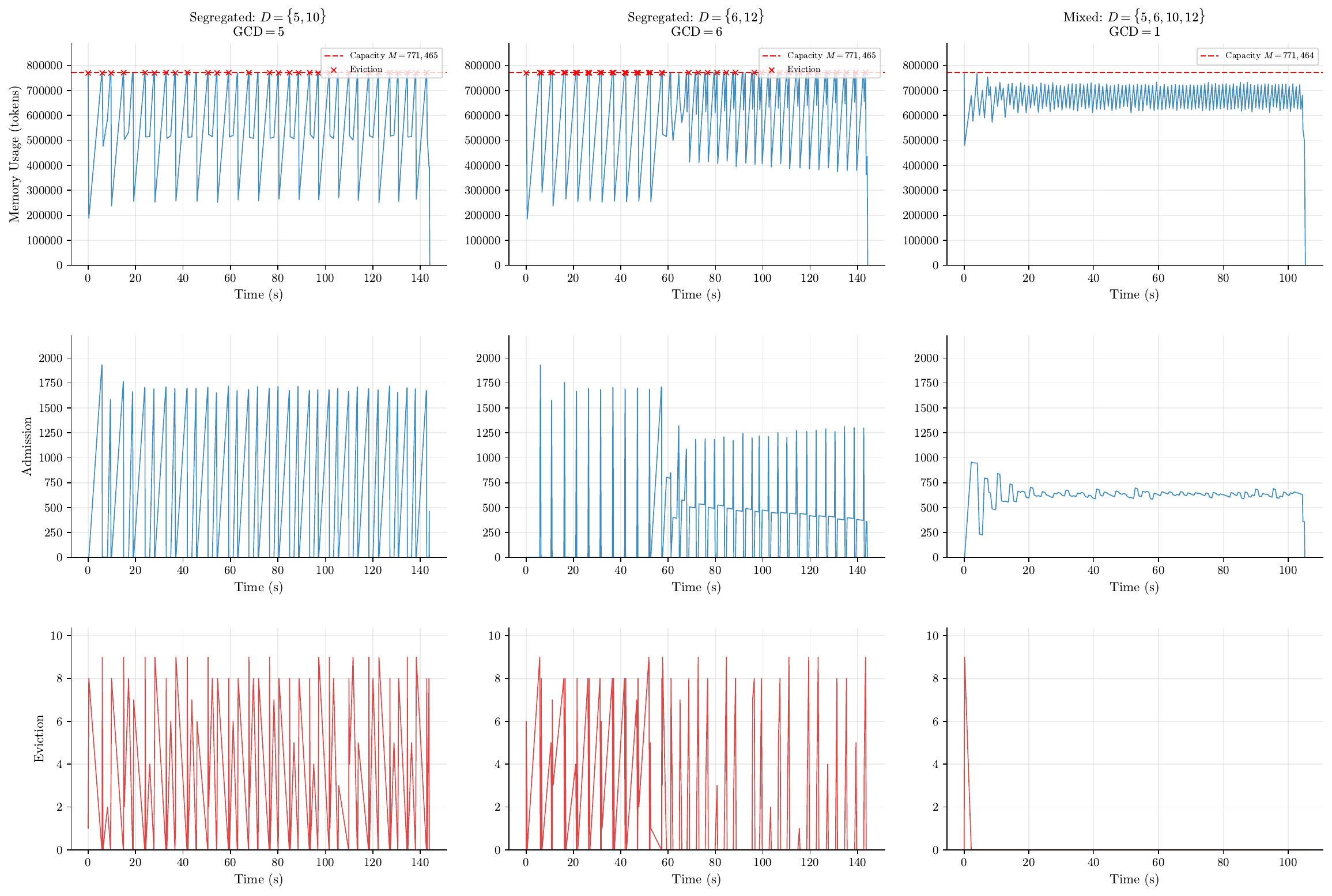}
    \caption{Real admission (top) and real eviction (bottom) over wall-clock
    time for three decoding-length configurations.
    Segregated routing (\(l_1\in\{5,10\}\) and \(l_1\in\{6,12\}\)) produces
    recurring eviction bursts at fixed intervals; mixed routing
    (\(l_1\in\{5,6,10,12\}\), GCD\(=1\)) eliminates the periodic pattern.
    \(x\)-axis: timestamp in seconds.}
    \label{fig:sglang_mixing_ts}
\end{figure}

\FloatBarrier

\subsection{Rate-limit mechanism and sensitivity}
\label{app:rate_limit_details}

Section~\ref{subsec:rate_limited_admission} gives the Vidur rate-limited
admission result. The supplementary figures illustrate the corresponding
mechanism in the model-based simulator and show the throughput-eviction tradeoff across
admission limits.

\begin{figure}[!htbp]
    \centering
    \includegraphics[width=0.95\linewidth]{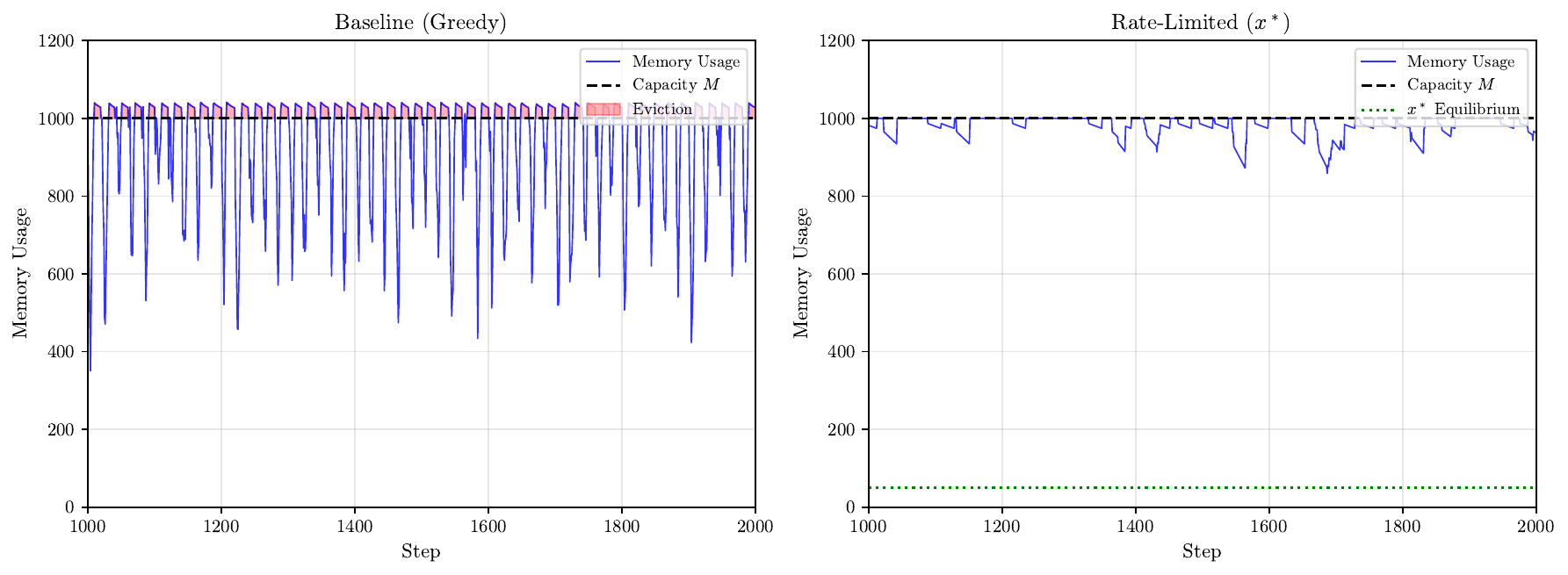}
    \caption{Rate-limited admission dynamics in the model-based simulator with
    \(M=1000\), \(l_0=20\), and \(l_1=20\). Greedy admission produces periodic
    memory overflow and eviction spikes; rate-limited admission keeps memory
    below capacity and eliminates eviction over the simulated horizon.}
    \label{fig:rate_limit_dynamics_cpu}
\end{figure}

\begin{figure}[ht]
    \centering
    \includegraphics[width=0.8\linewidth]{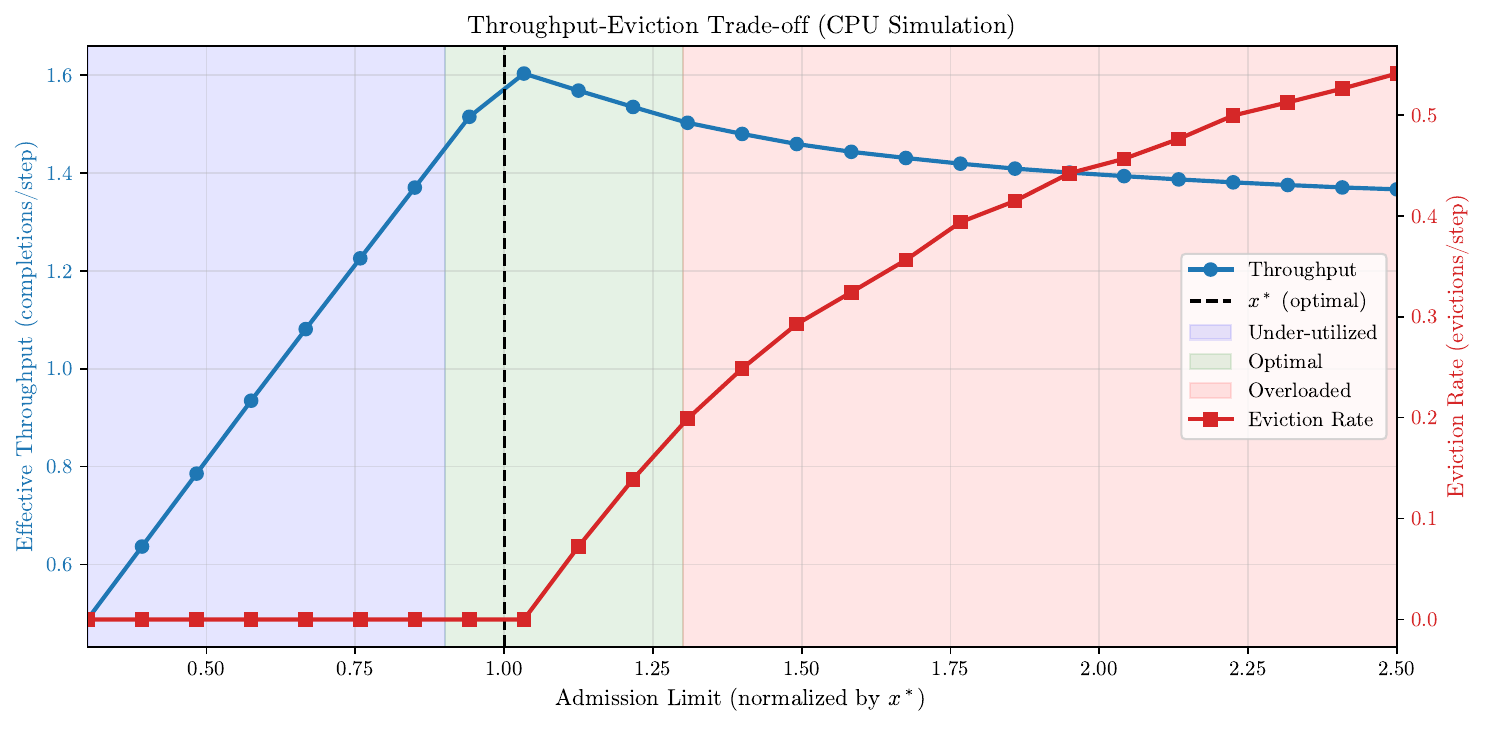}
    \caption{Throughput-eviction tradeoff in the model-based simulator.
    Throughput peaks near the eviction-free equilibrium \(x^*\) and declines
    beyond it as eviction overhead grows.}
    \label{fig:rate_limit_tradeoff_cpu}
\end{figure}

\begin{table}[htbp]
\centering
\caption{Rate-limited admission eliminates eviction while improving throughput (model-based simulator with $M=1000, l_0=20, l_1=20$).}
\label{tab:rate_limit_comparison}
\small
\begin{tabular}{@{}lccc@{}}
\toprule
\textbf{Metric} & \textbf{Baseline (Greedy)} & \textbf{Rate-Limited ($x^*$)} & \textbf{Improvement} \\
\midrule
Total Evictions (4000 steps) & 2662 & \textbf{0} & 100\% \\
Effective Throughput (req/step) & 1.33 & \textbf{1.61} & 20.7\% \\
Mean Eviction Rate & 0.666 & 0.000 & --- \\
Stability & Unstable (periodic eviction) & \textbf{Stable} (zero eviction) & --- \\
\bottomrule
\end{tabular}
\end{table}

\FloatBarrier

\subsection{Burst-window and burst-factor robustness}
\label{app:burst_robustness_details}

This section combines the finite-window ablation with the burst-factor
robustness sweeps. Mixing requires overlap among distinct request classes. Under
bursty arrivals, a short high-rate window may end before the scheduler sees
enough heterogeneous requests to form the intended mixed pool. Longer burst
windows move the realized class mix closer to the target proportions, increase
the probability of effective coprime mixing, and improve the performance of
mixed routing. The burst-factor results are consistent with the same mechanism: mixing
helps when the burst contains enough heterogeneity, while rate-limited
admission remains the direct hard cap against temporary overshoot.

\begin{figure}[ht]
    \centering
    \includegraphics[width=0.98\linewidth]{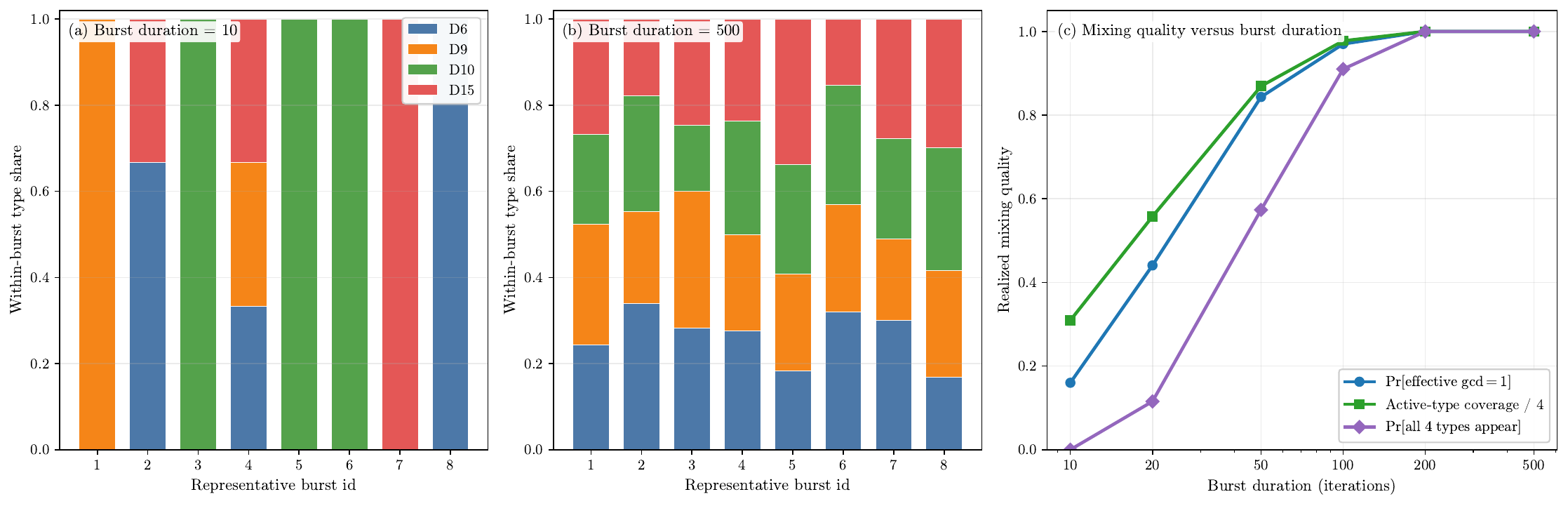}
    \caption{Window-length ablation for finite burst mixing in the model-based
    simulator. Short bursts may not contain enough active classes to realize the
    intended coprime pool; longer bursts do.}
    \label{fig:window_length_ablation_cpu}
\end{figure}

\begin{figure}[ht]
\centering
\includegraphics[width=0.98\linewidth]{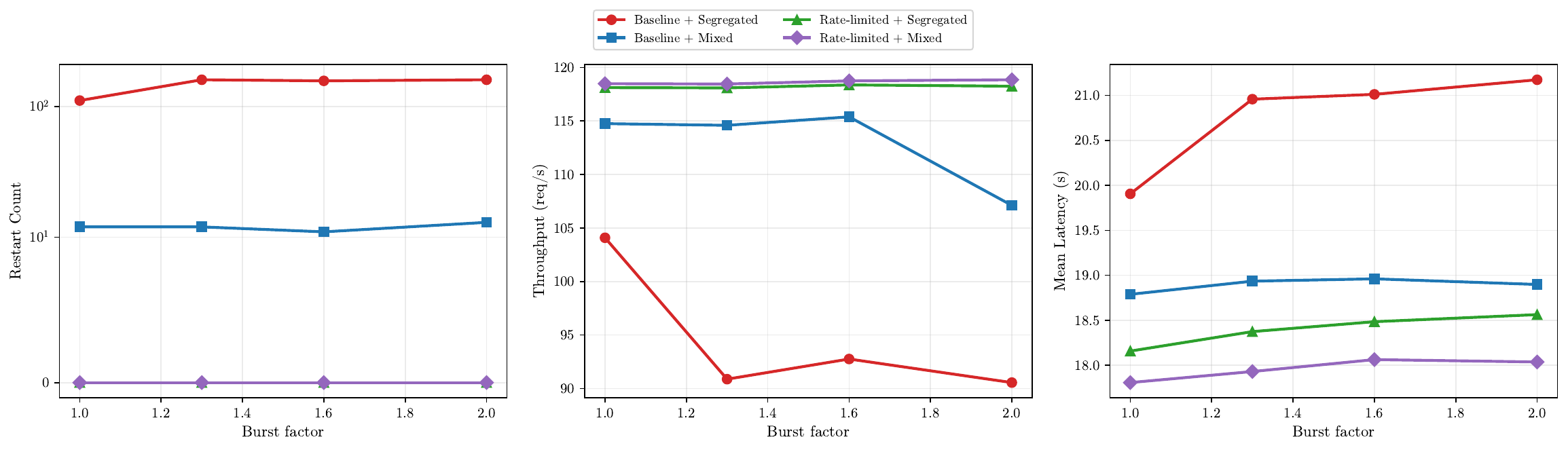}
\caption{Robustness to periodic burst arrivals: coprime mixing in Vidur. The
sweep compares baseline segregated routing, baseline mixed routing,
rate-limited segregated routing, and rate-limited mixed routing.}
\label{fig:vidur_robustness_coprime}
\end{figure}

\begin{figure}[ht]
\centering
\includegraphics[width=0.98\linewidth]{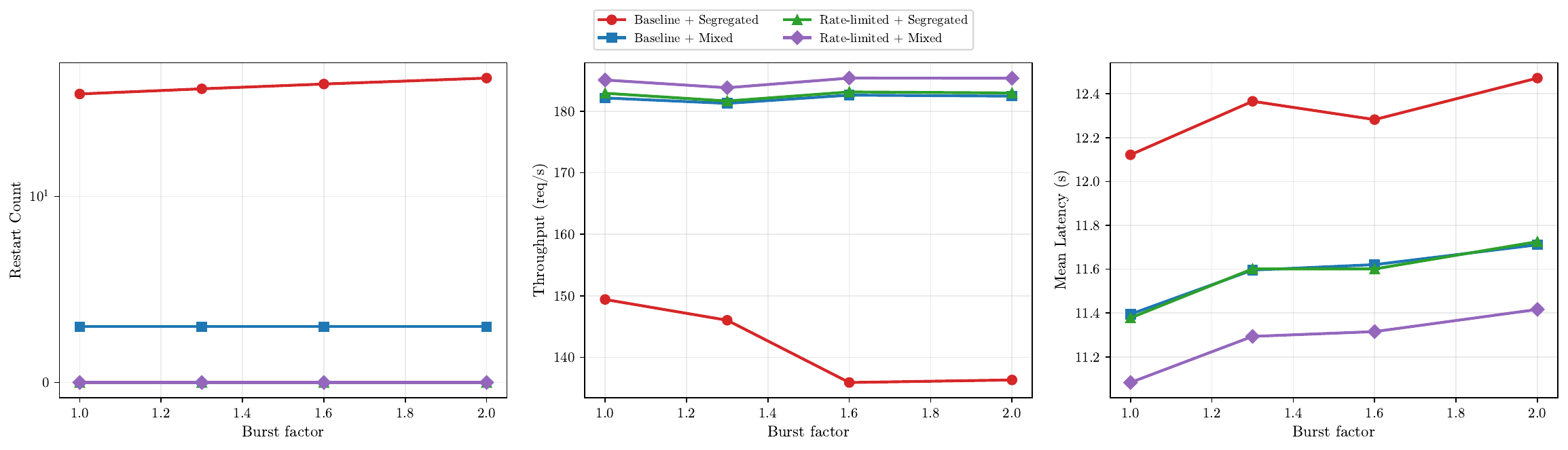}
\caption{Robustness to periodic burst arrivals: partial-GCD reduction in Vidur.
Mixed routing reduces the effective GCD to a value above one; rate limiting
remains the policy that eliminates evictions over this horizon.}
\label{fig:vidur_robustness_partial}
\end{figure}

\begin{figure}[ht]
    \centering
    \includegraphics[width=0.98\linewidth]{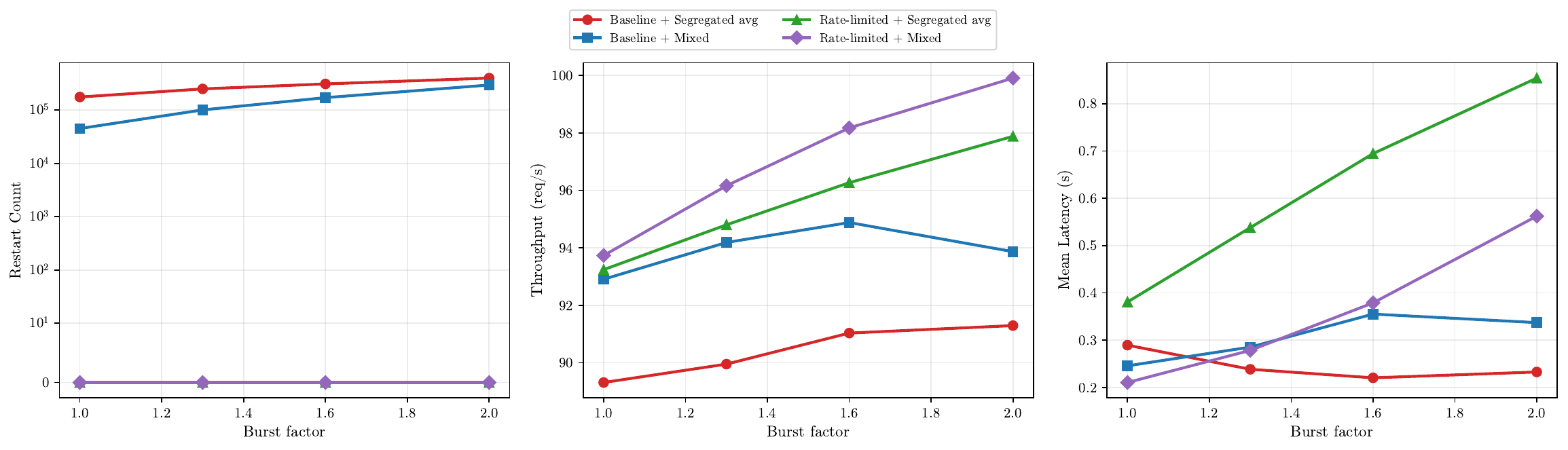}
\caption{Model-based robustness under periodic burst arrivals: coprime
    mixing. Mixing reduces eviction pressure under greedy admission, while
    rate-limited policies remain at zero eviction.}
    \label{fig:robustness_coprime}
\end{figure}

\begin{figure}[ht]
    \centering
    \includegraphics[width=0.98\linewidth]{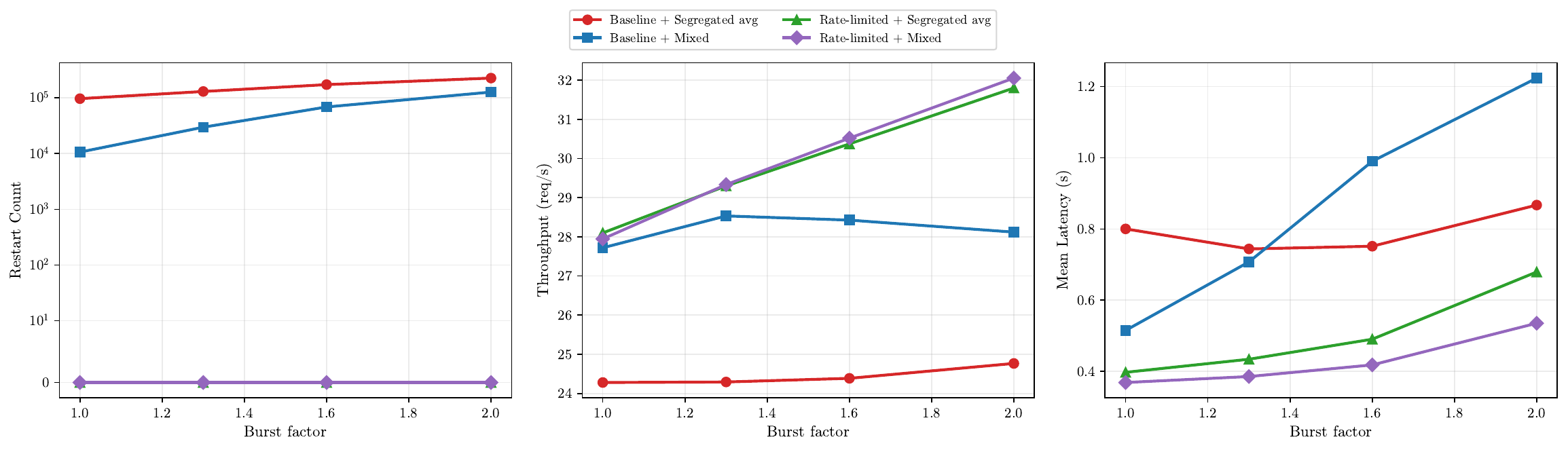}
    \caption{Model-based robustness under periodic burst arrivals: partial-GCD
    reduction. Partial mixing improves the greedy baseline, while rate limiting
    remains the hard stability mechanism.}
    \label{fig:robustness_partial}
\end{figure}

\FloatBarrier

\end{document}